 \definecolor{awesome}{rgb}{1.0, 0.13, 0.32}
 \definecolor{notred}{rgb}{0.57, 0.36, 0.51}
\newtheorem{thm}{Theorem}[section]
\newtheorem{defn}[thm]{Definition}
\newtheorem{lemma}[thm]{Lemma}
\newtheorem{cor}[thm]{Corollary}
\newtheorem{remark}[thm]{Remark}
\newtheorem{example}[thm]{Example}
\newcommand{\barQ}{$\bar{\rm Q}$}
\newcommand{\barD}{\bar D}
\newcommand{\bmb}{\left( \begin{array}{rr}}
\newcommand{\enm}{\end{array}\right)}
\newcommand{\cQ}{\mathcal Q}
\newcommand{\g}{{\mathfrak{g}}}
\newcommand{\gl}{{\mathfrak{gl}}}
\newcommand{\D}{{\mathcal{D}}}
\newcommand{\cH}{{\mathcal{H}}}
\renewcommand{\sl}{{\mathfrak{sl}}}
\newcommand{\C}{{\mathbb C}}
\newcommand{\Z}{{\mathbb Z}}
\newcommand{\Q}{{\mathcal Q}}
\newcommand{\R}{{\mathbb R}}
\newcommand{\bx}{{\mathbf x}}
\newcommand{\al}{{\alpha}}
\newcommand{\lL}{{\Lambda}}
\newcommand{\K}{{\mathcal K}}
\newcommand{\half}{\frac{1}{2}}
\numberwithin{equation}{section}
\begin{document}

\title{Macdonald Duality and the proof of the Quantum Q-system conjecture}
\author{Philippe Di Francesco} 
\address{Department of Mathematics, University of Illinois MC-382, Urbana, IL 61821, 
U.S.A. e-mail: philippe@illinois.edu and 
Institut de physique th\'eorique, Universit\'e Paris Saclay, 
CEA, CNRS, F-91191 Gif-sur-Yvette, FRANCE\hfill
\break e-mail: philippe.di-francesco@cea.fr
}
\author{Rinat Kedem} 
\address{Department of Mathematics, University of Illinois MC-382, Urbana, IL 61821, 
U.S.A. \hfill
\break e-mail: rinat@illinois.edu}

\begin{abstract} The $SL(2,\Z)$-symmetry of 
Cherednik's spherical double affine Hecke algebras in Macdonald theory includes a distinguished generator
which acts as a discrete time evolution of Macdonald operators, which can also be interpreted as a torus Dehn twist in type $A$.
We prove for all twisted and untwisted affine algebras of  type $ABCD$ that the time-evolved $q$-difference Macdonald operators,
in the $t\to\infty$ $q$-Whittaker limit, form a representation of the associated discrete integrable quantum Q-systems, 
which are obtained, in all but one case, via the canonical quantization of suitable cluster algebras. 
The proof relies strongly on the duality 
property of Macdonald and Koornwinder polynomials, which allows, in the $q$-Whittaker limit, for a unified description of the quantum Q-system variables
and the conserved quantities as limits of the time-evolved Macdonald operators and the Pieri operators, respectively. The latter are identified with relativistic $q$-difference Toda Hamiltonians.
A crucial ingredient in the proof is the use of the ``Fourier transformed" picture, in which we compute time-translation operators and prove that they commute with the Pieri operators or Hamiltonians.
We also discuss the universal solutions of Koornwinder-Macdonald eigenvalue and Pieri equations, 
for which we prove a duality relation, which simplifies the proofs further.
\end{abstract}

\maketitle
\date{\today}
\section{Introduction}

\subsection{Overview}

The purpose of this paper is to determine, in a uniform manner,  a certain discrete integrable structure associated with the finite-dimensional representations of quantum affine algebras or Yangians of affine algebras $\g$, and to understand the relation of this structure with Koornwinder-Macdonald theory and  spherical double affine Hecke algebras (sDAHA) for all $\g$ of non-exceptional type. 

Specifically, the characters of a special subset of representations of these algebras, known as KR-modules \cite{ChariMoura,KR}, satisfy recursion relations which can be viewed as discrete time evolution equations. These  evolutions, called Q-systems \cite{HKOTY,HKOTT}, are known to have an integrable structure for a subset of algebras $\g$. When $\g=A_{N-1}^{(1)}$, 
the integrable structure was shown to be a Coxeter-Toda system \cite{DFK10,GSVct}, and this was generalized  \cite{Williams} to $\g=D_N^{(1)}, A_{2N-1}^{(2)}, D_{N+1}^{(2)}$ using the refactorization map \cite{HKKR}. In type $\g=B_N^{(1)}$ the system was shown to be integrable \cite{Panu} using a Goncharov-Kenyon type dimer model \cite{GoncharovKenyon}. Generally, discrete integrability was a conjecture and does seem to be related to refactorization maps.

The combinatorial structure of Q-system evolution equations is such that, in almost all cases, they can be interpreted as mutations in a cluster algebra of geometric type \cite{clusK,clusDFK}. This  structure can be encoded in the form of  a quiver, which, in type $A_{N-1}^{(1)}$ appears in various contexts such as $K$-theoretic Coulomb branches \cite{SS19}, gauge theories \cite{CNV}, factorization maps \cite{Williams} and shifted quantum affine algebras or Yangians \cite{BFN,FTshiftedquantum,DFKqt}.

Cluster algebras have a natural quantization \cite{BZ}, and in \cite{krKR,Simon} it was shown that this quantization is associated to the grading of the so-called fusion product of KR-modules  \cite{FL}. That is, the character of the graded fusion product can be expressed as a constant term identity involving the corresponding solutions of the quantum Q-system.

The Hamiltonians associated with the quantum Q-system in type $A_{N-1}^{(1)}$ are the $q$-difference (or relativistic) 
quantum Toda Hamiltonians of \cite{Etingof}, and therefore the graded characters of fusion products of fundamental representations in this type can be identified with $q$-Whittaker functions of $U_q(\sl_N)$ \cite{DFK15}. More generally, the quantum Q-system generators can be identified with the algebra of creation and annihilation operators of generalized $q$-Whittaker functions or graded characters. 
This algebra was identified in \cite{DFK15,DFKcur,DFKqt} with the $q$-Whittaker limit $t\to\infty$ of the spherical double affine Hecke algebra of type $A$ \cite{Cheredbook} as follows.
There is a natural $SL(2,\Z)$-symmetry of the sDAHA, and the generator $\tau_+\in SL(2,\Z)$ acts on the functional 
representation as conjugation by Cherednik's Gaussian. The subset of the sDAHA generators consisting of all 
$\tau_+$-translates of the commuting Macdonald difference operators can be identified, in the $q$-Whittaker limit, 
as the set of solutions of the quantum Q-system. This gives a functional representation of the quantum Q-system as an algebra of 
difference operators. In particular, single $\tau_+$-translates of Macdonald operators are the $q$-Whittaker limits of the Kirillov-Noumi raising 
operators for Macdonald polynomials \cite{kinoum}.

There are generalizations of DAHAs and their functional representation to the other classical affine root systems \cite{Cheredbook,CheredDAHA}.
The corresponding Macdonald operators have generalized Macdonald polynomials as common eigenfunctions.
The latter can also be obtained as specializations of the Koornwinder polynomials, which depend on several extra parameters $(a,b,c,d)$ \cite{Koornwinder,YamaYana,vandiej} (see Table \ref{tableone}). These  are eigenfunctions of the Koornwinder difference operator, the first in a family of $N$ commuting difference operators \cite{vandiej}, where $N$ is the rank of the finite sub-algebra $R$. These are elements in the functional representation \cite{Noumireps} of a DAHA depending on $(a,b,c,d)$.

Quantum Q-systems can be defined for all affine algebras.
It is natural to expect that some selected commuting family of difference operators together with their $\tau_+$-translates will, in the  $q$-Whittaker limit, satisfy these non-commutative evolution equations. For a subset of algebras $\g$, we presented a conjectural family of such difference operators in \cite{DFKconj}. In this paper, we generalize and prove these conjectures for all non-exceptional $\g$ using duality and the Fourier transform.

In type $A_{N-1}^{(1)}$, we present a simpler, alternative proof to that of \cite{DFK15}, that the $\tau_+$-translates of the Macdonald difference operators, in the $q$-Whittaker limit, satisfy the quantum Q-system. An essential ingredient in the simplification of the proof is the use of the Macdonald duality property \cite{macdo}.
The Macdonald difference operators
are elements of $\C_t(x)[\Gamma_i]$, written in terms of the generators of the quantum torus
\begin{equation}\label{qtorx}
\mathbb T_x:= \langle x_i, \Gamma_i\rangle_{i=1,...,N}, \qquad \Gamma_i x_j = q^{\delta_{ij}} x_j \Gamma_i 
\end{equation}
acting on functions of $x=(x_1,...,x_N)$. The monic, symmetric Macdonald polynomials $P_\lambda(x)$ are the common eigenfunctions of the Macdonald difference operators, depending on two sets of parameters, the variables $x$ as well as integer partitions $\lambda=(\lambda_1\geq \lambda_2\geq \cdots \geq \lambda_N)$. 
Encoding the partitions using the variables $s=(s_1,...,s_N)$ with $s_i=\lL_i\, t^{\rho_i}$, $\Lambda_i=q^{\lambda_i}$ and $\rho$ is the half-sum of positive roots ($\rho_i=N-i$),
the duality of Macdonald theory is a symmetry property of the properly normalized eigenfunctions of the Macdonald operators under the interchange of the variables $x$ and $s$ \cite{macdo,vDselfdual,Cherednik95}. 

This symmetry can  be used to relate the Macdonald operator eigenvalue equations to the Pieri rules, which express the effect of multiplication of the Macdonald polynomial by elementary symmetric functions, as the action of $q$-difference operators
in the variables $s$ or $\Lambda$. These difference operators are Pieri operators, and can be written in terms of the quantum torus
\begin{equation}\label{qtorlambda}
\mathbb T_\Lambda:=\langle\Lambda_i
, T_i\rangle_{i=1,...,N},\qquad T_i \Lambda_j = q^{\delta_{ij}} \Lambda_j T_i .
\end{equation} 

The eigenvalue equations for Macdonald operators 
with eigenfunctions $P_\lambda(x)$ can be thought of as a special case of the sDAHA ``Fourier transform",
which is a map from operators $f(x)$ to operators $\bar f(\lL)$, such that
\begin{equation}\label{FT} f(x)\,P_\lambda(x)=\bar f(\lL)\, P_\lambda(x) \end{equation}
for all $\lambda$. In particular, the Fourier transforms of the Macdonald operators are the elementary symmetric functions in the variable $s$, and the transform of the
Pieri operators are the elementary symmetric functions in $x$.

In the $q$-Whittaker limit, switching to the ``Fourier transformed" $\mathbb T_\Lambda$ picture results in drastic simplifications.
Indeed, the transforms of the Macdonald operators in the $q$-Whittaker limit are the limits of the elementary symmetric function in $s$, which are the leading monomials in 
$\Lambda_i$ due to the dependence of $s$ on $t$. Because of this, 
the $\tau_+$-translates of the Fourier-transformed operators are simpler. 
In the functional representation as difference operators in $x$-variables, the action of $\tau_+$ is expressed as conjugation by the Cherednik Gaussian 
$\gamma(x)$ \cite{Cheredbook}, which is independent of $t$. In the Fourier-transformed picture in $\Lambda$-variables, we find the explicit operator 
$g(\Lambda)$ which represents the $\tau_+$-translation,  generates the discrete time evolution of the quantum Q-system, and commutes with the Hamiltonians. 

The operator $g(\Lambda)$ is interpreted as a Dehn twist \cite{SS} from a quantum higher Teichmüller theory interpretation in type $A$, 
and can be expressed as a product of quantum dilogarithms. It was shown to be a specialization of the Baxter $Q$-operator of \cite{SS} for the Q-system quiver of type $A_{N-1}^{(1)}$. 
The Baxter $Q$-operator is the generating function of commuting $q$-difference Toda Hamiltonians \cite{Etingof}, which coincide with the $q$-Whittaker limit of the Macdonald Pieri rules.

The above discussion for type $A_{N-1}^{(1)}$ can be extended to the affine algebras, twisted or untwisted, of the form $\g=X_N^{(r)}$, with $X$ a classical Lie algebra of type $ABCD$ and $r=1,2$, in the notation of \cite{Kac,HKOTT}. This includes the cases  presented as
conjectures in \cite{DFKconj}.  In this paper, we prove the relation between the the $q$-Whittaker limit of the $SL(2,\Z)$-translates of appropriately chosen $q$-difference Koornwinder-Macdonald operators, for each algebra $\g$,  with the solutions of the type $\g$ quantum Q-system systems.

At the same time, we find a uniform description of the integrable structure of the quantum Q-systems as discrete dynamical systems with commuting integrals of motion. These are versions of the $q$-difference Toda Hamiltonians for the various root systems.
The classical Q-systems associated with the affine root systems of types $A^{(1)}_N, D^{(1)}_N, A^{(2)}_{2N-1},  D^{(2)}_{N+1}$
are known \cite{Williams} to be the evolution equations of the refactorization maps 
of types $A,D,C,B$, respectively. 
As such, their integrable structure is given by a classical Toda-type lattice \cite{HKKR,Resh03}.
The integrable structure of the Q-systems for $\g=B_N^{(1)},C_N^{(1)}$ and $A^{(2)}_{2N}$, also given by a Toda-type Hamiltonian, but do not arise from factorization dynamics.

Here, we show that the unifying structure in the case of the quantum Q-systems 
is given, instead, by the Koornwinder-Macdonald operators \cite{vDselfdual,Sahi,macdoroot} via the duality relating the eigenvalue equations to the Pieri rules, which become $q$-difference Toda equations \cite{Etingof,gotsy} in the $q$-Whittaker limit $t\to \infty$.

In this limit, the symmetry in the duality relations is broken, and as a result
the Fourier transform of the Macdonald operators acting on $\Lambda$-space is greatly simplified. 
The rationale of our proofs is to identify the Fourier transforms of the $SL(2,\Z)$-translates of the Macdonald operators in the $q$-Whittaker limit and to transform the relations occurring in $\lL$-space back to $x$-space.

We also use another formulation of duality, based on the notion of the ``universal solution" $P(x;s)$ to the Koornwinder-Macdonald eigenvalue equations (c.f. \cite{Cherednik95,cherednikqwhittaker} and the asymptotically-free basic Harish-Chandra series of \cite{Stokman}). In \cite{ShiraishiNoumi}  such solutions are explicitly computed in type $A_{N-1}^{(1)}$ . In the Koornwinder case and its specializations, this suggests the definition of series solutions for the eigenvalue equation in the formal variables $(x,s)$. These truncate to the polynomial eigenfunctions when $\lambda$ is specialized to $\g$-partitions corresponding to dominant integral weights of $\g$, where $s$ is a discrete variable. Duality relates the universal solution $P(x;s)$ of the eigenvalue equation, to the universal solution $Q(s;x)$ of the Pieri equations, and formalizes the Fourier transform.

\subsection{Summary of the main results}

All the results here refer to all affine algebras $\g$ of type $A_{N-1}^{(1)}, B_N^{(1)}, C_N^{(1)}, D_N^{(1)}, A_{2N-1}^{(2)}, D_{N+1}^{(2)}$ and $A_{2N}^{(2)}$. The subsequent labeling of theorems refers to both Section 2 (dedicated to the case of $A_{N-1}^{(1)}$ for pedagogical reasons)
and Section 4 which addresses the cases $B_N^{(1)}, C_N^{(1)}, D_N^{(1)}, A_{2N-1}^{(2)}, D_{N+1}^{(2)}$ and $A_{2N}^{(2)}$. 

For each such algebra $\g$, we define a preferred set of $N$ commuting $q$-difference operators $\{D_a^{(\g)}(x;q,t),\ a\in [1,N]\}$, chosen via specializations of linear combinations of van Diejen's higher Koornwinder operators, augmented by Rains and Macdonald operators for each algebra. The operators are chosen so that their eigenvalues form a basis for the space spanned by the fundamental characters of the underlying finite subalgebras $R^*$ listed in Table \ref{tableone}, with $R^*=A_{N-1}$ in the case of $\g=A_{N-1}^{(1)}$.

Once an appropriate family of commuting difference operators is constructed, we define the $q$-Whittaker limit $t\to\infty$ of the family, $\{D_{a}^{(\g)}(x;q)\}$, as well as their $\tau_+^n\in SL(2,\Z)$-translates, $\{D_{a;n}^{(\g)}(x;q), n\in\Z\}$. 
These operators satisfy the $\g$-type quantum Q-systems, \eqref{Acommutation}, \eqref{AQsys} or \eqref{qsys1}--\eqref{qsys3}. The main results of this paper may be stated as the following Theorems:

\newtheorem*{Atheorem}{Theorems \ref{Atheorem}, \ref{Qsysconj}}
\begin{Atheorem}
For each $\g$, the family of $q$-difference operators $\{D_{a;n}^{(\g)}(x;q), a\in[1,N], n\in\Z\}$ defined in Section \ref{sec:genqW} satisfy the corresponding quantum Q-system relations.
\end{Atheorem}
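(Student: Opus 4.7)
The plan is to prove the theorem uniformly across all classical affine types by exploiting Koornwinder-Macdonald duality and passing to the Fourier-transformed $\Lambda$-picture, where the structure of the $\tau_+$-action becomes explicit. For each algebra $\g$, the starting data are the commuting $q$-difference operators $\{D_a^{(\g)}(x;q,t)\}$ built in Section \ref{sec:genqW} as specific linear combinations of van Diejen, Rains and Macdonald operators, whose joint eigenfunctions are the $\g$-specialized Koornwinder polynomials $P_\lambda^{(\g)}(x)$ with eigenvalues that realize the fundamental characters of the finite sub-algebra $R^*$ from Table \ref{tableone}.

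First, I would formalize the Fourier transform via duality between the universal solutions $P(x;s)$ of the eigenvalue equations and $Q(s;x)$ of the Pieri equations, obtaining a $q$-difference algebra isomorphism between the relevant subalgebra of the $x$-quantum torus $\mathbb T_x$ and the corresponding subalgebra of the $\Lambda$-quantum torus $\mathbb T_\Lambda$, characterized by $f(x) P_\lambda(x) = \bar f(\Lambda) P_\lambda(x)$ for $\lambda$ running over the dominant cone of $\g$. Under this transform, the operators $D_a^{(\g)}(x;q,t)$ become multiplication by elementary symmetric functions in the variables $s_i = \Lambda_i\, t^{\rho_i}$, while the Pieri operators become multiplication by elementary symmetric functions in $x$.

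Next, taking the $q$-Whittaker limit $t \to \infty$ breaks the duality symmetry: in the $\Lambda$-picture, the Fourier-transformed Macdonald operators reduce to the leading monomials in $\Lambda$ dictated by the Weyl vector $\rho$, while the transformed Pieri operators become the relativistic $q$-difference Toda Hamiltonians of the root system of $\g$. I would then identify the image of the $\tau_+$-action, whose $x$-space incarnation is conjugation by the Cherednik Gaussian $\gamma(x)$, as conjugation by an explicit operator $g(\Lambda)$ in the $\Lambda$-picture---expected to be a quadratic Gaussian or product of quantum dilogarithms tuned to the Cartan data of $\g$. The key observation is then that the $\tau_+^n$-translates satisfy $\overline{D_{a;n}^{(\g)}} = g(\Lambda)^{-n}\, \overline{D_a^{(\g)}}\, g(\Lambda)^n$, so the quadratic quantum Q-system exchange relations reduce to direct computations of quasi-commutation between simple $\Lambda$-monomials and a quadratic Gaussian. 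Once verified in $\Lambda$-space, these relations pull back through the Fourier transform isomorphism to the required quantum Q-system relations among the $D_{a;n}^{(\g)}(x;q)$ in $x$-space, and simultaneously imply commutation of each $D_{a;n}^{(\g)}$ with the $q$-difference Toda Hamiltonians, since $g(\Lambda)$ commutes with those Hamiltonians by construction.

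The main obstacle will arise in two places. First, pinning down the correct commuting family $\{D_a^{(\g)}(x;q,t)\}$ for each $\g$ so that the eigenvalues match the fundamental characters of $R^*$ node-by-node on the Q-system quiver---this is delicate for $B_N^{(1)}$, $A_{2N}^{(2)}$ and the spinor endpoints, where the Koornwinder parameters $(a,b,c,d)$ must be specialized carefully and the Rains operators must be invoked to reach the spin representations not captured by van Diejen's family. Second, computing the explicit form of $g(\Lambda)$ and verifying the required quasi-commutation with the leading-$\Lambda$ monomial image of each $D_a^{(\g)}$; here one must carefully handle the twisted cases $A_{2N-1}^{(2)}, D_{N+1}^{(2)}, A_{2N}^{(2)}$, where $q^{1/2}$-shifts and folded root-system data enter the relations \eqref{qsys1}--\eqref{qsys3}, and where the exchange relations are no longer uniformly those of a cluster mutation.
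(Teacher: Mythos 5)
Your overall architecture matches the paper's: pass to the $\Lambda$-picture via the eigenfunctions, realize the $\tau_+$-translation there by an explicit operator $g(\Lambda)$ built from a quadratic Gaussian in the $T$'s and quantum dilogarithms in the $\Lambda^{-\alpha_a^*}$, and pull the opposite quantum Q-system relations back to $x$-space. However, there is a genuine gap at the pivot of the argument. You write that the relations "pull back through the Fourier transform isomorphism," treating the identification of $g(\Lambda)^n\,\barD_{a;0}\,g(\Lambda)^{-n}$ with the Fourier transform of $\gamma(x)^{-n}D_a^{(\g)}(x)\gamma(x)^{n}$ as automatic once both time-translations are known. It is not: the entire content of Theorems \ref{fourierduathm} and \ref{lemtwo} is to prove $g(\Lambda)\,\Pi_\lambda(x)=\gamma(x)^{t_1}\,\Pi_\lambda(x)$, and this requires (i) showing $g(\Lambda)$ commutes with the first Pieri operator $H_1^{(\g)}(\Lambda)$ --- which is \emph{not} "by construction" but a nontrivial case-by-case verification occupying Appendix \ref{appC}, with bulk/boundary decompositions and special treatment of $C_N^{(1)}, A_{2N}^{(2)}$ where $g$ factors as $g_1g_2$; (ii) a uniqueness lemma for formal series solutions of the first Pieri equation (Lemmas \ref{uniquelem}, \ref{uniqueglem}), so that $\gamma^{-t_1}g\,\Pi_\lambda$ and $\Pi_\lambda$, both solving the same Pieri rule, must be proportional; and (iii) a Campbell--Hausdorff computation identifying the proportionality constant as $1$ from the leading term. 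Without this chain your "pull back" step has no justification, because the $\barD_{a;n}$ are \emph{defined} by the opposite Q-system recursion from initial data, not as Fourier transforms, and the theorem is precisely that the two definitions agree on eigenfunctions.

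A second omission: your plan does not address the odd-time evolution for short labels, where the single conjugation by $g$ advances $n$ by $2$, so the operators $D_{a;2n-1}^{(\g)}$ must be produced by a separate mechanism. For $\g=C_N^{(1)},A_{2N}^{(2)}$ the paper uses the $q$-commutator recursion $D_{a;-1}^{(\g)}=\frac{(-1)^a}{q-1}[D_{1;-a}^{(\g)},D_{a-1;0}^{(\g)}]_{q^a}$ together with an argument pinning down $\Sigma_{\pm1}=0$ via the vacuum $\lambda=0$; for $\g=B_N^{(1)}$ at $a=N$ none of this applies and one must factor the Rains operator $\widehat D_N=R_N^{(1)}R_N^{(0)}$ and pass through the companion theory $B_N^{(1)\,'}$ with its own polynomials $\Pi'_\lambda$, translation operator $g'$, and quantum Q-system. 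You invoke Rains operators only to construct the commuting family at $n=0$; their essential role in defining and controlling $D_{N;2n-1}^{(B_N^{(1)})}$ is the hardest single step of the proof and is missing from your plan.
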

These operators can be viewed as generalized raising or lowering operators, and in particular, we prove
\newtheorem*{raiseconj}{Theorem \ref{raiseconj}}
\begin{raiseconj}
The $q$-difference operators $\{D_{a,i}^{(\g)}(x;q) : i=0,1\}$ satisfy eigenvalue and raising operator properties:
\begin{eqnarray*}
D_{a;0}^{(\g)}(x;q)\, \Pi_\lambda^{(\g)}(x)&=&\lL^{\omega_a^*}\,\Pi_\lambda^{(\g)}(x),\\
D_{a;1}^{(\g)}(x;q)\, \Pi_\lambda^{(\g)}(x)&=& \lL^{\omega_a^*}\, \Pi_{\lambda+\omega_a}^{(\g)}(x),
\end{eqnarray*}
where $\omega_a,\omega_a^*$ are fundamental weights of $R,R^*$ the finite root lattices of $\g,\g^*$.
\end{raiseconj}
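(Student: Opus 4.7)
My plan is to prove Part~1 by a direct $q$-Whittaker limit of the $t$-dependent Koornwinder--Macdonald eigenvalue equations, and Part~2 by passing to the Fourier-dual $\Lambda$-picture where the $\tau_+$-translation has an explicit realization as conjugation by a Dehn-twist operator $g(\Lambda)$.

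For Part~1, I would start from the eigenvalue identities
\[
D_a^{(\g)}(x;q,t)\,P_\lambda^{(\g)}(x)=\chi_{\omega_a^*}(s)\,P_\lambda^{(\g)}(x),
\]
which hold by construction: the operators $D_a^{(\g)}(x;q,t)$ of Section~\ref{sec:genqW} are chosen so that their common eigenvalues on the Koornwinder--Macdonald polynomials are the fundamental characters of $R^*$ at highest weight $\omega_a^*$, evaluated at $s_i=\Lambda_i\,t^{\rho_i}$. Expanding $\chi_{\omega_a^*}(s)=\sum_\mu m_\mu\,s^\mu$ as a sum over weights of the fundamental module, each monomial $s^\mu$ carries a factor $t^{(\mu,\rho)}$ which is maximized at $\mu=\omega_a^*$. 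Absorbing the leading power $t^{(\omega_a^*,\rho)}$ into the normalization that defines $D_{a;0}^{(\g)}(x;q)$, the limit $t\to\infty$ isolates the leading monomial $\Lambda^{\omega_a^*}$; together with $P_\lambda^{(\g)}\to \Pi_\lambda^{(\g)}$ this establishes the first identity.

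For Part~2, the starting point is $D_{a;1}^{(\g)}=\gamma(x)\,D_{a;0}^{(\g)}\,\gamma(x)^{-1}$, where $\gamma(x)$ is the Cherednik Gaussian implementing $\tau_+$ on the $x$-side. Since $\gamma(x)$ is $t$-independent, it survives the $q$-Whittaker limit verbatim. I would then apply the Fourier transform \eqref{FT} to both sides: by Part~1, the transform of $D_{a;0}^{(\g)}$ is multiplication by $\Lambda^{\omega_a^*}$, while conjugation by $\gamma(x)$ on the $x$-side corresponds to conjugation by the Dehn-twist operator $g(\Lambda)$ on the dual $\Lambda$-side. Using the explicit quantum-dilogarithm factorization of $g(\Lambda)$ (which exists uniformly across all non-exceptional types, and is a specialization of the Baxter $Q$-operator in type $A$), the conjugation $g(\Lambda)^{-1}\,\Lambda^{\omega_a^*}\,g(\Lambda)$ produces $\Lambda^{\omega_a^*}$ multiplied by a shift operator sending $\Lambda_j\mapsto q^{(\omega_a)_j}\Lambda_j$. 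Inverse Fourier transforming this back to the $x$-side converts the shift on $\Lambda$ into the shift $\lambda\mapsto \lambda+\omega_a$ on the polynomial index, giving $D_{a;1}^{(\g)}\Pi_\lambda^{(\g)}=\Lambda^{\omega_a^*}\,\Pi_{\lambda+\omega_a}^{(\g)}$.

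The hard part will be the conjugation step: one needs an explicit enough description of $g(\Lambda)$ in each non-exceptional type $\g$ to verify that the induced shift is by exactly $\omega_a\in R$, respecting the duality between $R$ and $R^*$, rather than by some other combination of fundamental weights. A secondary issue is justifying that the $q$-Whittaker limits of the Gaussian conjugation and of the Fourier transform interact correctly, something the sDAHA framework arranges in principle but that must be tracked case by case in the twisted types $A_{2N-1}^{(2)}$, $D_{N+1}^{(2)}$ and $A_{2N}^{(2)}$, where the finite subalgebras $R$ and $R^*$ differ nontrivially.
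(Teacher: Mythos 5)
Your Part~1 is essentially the paper's argument (Theorem \ref{qwithakthm}): the normalized eigenvalue $(\theta_a^{(\g)})^{-1}\hat e_a^{(R^*)}(s)$ degenerates to its leading monomial $\Lambda^{\omega_a^*}$ as $t\to\infty$, and this part is fine. The problems are in Part~2.

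The primary gap is your starting assumption that $D_{a;1}^{(\g)}$ is the $\gamma$-conjugate of $D_{a;0}^{(\g)}$ for every label $a$. This holds only for the \emph{long} labels (Definitions \ref{defoneD1} and \ref{moredefn}, which define $D_{a;t_a n}$ by conjugation with $\gamma^{t_1 n}$). For the short labels --- $a=N$ in $B_N^{(1)}$, $a\in[1,N-1]$ in $C_N^{(1)}$, and all $a$ in $A_{2N}^{(2)}$ --- one has $t_a=2$, so conjugation by $\gamma$ only generates the \emph{even}-time operators $D_{a;2n}$, and the odd-time operators are defined by entirely different means: an explicit lowering operator $D_{1;-1}$ conjugated by $\gamma^{2}$ (Definition \ref{deftwoD1}), $q$-commutators $[D_{1;2n-a},D_{a-1;2n}]_{q^a}$ for higher short labels (Definition \ref{cadefs}), and a factorization into two Rains operators for $a=N$ in $B_N^{(1)}$ (Equation \eqref{defDBNminusone}). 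One can check directly from \eqref{defsevodG} that $D_{1;1}^{(C_N^{(1)})}\neq q^{-1/2}\gamma^{-1}D_{1;0}^{(C_N^{(1)})}\gamma$. Consequently your argument never engages with the cases that occupy most of Section \ref{sec:43}: the commutator identity of Lemma \ref{comuonelem} combined with the Pieri rule and a uniqueness argument (Lemmas \ref{firstlemCAe}--\ref{shortoddthm}) for $C_N^{(1)},A_{2N}^{(2)}$, and the passage through the companion system $B_N^{(1)\,'}$ with its own polynomials $\Pi_\lambda'$, translation operator $g'$, and Pieri operator (Section \ref{bnproofsec}) for the spin node of $B_N^{(1)}$. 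The raising property at $n=1$ for short labels is precisely the hard content of the theorem, and it cannot be reached by a single Gaussian conjugation.

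A second, smaller gap: you assert that ``conjugation by $\gamma(x)$ on the $x$-side corresponds to conjugation by $g(\Lambda)$ on the $\Lambda$-side,'' but this is the statement $g^{(\g)}(\Lambda)\,\Pi_\lambda^{(\g)}(x)=\gamma(x)^{t_1}\Pi_\lambda^{(\g)}(x)$ of Theorems \ref{fourierduathm} and \ref{lemtwo}, which is the central technical result rather than a formal consequence of the sDAHA framework. Its proof requires (i) constructing $g(\Lambda)$ from the opposite quantum Q-system evolution (Theorems \ref{gfunctionA}, \ref{longshort}), (ii) verifying case by case that $g$ commutes with the first Pieri operator $H_1^{(\g)}(\Lambda)$ (Theorem \ref{lemone}, Appendix \ref{appC}), and (iii) a uniqueness argument for series solutions of the Pieri equation (Lemmas \ref{uniquelem}, \ref{uniqueglem}) together with a Campbell--Hausdorff computation to pin down the leading coefficient. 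You flag ``interaction of the limits'' as a secondary issue, but the actual mechanism --- commutation with the Toda Hamiltonian plus uniqueness of the Pieri solution --- is missing from your outline, and without it the claimed correspondence between $\mathrm{Ad}_\gamma$ and $\mathrm{Ad}_g$ is unsupported.
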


The proof uses the $t\to\infty$ limit of the ``Fourier transform" \eqref{FT} (sometimes called $q$-Whittaker transform), which relies on the completeness of eigenfunctions of the $q$-Whittaker difference operators. Let $\Pi^{(\g)}_\lambda(x)$ be the set of common eigenfunctions of the operators $D_{a}^{(\g)}(x)=D_{a;0}^{(\g)}(x)$, i.e. limits of Macdonald polynomials. The Fourier transform relates operators $f(x)$ in $\mathbb T_x$ to operators $f(\lL)$ in $\mathbb T_\lL$ via $f(x)\, \Pi^{(\g)}_\lambda(x)=\bar f(\lL)\, \Pi^{(\g)}_\lambda(x)$.
Then we define the $q$-difference operators acting on the index variables $\lL$, $\barD_{a,n}^{(\g)}(\lL)$, starting from the initial (Macdonald eigenvalue) condition $D_{a;0}^{(\g)}(x)\,\Pi_\lambda^{(\g)}(x)=\bar D_{a;0}^{(\g)}(\lL)\,\Pi_\lambda^{(\g)}(x)$, and further constrained to obey the $\g$ quantum Q-system with the opposite multiplication. 
Finally we set out to prove that the operators $\barD_{a,n}^{(\g)}(\lL)$ are the Fourier transforms of the operators $D_{a;n}^{(\g)}(x)$.

To this end, we first find the explicit time-translation operators $n\mapsto n+1$ (or $n+2$) acting on the operators $\barD_{a,n}^{(\g)}$ for each $\g$: 
\newtheorem*{goperators}{Theorems \ref{gfunctionA}, \ref{longshort}}
\begin{goperators}
There is a unique, up to scalar multiple, function $g=g^{(\g)}(\Lambda)$ in the generators of the quantum torus $\mathbb T_\Lambda$, such that
$$\barD_{a,n+t_a} = q^{-\omega_a^*\cdot \omega_a t_a/2} g \barD_{a,n} g^{-1},$$
where $t_a=1$ for all labels $a$ except for the so-called short labels $N$ in type $B_{N}^{(1)}$, $[1,N-1]$ in type $C_N^{(1)}$ and $[1,N]$ in type $A_{2N}^{(2)}$, in which case $t_a=2$. Here $\omega_a$ ($\omega_a^*$) are fundamental weights of $R$ ($R^*$).
\end{goperators}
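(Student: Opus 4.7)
The plan is to produce $g^{(\g)}(\Lambda)$ explicitly as a ``dual Gaussian'' in $\mathbb T_\Lambda$, guided by the principle that the Fourier transform sends conjugation by Cherednik's Gaussian $\gamma(x)$ in $x$-space to conjugation by $g$ in $\Lambda$-space, and to verify uniqueness by showing that the collection $\{\barD_{a,n}^{(\g)}\}$ has trivial joint centralizer in $\mathbb T_\Lambda$.

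\medskip

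For existence, I would first write down the initial data explicitly: $\barD_{a,0}^{(\g)}(\Lambda)$ is the $t\to\infty$ limit of the Macdonald/Koornwinder eigenvalue (essentially the $a$-th fundamental character of $R^*$ in the variables $\Lambda$), a pure $\Lambda$-polynomial; $\barD_{a,-1}^{(\g)}(\Lambda)$ is determined from the opposite-multiplication Q-system and does involve the shifts $T_i$. Iterating the Q-system forward then yields $\barD_{a,t_a}^{(\g)}$ as a concrete element of $\mathbb T_\Lambda$. Next I would propose $g=g^{(\g)}(\Lambda)$ of Gaussian type, characterized by its conjugation action $g\, T_i\, g^{-1}=\mu_i(\Lambda)\, T_i$, where the monomials $\mu_i(\Lambda)$ are read off from the functional equation of $g$ with respect to the elementary shifts and matched to the $x$-space Cherednik Gaussian via duality. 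For $\g=A_{N-1}^{(1)}$ the resulting $g$ factors as a product of quantum dilogarithms as in \cite{SS}; for types $B,C,D$ and the twisted types, additional factors reflecting the short/long root structure of $R^*$ appear. The claimed conjugation identity
$$\barD_{a,n+t_a}^{(\g)}=q^{-\omega_a^*\cdot\omega_a\, t_a/2}\, g\,\barD_{a,n}^{(\g)}\, g^{-1}$$
then reduces to a direct calculation: every monomial in the $T_i$'s occurring in $\barD_{a,n}^{(\g)}$ is shifted by $g$-conjugation via $\mu_i$, and one compares with the Q-system generated $\barD_{a,n+t_a}^{(\g)}$. The scalar prefactor is produced by collecting $q$-commutator factors from the Weyl relations of $\mathbb T_\Lambda$ and reorganizes uniformly via the inner product on the $R^*$-weight lattice.

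\medskip

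For uniqueness, if $g'$ is another solution then $h:=g^{-1}g'$ commutes with every $\barD_{a,n}^{(\g)}$. The $\barD_{a,0}^{(\g)}$ generate the subalgebra of $\Lambda$-polynomials (their leading weights span a full-rank sublattice of the $R^*$-weight lattice), while the $\barD_{a,t_a}^{(\g)}$ bring in every $T_i$-direction through their shift content. Hence $h$ commutes with all generators of $\mathbb T_\Lambda$ and must be a scalar.

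\medskip

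The main obstacle is the short-label case $t_a=2$: a single $\tau_+$-translate of the $x$-space short-root operator produces half-integer powers of $\Lambda$ under the Fourier transform, which do not lie in $\mathbb T_\Lambda$; two translations are needed to return to the integer lattice, forcing the use of $g^2$-conjugation in place of $g$-conjugation. Carrying out this case requires careful bookkeeping of the fractional shifts and of the precise form of the scalar prefactor $q^{-\omega_a^*\cdot\omega_a\, t_a/2}$, which is ultimately encoded in the weight-lattice inner product of $R^*$.
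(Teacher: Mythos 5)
Your overall architecture (explicit construction of $g$ plus a centralizer argument for uniqueness) is the same as the paper's, and the uniqueness half is essentially right: since $\barD_{a;0}=\lL^{\omega_a^*}$ and $\barD_{a;t_a}$ together generate the quantum torus (the former span the $\lL$-directions, the latter carry all the shifts), any $h=g^{-1}g'$ commuting with all of them is scalar. The existence half, however, has two genuine gaps. First, characterizing $g$ by $g\,T_i\,g^{-1}=\mu_i(\lL)\,T_i$ with $\mu_i$ monomials cannot yield the required operator: an element whose adjoint action is specified only on the $T_i$ (and which, as your notation suggests, is a function of $\lL$ alone) commutes with the pure $\lL$-monomial $\barD_{a;0}=\lL^{\omega_a^*}$ and so can never conjugate it into $\barD_{a;t_a}$, which contains shift operators. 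The indispensable ingredient is a Gaussian in the $T$-variables, $g_T=\prod_a e^{(\log T_a)^2/(2\log q)}$, acting on the \emph{$\lL$-generators} by $g_T\,\lL_a\,g_T^{-1}=q^{1/2}\lL_a T_a$; this is what advances $\barD_{a;0}$ to $\barD_{a;1}$. (The Fourier transform does not send ``function of $x$'' to ``function of $\lL$'': the dual of the scalar Gaussian $\gamma(x)$ is necessarily an operator involving the $T_i$.) The remaining factors of $g$ are quantum dilogarithms $(\lL^{-\al^*};q^u)_\infty^{-1}$, whose conjugation of $T_i$ produces factors $(1-\lL^{-\al^*})^{\pm1}$ --- rational, not monomial --- and these are pinned down by the $k=1$ exchange relation determining $\barD_{a;2}$ (and, for short labels, by the half-step relations linking $\barD_{a;-1},\barD_{a;0},\barD_{a;1},\barD_{a;2}$).

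Second, the verification for all $n$ cannot proceed by ``direct calculation'' on the monomials of $\barD_{a;n}$: these operators are defined only recursively by the opposite quantum Q-system and have no closed monomial expansion for general $n$. The missing step is an induction on $n$: one must check that conjugation by $g$ intertwines the Q-system exchange relation at time $n$ with the one at time $n+t_a$, i.e.\ that $g\,\bar{\mathcal T}_{a;n}\,g^{-1}=q^{\Lambda^{(\g)}_{a,a}t_a}\,\bar{\mathcal T}_{a;n+t_a}$ for the monomials $\bar{\mathcal T}_{a;n}=\bar\Q_{a;n}^2-q^{\Lambda_{a,a}}\bar\Q_{a;n-1}\bar\Q_{a;n+1}$, after which it suffices to verify the claimed conjugation identity for the finitely many initial values of $n$ where $\barD_{a;n}$ is known explicitly. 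Without this compatibility-with-the-recursion step the argument does not close. Finally, your explanation of $t_a=2$ via half-integer powers of $\lL$ is not the operative mechanism; the doubling comes from the splitting of the Q-system evolution for short labels into separate even- and odd-time recursions, which forces the translation operator to advance the discrete time by two units at once.
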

The $g$ operators are listed in \eqref{gtypeA} and \eqref{variousg}. They can be expressed as explicit products of quantum dilogarithms. In the case of $A_{N-1}^{(1)}$ the  operator $g^{-1}$ is a particular evaluation of the Baxter $Q$-operator of \cite{SS}.

Next, we use the duality in Koornwinder-Macdonald theory, 
which relates the eigenvalue equation of the Koornwinder-Macdonald operators for $\g$ to the Pieri rules for the dual $\g^*$, 
where all cases are self-dual except $B_N^{(1)*}=C_N^{(1)}$ and $C_N^{(1)*}=B_N^{(1)}$. 
In the $q$-Whittaker limit, the Pieri operators are interpreted as (relativistic) $q$-difference Toda Hamiltonians, and the
polynomials $\Pi_\lambda^{(\g)}(x)$ as $q$-Whittaker functions. 
The First Pieri operators for $\Pi_\lambda^{(\g)}(x)$ are listed in \eqref{typeAHamiltonian} and \eqref{Hamil}. They express the effect of multiplying $\Pi_\lambda^{(\g)}(x)$ with the first elementary symmetric function in $x$, $e_1(x)$ in type $A_{N-1}^{(1)}$ or $\hat e_1(x)=\sum_{i=1}^N( x_i + x_i^{-1})$ for all other $\g$. The $N$ commuting Pieri operators $H_a^{(\g)}(\Lambda)$, which can be thought of as $q$-difference Toda Hamiltonians, commute with the time-translation operator $g^{(\g)}(\Lambda)$. Thus, we have 
\newtheorem*{integrability}{Corollaries \ref{identiftoda}, \ref{integrability}}
\begin{integrability}
The Pieri operators $\{H_a^{(\g)}(\Lambda), a\in[1,N]\}$ are algebraically independent conserved quantities of the opposite quantum Q-system for type $\g$.
\end{integrability}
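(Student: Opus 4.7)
The plan is to deduce both statements by transporting elementary algebraic facts from the $x$-picture to the $\Lambda$-picture through the Fourier transform \eqref{FT}. By construction, each Pieri operator $H_a^{(\g)}(\Lambda)$ is the image under this transform of multiplication by an elementary symmetric function $e_a(x)$ in type $A_{N-1}^{(1)}$, or by the corresponding Weyl-invariant symmetric polynomial $\hat e_a(x)$ in the remaining $BCD$ cases. Thus the computations reduce to trivial identities among multiplication operators on the $x$-side.

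For conservation, the key step is to establish $[g^{(\g)}(\Lambda), H_a^{(\g)}(\Lambda)] = 0$ for all $a \in [1,N]$. On the $x$-side, multiplication by $e_a(x)$ (or $\hat e_a(x)$) trivially commutes with multiplication by the Cherednik Gaussian $\gamma(x)$, since both are functions of $x$ alone. Because $g^{(\g)}(\Lambda)$ is precisely the $\Lambda$-space avatar of $\gamma(x)$ implementing the $\tau_+$ translation, by Theorems \ref{gfunctionA} and \ref{longshort}, and because the Fourier transform preserves commutation (commuting pairs remain commuting under both homomorphisms and anti-homomorphisms), we obtain $g H_a^{(\g)} g^{-1} = H_a^{(\g)}$. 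Since the time translation $n \mapsto n+t_a$ of the opposite quantum Q-system is implemented by conjugation by $g^{(\g)}$, this invariance is precisely the statement that $H_a^{(\g)}$ is a conserved quantity. The pairwise commutativity $[H_a^{(\g)}, H_b^{(\g)}] = 0$ follows in the same way from the manifest commutation of the multiplication operators $e_a(x)$ and $e_b(x)$.

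For algebraic independence, I would use the classical fact that the elementary symmetric functions $\{e_a(x)\}_{a=1}^N$ (respectively $\{\hat e_a(x)\}$) are algebraically independent generators of the commutative algebra of $S_N$-symmetric (respectively Weyl-invariant) polynomials in $x$. The restriction of the Fourier transform to this commutative subalgebra is injective, because the joint spectrum on the complete basis $\{\Pi_\lambda^{(\g)}(x)\}$ separates distinct polynomials; hence algebraic independence is transported to the images $H_a^{(\g)}(\Lambda)$ inside $\mathbb T_\Lambda$.

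The main obstacle I expect is careful bookkeeping in the $q$-Whittaker limit $t \to \infty$, where the $x \leftrightarrow s$ duality symmetry is broken. One must ensure that the concrete forms of $H_a^{(\g)}$ obtained from the $t\to\infty$ limit of the Koornwinder Pieri rules (as listed in \eqref{typeAHamiltonian} and \eqref{Hamil}) retain the clean commutation with the explicit quantum-dilogarithm products representing $g^{(\g)}$. The universal-solution framework relating $P(x;s)$ and $Q(s;x)$ is the natural tool here, since it converts the abstract $x$-side identity $[e_a(x),\gamma(x)]=0$ into a concrete verifiable identity in $\mathbb T_\Lambda$, bypassing any delicate limit interchange.
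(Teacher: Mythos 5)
Your overall strategy --- deduce $[g^{(\g)},H_a^{(\g)}]=0$ from the $x$-side triviality $[\gamma(x),\hat e_a(x)]=0$ via the Fourier transform, read off conservation from the fact that conjugation by $g^{(\g)}$ implements the discrete time translation, and pull algebraic independence back to that of the $\hat e_a(x)$ --- is exactly the paper's proof of Corollaries \ref{identiftoda} and \ref{integrability}. There is, however, one misattribution that hides the only non-trivial input. The claim that $g^{(\g)}(\Lambda)$ is ``the $\Lambda$-space avatar of $\gamma(x)$'', i.e.\ $g^{(\g)}(\Lambda)\,\Pi_\lambda^{(\g)}(x)=\gamma(x)^{t_1}\,\Pi_\lambda^{(\g)}(x)$, is \emph{not} a consequence of Theorems \ref{gfunctionA} and \ref{longshort}: those only construct $g$ abstractly as the element of (a completion of) $\mathbb T_\Lambda$ whose adjoint action reproduces the opposite quantum Q-system evolution of the $\barD_{a;n}(\Lambda)$, and they say nothing about its action on the eigenfunctions. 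The identification with $\gamma(x)^{t_1}$ is Theorems \ref{fourierduathm} and \ref{lemtwo}, and its proof requires first establishing $[g,H_1^{(\g)}]=0$ by a direct computation in $\mathbb T_\Lambda$ (Theorems \ref{commutationtypeA}, \ref{lemone} and Appendix \ref{appC}), combined with the uniqueness of series solutions of the first Pieri equation (Lemmas \ref{uniquelem}, \ref{uniqueglem}). So your ``trivial commutation on the $x$-side'' mechanism genuinely only bootstraps the case $a=1$ to all $a\in[1,N]$; it cannot produce the $a=1$ case for free, and using it there would be circular. Citing Theorems \ref{fourierduathm}/\ref{lemtwo} instead closes the gap and makes your argument identical to the paper's, which acts with $g\gamma^{-t_1}$ on the Pieri equation $H_a^{(\g)}(\Lambda)\Pi_\lambda^{(\g)}=\hat e_a(x)\Pi_\lambda^{(\g)}$ and concludes $gH_a^{(\g)}g^{-1}=H_a^{(\g)}$.

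Two smaller remarks. First, for ``$H_a^{(\g)}$ is a conserved quantity of the opposite quantum Q-system'' to be meaningful one also needs $H_a^{(\g)}$ to be expressible in the Q-system variables; the paper notes that $H_a^{(\g)}$ is a Laurent polynomial in the quantum torus generators, hence in the initial data $\{\barD_{b;0},\barD_{b;1}\}$, and invariance under conjugation by $g$ then says it is given by the same expression in $\{\barD_{b;n},\barD_{b;n+t_b}\}$ for every $n$. Second, your closing worry about delicate bookkeeping in the $t\to\infty$ limit is moot at this stage: every object appearing in the corollary already lives in the $q$-Whittaker limit, so no interchange of limits with commutators occurs in its proof.
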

The origin of the operators $g^{(\g)}(\Lambda)$ is clarified by the following theorem:
\newtheorem*{lemtwo}{Theorems \ref{fourierduathm}, \ref{lemtwo}}
\begin{lemtwo}
The operators $g^{(\g)}(\Lambda)$ are the Fourier transforms of Cherednik's Gaussian operator $\gamma(x)^{t_1}$, with $\gamma(x)$ as in Equation \eqref{gamma} the functional representation of the $SL(2,\Z)$ generator $\tau_+$:
$$
\gamma(x)^{t_1} \Pi_\lambda(x) = g^{(\g)}(\Lambda) \Pi_\lambda(x), \qquad t_1=\left\{ \begin{matrix} 2 &  {\rm if}\  \g=A_{2N}^{(2)}, C_N^{(1)},\\ 1 &\ {\rm otherwise .}
\end{matrix}\right.
$$
\end{lemtwo}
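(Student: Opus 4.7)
The plan is to apply the uniqueness clause of Theorems~\ref{gfunctionA}, \ref{longshort}: since $g^{(\g)}(\Lambda)$ is characterized, up to a scalar, by its conjugation action on the operators $\barD_{a,n}^{(\g)}(\Lambda)$, it suffices to show that the Fourier transform $G(\Lambda):=\overline{\gamma^{t_1}}(\Lambda)$ of the multiplication operator $\gamma(x)^{t_1}$ satisfies the same conjugation identity, then to fix the residual scalar. A preliminary observation is that the Fourier correspondence of~\eqref{FT} is an anti-homomorphism, $\overline{AB}=\bar B\,\bar A$: given any two operators $A(x),B(x)$, one checks $AB\,\Pi_\lambda^{(\g)}=A\,(\bar B\,\Pi_\lambda^{(\g)})=\bar B\,(A\,\Pi_\lambda^{(\g)})=\bar B\,\bar A\,\Pi_\lambda^{(\g)}$, where the middle equality uses that the $\Lambda$-operator $\bar B$ (which shifts and rescales the parameter $\lambda$) commutes with the $x$-operator $A$ when acting on the bi-variate function $\Pi_\lambda^{(\g)}(x)$.

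On the $x$-side, Cherednik's realization of $\tau_+$ as conjugation by $\gamma(x)$, combined with the construction of $D_{a;n}^{(\g)}(x)$ as $\tau_+$-translates of the $q$-Whittaker operator $D_a^{(\g)}(x)$, produces the identity
\[
D_{a;n+t_a}^{(\g)}(x)\;=\;q^{-\omega_a^*\cdot\omega_a\,t_a/2}\,\gamma(x)^{t_1}\,D_{a;n}^{(\g)}(x)\,\gamma(x)^{-t_1},
\]
which expresses one elementary Q-system time step as a $\gamma^{t_1}$-conjugation. Applying the Fourier transform and the anti-homomorphism property then yields
\[
\barD_{a,n+t_a}^{(\g)}(\Lambda)\;=\;q^{-\omega_a^*\cdot\omega_a\,t_a/2}\,G(\Lambda)\,\barD_{a,n}^{(\g)}(\Lambda)\,G(\Lambda)^{-1},
\]
which is precisely the defining property of $g^{(\g)}(\Lambda)$. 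Uniqueness then forces $G(\Lambda)=c\,g^{(\g)}(\Lambda)$ for some scalar $c$, and $c$ is pinned down by evaluating both sides on the trivial eigenfunction $\Pi_0^{(\g)}(x)=1$, exploiting the explicit product-of-quantum-dilogarithm presentation of $g^{(\g)}$ provided by Theorems~\ref{gfunctionA}, \ref{longshort}.

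The main obstacle is establishing the first displayed identity on the $x$-side. For the algebras with uniform $t_a$ -- namely $A_{N-1}^{(1)}$, $D_N^{(1)}$, $A_{2N-1}^{(2)}$, $D_{N+1}^{(2)}$, and $A_{2N}^{(2)}$ -- one has $t_a=t_1$ for every label, and the identity reduces to the standard statement that $t_1$-fold $\tau_+$-conjugation realizes one Q-system time step, with the scalar $q^{-\omega_a^*\cdot\omega_a\,t_a/2}$ arising from the $q$-commutations in $\mathbb T_x$ between $\gamma(x)$ and the $\Gamma_i$-monomials appearing in $D_{a;n}^{(\g)}$. The delicate cases are $B_N^{(1)}$ (only the short label $a=N$ has $t_a=2\neq t_1=1$) and $C_N^{(1)}$ (only the long label $a=N$ has $t_a=1\neq t_1=2$); in these mixed situations one must use the explicit form of the $D_{a;n}^{(\g)}$ built in Section~\ref{sec:genqW} together with the quantum Q-system relations \eqref{qsys1}--\eqref{qsys3} to reconcile the uniform $\gamma^{t_1}$-conjugation on the $x$-side with the label-dependent index shift $n\mapsto n+t_a$ on the $\Lambda$-side, with careful bookkeeping of the intermediate scalar factors.
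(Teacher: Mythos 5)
Your argument is circular at its core. You define $G(\Lambda)$ as ``the Fourier transform of $\gamma(x)^{t_1}$'' and then transport the $x$-side conjugation identity to the $\Lambda$-side via the anti-homomorphism property, concluding that $G$ conjugates $\barD_{a;n}$ to $\barD_{a;n+t_a}$. But this step requires knowing that $\overline{D_{a;n}^{(\g)}(x)}=\barD_{a;n}^{(\g)}(\Lambda)$ for $n\neq 0$, which is precisely Theorem \ref{foutranthm} --- the main result that the paper \emph{deduces from} the statement you are trying to prove. At the point where $\gamma^{t_1}\Pi_\lambda=g^{(\g)}\Pi_\lambda$ must be established, the only available Fourier datum is the eigenvalue equation $\barD_{a;0}=\Lambda^{\omega_a^*}$; the operators $\barD_{a;n}$ for $n\neq 0$ are \emph{defined} abstractly through the opposite quantum Q-system, not as Fourier transforms of anything. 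Moreover, the existence of a Fourier transform of $\gamma(x)^{t_1}$ as an element of (a completion of) $\mathbb T_\Lambda$ is itself the nontrivial content of the theorem: $\gamma(x)^{t_1}\Pi_\lambda(x)$ is not a polynomial in $x$, so it cannot be expanded in the basis $\{\Pi_\mu\}$, and there is no a priori reason an operator in $\Lambda,T$ reproducing it exists. Two further problems: the conjugation direction is reversed (Definition \ref{moredefn} advances time by $\gamma^{-t_1}(\cdot)\gamma^{t_1}$, so under the anti-homomorphism you would get $G^{-1}\barD_{a;n}G$ rather than $G\barD_{a;n}G^{-1}$); and even if the uniqueness clause of Theorems \ref{gfunctionA}, \ref{longshort} applied, the residual ambiguity when acting on the bivariate functions $\Pi_\lambda(x)$ is an arbitrary function of $x$ (which commutes with all of $\mathbb T_\Lambda$), not a scalar $c$, so evaluating at $\lambda=0$ does not pin it down --- and indeed the correct ``proportionality factor'' turns out to be $\gamma(x)^{t_1}$ itself.

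The paper's actual proof runs through entirely different machinery, which your proposal omits. The key inputs are: (i) the commutation of $g^{(\g)}(\Lambda)$ with the first Pieri operator $H_1^{(\g)}(\Lambda)$ (Theorem \ref{lemone}, a lengthy case-by-case computation in Appendix \ref{appC}); (ii) the uniqueness, up to a $\Lambda$-independent prefactor, of series solutions $x^\lambda\sum_\beta \tau_\beta(x)\Lambda^{-\beta}$ of the first Pieri equation (Lemma \ref{uniqueglem}). One then observes that $g^{(\g)}\Pi_\lambda$ satisfies the same Pieri equation as $\Pi_\lambda$, hence equals $\bigl(\tau_0(x)/\hat c_0^{(\g)}(x)\bigr)^{-1}$ times it, and computes the leading coefficient $\tau_0(x)$ via the Campbell--Hausdorff identity $(g_T)^{t_1}x^\lambda=\gamma(x)^{t_1}x^\lambda T^{t_1\xi}(g_T)^{t_1}$, yielding the factor $\gamma(x)^{t_1}$. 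Without the Pieri commutation and the uniqueness lemma (or an equivalent substitute), there is no route from the defining properties of $g^{(\g)}$ to its action on the eigenfunctions.
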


Using these properties, and a uniqueness argument involving series solutions of the first Pieri rule, we finally prove
\newtheorem*{mainthm}{Theorems \ref{mainthm}, \ref{foutranthm}}
\begin{mainthm}
For all $\g$, 
\begin{equation*}
D_{a;n}^{(\g)}(x)\,\Pi_\lambda^{(\g)}(x)=\barD_{a;n}^{(\g)}(\lL)\,\Pi_\lambda^{(\g)}(x),\qquad (n\in\Z)
\end{equation*}
valid for any $\g$-partition $\lambda$ and any root label $a\in [1,N]$.
\end{mainthm}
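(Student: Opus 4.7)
The plan is to proceed by induction on $|n|$, with the base case $n=0$ being exactly the $q$-Whittaker limit of the Macdonald eigenvalue equation: by construction of $D_{a;0}^{(\g)}(x)$ in Section \ref{sec:genqW} and the definition $\barD_{a;0}^{(\g)}(\Lambda):=\Lambda^{\omega_a^*}$, one has
\[
D_{a;0}^{(\g)}(x)\,\Pi_\lambda^{(\g)}(x)=\Lambda^{\omega_a^*}\,\Pi_\lambda^{(\g)}(x)=\barD_{a;0}^{(\g)}(\Lambda)\,\Pi_\lambda^{(\g)}(x).
\]
The inductive step combines three ingredients: (i) the $x$-side translation formula $D_{a;n+t_a}^{(\g)}(x)=q^{-\omega_a^*\cdot\omega_a\,t_a/2}\,\gamma(x)^{t_1}\,D_{a;n}^{(\g)}(x)\,\gamma(x)^{-t_1}$, which is the definition of the $\tau_+$-translate in the functional representation of the sDAHA; (ii) the $\Lambda$-side translation formula $\barD_{a;n+t_a}^{(\g)}(\Lambda)=q^{-\omega_a^*\cdot\omega_a\,t_a/2}\,g^{(\g)}(\Lambda)\,\barD_{a;n}^{(\g)}(\Lambda)\,g^{(\g)}(\Lambda)^{-1}$ from Theorems \ref{gfunctionA} and \ref{longshort}, which encodes the opposite quantum Q-system satisfied by the $\barD$'s; and (iii) the Fourier-transform identification $\gamma(x)^{t_1}\Pi_\lambda=g^{(\g)}(\Lambda)\Pi_\lambda$ furnished by Theorem \ref{lemtwo}.

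Granting these, the inductive hypothesis $D_{a;n}(x)\Pi_\lambda=\barD_{a;n}(\Lambda)\Pi_\lambda$ together with substitution of $\gamma(x)^{\pm t_1}$ by $g^{(\g)}(\Lambda)^{\pm 1}$ on the basis $\{\Pi_\lambda\}$ should yield
\[
D_{a;n+t_a}(x)\,\Pi_\lambda=q^{-\omega_a^*\cdot\omega_a\,t_a/2}\,g^{(\g)}(\Lambda)\,\barD_{a;n}(\Lambda)\,g^{(\g)}(\Lambda)^{-1}\,\Pi_\lambda=\barD_{a;n+t_a}(\Lambda)\,\Pi_\lambda.
\]
When $t_a=2$ (short labels in $B_N^{(1)}$, $C_N^{(1)}$, $A_{2N}^{(2)}$), the intermediate odd-$n$ operator is pinned down by the quantum Q-system exchange relations, which hold on both sides by construction.

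The delicate point, and the main obstacle, is that this naïve substitution invokes the Fourier identification not just on $\Pi_\lambda$ but on the intermediate vector $D_{a;n}(x)\gamma(x)^{-t_1}\Pi_\lambda$; making this rigorous requires checking preservation of $V:=\mathrm{span}\{\Pi_\lambda\}$ and compatibility of the Fourier transform with operator composition. To sidestep these technicalities, I would recast the inductive step using the universal solution $P^{(\g)}(x;s)$ of the Koornwinder-Macdonald eigenvalue equations: both $D_{a;n}(x)\,P^{(\g)}(x;s)$ and $\barD_{a;n}(\Lambda)\,P^{(\g)}(x;s)$ are formal power series in $(x,s)$ that are annihilated by the first Pieri difference equation, thanks to the commutation with the Hamiltonians $H_a^{(\g)}(\Lambda)$ from Corollaries \ref{identiftoda}--\ref{integrability}. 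Since the universal series solution of the first Pieri equation is uniquely determined by its leading monomial, and the leading terms of the two sides agree by virtue of the base case together with the explicit form of $g^{(\g)}(\Lambda)$ in \eqref{gtypeA}, \eqref{variousg}, the two series coincide; specializing $s$ to a $\g$-partition $\lambda$ then delivers the theorem.
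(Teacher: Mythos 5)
Your base case and the even-time/long-label part of the induction are exactly the paper's argument: multiply the $n=0$ identity by $\gamma(x)^{-t_1 n}(g^{(\g)}(\Lambda))^n$, commute the $x$- and $\Lambda$-operators past each other, and invoke Theorem \ref{lemtwo} to trade $\gamma^{t_1}$ for $g^{(\g)}$ on $\Pi_\lambda$. That part is sound. The gap is in how you dispose of the odd-time short-label cases. You write that ``the intermediate odd-$n$ operator is pinned down by the quantum Q-system exchange relations, which hold on both sides by construction.'' This is circular on the $x$-side: the operators $D_{a;2n-1}^{(\g)}(x)$ for short labels are \emph{not} defined through the Q-system — $D_{1;-1}^{(\g)}(x)$ is given by the explicit formula \eqref{minusone}, the higher $D_{a;-1}^{(\g)}$ by the $q$-commutator \eqref{oddshortn}, and $D_{N;2n-1}^{(B_N^{(1)})}$ by the factorized Rains operators \eqref{defDBNminusone} — and the assertion that they satisfy the Q-system relations is precisely Theorem \ref{Qsysconj}, which is \emph{deduced from} the present theorem, not available as an input to it.

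Your fallback via universal solutions does not repair this, because the objects you compare do not satisfy the equation you invoke: neither $D_{a;n}(x)\,P^{(\g)}(x;s)$ nor $\barD_{a;n}(\Lambda)\,P^{(\g)}(x;s)$ is annihilated by the first Pieri operator, since $[H_1^{(\g)}(\Lambda),\barD_{a;n}^{(\g)}(\Lambda)]\neq 0$ and likewise $[D_{a;n}^{(\g)}(x),\hat e_1(x)]\neq 0$ (indeed Lemma \ref{comuonelem} computes the latter to be $(q-1)D_{a;n+1}+(q^{-1}-1)D_{a;n-1}$). What the paper shows satisfies the Pieri equation is the \emph{difference} $\Sigma_n=(D_{1;n}(x)-\barD_{1;n}(\Lambda))\Pi_\lambda$, and only for $n=\pm1$, and proving this (Lemma \ref{seclemCAe}) requires both the commutator identity and the already-established even cases $n=0,2$; one then still needs the relation $\Sigma_1=q^{-1}\Sigma_{-1}$ (Lemma \ref{firstlemCAe}) and an evaluation at $\lambda=\emptyset$ to kill the residual proportionality factor, followed by the \cite{DFK16} recursion to propagate from $a=1$ to all short labels. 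Finally, the case $\g=B_N^{(1)}$, $a=N$, $n$ odd is absent from your proposal altogether; in the paper it is the most involved piece, requiring the factorization $\widehat D_N=R_N^{(1)}R_N^{(0)}$ and the companion theory $B_N^{(1)\,'}$ with its own Pieri operator, translation operator $g'$, and quantum Q-system (Section \ref{bnproofsec}). Without these three ingredients the induction cannot close.
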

In other words, the operators $\barD_{a;n}^{(\g)}(\lL)$ are the Fourier transforms of the Whittaker limits of Macdonald operators
$D_{a;n}^{(\g)}(x)$.
This completes the proof of the main claim of this paper (Theorems \ref{Atheorem}, \ref{Qsysconj} and \ref{raiseconj}), 
and the conjectures of \cite{DFKconj}, that the $q$-difference operators $D_{a,n}^{(\g)}(x;q)$ satisfy the quantum Q-system of type $\g$.

We consider duality in terms of the universal solutions of the eigenvalue and Pieri equations, $P(x;s)$ and $Q(s;x)$. These are continuations of the Koornwinder or Macdonald polynomials to arbitrary values of the parameters $\lambda$, with $s=q^\lambda t^\rho$ and $x=q^\mu t^{\rho^*}$ with $\rho$ a function of $(a,b,c,d)$ as in Section \ref{secduabcd}. The series have the form $P(x;s)=q^{\lambda\cdot\mu}\,\sum_{\beta\in Q_+} c_\beta(s) \, x^{-\beta}$ and
$Q(s;x)=q^{\lambda\cdot\mu}\,\sum_{\beta\in Q_+^*} \bar c_\beta(x) \, s^{-\beta}$, with normalizations $c_0(s)=\bar c_0(x)=1$, and $Q_+, Q_+^*$ are the positive root lattices of the root systems $R, R^*$. Under specialization of the parameters $(a,b,c,d)$, universal solutions of the Koornwinder equation specialize $\g$-Macdonald solutions, and whenenever $\lambda$ corresponds to any dominant integral $R$-weight, the series truncates to a polynomial. They also specialize to Chalykh's Baker-Akhiezer quasi-polynomials \cite{Chalykh} when parameters $a,b,c,d,t$ (resp. $t$) are specialized to negative (half-)integer (resp. integer) powers of $q$, while $\lambda$ remains arbitrary.


We establish the following relation between the series $P(x;s)$ and $Q(x;s)$ , and a simple subsequent reformulation of duality.

\newtheorem*{UKconj}{Theorem \ref{UKconj}, Corollary \ref{UGconj}, Theorem \ref{DUAthm}}
\begin{UKconj}
 The universal Koornwinder functions $Q^{(a,b,c,d)}(s;x)$ and $P^{(a,b,c,d)}(x;s)$  and their $\g$-specializations
are related via
$$
Q^{(a,b,c,d)}(s;x)=\frac{P^{(a,b,c,d)}(x;s)}{\Delta^{(a,b,c,d)}(x)},\ \ Q^{(\g)}(s;x)=\frac{P^{(\g)}(x;s)}{\Delta^{(\g)}(x)}
$$
with $\Delta^{(a,b,c,d)}$ as in \eqref{deltabcd}, and $\Delta^{(\g)}$ as in \eqref{deltaG}. Moreover the duality of Koornwinder and $\g$-Macdonald polynomials extends to universal solutions as follows
$$Q^{(a^*,b^*,c^*,d^*)}(x;s)=Q^{(a,b,c,d)}(s;x), \ \ Q^{(\g^*)}(x;s)=Q^{(\g)}(s;x) .
$$
\end{UKconj}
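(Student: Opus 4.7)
My plan is to establish the relation $Q(s;x)=P(x;s)/\Delta(x)$ first by a uniqueness-of-series-solution argument, and then derive the duality $Q^{(a^*,b^*,c^*,d^*)}(x;s)=Q^{(a,b,c,d)}(s;x)$ by combining that identity with the classical Koornwinder duality of polynomial eigenfunctions.

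For the first identity, I would observe that the universal function $Q(s;x)$ is characterized as the unique formal series of the form $q^{\lambda\cdot\mu}\sum_{\beta\in Q_+^*}\bar c_\beta(x)\,s^{-\beta}$ with $\bar c_0(x)=1$ satisfying the universal Pieri equations in $s$: these recursions determine the $\bar c_\beta$ inductively on the height of $\beta$, with denominators that are nonvanishing for generic $\lambda$. It therefore suffices to show that $P(x;s)/\Delta(x)$ satisfies the same recursion with the same leading term. The leading-term check is immediate: expanding $\Delta(x)^{-1}$ in $x^{-\beta}$ one finds constant term $1$, so $P/\Delta$ has leading term $q^{\lambda\cdot\mu}$, matching $Q$. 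The substantive content is that $P(x;s)/\Delta(x)$ solves the Pieri recursion in $s$. On the polynomial locus of $\lambda$, classical Koornwinder/$\g$-Macdonald duality asserts the symmetry of the normalized eigenfunctions under the simultaneous exchange $(x,s;a,b,c,d)\leftrightarrow(s,x;a^*,b^*,c^*,d^*)$, and this symmetry intertwines the eigenvalue equations in $x$ with the Pieri equations in $s$. The factor $\Delta^{(a,b,c,d)}(x)$ from \eqref{deltabcd} is precisely the universal-series analogue of the polynomial normalizing factor (essentially $P_\lambda(t^\rho)$) entering the symmetric form of this duality, so conjugating the universal eigenvalue equations for $P(x;s)$ by $\Delta(x)^{-1}$ produces equations in $s$ that coincide with the Pieri recursion for $Q$. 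Uniqueness then gives $Q=P/\Delta$, both in the universal $(a,b,c,d)$ setting and, after parameter specialization, in each $\g$-case.

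For the duality assertion, the polynomial Koornwinder duality, rewritten in the normalization just used, takes the manifestly symmetric form $P^{(a,b,c,d)}(x;s)/\Delta^{(a,b,c,d)}(x)=P^{(a^*,b^*,c^*,d^*)}(s;x)/\Delta^{(a^*,b^*,c^*,d^*)}(s)$ on the polynomial locus. Each coefficient of this identity at a fixed $\beta$-stratum is a rational expression in $q^\lambda,q^\mu$ and the $(a,b,c,d)$ parameters, determined by its infinitely many specializations to dominant integral weights, so the identity lifts to the universal-series level. Substituting this into $Q=P/\Delta$ yields $Q^{(a^*,b^*,c^*,d^*)}(x;s)=Q^{(a,b,c,d)}(s;x)$ and, by specialization, the corresponding $\g$-statement.

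The hard part will be the clean identification of $\Delta^{(a,b,c,d)}(x)$ as the correct universal-series lift of the polynomial evaluation factor: one must verify that conjugation of the universal Koornwinder eigenvalue equations by $\Delta(x)^{-1}$ produces \emph{exactly} the universal Pieri recursion in $s$, with matching eigenvalues, rather than a twisted variant carrying extra terms. A secondary technical point is the density/polynomiality argument promoting polynomial-locus equalities to formal-series identities, and the case-by-case verification that the $\g$-specific specialization of $(a,b,c,d)$ is compatible with the series manipulations; this last step is parallel across the seven affine types but must be checked in each.
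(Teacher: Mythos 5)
Your proposal shares the paper's overall skeleton --- uniqueness of the formal Pieri series solution up to an $s$-independent factor, followed by identification of that factor with $\Delta^{(a,b,c,d)}(x)$ --- but the identification step, which you correctly flag as ``the hard part,'' is exactly where the substantive content of the theorem lies, and your proposed route to it does not go through as described. First, the statement that ``conjugating the universal eigenvalue equations for $P(x;s)$ by $\Delta(x)^{-1}$ produces equations in $s$'' is not correct as a mechanism: the eigenvalue equations are $q$-difference equations in $x$, and conjugating them by a function of $x$ yields other difference operators in $x$, not the Pieri recursion in $s$. The Pieri operator is obtained by conjugating the \emph{dual} eigenvalue operator $\mathcal D_1^{(a^*,b^*,c^*,d^*)}(s)$ by $\Delta^{(a^*,b^*,c^*,d^*)}(s)$ --- a conjugation in the $s$ variable with dual parameters --- which is a different object. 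Second, the density argument from the polynomial locus is problematic for the $Q$ side: $Q(s;x)$ is a formal series in $s^{-\beta}$ with coefficients that are themselves series in $x$, and it does not truncate or obviously evaluate at $s=q^\lambda t^\rho$, so one cannot compare the two sides of $P=\Delta\,Q$ pointwise on partitions and then ``lift.''

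The paper's actual argument fills the gap as follows. (i) It invokes the known bispectrality of the universal solution (Cherednik, van Diejen, Sahi, Chalykh) to assert that $P(x;s)$ itself satisfies the Pieri equations in $s$; (ii) it re-expands $P(x;s)=q^{\lambda\cdot\mu}\sum_{\delta}\hat c_\delta(x)s^{-\delta}$ and applies the uniqueness of Pieri series solutions to get $P(x;s)=\hat c_0(x)\,Q(s;x)$; (iii) it pins down $\hat c_0(x)$ by taking the regime $|s_1|\gg\cdots\gg|s_N|\gg 1$ in \emph{all} $N$ higher eigenvalue equations, which collapses each to a single first-order relation $\bigl(\Gamma_1\cdots\Gamma_m-\theta_m^2/d_{[1,m],(+)^m,m}(x)\bigr)\hat c_0(x)=0$; one then checks by direct computation that the infinite product $\Delta^{(a,b,c,d)}(x)$ satisfies the identical system, so the ratio is $\Gamma_i$-invariant, hence constant, and equals $1$ by the normalization $c_{0,0}=1$. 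For the duality statement, the paper likewise stays at the universal level: conjugating the Pieri equation for $Q(s;x)$ by $\Delta^{(a^*,b^*,c^*,d^*)}(s)$ exhibits $\Delta^*(s)Q(s;x)$ as a solution of the dual eigenvalue equation; swapping $x\leftrightarrow s$ and invoking uniqueness identifies it with $P^{(a^*,b^*,c^*,d^*)}$, and the first identity applied at dual parameters finishes the proof. You would need to supply an argument of comparable strength for step (iii) --- the explicit difference-equation characterization of the leading coefficient --- for your proof to be complete.
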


\subsection{Outline of the paper}

This paper is organized as follows. 

We first revisit the $A$ type in Section \ref{Atype} as an illustration of the concepts used for other types in the remainder of the paper. 
After recalling the definition of Macdonald operators and polynomials, and showing how eigenvalue equations relate to Pieri rules via duality, we discuss the $q$-Whittaker limit $t\to\infty$ and the quantum Q-system (Sects. \ref{sec:21}-\ref{sec:24}).
Our main character is the time translation operator, whose adjoint action allows to advance Macdonald operators $D_a(x)$ in discrete time, thus producing operators $D_{a;n}(x)$. In $x$ space it takes the form of the (scalar) Gaussian operator $\gamma(x)$ (Sect. \ref{sec:25}). To prove that the Macdonald operators obey the quantum Q-system we switch to $\lL$ space: in Sect. \ref{sec:26}, using the eigenvalue $\bar D_{a;0}(\lL)$ of the Macdonald operator $D_a(x)$ as initial data, we {\it define} the candidate Fourier duals 
$\bar D_{a;n}(\lL)$ of the translated Macdonald operators as solutions of the opposite quantum Q-system for this initial data. In Sect. \ref{sec:27} we derive the associated explicit time translation operator $g(\lL)$.
Finally the latter is shown to be the Fourier transform of the Gaussian $\gamma(x)$ 
in Sect. \ref{sec:29} leading to the proof of the main Theorem \ref{Atheorem} for type $A$ by identifying $D_{a;n}(x)$ with the Fourier transform of $\bar D_{a;n}(\lL)$.
A key ingredient is the commutation of $g(\lL)$ with the first Pieri operator $H_1(\lL)$, 
shown in Sect. \ref{sec:28}. This allows to identify the Pieri operators as the commuting conserved 
quantities of the quantum Q-system, also known as the relativistic Toda Hamiltonians of type $A$ 
(Sect. \ref{Identification}). Finally Section \ref{universAsec} is devoted to a reformulation of the duality
properties, Fourier transform, and proofs in terms of the universal solution of the Macdonald eigenvalue equation considered by \cite{ShiraishiNoumi}. 

The remainder of the paper focusses on the other types. In Section \ref{macdoG}, we define suitable families of commuting Macdonald operators, borrowing from various existing constructions. The first approach uses the known specialization scheme of the Koornwinder operator to Macdonald operators, completed by van Diejen and Rains into a set of commuting difference operators (Sects. \ref{sec:macdopol} and \ref{sec:macdop}, and details in Appendix \ref{appA}). The duality of Koornwinder polynomials descends to a duality between Macdonald operators and Pieri rules for dual types. Sect. \ref{sec:qwhittak} describes the $q$-Whittaker limit $t\to\infty$, and provides detailed definitions of the translated limiting Macdonald operators $D_{a;n}^{(\g)}$ for all types. Some new subtleties arise for non-$A$ types, in particular the distinction between 
long and short labels $a$ for which the time translation has to be defined separately (type $A$ only has long labels).
Section \ref{sec:proof} defines the quantum Q-systems for all types and presents the main results of this paper,
Theorems \ref{Qsysconj} and \ref{raiseconj} (Sect. \ref{sec:41}), and their proof (Sect. \ref{sec:42} complemented by Appendices \ref{appB} and \ref{appC}), along the same lines as for the case of type $A$. We define candidate Fourier transforms $\bar D_{a;n}^{(\g)}(\lL)$ of the translated Macdonald operators by use of the opposite $\g$-quantum Q-system, and construct the time translation operators  $g^{(\g)}(\lL)$ explicitly. The latter commute with the Toda Hamiltonians, identified as the conservation laws of the quantum Q-systems, thus allowing us to identify $g^{(\g)}(\lL)$ with the Fourier transform of the Gaussian operator $\gamma^{(\g)}(x)$. 
Finally in Sect. \ref{sec:43} we prove that the $\bar D_{a;n}^{(\g)}(\lL)$ are the Fourier transforms of the 
translated Macdonald operators $D_{a;n}^{(\g)}(x)$, from which the main results follow. 

Section \ref{sec:universal} is an extension of the constructions of Section \ref{universAsec}. 
In Sect. \ref{sec:51}
we introduce universal solutions $P(x,s)$ to the Koornwinder-Macdonald eigenvalue equations in the form of power series of  the variables $x^{-\al_i},s^{-\al_i^*}$ where $\al_i,\al_i^*$ are the simple roots of suitable lattices. $P(x,s)$ has the remarkable 
property that it reduces to Koornwinder-Macdonald polynomials upon specializing $s=q^\lambda t^\rho$ for $\lambda$ a
($\g$-) partition. In Sect. \ref{sec:52} we extend the duality of Koornwinder-Macdonald polynomials to some 
relation between $P(x,s)$ and its dual $P^*(s,x)$. Finally in Sect. \ref{sec:53} we show how this new formulation 
allows to rewrite the proofs of this paper more elegantly.

We finally gather some concluding remarks in Section \ref{sec:conclusion}. We comment on the three additional ``companion" quantum Q-systems obtained as a by-product of our study and the associated representation by $q$-difference operators, which were not part of the original setting (Sect. \ref{sec:threenew}). We discuss path models for the various universal functions of the paper in Sect. \ref{sec:paths}. We interpret the universal function results in terms of $q$-Whittaker functions in Sect. \ref{sec:whitak}.
Sect. \ref{sec:summary} summarizes our results and lists related open questions.
\medskip

\noindent{\bf Acknowledgments.} 
We thank I. Cherednik, G. Schrader, A. Shapiro, J. Shiraishi and C. Stroppel for useful discussions, and G. Barraquand for pointing out reference \cite{Rains}.
We are also thankful to the referee who raised a number of interesting issues and suggested further important references. 
This work was supported by the following grants: National Science Foundation grants DMS 18-02044 and DMS-1937241; 
NSF Grant No. 1440140 while the authors were in residence at the Mathematical Sciences Research Institute in Berkeley, California in 2021; Simons Foundation Fellowship grants 613580 and 617036 and 
Simons Foundation grants MP-TSM-00002262 and MP-TSM-00001941.
PDF is supported by the Morris and Gertrude Fine Endowment.
RK thanks the Institut Henri Poincar\'e and the Institut de Physique Th\'eorique-CEA Paris Saclay for their hospitality.

\section{Duality property in type $A^{(1)}_{N-1}$}\label{Atype}

The main result of \cite{DFK15} is a theorem which states that the $q$-Whittaker limit of the A-type Macdonald operators and their $\tau_+$-translates satisfy the quantum Q-system of type $A_{N-1}^{(1)}$. To prove this, we used the explicit form of these difference operators in $x$-space. In this section we will give a different method for proving the same theorem by using Macdonald's duality relation and the Fourier transform of these operators.

We use the notation for the roots and weights of $\gl_N$ as follows. Let $\{e_1,...,e_N\}$ be the standard basis of $\R^N$, then the simple roots are $\{\alpha_i=e_i-e_{i+1}; i=1,...,N-1\}$, the $\gl_N$ fundamental weights (including  $\omega_N$) are $\{\omega_i = e_1 + \cdots + e_i, i=1,...,N\}$, the positive roots are $R_+=\{e_i-e_j, 1\leq i<j\leq N\}$, and $\rho=\sum_{i=1}^N (N-i)e_i$. For $u,v\in \R^N$, we use the notation $u\cdot v$ for the standard scalar product. We consider $q\in\C^*$, $|q|<1$.

\subsection{Macdonald operators and polynomials}\label{sec:21} Let $x=(x_1,...,x_N)$ be formal variables, and define the $q$-difference operators
$\D_a(x; q,t)$  be the  $a$-th Macdonald $q$-difference operator in type $A^{(1)}_{N-1}$ \cite{macdo}: 
\begin{equation}\label{DA}
\D_a(x;q,t) = \sum_{\substack{ I\subset\{1,...,N\}\\ |I|=a}} \prod_{\substack{j\notin I\\ i\in I}}\left(
\frac{t x_i -x_j}{x_i-x_j} \right)\prod_{i\in I} \Gamma_i, \qquad a\in\{1,...,N\}, \quad \Gamma_i x_j = q^{\delta_{ij}} x_j \Gamma_i.
\end{equation}
These difference operators form a commuting family and preserve the space of symmetric polynomials in $x$. 
The common eigenvectors are the monic symmetric Macdonald polynomials $P_\lambda(x)$, where $\lambda=(\lambda_1\geq \lambda_2\geq \cdots \geq \lambda_N\geq 0)$ is an integer partition:
\begin{equation}\label{eigenvalue}
\D_a(x;q,t) P_\lambda(x) = t^{-{a\choose2}}e_a(s) P_\lambda(x)\qquad (a=1,2,...,N),
\end{equation}
where, $e_a(s)$ is the $a$th elementary symmetric function in $s=(s_1,...,s_N)$, with $s_i = t^{N-i} q^{\lambda_i}$. Macdonald polynomials form a basis for the space of symmetric polynomials of $x$.

\subsection{Duality}\label{Duality}
The Macdonald polynomials $P_\lambda(x)$ satisfy a remarkable duality property, which is a symmetry under the interchange of the variables $x$ and $s$. Under the specialization $x=t^\rho q^\mu$ with $\mu$ an integer partition,  Macdonald showed that \cite{macdo}
\begin{equation}\label{dualpoA}
\frac{P_\lambda(t^\rho q^\mu)}{P_\lambda(t^\rho)}=\frac{P_\mu(t^\rho q^\lambda)}{P_\mu(t^\rho)}
\end{equation}
for all integer partitions $\lambda,\mu$. 
\begin{remark} The duality property has a formal generalization to arbitrary values of $\mu,\lambda$, see the discussion on the universal function in Section \ref{universAsec}.
\end{remark}

The denominators in Equation \eqref{dualpoA} can be obtained as specializations of an infinite product expression.
Let
\begin{equation}
\Delta(x) = \prod_{n\geq 1} \prod_{\alpha\in R_+} \frac{1-q^n x^{-\alpha}}{1-t^{-1}q^n x^{-\alpha}}=
\prod_{\beta\in \hat R_{++}} \frac{1-x^{-\beta}}{1-t^{-1}x^{-\beta}},\label{DeltaA}
\end{equation}
where the second product extends over the set $\hat R_{++}$ of strictly positive affine roots $\beta=n\delta+\sum_i \beta_i\al_i$, $n\geq 1,\beta_i\in \Z_+$, and we use the shorthand notation $x^\beta=q^{-n} \prod_i (x_{i}/x_{i+1})^{\beta_i}$.
Then  \cite{macdo}\footnote{Our definition of the infinite product $\Delta$ differs slightly from
that of Macdonald's $\Delta^+$, but is better suited for taking the limit $t\to\infty$ below.}
\begin{equation}\label{normPA}
P_\lambda(t^\rho)=t^{\lambda\cdot\rho}\,\frac{\Delta(t^\rho)}{\Delta(t^\rho q^\lambda)}.
\end{equation}

Next we use Macdonald's evaluation homomorphism $u_\mu$ defined on functions of $x$ by $u_\mu(f(x))=f(q^\mu t^\rho)$. It maps the generators of quantum torus $\mathbb T_x$ to
$u_\mu(x_i)=q^{\mu_i}t^{N-i}$ and $u_\mu(\Gamma_i)=e^{\partial_{\mu_i}}$ which acts as the translation $\mu_i\mapsto \mu_i+1$ while leaving $\mu_j,j\neq i$ unchanged.
Applying $u_\mu$ for $\mu$ an integer partition on  the eigenvalue equation \eqref{eigenvalue}, and using duality \eqref{dualpoA} gives
\begin{eqnarray*}
e_a(t^\rho q^\lambda) P_\mu(t^\rho q^\lambda)&=& t^{{a\choose2}}\, P_\mu(t^\rho)\,u_\mu\left(\D_a(x;q,t)\right) \,P_\mu(t^\rho)^{-1} \,P_\mu(t^\rho q^\lambda)\\
&=& \left\{t^{{a\choose2}}\, t^{\rho\cdot\mu}\,\Delta(t^\rho q^\mu)^{-1}\, u_\mu\left(\D_a(x;q,t)\right) \, \Delta(t^\rho q^\mu)\, t^{-\rho\cdot\mu}\right\} \,
P_\mu(t^\rho q^\lambda).
\end{eqnarray*}
Let us finally interchange the partition labels $\lambda\leftrightarrow \mu$ in the above equation, and note moreover that since
the polynomial $P_\lambda(x)$ is entirely determined by its values at the discrete specializations $x=t^\rho q^\mu$ with $\mu$ taking values in integer partitions, we can conclude that $P_\lambda(x)$ satisfies the Pieri equations:
\begin{eqnarray}
\cH_a(\Lambda;q,t)\, P_\lambda(x)&=& e_a(x)\, P_\lambda(x) \qquad (a=1,2,...,N)\label{PieriA} ,
\end{eqnarray}
with 
\begin{eqnarray}
\cH_a(\Lambda;q,t)&:=&
t^{{a\choose 2}}t^{\rho\cdot \lambda} \Delta(s)^{-1} u_\lambda\left(\D_a(x;q,t)\right) \Delta(s) t^{-\rho\cdot\lambda}\label{PieriHA}.
\end{eqnarray} 
The Pieri rules for the Macdonald polynomials express the multiplication of $P_\lambda(x)$ by an elementary symmetric function $e_a(x)$ as a linear combination
of Macdonald polynomials with shifted partitions. 

\begin{remark}\label{leftrightrem}
Note that in \eqref{PieriHA} the evaluation $u_\lambda$ maps $x_i\to q^{\lambda_i}t^{N-i}=s_i$ and 
$\Gamma_i\to e^{\partial_{\lambda_i}}$. The same evaluation map sends the generators of the quantum torus $\mathbb T_\lL$ to respectively $\lL_i\to q^{\lambda_i}$ and $T_i\to e^{\partial_{\lambda_i}}$.
By a slight abuse of notation, we simply express the operator $\cH_a(\Lambda;q,t)$ in terms of the generators $\lL_i$ and $T_i$ 
and omit the mention of the evaluation map from now on. This amounts to evaluating the left action of the quantum torus $\mathbb T_\lL$ 
on functions of the variables $(\lL_1,...,\lL_N)$. There is also a right action of the evaluation of $\mathbb T_\lL$ on the basis of Macdonald polynomials $P_\lambda$
in which $\lL_i$ acts diagonally $P_\lambda \,\lL_i=P_\lambda\,q^{\lambda_i} $ and $T_i$ by a shift $P_\lambda \,T_i=P_{\lambda+e_i}$. We adopt a single notation using the quantum torus generators, to denote either left or right action.
\end{remark}

Explicitly the commuting  operators $H_a(\Lambda;q,t)$
acting on functions of $\Lambda$ or $s$ are
\begin{equation}
\cH_a(\Lambda;q,t) =
\sum_{I\subset [1,N]\atop |I|=a}
\prod_{i\in I,\ j\not \in I\atop j<i} \frac{t^{i-j-1}\lL_{j}-\lL_i}{t^{i-j}\lL_{j}-\lL_i}\ \frac{t^{i-j+1} \lL_j-q \lL_{i}}{t^{i-j}\lL_j-q\lL_{i}}\, 
\prod_{i\in I}T_i,\qquad T_i \Lambda_j = q^{\delta_{ij}} \Lambda_j T_i.
\end{equation}
They can be thought of as Macdonald Hamiltonians of type $A_{N-1}^{(1)}$: In the $q$-Whittaker limit, they become relativistic $q$-difference
Toda Hamiltonians (see Sect. \ref{Identification} below).

\subsection{The $q$-Whittaker limit}\label{sec:23}
By a slight abuse of languge, we call the limit $t\to\infty$ of the difference operators and eigenfunctions above the $q$-Whittaker limit (as opposed to the standard $t\to0$).
The $q$-Whittaker functions\footnote{This is a slight abuse of  language, as these are strictly speaking $q^{-1}$-Whittaker functions. $\Pi_\lambda(x)$ is interpreted as a (class 1) $q$-Whittaker function where $x$ is the representation index, and $\lambda$ the argument.} are the limits of the Macdonald polynomials \cite{cherednikqwhittaker}:
$$\Pi_\lambda(x) =  \lim_{t\to\infty} P_\lambda(x).$$
In this limit we use the $t$-independent variables $\Lambda_i=q^{\lambda_i}$ instead of $s_i$. The symmetry between the eigenvalue equation and the Pieri rules under exchange of $x$ and $s$ is lost in this limit. However, the limit of the difference equations still makes sense.
The $q$-Whittaker functions satisfy the difference equations
\begin{eqnarray}\label{DWhittaker}
D_a(x;q) \Pi_\lambda(x) &=& \Lambda^{\omega_a} \Pi_\lambda(x), \qquad \Lambda^{\omega_a} = \Lambda_1 \cdots \Lambda_a;
\label{eigenvalueW}\\
H_a(\Lambda;q) \Pi_\lambda(x) &=& e_a(x) \Pi_\lambda(x).\label{pieriW}
\end{eqnarray}
Here, the difference operators are
\begin{equation}\label{qDiffOp}
D_{a}(x;q) =\lim_{t\to\infty} t^{a(a-N)} \D_{a}(x;q,t) 
=
 \sum_{\substack{ I\subset\{1,...,N\}\\ |I|=a}} 
 \prod_{j\notin I\atop i\in I}
\frac{x_i}{x_i-x_j} \prod_{i\in I} \Gamma_i,%
\end{equation}
and the Pieri operators are
\begin{equation}\label{typeAHamiltonian}
H_a(\lL;q)=\lim_{t\to\infty} \cH_{a}(\lL;q,t)=\sum_{I\subset[1,N]\atop |I|=a}
\prod_{i\in I\atop i-1\notin I} \left(1-\frac{\Lambda_i}{\Lambda_{i-1}}\right) \, \prod_{i\in I}T_{i}
\end{equation}
with the convention that $\Lambda_0:=\infty$. Note the simplified form of these equations: 
the Pieri operators are polynomials in $\Lambda^{-\alpha_i}$ where $\alpha_i$ are the simple roots, rather than rational functions. Moreover,
the eigenvalues of operators $D_a(x;q)$ are monomials in $\Lambda$, because $e_a(s)$ is replaced by its leading term in $t$, $\Lambda^{\omega_a}$, where $\omega_a$ are the fundamental weights.

\subsection{Type $A^{(1)}_{N-1}$ quantum Q-system}\label{sec:24} 
The main result of \cite{DFK15} is that the set of all $\tau_+$-translations of the operators $D_a(x;q)$ satisfy the quantum Q-system equations\footnote{This was referred to as the type $A_{N-1}$ Q-system in e.g. \cite{DFK15}.} of type $A^{(1)}_{N-1}$. To define this system, let $\{ \Q_{a;k}: a\in\{1,...,N\},k\in\Z\}$ be invertible elements in some non-commutative algebra over $\C(q)$, subject to two types of relations: A $q$-commutation relation
\begin{equation}\label{Acommutation}
\Q_{a;k}\Q_{b;k+i} = q^{i\min(a,b)} \Q_{b;k+i}\Q_{a;k}, \qquad k\in\Z,\ a,b\in[1,N],\ |i|\leq 1,
\end{equation}
and a cluster exchange-type relation, which can be thought of as  a discrete time evolution in the  variable $k$:
\begin{equation}\label{AQsys}
q^a \Q_{a;k+1}\Q_{a;k-1}=\Q_{a;k}^2-\Q_{a-1;k}\Q_{a+1;k},\quad a\in[1,N],\ k\in\Z,
\end{equation}
subject to the boundary conditions $\Q_{0;k}=1, \Q_{N+1;k}=0$. The algebra generated by the set $\{\Q_{a;k}: a\in[1,N], k\in\Z\}$ and their inverses is a subalgebra in a certain quantum cluster algebra \cite{DFKnoncom}.

Define the generalized $q$-Whittaker difference operators, acting on the space of functions in the variables $\{x_1,...,x_N\}$ with coefficients in $\C(x_1,...,x_N,q)$:
\begin{equation}\label{qDiffOphigher}
D_{a;k}(x;q) = \sum_{\substack{ I\subset\{1,...,N\}\\ |I|=a}} \prod_{i\in I} x_i^k
 \prod_{j\notin I}\left(
\frac{x_i}{x_i-x_j} \right)\prod_{i\in I} \Gamma_i, \qquad a\in[1,N],\ k\in\Z.
\end{equation}
When $k=0$ these are just the $q$-Whittaker difference operators \eqref{DWhittaker}.
The main result of \cite{DFK15} is that these operators provide a functional representation of the quantum Q-system:
\begin{thm}[\cite{DFK15}] \label{Atheorem} The $q$-difference operators $D_{a;k}(x;q)$ of \eqref{qDiffOphigher} satisfy the quantum Q-system relations \eqref{Acommutation} and \eqref{AQsys}.
\end{thm}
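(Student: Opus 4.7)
The strategy is to avoid direct calculations with the operators $D_{a;n}(x;q)$ of \eqref{qDiffOphigher} and instead reduce the Q-system identities to simpler identities on the Fourier-dual side, via the $q$-Whittaker transform attached to the eigenbasis $\{\Pi_\lambda(x)\}$ of \eqref{DWhittaker}--\eqref{pieriW}. On the $\lL$-side the $t\to\infty$ limit has collapsed the eigenvalues $\barD_{a;0}(\lL)=\lL^{\omega_a}$ of the Macdonald operators into pure monomials in $\mathbb T_\lL$, and this is the drastic simplification that makes the whole approach work.

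The first step is to define, intrinsically on the $\lL$-side, a candidate Fourier-dual family $\{\barD_{a;n}(\lL):a\in[1,N],\,n\in\Z\}$ by taking the initial data $\barD_{a;0}(\lL):=\lL^{\omega_a}$ and \emph{imposing} the \emph{opposite} quantum Q-system, i.e. relations \eqref{Acommutation} and \eqref{AQsys} with the order of all products reversed. Induction on $|n|$, combined with the fact that the exchange relation solves explicitly for $\barD_{a;n+1}$ in terms of data at smaller times, shows that the resulting family is well-defined and unique. The second step is to exhibit an explicit time-translation operator $g(\lL)$, given as a product of quantum dilogarithms in the generators of $\mathbb T_\lL$, such that
\begin{equation*}
\barD_{a;n+1}(\lL)\;=\;q^{-\omega_a\cdot\omega_a/2}\,g(\lL)\,\barD_{a;n}(\lL)\,g(\lL)^{-1}.
\end{equation*}
Verifying this identity at $n=0$, i.e. showing that the adjoint action of $g$ sends $\lL^{\omega_a}$ to the operator $\barD_{a;1}$ prescribed by the opposite Q-system, is a finite computation in $\mathbb T_\lL$; the case of general $n$ then follows because both sides satisfy the same opposite Q-system in $n$.

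The crucial auxiliary fact is that the same operator $g(\lL)$ commutes with the first $q$-Whittaker Pieri operator $H_1(\lL)$ of \eqref{typeAHamiltonian}, and hence with every $H_a(\lL)$ by the integrability of the Toda hierarchy. With this in hand, one identifies $g(\lL)$ with the Fourier transform of the Cherednik Gaussian $\gamma(x)$ that implements the $SL(2,\Z)$ generator $\tau_+$ on the $x$-side: both $\gamma(x)\Pi_\lambda(x)$ and $g(\lL)\Pi_\lambda(x)$ solve the same Pieri difference equation---$\gamma(x)$ commutes trivially with multiplication by $e_1(x)$ while $g(\lL)$ commutes with $H_1(\lL)$---and matching their value on a single Macdonald polynomial (e.g. at $\lambda=0$) forces them to agree. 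Since $D_{a;n}(x)$ of \eqref{qDiffOphigher} is, up to an explicit power of $q$, the $n$-fold conjugate $\gamma(x)^n D_{a;0}(x)\gamma(x)^{-n}$, as one sees directly from the definition together with the elementary commutation $\gamma\,\Gamma_i\,\gamma^{-1}\propto x_i\Gamma_i$, this yields
\begin{equation*}
D_{a;n}(x)\,\Pi_\lambda(x)=\barD_{a;n}(\lL)\,\Pi_\lambda(x)\qquad(a\in[1,N],\ n\in\Z,\ \lambda\text{ a partition}).
\end{equation*}
Because $f(x)\mapsto\bar f(\lL)$ is an anti-homomorphism of operator algebras against a common eigenbasis, the opposite-Q-system relations satisfied by the $\barD_{a;n}(\lL)$ translate back into the desired Q-system relations \eqref{Acommutation}--\eqref{AQsys} for the $D_{a;n}(x)$, proving the theorem.

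The main obstacle I anticipate is the explicit construction of $g(\lL)$ together with the twin verifications that (i) its adjoint action implements the one-step opposite-Q-system translation on the initial data $\lL^{\omega_a}$, and (ii) it commutes with $H_1(\lL)$. Each is a structured but non-trivial manipulation in the quantum torus $\mathbb T_\lL$, and the commutation with $H_1$ is where the cluster-algebraic/Dehn-twist interpretation of $g$ as a product of quantum dilogarithms really pays off. Everything else in the argument---well-definedness of the opposite-Q-system extension, completeness of the $q$-Whittaker basis, and the anti-homomorphism property of the $q$-Whittaker transform---is essentially bookkeeping once the correct $g(\lL)$ is in hand.
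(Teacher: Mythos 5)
Your proposal follows essentially the same route as the paper's proof: define candidate Fourier duals $\bar D_{a;k}(\Lambda)$ by the opposite quantum Q-system, construct the time-translation operator $g(\Lambda)=g_T g_\Lambda$ as a product of quantum dilogarithms, show it commutes with $H_1(\Lambda;q)$, identify it with the Fourier transform of $\gamma(x)$ via uniqueness of series solutions of the first Pieri rule, and transport the opposite relations back through the $q$-Whittaker transform. The only caveats are bookkeeping ones: the two-step recursion \eqref{AQsys} requires $\bar D_{a;1}$ as additional initial data (fixed only up to a scalar by \eqref{oppcr} and normalized to $\Lambda^{\omega_a}T^{\omega_a}$), and the normalization of $g$ against $\gamma$ is settled not by evaluation at $\lambda=0$ (which is delicate because $g_T$ acts by shifting $\lambda$) but by computing the leading coefficient of the $\Lambda^{-\beta}$-expansion of $\gamma^{-1}g\,x^\lambda\hat p_\lambda(x)$ via a Campbell--Hausdorff manipulation of $g_T x^\lambda$.
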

The original proof of the theorem consisted of working directly with the difference operators $D_{a;k}(x;q)$.
Here, we present a simplification of the proof which uses the duality of Section \ref{Duality}. This method has the advantage that it can be generalized to other root systems.

\subsection{The action of $\tau_+$ on the difference operators}\label{sec:25}
The difference operators $D_a(x;q,t)$ are representations of elements of the spherical DAHA acting on the space of functions in $N$ variables. There is an action of $SL_2(\Z)$ on DAHA, and the $SL_2(\Z)$-generator $\tau_+$ acts on the functional representation by the adjoint action of Cherednik's Gaussian operator \cite{Cheredbook}
\begin{equation}\label{gamma}
\gamma(x)=e^{\sum_{i=1}^N \frac{\log(x_i)^2}{2\log(q)}}.
\end{equation}
In the $q$-Whittaker limit, the adjoint action of $\gamma(x)$ is well-defined. In particular, the difference operators \eqref{qDiffOp} are the $\tau_+$-translates of $D_a(x;q)$:
\begin{lemma}[\cite{DFKqt}]
\begin{equation}
D_{a;k}(x;q) = q^{-a k/2} \gamma(x)^{-k} D_{a}(x;q) \gamma(x)^k.
\end{equation}
\end{lemma}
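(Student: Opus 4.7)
The plan is to prove the identity by direct conjugation computation, exploiting the fact that the Gaussian $\gamma(x)$ depends only on the multiplication variables $x_i$ and not on the shift operators $\Gamma_i$. Thus $\gamma(x)$ commutes with each $x_i$ (and with all rational coefficients appearing in $D_a(x;q)$), so the only nontrivial part of the conjugation concerns the way $\gamma(x)^k$ interacts with the shift operators $\Gamma_i$ that appear in $D_a(x;q)$.

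First I would compute the elementary relation $\gamma(x)^{-1}\Gamma_i\gamma(x)$. Writing $\gamma(x)=\exp\!\bigl(\sum_j\log(x_j)^2/(2\log q)\bigr)$ and using $\Gamma_i: x_i\mapsto qx_i$, the identity
\[
\log(qx_i)^2=\log(x_i)^2+2\log(q)\log(x_i)+\log(q)^2
\]
yields $\gamma(qx_i;x_{\neq i})=\gamma(x)\,q^{1/2}x_i$, hence $\gamma(x)^{-1}\Gamma_i\gamma(x)=q^{1/2}x_i\Gamma_i$. Iterating $k$ times (or, equivalently, replacing $\gamma$ by $\gamma^k$ and running the same computation with $k$ inserted in the exponent), I would obtain
\[
\gamma(x)^{-k}\Gamma_i\gamma(x)^k=q^{k/2}x_i^k\,\Gamma_i.
\]

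Next I would apply this conjugation to the explicit form \eqref{qDiffOp} of $D_a(x;q)$. Since $\gamma(x)^{\pm k}$ commutes with every $x_j$, the rational prefactor $\prod_{j\notin I,\,i\in I}x_i/(x_i-x_j)$ passes through the conjugation untouched. For the product $\prod_{i\in I}\Gamma_i$ with $|I|=a$, the shift operators $\Gamma_{i}$ and multiplication operators $x_{i'}$ with $i\ne i'$ commute, so one obtains
\[
\gamma(x)^{-k}\!\prod_{i\in I}\Gamma_i\,\gamma(x)^k=\prod_{i\in I}\bigl(q^{k/2}x_i^k\,\Gamma_i\bigr)=q^{ak/2}\Bigl(\prod_{i\in I}x_i^k\Bigr)\prod_{i\in I}\Gamma_i .
\]
Summing over $I$ and comparing with the definition \eqref{qDiffOphigher} gives $\gamma^{-k}D_a(x;q)\gamma^k=q^{ak/2}D_{a;k}(x;q)$, which is equivalent to the claim.

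The only subtle point is to be careful with operator ordering when rewriting $\prod_{i\in I}(q^{1/2}x_i^k\Gamma_i)$ as $q^{ak/2}\bigl(\prod_{i\in I}x_i^k\bigr)\bigl(\prod_{i\in I}\Gamma_i\bigr)$; this rearrangement is harmless precisely because $\Gamma_i$ acts nontrivially only on $x_i$, so all cross commutators within the index set $I$ vanish. No further obstacle is expected, and the computation is essentially a one-line verification once the conjugation rule for $\Gamma_i$ has been established.
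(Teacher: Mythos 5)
Your proof is correct and follows the same route as the paper: the paper's entire argument is the single conjugation identity $\gamma(x)^{-1}\Gamma_i\gamma(x)=q^{1/2}x_i\Gamma_i$, which you derive and then apply to the explicit form of $D_a(x;q)$, with the iteration and operator-ordering details (which the paper leaves implicit) correctly handled.
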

The proof follows from
$$
\gamma(x)^{-1} \Gamma_i \gamma(x) = q^{\frac12} x_i \Gamma_i.
$$

\subsection{Fourier transform}\label{sec:26}

The $q$-Whittaker functions $\{\Pi_\lambda(x)\}$, with $\lambda$ ranging over integer partitions, form a complete basis of the space of symmetric polynomials. Suppose that a set of difference operators $\{D_a(x)\}$ in the variables of $\mathbb T_x$ satisfies
$$
D_a(x)\, \Pi_\lambda(x) = \bar D_a(\Lambda) \, \Pi_\lambda(x)
$$
for all $\lambda$, where the difference operators $\bar D_a(\Lambda)$ act in the variables of $\mathbb T_{\Lambda=q^\lambda}$. If the operators $\{\bar D_a(\lambda)\}$ satisfy certain relations $\mathcal R$, then the set of operators $\{D_a(x)\}$ satisfy the relations $\mathcal R^{\rm op}$ with the opposite multiplication. The operators $\bar D_a(\Lambda)$ are the ``Fourier transforms" of the operators $D_a(x)$.

The strategy is to define the operators $\bar D_{a;k}(\Lambda;q)$, starting with initial data $\{\bar D_{a;0},\bar D_{a;1}: a\in[1,N]\}$, by using the quantum Q-system relations with the opposite multiplication. We will then show that the corresponding Fourier-dual operators $D_{a;k}(x;q)$ are the difference operators \eqref{qDiffOphigher}, which therefore satisfy the quantum Q-system. The simplification of the proof in the Fourier transformed picture is due to the simple form of the initial data in $\mathbb T_\Lambda$, which allows us to compute the $\tau_+$ action on the Fourier transformed operators directly.

\subsubsection{Initial data}
To find appropriate initial data for the opposite quantum Q-system, we start with the eigenvalue equation \eqref{eigenvalueW}, which we write as
\begin{equation}\label{initdatazero}
D_{a;0}(x;q) \Pi_\lambda(x) = \bar D_{a;0}(\Lambda;q)\Pi_\lambda(x) ,\qquad \bar D_{a;0} (\Lambda;q) = \Lambda^{\omega_a}, \qquad a\in [1,N].
\end{equation}

For the operators $\bar D_{a;k}(\Lambda;q)\in \C_q(\Lambda)[T]$ to be well-defined from the opposite quantum Q-system for all $k$, we need another set of initial data, $\{\bar D_{a;1}(\Lambda;q),\ a\in[1,N]\}$, such that (we drop the arguments $\lL,q$ from now on):
\begin{equation}\label{oppcr}
\bar D_{a;0}\, \bar D_{b;1} = q^{-\min(a,b)}\, \bar D_{b;1}\,\bar D_{a;0},\qquad [\bar D_{a;1}, \bar D_{b;1}]=0,\qquad a,b\in[1,N].
\end{equation}
These are $2N$ relations on the quantum torus $\mathbb T_\Lambda$ of dimension $2N$, so they determine 
$\{\bar D_{a;1}\}$ up to scalar multiple, which commutes with $\mathbb T_\Lambda$, i.e. an element in $\C(q)$. We choose this constant to be 1:
\begin{equation}\label{DaonetypeA}
\bar D_{a;1} = \Lambda^{\omega_a} T^{\omega_a} = \Lambda_1\cdots \Lambda_a T_1 \cdots T_a.
\end{equation}

\subsubsection{The set $\{\bar D_{a;k}\}$}
Given the set of $2N$ initial data $\{\bar D_{a;k}, k\in\{0,1\}, a\in[1,N]\}$, which form an alternate basis for $\mathbb T_\Lambda$,  all $\bar D_{a;k}$ are uniquely defined by the requirement that they satisfy the opposite quantum Q-system relation:
\begin{equation}\label{oppositeQ}
q^{a}\,\bar {D}_{a;k-1}\,\bar D_{a;k+1}= \bar {D}^2_{a;k} - \bar D_{a+1;k}\,\bar D_{a-1;k} , \qquad \bar D_{0;k}=1,\ \bar D_{N+1;k}=0.
\end{equation}

\begin{remark}
The quantum Q-system variables are cluster variables in a quantum cluster algebra, and therefore, due to the Laurent property of these algebras, all $\bar D_{a;k}$ are in fact Laurent polynomials in the initial data \eqref{initdatazero} and \eqref{DaonetypeA}. As the latter are monomial in the
variables of the quantum torus $\mathbb T_\Lambda$, so are the $\bar D_{a;k}\in \C_q[\Lambda^{\pm1}, T^{\pm1}]$.
\end{remark}

The main theorem to be proven is
\begin{thm}\label{mainthm}
$$ \bar D_{a;k}(\Lambda;q)\, \Pi_\lambda(x) = D_{a;k}(x;q)\, \Pi_\lambda(x)$$
for all $a\in[1,N]$ and $k\in\Z$, with $D_{a;k}(x;q)$ the difference operators of Equation \eqref{qDiffOphigher}
\end{thm}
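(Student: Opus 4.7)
The argument is an induction on $|k|$ that compares the time-translation structures in $x$- and $\Lambda$-space, glued together by the anti-homomorphism property of the $q$-Whittaker Fourier transform (FT). Recall that the FT defined by $f(x)\Pi_\lambda(x)=\bar f(\Lambda)\Pi_\lambda(x)$ is an anti-homomorphism, $\overline{fh}=\bar h\bar f$: this is a direct consequence of the conventions in Remark \ref{leftrightrem}, and is precisely what aligns the Q-system \eqref{AQsys} for the $D_{a;k}$ with the opposite Q-system \eqref{oppositeQ} used to define the $\bar D_{a;k}$.

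The induction rests on two preparatory results. \emph{First}, one constructs an explicit operator $g(\Lambda)\in\mathbb T_\Lambda$ (realized as a product of quantum dilogarithms) such that
\begin{equation*}
\bar D_{a;k+1}\;=\;q^{-a/2}\,g\,\bar D_{a;k}\,g^{-1},\qquad a\in[1,N],\ k\in\Z,
\end{equation*}
for the operators $\bar D_{a;k}$ generated from \eqref{initdatazero}--\eqref{DaonetypeA} by \eqref{oppositeQ}. This is verified at the initial step $k=0\mapsto 1$ against $\bar D_{a;1}=\Lambda^{\omega_a}T^{\omega_a}$, and the iteration then propagates by the opposite Q-system. \emph{Second}, one proves the Fourier duality $\gamma(x)\Pi_\lambda(x)=g(\Lambda)\Pi_\lambda(x)$, identifying $g$ as the FT of Cherednik's Gaussian. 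The key step is the commutation of $g(\Lambda)$ with the first Pieri operator $H_1(\Lambda)$ of \eqref{typeAHamiltonian}, which together with the completeness of the Pieri Hamiltonians on the Whittaker basis determines $g$ up to a scalar; the scalar is then fixed by matching the action on a specific reference Whittaker function.

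Given these ingredients, the induction is immediate. The base case $k=0$ is the Whittaker eigenvalue equation \eqref{eigenvalueW}: $D_{a;0}\Pi_\lambda=\Lambda^{\omega_a}\Pi_\lambda=\bar D_{a;0}\Pi_\lambda$. For the inductive step, the conjugation formula $D_{a;k+1}=q^{-a/2}\gamma^{-1}D_{a;k}\gamma$ of Section \ref{sec:25}, combined with the anti-homomorphism of the FT, the inductive correspondence $D_{a;k}\leftrightarrow\bar D_{a;k}$, and the duality $\gamma\leftrightarrow g$, yields
\begin{equation*}
\overline{D_{a;k+1}}\;=\;q^{-a/2}\,g\,\bar D_{a;k}\,g^{-1}\;=\;\bar D_{a;k+1},
\end{equation*}
i.e.\ $D_{a;k+1}\Pi_\lambda=\bar D_{a;k+1}\Pi_\lambda$. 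The negative direction $k\to k-1$ is identical with $\gamma,g$ replaced by $\gamma^{-1},g^{-1}$.

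The principal obstacle is the Fourier duality $\gamma\leftrightarrow g$. Unlike the Gaussian $\gamma$, the operator $g$ must be constructed from first principles out of the opposite Q-system data, and its commutation with the Pieri Hamiltonian requires delicate manipulations with the quantum torus relations and the product expansion of $g$ as quantum dilogarithms. Everything else reduces to formal consequences of the anti-homomorphism property of the FT.
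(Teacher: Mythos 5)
Your proposal is correct and follows essentially the same route as the paper: the two preparatory results you isolate are exactly Theorems \ref{gfunctionA} and \ref{fourierduathm} (the latter proved via commutation with $H_1(\Lambda;q)$ and the uniqueness of series solutions of the first Pieri equation), and your induction on $k$ is just the unrolled version of the paper's one-line computation, which acts with $\gamma(x)^{-k}g(\Lambda)^{k}$ on the eigenvalue equation \eqref{eigenvalueW} and uses the commutation of $x$-operators with $\Lambda$-operators. No gap.
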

Since $\bar D_{a;k}(\Lambda;q)$ satisfy the opposite quantum Q-system relations, then the operators $D_{a;k}(x;q)$ satisfy the quantum Q-system. Theorem \ref{mainthm} therefore implies Theorem \ref{Atheorem}.

The proof of this theorem performed over the next several subsections as follows. First, in \ref{sec:27} we compute a ``time translation" operator $g(\Lambda)$, whose adjoint action on $\bar D_{a;k}(\Lambda;q)$ gives $\bar D_{a;k+1}(\Lambda;q)$ in accordance with the opposite Q-system recursion evolution. We then show in \ref{sec:28} that this time-translation commutes with the Pieri operator or Hamiltonian $H_1(\Lambda;q)$. In \ref{sec:unique} we prove the uniqueness of the solution of the Pieri equation $H_1(\Lambda,q)\,f=\hat e(x)\, f$. This implies, as shown in \ref{sec:29}, that $g(\Lambda)$ is the Fourier transform of the Gaussian $\gamma(x)$. The proof of Theorem \ref{mainthm}  follows (Section \ref{sec:theproof}) from a simple argument using the Fourier transform.

\subsection{The action of $\tau_+$ on $\bar D_{a;k}(\Lambda;q)$}\label{sec:27}
The key to proving theorem \ref{mainthm} is to find an element $g(\Lambda)$ in the completion of the space of rational functions in $\mathbb T_\Lambda$ which realizes the action of $\gamma(x)$ in the Fourier transformed picture\footnote{The element $g(\lL)$ is referred to as the ``Dehn twist" generator in the geometric formulation of Ref. \cite{SS}, which uses a different but related definition of the Fourier transform, under the name of Whittaker transform.}:
\begin{thm}\label{gfunctionA}
There is a unique, up to scalar multiple, function $g\equiv g(\Lambda)$ acting on the variables of $\mathbb T_\Lambda$, such that
\begin{equation}\label{gevolution}
\bar D_{a;k} = q^{-a k/2} \,g^k \,\bar D_{a;0} \,g^{-k}
\end{equation}
is a solution of the opposite quantum Q-system for all $a,k$.
\end{thm}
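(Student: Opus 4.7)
My plan handles existence and uniqueness separately. Uniqueness is immediate: if $g$ and $g'$ both realize the formula then $g'^{-1}g$ commutes with every $\bar{D}_{a;k}$; since $\{\bar{D}_{a;0}\}=\{\Lambda^{\omega_a}\}$ generate $\langle\Lambda_i\rangle$ and $\{\bar{D}_{a;1}\bar{D}_{a;0}^{-1}\}=\{T^{\omega_a}\}$ generate $\langle T_i\rangle$, this element centralizes all of $\mathbb{T}_\Lambda$ and reduces to a scalar at generic $q$.

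For existence I would construct $g$ explicitly in the completion of $\mathbb{T}_\Lambda$ as a product of quantum dilogarithm factors implementing the sequence of cluster mutations that advances the opposite Q-system by one time step; in type $A$ this should identify $g^{-1}$ with a specialization of the Baxter $Q$-operator of \cite{SS}. Conjugation by such a product advances the Q-system cluster by one step, so $g\bar{D}_{a;k}g^{-1}=q^{a/2}\bar{D}_{a;k+1}$ holds for all $a,k$ and iterates to the theorem's formula.

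Should one prefer a less structural route, the same conclusion follows by induction. One verifies the two base cases $g\bar{D}_{a;0}g^{-1}=q^{a/2}\bar{D}_{a;1}$ and $g\bar{D}_{a;1}g^{-1}=q^{a/2}\bar{D}_{a;2}$ directly from the explicit form of $g$ and the opposite Q-system expressions for $\bar{D}_{a;j}$. Then, assuming the identity up to index $k+1$, conjugate the opposite Q-system relation
\[
q^a\bar{D}_{a;k}\bar{D}_{a;k+2}=\bar{D}_{a;k+1}^{2}-\bar{D}_{a+1;k+1}\bar{D}_{a-1;k+1}
\]
by $g$, substitute the induction hypothesis on the known terms, and use the Q-system at time $k+2$ together with cancellation of the invertible Laurent element $\bar{D}_{a;k+1}$ in the domain $\mathbb{T}_\Lambda$ to obtain $g\bar{D}_{a;k+2}g^{-1}=q^{a/2}\bar{D}_{a;k+3}$, closing the induction; the case $k<0$ is symmetric. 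Note that both the base case and the induction step use only that $\mathbb{T}_\Lambda$ is a domain and that the Q-system variables are Laurent polynomials in the initial cluster, so the cancellations are legitimate.

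The main obstacle is pinning down the correct $g$: the naive candidate $g_0=\exp\bigl(\sum_i(\log T_i)^2/(2\log q)\bigr)$ satisfies $g_0\Lambda_ig_0^{-1}=q^{1/2}\Lambda_iT_i$ and hence the $k=0$ identity, but it already fails at $k=1$, since one computes $g_0^{2}\bar{D}_{1;0}g_0^{-2}=q\Lambda_1T_1^{2}$ whereas the opposite Q-system forces the binomial $\bar{D}_{1;2}=\Lambda_1T_1^{2}-q^{-1}\Lambda_2T_1T_2$. The true $g$ must therefore include additional dilogarithm factors mixing the $\Lambda$'s and the $T$'s which, upon iterated conjugation, generate precisely the non-monomial subtraction terms typical of the quantum cluster Laurent expansions of higher $\bar{D}_{a;k}$. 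Identifying this explicit product of dilogarithms and checking that it does the job (equivalently, that conjugation by it realizes the full mutation sequence) is the technical heart of the proof.
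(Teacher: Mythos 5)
Your overall architecture matches the paper's: uniqueness by showing a would-be second solution centralizes $\mathbb T_\Lambda$ (the paper gets the same conclusion by counting: the $2N$ relations at $k=0,1$ pin $g$ down on the $2N$-dimensional quantum torus up to scalar), and propagation to all $k$ by conjugating the finitely many defining relations with $g^m$ — your induction via cancellation of $\bar D_{a;k+1}$ in the domain $\mathbb T_\Lambda$ is a slightly heavier but valid variant of the paper's one-line conjugation argument, and your diagnostic computation showing that $g_T$ alone fails at $k=2$ (producing $q\Lambda_1T_1^2$ instead of $\Lambda_1T_1^2-q^{-1}\Lambda_2T_1T_2$) is correct.

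However, there is a genuine gap: you never produce the element $g$, and you explicitly defer "identifying this explicit product of dilogarithms and checking that it does the job" as "the technical heart of the proof." That is exactly the content of the paper's existence argument, so the proposal as written does not prove the theorem. The missing piece is short but essential: write $g=g_Tg_\Lambda$ with $g_\Lambda$ commuting with all $\Lambda_a$; the $k=1$ conditions force $g_T=\prod_a e^{(\log T_a)^2/(2\log q)}$ as you found; then the single remaining condition $q^{a/2}\,g^{-1}\bar D_{a;2}\,g=\bar D_{a;1}$, with $\bar D_{a;2}=\Lambda^{\omega_a}T^{2\omega_a}(1-q\Lambda^{-\alpha_a}T^{-\alpha_a})$ read off from the $k=1$ Q-system relation, reduces (after conjugating by $g_T^{-1}$, which turns the binomial into $\Lambda^{\omega_a}T^{\omega_a}(1-\Lambda^{-\alpha_a})$) to finding $g_\Lambda$ with $g_\Lambda^{-1}T^{\omega_a}(1-\Lambda^{-\alpha_a})g_\Lambda=T^{\omega_a}$. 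The identity $(\Lambda^{-\alpha_a};q)_\infty\,T^{\omega_b}=T^{\omega_b}\,(1-\Lambda^{-\alpha_a})^{-\delta_{ab}}(\Lambda^{-\alpha_a};q)_\infty$ then yields $g_\Lambda=\prod_{a=1}^{N-1}(\Lambda^{-\alpha_a};q)_\infty^{-1}$. Note also that no further verification (e.g.\ of the $k=2$ Q-system relation or higher) is required beyond this window, precisely because of the conjugation/propagation step — a point your "check the two base cases from the explicit form of $g$" phrasing gestures at but cannot execute without the formula.
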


\begin{proof} The element $g$ is determined by the commutation relation \eqref{oppcr} and the subset of Q-system relations
\eqref{oppositeQ} corresponding to $k=1$. Indeed, assuming such  
an element $g$ exists, we first note that 
$g^m[{\bar D}_{a,0},{\bar D}_{b,0}]g^{-m}=q^{(a+b)m/2}[{\bar D}_{a,m},{\bar D}_{b,m}]=0$
for all $m\in \Z$. 
Similarly, conjugating \eqref{oppcr} and \eqref{oppositeQ} for $k=1$ with $g^m$ gives 
\begin{eqnarray*}&&\!\!\!\!\!\!g^m ({\bar D}_{a,0}\, {\bar D}_{b,1}-q^{{\rm Min}(a,b)} {\bar D}_{b,1}\,{\bar D}_{a,0})g^{-m}=
q^{(a+b)m/2}({\bar D}_{a,m}\, {\bar D}_{a,m+1}-q^{{\rm Min}(a,b)} {\bar D}_{a,m+1}\,{\bar D}_{a,m})=0\\
&&\!\!\!\!\!\!\!\!g^m ( q^a {\bar D}_{a,-1}{\bar D}_{a,1}\!\!-\!\!{\bar D}_{a,0}^2\!\!+\!\!{\bar D}_{a-1,0}{\bar D}_{a+1,0})g^{-m}=
q^{am} (q^a {\bar D}_{a,m-1}{\bar D}_{a,m+1}\!\!-\!\!{\bar D}_{a,m}^2\!\!+\!\!{\bar D}_{a-1,m}{\bar D}_{a+1,m})=0
\end{eqnarray*}
We therefore obtain the opposite of \eqref{Acommutation} and \eqref{AQsys} for all $m\in \Z$.

We now prove the existence of $g$ by construction.
Assume $g=g_T g_\Lambda$, where $g_T$ commutes with all $T_a$ and $g_\Lambda$ commutes with all $\Lambda_a$. Up to a scalar multiple, $g_T$ is  determined by the $N$ equations \eqref{gevolution} with $k=1$:
$$
q^{-a/2} \,g\,\bar  D_{a;0}\,g^{-1} = q^{-a/2} \,g_T \,\Lambda^{\omega_a} \,g_T^{-1} = \bar D_{a;1} =  \Lambda^{\omega_a} \,T^{\omega_a},\qquad a\in[1,N],
$$
because $g_\Lambda$ commutes with $\bar D_{a;0}=\Lambda^{\omega_a}$ by assumption.
This has the solution
\begin{equation}\label{gtvalue}
g_T =\prod_{i=a}^N e^{\frac{(\log T_a)^2}{2\log q}},
\end{equation}
where we used the identity
$$
e^{\frac{(\log T_a)^2}{2\log q}} \,\Lambda_a \,e^{-\frac{(\log T_a)^2}{2\log q}}  = \Lambda_a \,e^{\frac{(\log qT_a)^2-(\log T_a)^2}{2\log q}} = q^{1/2} \,\Lambda_a\, T_a.
$$

To find $g_\Lambda$, we use the Q-system \eqref{oppositeQ} with $k=1$:
\begin{equation}\label{Dtwoa}
\bar D_{a;2}= \Lambda^{\omega_a}\,T^{2\omega_a}\,(1-q \,\Lambda^{-\alpha_a}\,T^{-\alpha_a})  ,  \ a\in[1,N-1],\quad \bar D_{N;2}=\Lambda^{\omega_N}\,T^{2\omega_N}.
\end{equation}

To simplify the equations below, we set $\Lambda_{N+1}=0$ and $\Lambda_0=\infty$, i.e. we define $\alpha_0,\alpha_N$ by $\Lambda^{-\alpha_0}=\Lambda^{-\alpha_N}=0$.
The function $g_\Lambda$ is defined from
\begin{equation}\label{dtwotodone}
\bar D_{a;1}=q^{a/2}\,g^{-1}\,\bar  D_{a;2} \,g = q^{a/2} \,g_\Lambda^{-1} \,g_T^{-1} \,\bar D_{a;2}\,g_T\, g_\Lambda =  \Lambda^{\omega_a} \,T^{\omega_a}.
\end{equation}
Acting on \eqref{Dtwoa} by the adjoint action of $g_T^{-1}$, we have
$$
q^{a/2} \,g_T^{-1}\,\bar  D_{a;2} \,g_T = \Lambda^{\omega_a}\, T^{\omega_a} \,(1-\Lambda^{-\alpha_a}).
$$
Using 
$$
\prod_{n=0}^\infty (1-q^n \,\Lambda^{-\alpha_a}) \,T^{\omega_b} = T^{\omega_b} \,\prod_{n=0}^\infty (1-q^{n+\delta_{ab}} \,\Lambda^{-\alpha_a}) = T^{\omega_b} \, (1-\Lambda^{-\alpha_a})^{-1}\prod_{n=0}^\infty (1-q^n \,\Lambda^{-\alpha_a})
$$
we can choose
\begin{equation}\label{glambdavalue}
g_\Lambda = \prod_{a=1}^{N-1} (\Lambda^{-\alpha_a};q)_\infty^{-1},\qquad (a;b)_\infty = \prod_{n=0}^{\infty} (1-b^n a).
\end{equation}

We conclude that the function
\begin{equation}\label{gtypeA}
g=\left(\prod_{a=1}^N e^{\frac{(\log T_a)^2}{2\log q}}\right)\  \prod_{a=1}^{N-1}\left( \frac{\Lambda_{a+1}}{\Lambda_a};q\right)_\infty^{-1}
\end{equation}
satisfies the conditions of Theorem \ref{gfunctionA}.
\end{proof}

\subsection{Commutation with the first Pieri operator}\label{sec:28}
In general,  the translation operator $g$ commutes with each of the Pieri operators $H_a(\Lambda;q)$. It is sufficient for our purposes to show that it commutes with the first, $H_1(\Lambda;q)$:
\begin{thm}\label{commutationtypeA}
The function $g(\Lambda)$ of Theorem \ref{gfunctionA} commutes with the Pieri operator 
$H_1(\Lambda;q)$ of Equation \eqref{typeAHamiltonian}.
\end{thm}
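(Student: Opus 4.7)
The plan is to decompose $g = g_T\, g_\Lambda$ as in \eqref{gtypeA}, with $g_T = \prod_{a=1}^N e^{(\log T_a)^2/(2\log q)}$ and $g_\Lambda = \prod_{a=1}^{N-1}(\Lambda_{a+1}/\Lambda_a;q)_\infty^{-1}$, and to compute the adjoint action $g H_1 g^{-1}$ in two natural steps, showing that $g_T\bigl(g_\Lambda H_1 g_\Lambda^{-1}\bigr) g_T^{-1} = H_1$.

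For the first step, since $g_\Lambda$ is a function only of the commuting $\Lambda_j$'s, the Pieri prefactors $(1 - \Lambda_i/\Lambda_{i-1})$ in $H_1$ are unaffected and only the shifts $T_i$ need to be transformed. I would use $T_i \Lambda_j T_i^{-1} = q^{\delta_{ij}} \Lambda_j$ together with the elementary $q$-Pochhammer identities $(qx;q)_\infty = (x;q)_\infty/(1-x)$ and $(q^{-1}x;q)_\infty = (1-q^{-1}x)(x;q)_\infty$ applied to each factor of $g_\Lambda$, to obtain the uniform formula
\[
g_\Lambda T_i g_\Lambda^{-1} \;=\; \frac{1 - q^{-1}\Lambda_{i+1}/\Lambda_i}{1 - \Lambda_i/\Lambda_{i-1}}\; T_i,
\]
with the conventions $\Lambda_0 = \infty$ and $\Lambda_{N+1}/\Lambda_N = 0$ handling the boundary cases $i=1$ and $i=N$. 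Multiplying by the prefactor $(1 - \Lambda_i/\Lambda_{i-1})$ cancels the denominator exactly, yielding
\[
g_\Lambda H_1 g_\Lambda^{-1} \;=\; T_N + \sum_{i=1}^{N-1}\bigl(1 - q^{-1}\Lambda_{i+1}/\Lambda_i\bigr)\, T_i.
\]

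For the second step, I would exploit the identity $g_T \Lambda_j g_T^{-1} = q^{1/2}\Lambda_j T_j$ already established in the construction of $g$, together with the fact that $g_T$ commutes with every $T_i$. A short computation then gives $g_T(q^{-1}\Lambda_{i+1}/\Lambda_i) g_T^{-1} = (\Lambda_{i+1}/\Lambda_i)\,T_{i+1} T_i^{-1}$, so that
\[
g_T\bigl[(1 - q^{-1}\Lambda_{i+1}/\Lambda_i)\, T_i\bigr] g_T^{-1} \;=\; T_i - (\Lambda_{i+1}/\Lambda_i)\, T_{i+1}.
\]
Summing over $i=1,\dots,N-1$, adding $g_T T_N g_T^{-1} = T_N$, and reindexing $j=i+1$ in the shifted sum, I obtain
\[
g H_1 g^{-1} \;=\; \sum_{j=1}^N T_j - \sum_{j=2}^N (\Lambda_j/\Lambda_{j-1})\, T_j \;=\; T_1 + \sum_{j=2}^N\bigl(1 - \Lambda_j/\Lambda_{j-1}\bigr) T_j \;=\; H_1.
\]

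The only delicate point is the bookkeeping at the boundary labels $i=1$ and $i=N$, where the fictitious $\Lambda_0$ and $\Lambda_{N+1}$ must be handled according to the conventions built into the definitions of $g$ and $H_1$; happily, these match up automatically, so no extra boundary argument is needed. The essential mechanism is thus a telescoping cancellation between the quantum dilogarithm factors of $g_\Lambda$ and the Pieri numerators $(1 - \Lambda_i/\Lambda_{i-1})$, followed by a shift $T_i \mapsto T_i - (\Lambda_{i+1}/\Lambda_i)T_{i+1}$ induced by the Gaussian $g_T$, which together rebuild $H_1$ exactly.
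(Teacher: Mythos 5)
Your proof is correct and follows essentially the same route as the paper: the paper likewise conjugates each term $(1-\Lambda^{-\alpha_{a-1}})T_a$ first by $g_\Lambda$ (obtaining $(1-q^{-1}\Lambda^{-\alpha_a})T_a$) and then by $g_T$ (obtaining $T_a-\Lambda^{-\alpha_a}T_{a+1}$), and sums over $a$ with the same boundary conventions. Your intermediate formulas and the telescoping at the end match the paper's computation exactly.
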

\begin{proof}
From \eqref{typeAHamiltonian},
\begin{eqnarray*}
H_1(\Lambda;q) &=& \sum_{a=1}^{N} (1-\Lambda^{-\alpha_{a-1}}) T_{a} = \sum_{a=1}^N T_a - \sum_{a=1}^{N-1} \frac{\Lambda_{a+1}}{\Lambda_{a}} T_{a+1}.
\end{eqnarray*}
For all $a\in[1,N]$,
$$ g (1-\Lambda^{-\alpha_{a-1}})T_a g^{-1}= g_T(1-q^{-1}\Lambda^{-\alpha_{a}}) T_a g_T^{-1}
= (T_a - \Lambda^{-\alpha_a} T_{a+1}) ,
$$
again with the convention that $\Lambda^{-\alpha_0}=\Lambda^{-\alpha_N}=0$.
Summing over all $a=1,2,...,N$ results in $g H_1(\Lambda;q) g^{-1}=H_1(\Lambda;q)$.
\end{proof}

\subsection{Uniqueness of the Pieri solution}\label{sec:unique}
To prove the result of next section, we resort to a uniqueness argument regarding the solutions to the first Pieri equation
\eqref{firsthamil}.

Recall that $\Pi_\lambda(x)$ is eigenfunction
of the Macdonald operator: $D_1\,\Pi_\lambda= \lL_1\, \Pi_\lambda$. Writing $\Pi_\lambda(x)=x^\lambda p_\lambda(x)$,
and $D_1=\sum_{i} \phi_{i}(x) \,\Gamma_i$, the eigenvalue equation turns into
\begin{equation}\label{continuable}
(1-\sum_{i} \phi_{i}(x) \,\frac{\lL_i}{\lL_1} \, \Gamma_i)\,p_\lambda(x)=0 .
\end{equation}
This equation makes it easy to analytically continue 
$p_\lambda(x)$ to a function $\hat p_\lambda(x)$ with $\lambda\in \C^N$,
as the dependence on $\lL$ is explicit (in fact polynomial of the variables $\lL^{-\al_i}$, $\al_i$ the simple roots of $A_{N-1}$).  
Writing $\hat p_\lambda(x)$
as a series of the variables $x^{-\al_i}$: $\hat p_\lambda(x)=\sum_{\beta\in Q_+}
c_\beta(\lL) x^{-\beta}$, $Q_+$ the positive cone of the root lattice of $A_{N-1}$, 
\eqref{continuable} turns into a linear triangular system for the coefficients $c_\beta(\lL)$,
which are uniquely determined (with  $c_0(\lL)=1$), and rational functions of the $\lL^{-\al_i}$. 
Moreover, specializing $\lambda$ to an integer partition in $\hat p_\lambda(x)$ recovers $p_\lambda(x)$. 
Expanding $c_\beta(\lL)=\sum_{\delta\in Q_+} c_{\beta,\delta} \lL^{-\delta}$ allows to view $\Pi_\lambda(x)$ as the specialization of a series in $\lL$: 
$$x^\lambda \hat p_\lambda(x)=x^\lambda \sum_{\delta\in Q_+} \hat c_\delta(x)\, \lL^{-\delta},\ \ 
\hat c_\delta(x)=\sum_{\beta\in Q_+} c_{\beta,\delta} x^{-\beta} .$$

\begin{lemma}\label{uniquelem}
Assume we have a  (non necessarily polynomial) solution $\Theta_\lambda(x)$ of the first Pieri rule \eqref{firsthamil}, 
which admits a series expansion of the form $\Theta_\lambda(x)=x^\lambda \sum_{\beta\in Q_+} \tau_\beta(x)\, \lL^{-\beta}$ for $\lambda\in \C^N$.
Then we have, for $\lambda$ an integer partition:
$$\Pi_\lambda(x)=\frac{\hat c_0(x)}{\tau_0(x)} \Theta_\lambda(x), $$
where $\hat c_0(x)$ is the leading coefficient in the series $\hat p_\lambda(x)$ that specializes to 
$\Pi_\lambda(x)$ for integer partitions $\lambda$.
\end{lemma}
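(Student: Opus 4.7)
My plan is to exploit the fact that the first Pieri operator $H_1(\Lambda;q)$ of \eqref{typeAHamiltonian} acts only in the $\lL$ variables and therefore commutes with multiplication by any function of $x$ alone. Consequently, the rescalings $F_\lambda(x):=\Theta_\lambda(x)/\tau_0(x)$ and $G_\lambda(x):=\Pi_\lambda(x)/\hat c_0(x)$ both remain solutions of the first Pieri equation $H_1 f=e_1(x)f$, and each admits a series expansion of the form $x^\lambda\bigl[1+\sum_{\beta\in Q_+\setminus\{0\}} f_\beta(x)\lL^{-\beta}\bigr]$ with normalized leading coefficient $f_0(x)=1$. It therefore suffices to show that a series solution of this normalized shape is unique; then $F_\lambda=G_\lambda$ identically, which yields $\Pi_\lambda=(\hat c_0/\tau_0)\,\Theta_\lambda$ and in particular proves the lemma at integer partitions $\lambda$.

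To establish uniqueness, I would substitute the ansatz $f_\lambda=x^\lambda\sum_\beta f_\beta(x)\lL^{-\beta}$ into $H_1 f_\lambda=e_1(x)f_\lambda$ and extract the coefficient of $x^\lambda\lL^{-\gamma}$ for each $\gamma\in Q_+$. Using $T_a(x^\lambda\lL^{-\beta})=x_a x^\lambda q^{-c_a(\beta)}\lL^{-\beta}$, where $c_a(\beta)$ denotes the $e_a$-component of $\beta$, together with $(1-\lL^{-\al_{a-1}})\lL^{-\beta}=\lL^{-\beta}-\lL^{-\beta-\al_{a-1}}$ (and the convention $\lL^{-\al_0}=0$), a short calculation shows that the Pieri equation reduces to the linear triangular system
$$\Bigl(\sum_{a=1}^N x_a\bigl[q^{-c_a(\gamma)}-1\bigr]\Bigr)\,f_\gamma(x)=\sum_{a=2}^N x_a\,q^{-c_a(\gamma-\al_{a-1})}\,f_{\gamma-\al_{a-1}}(x),$$
in which only $f_\beta$ with $\beta<\gamma$ in the partial order on $Q_+$ appear on the right-hand side.

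For every $\gamma\in Q_+\setminus\{0\}$, writing $\gamma=\sum_i n_i\al_i$ with $n_0=n_N=0$ gives $c_a(\gamma)=n_a-n_{a-1}$, and since not all $n_i$ vanish, at least one $c_a(\gamma)$ is nonzero. The diagonal coefficient $\sum_a x_a[q^{-c_a(\gamma)}-1]$ is then a nontrivial linear combination of the algebraically independent variables $x_a$, hence a nonvanishing rational function of $x$. The recursion therefore determines $f_\gamma(x)$ uniquely from the initial datum $f_0(x)=1$, so $F_\lambda=G_\lambda$ coefficient by coefficient, and specializing at an integer partition $\lambda$ gives the claimed identity. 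The main point to verify is this non-degeneracy of the diagonal coefficient (together with the triangularity of the recursion), which is transparent from the explicit shape of $H_1$ in \eqref{typeAHamiltonian}; everything else is bookkeeping on the root lattice.
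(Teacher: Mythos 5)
Your proof is correct and follows essentially the same route as the paper: conjugate by $x^\lambda$, observe that the Pieri equation becomes a triangular linear system for the coefficients in the $\lL^{-\beta}$ expansion, and conclude that any two normalized solutions coincide. The paper states the triangularity and uniqueness without computation, whereas you write out the explicit recursion and verify the non-vanishing of the diagonal coefficient $\sum_a x_a\bigl[q^{-c_a(\gamma)}-1\bigr]$ for $\gamma\neq 0$; this is a welcome but not essentially different elaboration.
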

\begin{proof}
Write $\Theta_\lambda(x)=x^\lambda \theta_\lambda(x)$. By use of \eqref{typeAHamiltonian} for $a=1$, 
the Pieri equation turns into
$$\left(\sum_{i=1}^N x_i \left\{ (1-\lL^{-\al_i}) T_i -1\right\}\right) \theta_\lambda(x)=0 .$$
This is a linear triangular system for the coefficients $\tau_\beta(x)$, which are uniquely determined for 
$\beta\neq 0$, proportional to $\tau_0(x)$. The same holds for the coefficients $\hat c_\beta(x)$ 
of $\hat p_\lambda(x)$ in terms of $\hat c_0(x)$. We deduce that
$x^\lambda \, \hat p_\lambda(x) =\frac{\hat c_0(x)}{\theta_0(x)} \Theta_\lambda(x)$, and the Lemma follows
by specialization.
\end{proof}

\begin{remark}
The argument used in this section prefigures the reformulation in terms of universal solutions performed in Sect. \ref{universAsec} below, and extends to all types.
\end{remark}

\subsection{Fourier duality of $g$ and $\gamma$}\label{sec:29}
The final piece of information we need to prove Theorem \ref{mainthm} is that $g$ acting on the $q$-Whittaker functions is equal to the 
Gaussian $\gamma(x)$ acting on the same functions:
\begin{thm}\label{fourierduathm}
\begin{equation}
g(\Lambda)\, \Pi_\lambda(x) = \gamma(x) \, \Pi_\lambda(x).
\end{equation}
\end{thm}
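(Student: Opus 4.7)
The plan is to combine two ingredients already in hand: the commutation of $g(\Lambda)$ with the first Pieri operator $H_1(\Lambda;q)$ (Theorem \ref{commutationtypeA}), and the uniqueness of Pieri solutions with prescribed asymptotic form (Lemma \ref{uniquelem}). I will show that both $g(\Lambda)\Pi_\lambda(x)$ and $\gamma(x)\Pi_\lambda(x)$ are Pieri solutions of the form required by the lemma, and that they share the same leading coefficient $\tau_0(x)$; the uniqueness statement then forces them to coincide.

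First, I will verify that both candidates satisfy $H_1(\Lambda;q)\,f = e_1(x)\,f$. For $\gamma(x)\Pi_\lambda(x)$, this is immediate, since $\gamma(x)$ is a scalar function of the $x$-variables and therefore commutes with every element of $\mathbb T_\Lambda$, in particular with $H_1(\Lambda;q)$; the Pieri equation \eqref{pieriW} transports through. For $g(\Lambda)\Pi_\lambda(x)$, Theorem \ref{commutationtypeA} gives $g\,H_1 = H_1\,g$, so the same argument applies.

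Second, I will expand both candidates in the form $x^\lambda\sum_{\beta\in Q_+}\tau_\beta(x)\,\Lambda^{-\beta}$ required by Lemma \ref{uniquelem} and compute their leading coefficient $\tau_0(x)$. Using $\Pi_\lambda(x)=x^\lambda\sum_\delta \hat c_\delta(x)\Lambda^{-\delta}$, the expansion for $\gamma(x)\Pi_\lambda(x)$ is immediate and yields $\tau_0^\gamma(x)=\gamma(x)\hat c_0(x)$. For $g(\Lambda)\Pi_\lambda(x)=g_T\,g_\Lambda\,\Pi_\lambda(x)$, the key identity is the Gaussian heat-kernel calculation
$$g_T\cdot x^\lambda\,\Lambda^{-\mu} \;=\; \gamma(x)\, x^{\lambda-\mu}\, q^{\mu\cdot\mu/2}\,\Lambda^{-\mu},$$
obtained from $\partial_{\lambda_a}\!\left(x^\lambda\Lambda^{-\mu}\right)=(\log x_a-\mu_a\log q)\left(x^\lambda\Lambda^{-\mu}\right)$ together with $e^{(\log T_a)^2/(2\log q)}\,e^{\lambda_a\xi_a}=e^{\xi_a^2/(2\log q)}\,e^{\lambda_a\xi_a}$. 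Since $g_\Lambda=1+O(\Lambda^{-\alpha})$ contributes only to subleading $\Lambda^{-\eta}$ terms (with $\eta>0$), the $\mu=0$ coefficient of $g(\Lambda)\Pi_\lambda(x)$ is $\tau_0^g(x)=\gamma(x)\hat c_0(x)$, matching $\tau_0^\gamma(x)$.

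By Lemma \ref{uniquelem}, both candidates then equal $\frac{\tau_0(x)}{\hat c_0(x)}\Pi_\lambda(x)=\gamma(x)\Pi_\lambda(x)$, proving the theorem. The main obstacle is the technical justification that $g(\Lambda)\Pi_\lambda(x)$ admits a genuine double-series expansion in the variables $x^{-\beta}$ and $\Lambda^{-\delta}$ to which Lemma \ref{uniquelem} actually applies, together with the legitimacy of applying the formal Gaussian operator $g_T$ term-by-term to this series; once these formal aspects are secured, the matching of leading coefficients delivers the identification $g(\Lambda)\Pi_\lambda(x)=\gamma(x)\Pi_\lambda(x)$ in one short step.
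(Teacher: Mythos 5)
Your proposal is correct and follows essentially the same route as the paper: both candidates are shown to solve the first Pieri equation via Theorem \ref{commutationtypeA}, Lemma \ref{uniquelem} is invoked, and the proportionality constant is fixed by computing the leading coefficient of the action of $g_T$ on $x^\lambda$. The only (cosmetic) difference is that you extract the factor $\gamma(x)$ by a direct heat-kernel identity $e^{c\partial^2}e^{\lambda\xi}=e^{c\xi^2}e^{\lambda\xi}$ applied to $x^\lambda\Lambda^{-\mu}$, whereas the paper packages the same computation via the Campbell--Hausdorff formula.
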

\begin{proof}
Acting with $g(\Lambda)$ on both sides of the first Pieri rule
\begin{equation}\label{firsthamil}
H_1(\Lambda;q)\, \Pi_\lambda(x) = e_1(x)\, \Pi_\lambda(x)
\end{equation}
and noting that $g$ and $\gamma$ commute with both $H_1(\Lambda;q)$ and $e_1(x)$, we see 
that $\gamma^{-1} \,g\, \Pi_\lambda(x)$ obeys the {\it same} first Pieri rule. 
We use the uniqueness argument of the previous section for the solutions of \eqref{firsthamil}. 
Applying Lemma \ref{uniquelem} to  $\Theta_\lambda=\gamma^{-1} \,g\, x^\lambda\,\hat p_\lambda(x)$, 
the latter must be proportional to $\Pi_\lambda(x)$ when specialized to an integer partition $\lambda$.
To determine the proportionality constant, we compute the leading coefficient of the series expansion $x^{-\lambda}\,\gamma^{-1} \,g\,x^\lambda\,\hat p_\lambda(x)$. Since $g_\Lambda$ is a power series in $\{\Lambda^{-\alpha_i}\}$ with leading term 1, the leading term is determined by the action of $g_T$ on $x^\lambda\,\hat c_0(x)$. 

We claim that
$$
g_T x^\lambda = \gamma\, x^{\lambda} {g}'_T,
$$
where $g'_T$ acts only by shifting $\Lambda$ by powers of $q$, leaving the leading term $\hat c_0(x)$ unchanged. 

To see this, let $a,b$ be elements in an algebra such that $[a,[a,b]]=0$ and $[b,[a,b]]=c$ commutes with both $a$ and $b$. 
Then the Campbell-Hausdorff formula
implies:
\begin{equation}\label{CH}
e^a\, e^b \,e^{-a} \,e^{-b} = e^{a+b+\frac12[a,b] - \frac{1}{12}[b,[a,b]]}e^{-a-b+\frac12[a,b]+ \frac{1}{12}[b,[a,b]]} 
= e^{[a,b] +\frac{c}{2}}.
\end{equation}
Choosing $e^a=g_T$ and $e^b = x^\lambda$, i.e. $a=\sum_i\frac{\partial_{\lambda_i}^2}{2\log q }$ and $b=\sum_i\lambda_i\,\log x_i$ leads to 
$[a,b]
= \sum_i \frac{\log x_i\,\partial_{\lambda_i}}{\log q}$ and $e^{\frac12 c} = \gamma(x)^{-1}$, which commutes with $e^{[a,b]}$. 
We finally write
\begin{eqnarray*} e^a \,e^b= g_T\, x^\lambda=e^{\frac{c}{2}}\, e^{[a,b]} \,e^b\,e^a
=\gamma^{-1} e^{\sum_i \frac{\log x_i\,\partial_{\lambda_i}}{\log q}} e^{\sum_i \lambda_i \log x_i}\, g_T
=\gamma \, x^\lambda\, \prod_i T_i^{\frac{\log x_i}{\log q}} g_T=\gamma \, x^\lambda\,g_T'
\end{eqnarray*}
where the last step uses again \eqref{CH}, with $a=\sum_i \frac{\log x_i\,\partial_{\lambda_i}}{\log q}$, $b=\sum_i \lambda_i \log x_i$
hence $e^{[a,b]}=\gamma^2$ and $c=0$.

We conclude that $\gamma^{-1} \,g\, \Pi_\lambda(x)=\Pi_\lambda(x)$, and the theorem follows.
\end{proof}

\begin{remark} \label{ajim}
The relation $g(\Lambda)\, \Pi_\lambda(x) = \gamma(x) \, \Pi_\lambda(x)$ is equivalent to the recursion relation for 
the quantities denoted $J_\beta^\mu$ in \cite{FJMMfermio} (Theorem 3.1), which we denote as $J_\beta(x=q^\mu)$ below. These can be identified as the coefficients in the formal expansion of the series
$\widetilde \Pi_\lambda(x):=g_\Lambda\,\Pi_\lambda(x)=x^\lambda \sum_{\beta \in Q_+}  J_\beta(x)\Lambda^{-\beta}$.
Theorem \ref{fourierduathm} says that $g_Tg_\Lambda \Pi_\lambda(x)=\gamma(x)\Pi_\lambda(x)$, which implies
$(g_\Lambda g_T) \widetilde \Pi_\lambda(x)=\gamma(x)\, \widetilde \Pi_\lambda(x)$. As a recursion relation for coefficients, this means
$$J_\beta^\mu=\sum_\delta \frac{\gamma(q^\delta)}{(q,q)_{\beta-\delta}} x^{-\delta}J_\delta^\mu$$
with the notation $(q,q)_\al=\prod_i (q,q)_{\al_i}$,  
where we have used the series expansion $g_\Lambda=\sum_{\alpha\in Q_+} \frac{\Lambda^{-\alpha}}{(q;q)_\alpha}$,
as well as the relation $\gamma(x)^{-1} x^{-\lambda}g_T x^\lambda \Lambda^{-\delta}=\gamma(q^{\delta})(\Lambda x)^{-\delta}$.
A similar connection holds for certain other root systems, see Remark \ref{newajim}.
\end{remark}

\begin{cor}\label{identiftoda}
The (higher) $q$-difference Toda Hamiltonians $H_a(\lL)$ are algebraically independent conserved quantities of the 
opposite quantum Q-system \eqref{oppositeQ}.
\end{cor}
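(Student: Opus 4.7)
The plan is to deduce the corollary from Theorem \ref{fourierduathm} via a completeness argument, upgrading Theorem \ref{commutationtypeA} from $H_1$ to every $H_a$.

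First I would prove the stronger commutation relation $[g(\Lambda), H_a(\Lambda)] = 0$ for every $a \in [1,N]$. Since $g$ acts in $\Lambda$-space and thus commutes with every function of $x$, while $H_a$ acts in $\Lambda$-space and $\gamma(x)$ is merely scalar multiplication by a function of $x$, one can chain the Pieri equation $H_a\Pi_\lambda = e_a(x)\Pi_\lambda$ with Theorem \ref{fourierduathm} in two orders to obtain
\begin{equation*}
gH_a\Pi_\lambda(x) \;=\; g\,e_a(x)\Pi_\lambda(x) \;=\; e_a(x)\gamma(x)\Pi_\lambda(x) \;=\; \gamma(x)H_a\Pi_\lambda(x) \;=\; H_a g\Pi_\lambda(x),
\end{equation*}
for every partition $\lambda$. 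Since the $\{\Pi_\lambda\}$ form a complete basis of symmetric polynomials (and the action of $\mathbb T_\Lambda$ on this basis is faithful in the sense of Remark \ref{leftrightrem}), this forces $[g,H_a]=0$ in the relevant completion of $\mathbb T_\Lambda$. Combined with the time-evolution law $\bar D_{a;k+1} = q^{-a/2}\,g\,\bar D_{a;k}\,g^{-1}$ of Theorem \ref{gfunctionA}, the identity $g^k H_a g^{-k} = H_a$ expresses that each $H_a$ is preserved by the discrete time evolution of the opposite quantum Q-system, i.e., is a conserved quantity.

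For algebraic independence, the $H_a$ pairwise commute (they share the eigenbasis $\{\Pi_\lambda\}$ with the commuting scalar eigenvalues $e_a(x)$), so any polynomial expression $P(H_1,\ldots,H_N)$ is well-defined. If $P(H_1,\ldots,H_N) = 0$, then applying to $\Pi_\lambda(x)$ yields $P(e_1(x),\ldots,e_N(x))\,\Pi_\lambda(x) = 0$ for every partition $\lambda$; completeness of $\{\Pi_\lambda\}$ forces $P(e_1(x),\ldots,e_N(x)) \equiv 0$ as a polynomial in $x$, and algebraic independence of the elementary symmetric polynomials then gives $P \equiv 0$.

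The only technical subtlety is that $g$ lies in a completion of $\mathbb T_\Lambda$ (involving quantum dilogarithms and Gaussian-type exponentials of $\log T_a$), so one should check that $[g,H_a]$ and its action on $\Pi_\lambda$ are legitimate operations in that completion; this is not a genuine obstacle, since the identity $g\,\Pi_\lambda = \gamma(x)\,\Pi_\lambda$ is already established in the same completion by Theorem \ref{fourierduathm}, while the $H_a$ themselves lie in $\mathbb T_\Lambda$. I expect the extension of this argument to types $BCD$ to be the main labor in Section \ref{sec:proof}, essentially because the two different time-steps $t_a\in\{1,2\}$ appearing in the analog Theorem \ref{longshort} require treating short and long labels separately, but the logical skeleton above should carry over verbatim.
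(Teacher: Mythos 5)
Your proposal is correct and follows essentially the same route as the paper: both deduce $gH_a g^{-1}=H_a$ for all $a$ by sandwiching the $a$-th Pieri equation with Theorem \ref{fourierduathm} and invoking completeness of $\{\Pi_\lambda\}$, then note that $H_a$, being Laurent in the initial data of $\mathbb T_\Lambda$ and invariant under conjugation by the time-translation $g$, is conserved, and finally transport any algebraic relation among the $H_a$ to one among the $e_a(x)$ via the Fourier transform to get independence.
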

\begin{proof}
Multiply the Pieri equation $e_a(x)\Pi_\lambda(x)=H_a(\lL)\,\Pi_\lambda(x)$ on the left by $g \gamma^{-1}$, and use
Theorem \ref{fourierduathm} to rewrite:
$$g\gamma^{-1} \,e_a(x)\Pi_\lambda(x)=e_a(x)\Pi_\lambda(x)=g\,H_a(\lL)\,g^{-1}\, \Pi_\lambda(x)=H_a(\lL)\, \Pi_\lambda(x),$$
hence $g$ commutes with all $H_a(\lL)$, $a=1,2,...,N$. The latter are Laurent polynomials of the elements of the quantum torus 
$\mathbb T_\lL$, hence of the initial data $\bar D_{i;0},\bar D_{i;1}$ as well, which are invariant under any time translation
$(\bar D_{i;0},\bar D_{i;1})\mapsto (\bar D_{i;n},\bar D_{i;n+1})$, $n\in \Z$. These are independent conserved quantities 
of the opposite quantum Q-system that governs the $\bar D_{a;n}$, as any dependence between $H_a(\lL)$ would imply 
a dependence between the $e_a(x)$ by inverse Fourier transform.
\end{proof}

\subsection{Proof of Theorem \ref{mainthm}}\label{sec:theproof}
The proof of the main Theorem \ref{mainthm} now follows.
\begin{proof}
Acting on $D_{a;0}(x;q)\Pi_\lambda(x)$ by $\gamma(x)^{-k} \,g^k(\Lambda)$, we have
$$
\gamma^{-k}\,g^k D_{a;0}(x;q) \,\Pi_\lambda(x) = \gamma^{-k} \,D_{a;0}(x;q)\,g^k \,\Pi_\lambda(x) 
=  \gamma^{-k}\,D_{a;0}(x;q)\,\gamma^{k}\,\Pi_\lambda(x)= q^{a k/2} \,D_{a;k}(x;q) \,\Pi_\lambda(x).
$$
Using the eigenvalue equation \eqref{eigenvalueW}, this is equal to 
$$\gamma^{-k}\,g^k \,\bar D_{a;0}\,\Pi_\lambda(x) = g^k \,\bar D_{a;0}\,\gamma^{-k}\,\Pi_\lambda(x)  
= g^k \, \bar D_{a;0}\,g^{-k}\,\Pi_\lambda(x)= q^{a k/2} \,\bar D_{a;k}\,\Pi_\lambda(x),$$
which is the statement of Theorem \ref{mainthm}.
\end{proof}

\subsection{Pieri operators, conserved quantities and Toda Hamiltonians}
\label{Identification}
Let us compare the result of  Corollary \ref{identiftoda} with the conserved quantities of the (opposite) quantum
Q-system of \cite{DFKnoncom,DFK10,DFK15}. Those references use a different normalization of the Q-system variables, resulting in the system
\begin{eqnarray}
\bar Q_{a;n}\, \bar Q_{b;n+1}&=& v^{-\Lambda_{a,b}} \,\bar Q_{b;n+1}\, \bar Q_{a;n}\nonumber \\
v^{\Lambda_{a,a}} \,\bar Q_{a;n-1}\, \bar Q_{a;n+1}&=& (\bar Q_{a;n})^2-\bar Q_{a-1;n}\, \bar Q_{a+1;n}\label{otherquant}
\end{eqnarray}
with $\Lambda_{a,b}=\min(a,b)\big( N-\max(a,b)\big)$.
The precise relation with the operators $\bar D_{a;k}(\Lambda;q)$ is
\begin{equation}\label{dictio}
\bar Q_{a;n}=q^{-\frac{n \al^2}{2N}}\,(\bar D_{N;0})^{-\frac{a}{N}}\,\bar D_{a;n}
\end{equation}
with $v=q^{1/N}$. The conserved quantities $\bar C_m$ of the quantum system \eqref{otherquant}
are expressed in terms of the initial cluster $\{ \bar Q_{a;0},\bar Q_{a;1}\}$
as hard particle partition functions (i.e. generating polynomials of independent sets of vertices) on a graph (Figure 3 of \cite{DFK10})
with ordered vertices labeled $1,2,...,2N-1$, with a weight $\bar y_i$ per occupied vertex labeled $i$, where: 
\begin{eqnarray*}
\bar y_{2a-1}&=&\bar Q_{a-1;0}\,(\bar Q_{a;0})^{-1}\, (\bar Q_{a-1;1})^{-1}\, \bar Q_{a;1} \qquad (a=1,2,...,N),\\
\bar y_{2a}&=&-\bar Q_{a-1;0}\,(\bar Q_{a;0})^{-1}\, (\bar Q_{a;1})^{-1}\, \bar Q_{a+1;1} \qquad (a=1,2,...,N-1).
\end{eqnarray*}
Using \eqref{dictio},
$$\bar y_{2a-1}=v^{-\frac{1}{2}}\,T_a,\qquad \bar y_{2a}=-v^{-\frac{1}{2}} \frac{\lL_{a+1}}{\lL_a}\, T_{a+1}, $$
hence the resulting conserved quantities are related to the Hamiltonians \eqref{typeAHamiltonian} by $H_m(\lL)= v^{m/2}\, \bar C_m$.

In Ref. \cite{DFK15}, it was shown that the conserved quantities $\bar C_m$ can be interpreted as type $A$ (relativistic) 
$q$-difference Toda Hamiltonians \cite{Etingof}. This justifies the identification of the  Pieri operators \eqref{typeAHamiltonian} 
as $q$-difference Toda Hamiltonians.

\subsection{Universal solutions and duality}\label{universAsec}

The results of the previous sections may be rephrased in the more uniform context of universal solutions, obtained explicitly in the case of type A in \cite{ShiraishiNoumi}. The following discussion treats the two sets $x$ and $s$ on equal footing, as formal variables, and implies that the duality property is more general.

\subsubsection{Universal Macdonald and Pieri solutions and duality}
We now consider $s=t^{\rho}q^{\lambda}$ as a {\it formal} variable which we may specialize to
integer partitions $\lambda$, and similarly write $x=q^\mu t^\rho$.
If we consider the difference operators \eqref{DA} in the space $\C(q,t)[[\{x^{-\al_i}\}]][\Gamma_1,...,\Gamma_N],$
i.e. expanded as power series of the variables $x_{i+1}/x_i$ with $1\leq i \leq N-1$,
then there is a unique series solution $P(x;s)$ to the Macdonald eigenvalue equation
\begin{equation}\label{eig}
\mathcal D_1(x;q,t)\, P(x;s)= e_1(x)\, P(x;s)
\end{equation}
of the form
\begin{equation}\label{xseriessol}
P(x;s) = q^{\lambda\cdot \mu} \sum_{\beta\in Q_+} c_{\beta}(s;q,t) x^{-\beta}, \qquad c_0(s;q,t)=1. 
\end{equation}
Note that $q^{\lambda\cdot \mu}=x^\lambda\, t^{-\rho\cdot \lambda}  = s^\mu \,t^{-\rho\cdot \mu}$ is symmetric with respect to the interchange of $x$ and $s$. The uniqueness of the solution follows from the fact that \eqref{eig} is a nonsingular triangular system for the coefficients $c_\beta(s;q,t)$.

The series \eqref{xseriessol} is called the universal Macdonald solution because, under the specialization of the 
variables $s =q^{\lambda}\, t^{\rho} $ with $\lambda$ an integer partition, the function $t^{\rho\cdot \lambda}\,P(x;s)$ specializes to the symmetric Macdonald  polynomial $P_\lambda(x)$:
\begin{equation}\label{specializationA}
P_{\lambda}(x) = t^{\rho\cdot \lambda} P(x;q^\lambda\,t^\rho ),
\qquad \lambda=(\lambda_1\geq\cdots\geq\lambda_N),\quad \lambda_i\in \Z_{\geq 0}.
\end{equation}
That is, the infinite series \eqref{xseriessol} truncates to a finite number of terms, and the prefactor $t^{\rho\cdot \lambda} q^{\lambda\cdot \mu}=x^\lambda$ ensures that the function is a symmetric polynomial of $x_1,...,x_N$. 
All Macdonald polynomials $P_\lambda(x)$ are obtained as specializations of $P(x;s)$.
On the other hand, the series $P(x;s)$ also specializes to the Baker-Akhiezer quasi-polynomials introduced by Chalykh \cite{Chalykh}. In this case, the relevant specialization consists in taking $t=q^{-k}$, $k$ some positive integer. One can check that another truncation occurs leaving us with a finite sum.
\begin{example}
Let us illustrate the phenomenon of truncation in the simplest case of $A_1^{(1)}$ of rank $N=2$. The universal Macdonald series is expressed in terms of the 
variables $u=x_2/x_1$ and $v=\lL_2/\lL_1$ as
$$ P(x;s)=x_1^{\lambda_1}x_2^{\lambda_2} t^{-\lambda_1}\, 
\sum_{n=0}^\infty u^n \,\prod_{i=0}^{n-1} \frac{1- v q^i}{1-t^{-1}v q^i}\,\frac{1-t^{-1}q^{-i}}{1-q^{-i-1}}.$$
The first type of truncation occurs for $v=q^{\lambda_2-\lambda_1}=q^{-k}$ for some integer $k\geq 0$. We see that the first factor in the numerator 
of the coefficient in the series vanishes as 
soon as $n>k$ (for $i=k$).  The second type of truncation occurs when $t=q^{-k}$ for some integer $k\geq 0$ 
and arbitrary $\lambda$. In that case the second factor vanishes as soon as $n>k$.
\end{example}

Similarly, if we consider $\mathcal H_1(\Lambda;q,t)$ as a power series in $\{s_{i+1}/s_i\}$, 
then the equation  
\begin{equation}\label{HQ}
t^{-\rho\cdot \lambda}\mathcal H_1(\Lambda;q,t)t^{\rho\cdot \lambda}Q(s;x) = e_1(x) Q(s;x)
\end{equation}
has a unique ``universal Pieri" solution of the form
\begin{equation}\label{Qseries}
Q(s;x)
= q^{\lambda\cdot \mu} \sum_{\beta\in Q_+} \bar{c}_\beta(x;q,t) s^{-\beta}, \qquad \bar{c}_0(x;q,t)=1.
\end{equation}

An outcome of the study of \cite{ShiraishiNoumi} is a relation between the universal Macdonald and Pieri solutions, which we re-prove below.
\begin{thm}\label{macpiedelta} The universal Macdonald and Pieri solutions are related via
$$P(x;s)=\Delta(x)\, Q(s;x),$$
with $\Delta(x)$ as in \eqref{DeltaA}. 
\end{thm}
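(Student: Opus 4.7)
The plan is a uniqueness argument based on the triangular structure of the Macdonald eigenvalue equation. Both $P(x;s)$ and $\Delta(x)Q(s;x)$ can be expanded as formal power series in the variables $x^{-\alpha_i}$ and $s^{-\alpha_j}$ with leading coefficient $q^{\lambda\cdot\mu}$; if both satisfy the same Macdonald-type equation with the same normalization, the triangular uniqueness that defines $P$ will force them to agree.

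First I would recast the Pieri equation \eqref{HQ} for $Q(s;x)$ as a Macdonald eigenvalue equation in the dual variable $s$. The conjugation identity $\mathcal H_1 = t^{\rho\cdot\lambda}\Delta(s)^{-1}\mathcal D_1(s)\Delta(s) t^{-\rho\cdot\lambda}$ read off from \eqref{PieriHA}, where $\mathcal D_1(s)$ denotes the Macdonald operator obtained from $\mathcal D_1(x)$ by the substitution $x_i,\Gamma_i\to s_i,T_i$, turns \eqref{HQ} into
\begin{equation*}
\mathcal D_1(s)\bigl[\Delta(s)Q(s;x)\bigr]=e_1(x)\bigl[\Delta(s)Q(s;x)\bigr].
\end{equation*}
Since $\Delta(s)=1+O(s^{-1})$, the product $\Delta(s)Q(s;x)$ has the form $q^{\lambda\cdot\mu}(1+O(s^{-1}))$, so by the triangular uniqueness applied to the $s$-variable it coincides with the dual universal Macdonald solution $P^{*}(s;x)$, characterized as the unique series $q^{\lambda\cdot\mu}\sum_\beta c^{*}_\beta(x) s^{-\beta}$ with $c^{*}_0=1$ satisfying $\mathcal D_1(s)F = e_1(x)F$.

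The theorem then reduces to the duality of universal Macdonald solutions, $\Delta(s)P(x;s) = \Delta(x)P^{*}(s;x)$. I would establish this by specializing both sides at $(x,s) = (q^\mu t^\rho, q^\lambda t^\rho)$ for integer partitions $\mu,\lambda$: via \eqref{specializationA} the specialization reduces both sides to products of Macdonald polynomial values and $\Delta$-factors, and Macdonald's polynomial duality \eqref{dualpoA} combined with the norm formula \eqref{normPA} matches them exactly on this infinite family. Substituting $\Delta(s)Q = P^{*}$ into this duality then gives $\Delta(s)P(x;s) = \Delta(x)\Delta(s)Q(s;x)$, whence $P(x;s) = \Delta(x)Q(s;x)$.

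The main obstacle is promoting the pointwise identity at integer-partition specializations to an identity of formal power series. I would handle this via a double uniqueness argument: verify that both $\Delta(s)P(x;s)$ and $\Delta(x)P^{*}(s;x)$ satisfy the Macdonald eigenvalue equation in both variables separately, in $x$ with eigenvalue $e_1(s)$ and in $s$ with eigenvalue $e_1(x)$. The $x$-equation for $\Delta(s)P$ is immediate from \eqref{eig} since $\Delta(s)$ is $x$-independent; the companion $s$-equation requires a universal analog of the Pieri derivation producing \eqref{PieriA}, which I would carry out at the formal-series level by mimicking the polynomial argument with the evaluation $u_\mu$ replaced by its formal counterpart. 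The corresponding double triangular system has a unique solution matching the common leading term $q^{\lambda\cdot\mu}$, yielding the required identity and completing the proof.
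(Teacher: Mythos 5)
Your proof is correct in outline but follows a genuinely different route from the paper's for the key step. Both arguments rest on the same external input --- that the universal solution $P(x;s)$ is bispectral, i.e.\ satisfies the Pieri equation in $s$ as well as the eigenvalue equation in $x$ (the paper simply asserts this with a citation to \cite{Cherednik95,Chalykh}) --- and both exploit the triangular uniqueness of the Pieri solution in the $s$-variable to conclude $P(x;s)=\hat c_0(x)\,Q(s;x)$, where $\hat c_0(x)$ is the leading coefficient of $P$ in its $s$-expansion. Where you diverge is in identifying $\hat c_0(x)=\Delta(x)$. The paper extracts, from the limit $|s_1|\gg\cdots\gg|s_N|\gg 1$ of all $N$ Macdonald eigenvalue equations, the system of first-order difference equations \eqref{eyh} satisfied by $\hat c_0(x)$, verifies that $\Delta(x)$ of \eqref{DeltaA} satisfies the same system, and concludes that the ratio is invariant under every $\Gamma_i$, hence constant and equal to $1$. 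You instead introduce the dual solution $P^{*}(s;x)$ and run a double (bispectral) uniqueness argument on $\Delta(s)P(x;s)$ and $\Delta(x)P^{*}(s;x)$, so that $\Delta(x)$ enters automatically as the conjugating factor in \eqref{PieriHA} rather than as the solution of an explicit difference system. Your route uses only the $a=1$ equations, which is economical; the paper's route avoids invoking bispectrality of $P^{*}$ in the $x$-variable and yields an intrinsic characterization of $\Delta(x)$.

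Two cautions. First, once the double-uniqueness argument is available, your intermediate specialization at integer partitions (via \eqref{dualpoA} and \eqref{normPA}) is redundant: two solutions of both eigenvalue equations with the same normalized leading coefficient already coincide, since $F_2=\phi(s)F_1=\psi(x)F_1$ forces $\phi=\psi=1$ in the integral domain of formal series. Second, and more seriously, your proposed derivation of the universal Pieri equation ``by mimicking the polynomial argument with the evaluation $u_\mu$ replaced by its formal counterpart'' is at risk of circularity: the engine of the polynomial derivation of \eqref{PieriA} in Section \ref{Duality} is the duality \eqref{dualpoA}, and its formal analogue is precisely \eqref{symmetryA}, which is equivalent to the theorem being proved. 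The non-circular way to get the universal Pieri equation is to continue the already-established polynomial Pieri rule from the specializations $s=q^\lambda t^\rho$ to formal $s$, using that the coefficients of the equation are rational in the $s^{-\alpha_i}$ and vanish on an infinite family of specialization points in each variable --- or simply to take bispectrality as a cited input, as the paper does.
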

\begin{proof}
First note that $P(x;s)$ obey both Macdonald eigenvalue and Pieri equations \cite{Cherednik95,Chalykh}. 
Expanding the coefficients of $P(x;s)$ as series of $s^{-\al_i}$: $c_\beta(s)=\sum_{\delta \in Q_+} c_{\beta,\delta}s^{-\delta}$ allows to rewrite
$P(x;s)=q^{\lambda\cdot \mu}\sum_{\delta  \in Q_+} \hat c_\delta(x)\, s^{-\delta}$, where $\hat c_\delta(x)=\sum_{\beta \in Q_+}c_{\beta,\delta}x^{-\beta}$.
Now both $P(x;s)$ and $Q(s;x)$ can be viewed as two different solutions of the Pieri equation. By uniqueness, they must be proportional up to an $s$-independent factor. We deduce that $P(x;s)=\hat c_0(x) Q(s;x)$. To compute $\hat c_0(x)$, note that it is the limit of 
$q^{-\lambda\cdot \mu}P(x;s)=\hat c_0(x)+O(s^{-\al_i})$, when we take $|s_1|>>|s_2|>>\cdots |s_N|>>1$, so that all $s^{-\al_i}\to 0$. Using the $a$-th Macdonald eigenvalue equation \eqref{eigenvalue} and the explicit formula \eqref{DA}, and dividing by $\lL_1\lL_2\cdots \lL_a$, we have for $a=1,2,...,N$:
$$\left\{t^{-{a\choose 2}}\frac{e_a(s)}{\lL_1\lL_2\cdots \lL_a}-\sum_{I\subset [1,N]\atop |I|=a} \prod_{j\not\in I\atop i\in I} \frac{t x_i-x_j}{x_i-x_j} \frac{1}{\lL_1\lL_2\cdots \lL_a}\prod_{i\in I} \lL_i \Gamma_i\right\}\, \big(\hat c_0(x)+O(x^{\al_i})\big)=0.$$
In the limit $\lL_{i+1}/\lL_i\to 0$, all the terms tend to zero except for the leading term in the eigenvalue, and the term corresponding to $I=\{1,2,...,a\}$ in the sum:
\begin{equation}\label{eyh} \left\{1-\prod_{1\leq i\leq a<j\leq N} \frac{1-\frac{x_j}{t x_i}}{1-\frac{x_j}{x_i}} \,
\Gamma_1\Gamma_2\cdots \Gamma_a\right\}\,\hat c_0(x)=0.
\end{equation}
We verify that $\hat c_0(x)$ and $\Delta(x)$ of \eqref{DeltaA} both obey \eqref{eyh} for $a=1,2,...,N$. 
Their ratio must therefore be independent of $x$ as it is invariant under each $\Gamma_i$,
and is easily identified to 1 by taking the limit when all $x^{-\al_i}\to 0$ and using $c_{0,0}=1$. The Theorem follows.
\end{proof}

Repeating the argument of Section \ref{Duality} starting with \eqref{eig}, including the interchange of the labels $x$ and $s$,
we see that
$$t^{-\rho\cdot \lambda}\mathcal H_1(\Lambda;q,t)t^{\rho\cdot \lambda}\, Q(x;s)=e_1(x)\, Q(x;s) ,$$
where the expansion in $s$ of $Q(x;s)=P(s;x)/\Delta(s)$ has the form $q^{\lambda\cdot \mu}(1+\mathcal{O} (s^{-\alpha_i}))$. By uniqueness of the solution, 
\begin{equation}\label{symmetryA}
Q(x;s)=Q(s;x)\quad \Leftrightarrow \quad \Delta(s)\,P(x;s)=\Delta(x)\,P(s;x).
\end{equation}
The duality \eqref{symmetryA} states that the series $Q(s;x)$ in $s$ is equal to the series $Q(x;s)$ in $x$. The specialization of $\lambda$ and $\mu$ to integer partitions reduces to the Macdonald polynomial duality relation \eqref{dualpoA}. We may therefore interpret \eqref{symmetryA} as a universal extension of the duality of Macdonald theory.

\subsubsection{$q$-Whittaker limit}

The symmetry \eqref{symmetryA} is replaced in the $t\to\infty$ limit by 
\begin{equation}\label{limitofsymmetry}
\Pi(x;\Lambda) = \overline{\Delta}(x) \mathrm{K}(\Lambda;x),\qquad \overline{\Delta}(x)=\prod_{\alpha\in R_+} (q x^{-\alpha};q)_\infty,
\end{equation}
where the universal $q$-Whittaker function $\Pi(x;\Lambda)$ and Pieri solution $\mathrm{K}(\Lambda;x)$ read respectively
$$
\Pi(x;\Lambda)=\lim_{t\to\infty} t^{\rho \lambda} P(x;s), \quad \mathrm{K}(\Lambda;x) = \lim_{t\to\infty} t^{\rho \lambda} Q(s;x)
$$
and satisfy the limiting Macdonald eigenvalue equations and Pieri rules:
\begin{eqnarray}
D_a(x;q)\, \Pi(x;\Lambda)&=& \Lambda^{\omega_a}\, \Pi(x;\Lambda) \label{UeigenvalueW}\\
H_a(\Lambda;q) \, \mathrm{K}(\Lambda;x) &=& e_a(x)\mathrm{K}(\Lambda;x) \label{HKappa}
\end{eqnarray}

As in the case of the Macdonald function, the universal functions $\Pi(x;\Lambda)$ and $\mathrm{K}(\Lambda;x)$ are uniquely determined from the difference equations \eqref{UeigenvalueW} and \eqref{HKappa} as series of the form
\begin{eqnarray}
\Pi(x;\Lambda) &=& x^\lambda \sum_{\beta \in Q_+} {c}_\beta(\Lambda;q) x^{-\beta},\qquad {c}_0(\Lambda;q)=1,\label{piseries}\\
\mathrm{K}(\Lambda;x) &=& x^\lambda \sum_{\beta\in Q_+}\bar{c}_\beta(x;q) \Lambda^{-\beta},\qquad 
\bar{c}_0(x;q)=1.\label{kappaseries}
\end{eqnarray}

\begin{example}\label{pathAex}
Let us compute  $\mathrm{K}(\Lambda;x)$ explicitly as a solution of the first Pieri equation $H_1(\Lambda;q) \, \mathrm{K}(\Lambda;x)= e_1(x)\mathrm{K}(\Lambda;x)$ with $H_1(\lL;q)$ as in \eqref{typeAHamiltonian} for $a=1$,
by use of a path model. The equation reduces to the following triangular system for the coefficients $\bar{c}_\beta(x;q)$ from \eqref{kappaseries}:
$$\left\{\sum_{i=1}^N x_i (q^{\beta_{i-1}-\beta_i}-1)\right\}\bar{c}_\beta(x;q)=\sum_{i=1}^{N-1} x_{i+1} \,q^{\beta_i-\beta_{i+1}-1}\, \bar{c}_{\beta-e_i}(x;q),$$
with $\beta=\sum_{i=1}^{N-1} \beta_i e_i$ and $\beta_0=\beta_N=0$ by convention. 
The system is nonsingular for $x$ generic, and
we may express $\bar{c}_\beta(x;q)$ as a path model
partition function, namely as the sum over all paths $p$ from $0$ to $\beta$ in the positive quadrant $\Z_+^{N-1}$ of path weights $w(p)$. 
The path weight is defined as a product along the path of its vertex weights $w_v(a)=(\sum_{i=1}^n x_i (q^{a_{i-1}-a_i}-1))^{-1}$ per vertex $a$ visited and edge step weights $w_e(s)=x_{i+1} \,q^{b_i-b_{i+1}-1}$ per edge step $s=b-e_i\to b$:
$$\bar{c}_\beta(x;q)=\sum_{{\rm paths}\, p:0\to \beta} \quad \prod_{{\rm vertex}\, a\, {\rm of}\, p} w_v(a)\prod_{ {\rm edge}\, {\rm step}\, s\, {\rm of}\, p}
w_e(s)$$
This construction parallels that of Whittaker vectors performed in \cite{DKT}.
\end{example}

The Fourier transform may be restated in terms of the universal $q$-Whittaker function as follows:
$f(x)\, \Pi(x;\lL)={\bar f}(\lL)\, \Pi(x;\lL)$, and our main theorem \ref{mainthm} as $D_{a;k}(x;q)\, \Pi(x;\lL)=\bar D_{a;k}(\lL;q)\, \Pi(x;\lL)$, also a consequence
of $g(\lL) \, \Pi(x;\lL)=\gamma(x)\,  \Pi(x;\lL)$.

\section{Duality property for other classical root systems}\label{macdoG}

In this section we present a generalization of the methods of Section \ref{Atype} for the root systems corresponding to the affine algebras in the classical series, listed in Table \ref{tableone}. In these cases, the underlying double affine Hecke algebra is of $BC$-type, corresponding to finite-type Weyl groups of types $C_N$ or $D_N$, and the commuting $q$-difference operators and their eigenfunctions are Koornwinder/Macdonald operators and polynomials.
A key ingredient
is the duality property, which implies a relation between the Koornwinder/Macdonald eigenvalue equations and the Pieri rules  \cite{vDselfdual,Sahi,cheredMehta}.

Our goal is to choose a set of $N$ commuting difference operators for each  algebra $\g$ such that, in the $q$-Whittaker limit, these difference operators and their $SL(2,\Z)$-translates satisfy the $\g$-type quantum Q-system.
There are several known constructions of the difference operators. Koornwinder introduced a first order operator \cite{Koornwinder} depending on generic parameters $(q,t,a,b,c,d)$, which was extended by van Diejen to a complete set of $N$ commuting difference operators \cite{vandiej}. 
Upon specialization of the parameters $(a,b,c,d)$, these correspond to the root systems of $\g$. In some cases, van Diejen's operators must be supplemented by operators introduced by Macdonald \cite{macdoroot}, who used the structure of specific root systems to produce difference operators for miniscule co-weights.
We combine these constructions and define a set of $N$ commuting {\it $\g$-Macdonald} operators in Appendix \ref{appA}, whose eigenvalues form a basis for the space of fundamental characters of the finite algebra $R^*=R(\g^*)$ (see Table \ref{tableone}).

This section is organized as follows. We introduce the Koornwinder operators for generic values of $(a,b,c,d)$ in (Section~\ref{sec:macdopol}). The specialization of the parameters corresponding to $\g$ of Table \ref{tableone} are explained in Section~\ref{sec:macdop}.
Finally, the $q$-Whittaker limit of the $\g$-Macdonald operators is described in Section~\ref{sec:qwhittak}, together with their $\tau_+$-translates. We will prove that the $\tau_+$ translation is the discrete time evolution of the associated quantum Q-systems in Section \ref{sec:proof}.

\subsection{Koornwinder Operators, Polynomials and Pieri rules}
\label{sec:macdopol}

\subsubsection{The Koornwinder operators}
\label{kooropsec}

Let $\mathbb F$ be the field of rational functions in the indeterminates $a,b,c,d, q^{\half},t^{\half}$ and  $W$ the Weyl group of type $C_N$. It acts on functions in $\mathbb F(x_1,...,x_N)$
by permutations and inversions of the variables. The $W$-invariant space of Laurent polynomials in $x$ is generated by
 the elementary symmetric functions $\hat e(x)$, defined via the generating function
\begin{equation}\label{Chatdef}
{\hat E}(z;x):=\prod_{i=1}^N\prod_{\epsilon=\pm 1} (1+z x_i^\epsilon)=\sum_{k=0}^{2N} z^k\, {\hat e}_k(x),
\end{equation}
so that ${\hat e}_1(x)=\sum_{i=1}^n( x_i+x_i^{-1})$. The space $\mathbb F[x,x^{-1}]^W$ is preserved by the action of the Koornwinder operator, defined as follows.
\begin{defn}
The Koornwinder operator $\K_1^{(a,b,c,d)}(x;q,t)$ is the  $q$-difference operator acting on $\mathbb F[x,x^{-1}]$:
\begin{eqnarray} 
\K_1^{(a,b,c,d)}(x;q,t)&=&\sum_{i=1}^N\sum_{ \epsilon=\pm 1} \Phi_{i,\epsilon}^{(a,b,c,d)}(x) \, 
 \, (\Gamma_i^{\epsilon}-1) ,\qquad \Gamma_i x_j = q^{\delta_{ij}} x_j \Gamma_i, \label{koorn} 
 \end{eqnarray}
 where
 \begin{eqnarray*}
\Phi_{i,\epsilon}^{(a,b,c,d)}(x) &=&\frac{(1-a x_i^{\epsilon})(1-b x_i^{\epsilon})(1-c x_i^{\epsilon})(1-d x_i^{\epsilon})}{(1-x_i^{2\epsilon})(1-q x_i^{2\epsilon})}\prod_{j\neq i} \frac{t x_i^\epsilon-x_j}{x_i^\epsilon-x_j}\frac{t x_i^\epsilon x_j-1}{x_i^\epsilon x_j-1}.\nonumber 
\end{eqnarray*}
\end{defn}

Koornwinder polynomials are the unique monic, symmetric, Laurent polynomial eigenfunctions of the eigenvalue equation
\begin{eqnarray*}\K_1^{(a,b,c,d)}(x;q,t) \, P_\lambda^{(a,b,c,d)}(x)&=&
\sigma t^{N-1} \left( \hat e_1(s)- \hat e_1(\sigma t^\rho) \right) P_\lambda^{(a,b,c,d)}(x)\\
\end{eqnarray*}
where $\lambda$ is any integer partition coding the leading term $x^\lambda$ of $P_\lambda^{(a,b,c,d)}(x)$. We make use of the notations
\begin{equation}\label{sabcd}
s_i=q^{\lambda_i} t^{\rho_i} \sigma, \qquad \sigma=\sqrt{{abcd}/{q}}, \qquad \rho_i=N-i \qquad(i=1,2,...,N).
\end{equation}

As an example, the first two Koornwinder polynomials are  $P_\emptyset^{(a,b,c,d)}(x)=1$, with eigenvalue 0, and
$$P_{1,0,0,...,0}^{(a,b,c,d)}(x)={\hat e}_1(x)+\frac{1-t^N}{1-t}\, 
\frac{  abcd(a^{-1}+b^{-1}+c^{-1}+d^{-1})t^{N-1}-(a+b+c+d)}{1-a b c d \, t^{2N-2}}.$$

\subsubsection{Koornwinder-Macdonald operators}
We define the Koornwinder-Macdonald operators to be the set of mutually commuting difference operators, which commute with the Koornwinder operator, chosen so that their eigenvalues are proportional to the elementary symmetric functions $\hat e_m(s)$.
The first order Koornwinder-Macdonald operator \eqref{MaKop}
is
\begin{equation}\label{MKopone}
{\mathcal D}_1^{(a,b,c,d)}(x;q,t)%
=\K_1^{(a,b,c,d)}(x;q,t)+\frac{1-t^N}{1-t}\left( 1+ \frac{abcd}{q}t^{N-1}\right).
\end{equation}
The additive constant in \eqref{MKopone} is $\sigma \,t^{N-1}\,{\hat e}_1(\sigma t^\rho)$, so that Koornwinder polynomials satisfy the eigenvalue equation
\begin{equation}\label{eigenKo}
{\mathcal D}_1^{(a,b,c,d)}(x;q,t)\, P_\lambda^{(a,b,c,d)}(x)= \sigma \,t^{N-1}\, 
{\hat e}_1(s)\, P_\lambda^{(a,b,c,d)}(x).
\end{equation}

\begin{defn}\label{KorMacdef}
The Koornwinder-Macdonald operators are a commuting family of difference operators $\{\D_m^{(a,b,c,d)}(x;q,t): m\in [1,N]\}$, which are linear
combinations of van Diejen's commuting difference operators \cite{vDselfdual}, uniquely defined by their eigenvalues: 
\begin{equation}
{\mathcal D}_m^{(a,b,c,d)}(x;q,t)\, P_\lambda^{(a,b,c,d)}(x)= d_{\lambda;m}^{(a,b,c,d)}\, P_\lambda^{(a,b,c,d)}(x),\quad d_{\lambda;m}^{(a,b,c,d)}= \sigma^m\,t^{m(N-\frac{m+1}{2})}\, {\hat e}_m(s). \label{eigenvecabcd}
\end{equation}
\end{defn}
These  operators are explicitly constructed in Appendix \ref{appA}, see Definitions \ref{defVandiej},  \ref{malphadef} and Theorem \ref{ABCDmacdopol}.

\subsubsection{Rains operators}\label{rainsubsec}
A useful alternative ``$N$-th order" difference operator was constructed by
Rains \cite{Rains}\footnote{Some of these operators actually appeared in earlier works of van Diejen,
but Rains' construction is more systematic.} in his study of BC-symmetric polynomials. Rains shows that the  operators
\begin{equation}\label{rains}
{\mathcal R}_N^{(u,v)}(x;q,t)= \sum_{\epsilon_1,\epsilon_2,...,\epsilon_N=\pm 1} 
\prod_{i=1}^N \frac{(1-u x_i^{\epsilon_i})(1-v x_i^{\epsilon_i})}{1-x_i^{2\epsilon_i}}
\, \prod_{1\leq i<j\leq n} \frac{1- t x_i^{\epsilon_i}x_j^{\epsilon_j}}{1- x_i^{\epsilon_i}x_j^{\epsilon_j}}\,
\prod_{i=1}^N \Gamma_i^{\epsilon_i/2} 
\end{equation}
are maps between $W$-invariant spaces with different values of the parameters  $(a,b,c,d)$. They act between bases  of Koornwinder polynomials as follows:
\begin{equation}\label{rainskor}
\begin{array}{rl}
{\mathcal R}_N^{(q^{-\half}{a},q^{-\half}b)}(x;q,t)\, P_\lambda^{(a,b,c,d)}(x)&= q^{-|\lambda|/2} \, \prod_{i=1}^N (1-a b q^{\lambda_i-1} t^{N-i}) \, P_\lambda^{(q^{-\half}a,q^{-\half}b,q^{\half}c,q^{\half}d)}(x), \\
{\mathcal R}_N^{(q^{-\half}c,q^{-\half}d)}(x;q,t)\, P_\lambda^{(a,b,c,d)}(x)&= q^{-|\lambda|/2} \, \prod_{i=1}^N (1-c d q^{\lambda_i-1} t^{N-i}) \, P_\lambda^{(q^{\half}a,q^{\half}b,q^{-\half}c,q^{-\half}d)}(x),
\end{array}
\end{equation}
where $|\lambda|=\sum_{i=1}^N\lambda_i$.
The product
\begin{eqnarray}
\widehat{\mathcal D}_N^{(a,b,c,d)}(x;q,t)&=&{\mathcal R}_N^{(a,b)}(x;q,t)\,{\mathcal R}_N^{(q^{-\half}c,q^{-\half}d)}(x;q,t)\label{rainsop}
\end{eqnarray}
commutes with the Koornwinder operators, since Koornwinder polynomials satisfy the eigenvalue equation
$$
\widehat{\mathcal D}_N^{(a,b,c,d)}(x;q,t)\, P_\lambda^{(a,b,c,d)}(x)=\hat{d}_{\lambda;N}^{(a,b,c,d)}\, P_\lambda^{(a,b,c,d)}(x),
$$
with eigenvalues
\begin{equation}\label{eigenrains}
\hat{d}_{\lambda;N}^{(a,b,c,d)}=q^{-|\lambda|}\prod_{i=1}^N (1-a b q^{\lambda_i} t^{N-i})(1-c d q^{\lambda_i-1} t^{N-i}).
\end{equation}
The operator \eqref{rainsop} is not linearly independent of the set of Koornwinder-Macdonald operators: See Section~\ref{rainsec}, Lemma \ref{rainstokor} for the explicit expression. However, its factored form will play a crucial role in the proof of our main theorems below.

\subsubsection{Duality}
\label{secduabcd}
Due to the existence of an anti-involution $*$ of the DAHA,
Koornwinder polynomials obey a remarkable duality property  \cite{vDselfdual,Sahi}. The involution acts on the parameters $(a,b,c,d)$ as 
\begin{equation}
a^*=\Big(\frac{abcd}{q}\Big)^{1/2},\quad
b^*=-\Big(q\frac{ab}{cd}\Big)^{1/2},\quad
c^*=\Big(q\frac{ac}{bd}\Big)^{1/2},\quad
d^*=-\Big(q\frac{ad}{bc}\Big)^{1/2},\label{dua}
\end{equation}
so that $\sigma^*=a$. The duality property for Koornwinder polynomials is 
\begin{equation}\label{duapolsabcd}
\frac{P_\lambda^{(a,b,c,d)}(q^\mu t^{\rho^*})}{P^{(a,b,c,d)}_\lambda(t^{\rho^*})}=
\frac{P_\mu^{(a^*,b^*,c^*,d^*)}(q^\lambda t^{\rho})}{P_\mu^{(a^*,b^*,c^*,d^*)}(t^{\rho})}.
\end{equation}
where $\mu,\lambda$ are both integer partitions.
$\rho,\rho^*$ are defined by
\begin{equation}\label{rhosabcd} 
t^{\rho_i}=t^{\rho^{(a,b,c,d)}_i}=\sigma t^{N-i}, \qquad 
t^{\rho^*_i}=t^{\rho^{(a^*,b^*,c^*,d^*)}_i}=a t^{N-i} .\end{equation}

\subsubsection{Pieri rules} \label{sec:koorpieri}
Using the duality relation \eqref{duapolsabcd}, one obtains the Pieri rules for Koornwinder polynomials with parameters $(a,b,c,d)$ from the eigenvalue equation with parameters $(a^*,b^*,c^*,d^*)$.
It is useful to define the function
\begin{eqnarray} 
\Delta^{(a,b,c,d)}(x)&:=&\prod_{i=1}^N \frac{(\frac{q}{x_i^2};q)_\infty}{(\frac{q}{a x_i};q)_\infty(\frac{q}{b x_i};q)_\infty (\frac{q}{c x_i};q)_\infty (\frac{q}{d x_i};q)_\infty  }
\prod_{1\leq i < j \leq N} \prod_{\epsilon=\pm1} \frac{(\frac{qx_j^\epsilon}{ x_i};q)_\infty}{(\frac{qx_j^\epsilon}{t  x_i};q)_\infty}.
\label{deltabcd}
\end{eqnarray}
The normalization factor in the duality \eqref{duapolsabcd} is given by (See Theorem 5.1 of \cite{vDselfdual}):
\begin{equation}\label{normpolabcd}
P_\lambda^{(a,b,c,d)}(t^{\rho^*})=t^{\rho^*\cdot\lambda}\frac{\Delta^{(a^*,b^*,c^*,d^*)}(t^{\rho})}{\Delta^{(a^*,b^*,c^*,d^*)}(q^\lambda t^{\rho})},\ \ 
P_\mu^{(a^*,b^*,c^*,d^*)}(t^{\rho})=t^{\rho\cdot\mu}\frac{\Delta^{(a,b,c,d)}(t^{\rho^*})}{\Delta^{(a,b,c,d)}(q^\mu t^{\rho^*})},
\end{equation}

Using the parametrization  $x=q^\mu t^{\rho^*}$ in the eigenvalue equation \eqref{eigenvecabcd},
\begin{equation}\label{KHmacdo}
{\mathcal D}_m(q^\mu\,t^{\rho^*};q,t) P_\lambda(q^\mu\,t^{\rho^*})=\theta_m\, {\hat e}_m(q^\lambda t^\rho) \, P_\lambda(q^\mu\,t^{\rho^*}), \qquad m\in[1,N],
\end{equation}
where $\theta_m=\sigma^m \,t^{m(N-\frac{m+1}{2})}$ and $\Gamma_i: \mu_j \mapsto \mu_j+\delta_{i,j}$ in the difference  operator $\D_m=\D_m^{(a,b,c,d)}$. Upon specializing  $\mu$ to integer partitions, we  use
the duality relation \eqref{duapolsabcd}, where we replace the normalization factor by \eqref{normpolabcd}. Equation \eqref{KHmacdo} becomes
\begin{eqnarray*}
\theta_m{\hat e}_m(q^\lambda t^\rho)\, P_\mu^*(q^\lambda\,t^{\rho})
&=& P_\mu^*(t^{\rho})\, {\mathcal D}_m(q^\mu\,t^{\rho^*};q,t)\,P_\mu^*(t^{\rho})^{-1}\, P_\mu^*(q^\lambda\,t^{\rho})\\
&=&\left( t^{\rho\cdot\mu} \Delta(q^\mu\,t^{\rho^*})^{-1}\, {\mathcal D}_m(q^\mu\,t^{\rho^*};q,t)\, \Delta(q^\mu\,t^{\rho^*})\, t^{-\rho\cdot\mu}\right)P_\mu^*(q^\lambda\,t^{\rho}).
\end{eqnarray*}
Acting with the involution $*$ on the parameters and 
interchanging the roles of $\lambda$ and $\mu$ (i.e. $x$ and $s$) above, we arrive at the following Pieri rules.
\begin{thm}\label{kortopieri}
The Pieri rules for the Koornwinder polynomials are
\begin{equation}
\cH_m^{(a,b,c,d)}(s;q,t) \, P_\lambda^{(a,b,c,d)}(x)= {\hat e}_m(x)\, P_\lambda^{(a,b,c,d)}(x),\label{PieriK}\end{equation}
where the  Pieri operators $\cH_m^{(a,b,c,d)}(s;q,t)$ are 
\begin{equation}
\cH_m^{(a,b,c,d)}(s;q,t)=
\frac{1}{\theta_m^*}  t^{\rho^*\cdot\lambda}\,
\Delta^{(a^*,b^*,c^*,d^*)}(s)^{-1}\, {\mathcal D}_m^{(a^*,b^*,c^*,d^*)}(s;q,t)\,\Delta^{(a^*,b^*,c^*,d^*)}(s)\,t^{-\rho^*\cdot\lambda}.\label{PieriOpK}
\end{equation}
In the difference operators $\mathcal H_m$, $s$ is specialized to $q^\lambda\,t^{\rho}$ with $\lambda$ an integer partition, $\theta_m^*=a^{m} t^{m(N-\frac{m+1}{2})}$, and $T_i: \lambda_j\mapsto \lambda_j+\delta_{i,j}$.
\end{thm}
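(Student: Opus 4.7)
The plan is to formalize the computation sketched immediately before the theorem statement, which obtains the Pieri rule as a consequence of the Koornwinder-Macdonald eigenvalue equation \eqref{eigenvecabcd} under the self-duality \eqref{duapolsabcd}. The strategy is a direct generalization of the derivation of \eqref{PieriA}--\eqref{PieriHA} in the type $A$ case (Section \ref{Duality}), with the involution $*$ on the parameters $(a,b,c,d)$ and the two $\rho$-vectors playing the role of the self-dual involution there.

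First I would start from the eigenvalue equation \eqref{KHmacdo} specialized at $x=q^{\mu}t^{\rho^*}$ with $\mu$ an integer partition, where $\D_m^{(a,b,c,d)}$ acts in the variable $\mu$ through $\Gamma_i:\mu_j\mapsto \mu_j+\delta_{ij}$. Applying \eqref{duapolsabcd} I would rewrite $P_\lambda^{(a,b,c,d)}(q^{\mu}t^{\rho^*})$ as $\bigl(P_\lambda^{(a,b,c,d)}(t^{\rho^*})/P_\mu^{(a^*,b^*,c^*,d^*)}(t^{\rho})\bigr)\,P_\mu^{(a^*,b^*,c^*,d^*)}(q^{\lambda}t^{\rho})$ on both sides. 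The scalar $P_\lambda^{(a,b,c,d)}(t^{\rho^*})$ is independent of $\mu$, hence cancels, while the $\mu$-dependent normalization $P_\mu^{(a^*,b^*,c^*,d^*)}(t^{\rho})$ must be conjugated through $\D_m^{(a,b,c,d)}$. Substituting the closed form \eqref{normpolabcd}, the $\mu$-independent piece $\Delta^{(a,b,c,d)}(t^{\rho^*})$ drops out of the conjugation and the remainder is precisely $t^{\rho\cdot\mu}\Delta^{(a,b,c,d)}(q^{\mu}t^{\rho^*})^{-1}\D_m^{(a,b,c,d)}\Delta^{(a,b,c,d)}(q^{\mu}t^{\rho^*})t^{-\rho\cdot\mu}$.

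Next I would perform the relabeling $\lambda\leftrightarrow\mu$ together with the involution $(a,b,c,d)\leftrightarrow (a^*,b^*,c^*,d^*)$, which simultaneously exchanges $\rho\leftrightarrow\rho^*$ and $\theta_m\leftrightarrow\theta_m^*$. Setting $s=q^{\lambda}t^{\rho}$ and $x=q^{\mu}t^{\rho^*}$ after the swap, the right-hand side becomes $\theta_m^*\,\mathcal H_m^{(a,b,c,d)}(s;q,t)\,P_\lambda^{(a,b,c,d)}(x)$ with $\mathcal H_m^{(a,b,c,d)}$ exactly as in \eqref{PieriOpK}, while the left-hand side becomes $\theta_m^*\,\hat e_m(x)\,P_\lambda^{(a,b,c,d)}(x)$. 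Dividing through by $\theta_m^*$ yields \eqref{PieriK} at every point of the discrete grid $\{q^{\mu}t^{\rho^*}:\mu \text{ an integer partition}\}$. To promote this pointwise identity to an identity of Laurent polynomials in $x$, I would invoke the standard $BC$-type evaluation/interpolation property: a $W$-invariant Laurent polynomial of bounded degree is determined by its values on such a grid, because the evaluation map used in the derivation of \eqref{PieriHA} extends in the Koornwinder setting (cf.\ \cite{vDselfdual,Sahi}) to a complete system on $BC$-symmetric polynomials.

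The step I expect to require the most care is the bookkeeping of the conjugating factor $t^{\rho^*\cdot\lambda}$ and the identification of the shift operators $T_i:\lambda_j\mapsto\lambda_j+\delta_{ij}$ after the swap. In particular, the $\Gamma_i$'s in $\D_m^{(a^*,b^*,c^*,d^*)}$ originally act on the argument $\mu$ of the dual polynomial, and only after relabeling do they become the desired shifts in the eigenvalue label $\lambda$; one must verify that no spurious $t$-factors sneak in from the $t^{\rho^*}$ part of the grid variable $s=q^{\lambda}t^{\rho}$ versus $x=q^{\mu}t^{\rho^*}$. Once this bookkeeping is carried out carefully, together with the fact that van Diejen's commuting family is preserved by the parameter involution, the formula \eqref{PieriOpK} drops out in the stated form and the Pieri rule \eqref{PieriK} follows.
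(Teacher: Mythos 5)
Your proposal is correct and follows essentially the same route as the paper: it reproduces the derivation in Section \ref{sec:koorpieri}, namely evaluating the eigenvalue equation \eqref{KHmacdo} at $x=q^\mu t^{\rho^*}$, transferring to $P_\mu^{(a^*,b^*,c^*,d^*)}$ via \eqref{duapolsabcd}, replacing the normalization by \eqref{normpolabcd} so that the conjugation by $\Delta^{(a,b,c,d)}(q^\mu t^{\rho^*})\,t^{-\rho\cdot\mu}$ appears, and then applying the involution $*$ together with the swap $\lambda\leftrightarrow\mu$. The interpolation step you add at the end (determining the Laurent polynomial from its values on the grid $x=q^\mu t^{\rho^*}$) is exactly the argument the paper makes explicit in the type $A$ case and leaves implicit here.
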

\noindent The explicit first Pieri operator $\cH_1^{(a,b,c,d)}(s;q,t)$ is derived  in Theorem \ref{korpier} of Appendix \ref{appB}.

\subsection{Specialization of the parameters $(a,b,c,d)$}
\label{sec:macdop}
\begin{table}
\begin{center}
\renewcommand{\arraystretch}{2}
\begin{tabular}{ |c|c|c|c|c|c|c|c|c|c|} 
\hline
 $\g$ & $\g^*$& $a$ & $b$ & $c$ & $d$ & $R$  & S & $R^*$ & $\xi_\g$ \\ 
\hline
$D_{N}^{(1)}$ & $D_{N}^{(1)}$ &$1$ & $-1$ & $q^{\frac{1}{2}}$ & $-q^{\frac{1}{2}}$ & $D_N$ & $D_N$ & $D_N$ & 0 \\ 
\hline
$B_{N}^{(1)}$ & $C_{N}^{(1)}$ &$t $ & $-1$ & $q^{\frac{1}{2}}$ & $-q^{\frac{1}{2}}$ & $B_N$ & $B_N$ & $C_N $ &$\frac12$\\ 
\hline
$C_{N}^{(1)}$ &$B_{N}^{(1)}$ &$t^{\frac{1}{2}}$ & $-t^{\frac{1}{2}}$ & $t^{\frac{1}{2}}\,q^{\frac{1}{2}}$ & $-t^{\frac{1}{2}}\,q^{\frac{1}{2}}$ & $C_N$ & $C_N$ &$B_N$& 1 \\
\hline
$A_{2N-1}^{(2)}$ & $A_{2N-1}^{(2)}$ &$t^{\frac{1}{2}}$ & $-t^{\frac{1}{2}}$ & $q^{\frac{1}{2}}$ & $-q^{\frac{1}{2}}$ & $C_N$ & $B_N$ &$C_N$& $\frac12$ \\
\hline
$D_{N+1}^{(2)}$ & $D_{N+1}^{(2)}$ &$t $ & $-1$ & $t\, q^{\frac{1}{2}}$ & $-q^{\frac{1}{2}}$ & $B_N$ & $C_N$ &$B_N$& 1\\
\hline
$A_{2N}^{(2)}$ & $A_{2N}^{(2)}$ &$t $ & $-1$ & $t^{\frac{1}{2}}\,q^{\frac{1}{2}}$ & $-t^{\frac{1}{2}}\,q^{\frac{1}{2}}$ & $BC_N$ & -- & $BC_N$& 1\\
\hline
\end{tabular}
\vskip.5cm
\caption{Specialization of the Koornwinder parameters $a,b,c,d$ corresponding to the affine algebra $\g$. The pair  $(R,S)$ refer to a pair of classical root systems corresponding to Macdonald's notation \cite{macdoroot}, used in Appendix \ref{appendixA}, except for $A_{2n}^{(2)}$. }\label{tableone}    \label{korspec}
\end{center}
\end{table}
\subsubsection{$\g$-Macdonald operators and polynomials}
\label{macdopsec}

The Macdonald operators for type $\g$ are a set of $N$ commuting operators which commute with the specialization of the Koornwinder operators at values of $(a,b,c,d)$ indicated in Table \ref{korspec}. The list of operators is given in Definition \ref{malphadef}. In most cases, these are just the specialized Koornwinder-Macdonald operators, but for a few exceptions, where the operators are chosen so that their eigenvalues be fundamental characters of the finite-dimensional algebra $R^*=R(\g^*)$. This occurs when $R^*=B_N$ or $D_N$.

\begin{remark}\label{Atwotworem}
The case of $A_{2N}^{(2)}$ is special, as $R=BC_N=R^*$ is non-reduced. In this case, we must use the set of simple roots $\al_a=\al_a^*$ for $B_N$,
and the set of fundamental weights $\omega_a=\omega_a^*$ for $C_N$.
\end{remark}

The principle for the choice of $\g$-Macdonald operators is that their spectrum generates
the
Grothendieck ring of the algebra $R^*$ associated to each $\g$. The choice is not unique. 
Define
\begin{equation}\label{NGdef} N_\g:=\left\{ \begin{array}{ll}
N, &{\rm for}\ \g=A_{N-1}^{(1)},B_N^{(1)},A_{2N-1}^{(2)},A_{2N}^{(2)} ,\\
N-1, &{\rm for}\ \g= C_N^{(1)},D_{N+1}^{(2)} ,\\
N-2, & {\rm for}\  \g=D_N^{(1)}. \end{array}\right. 
\end{equation}
\begin{table}
\begin{center}
\renewcommand{\arraystretch}{2}
\begin{tabular}{|c|c|c|} 
\hline
R & $\hat{e}_a^{(R)}(x)$ \\ \hline
$D_N$ & $\hat{e}_1,...,\hat e_{N-2}, \hat e_{N-1}^{(D_N)}, \hat e_{N}^{(D_N)}$  \\ 
\hline
$B_N$ & $\hat e_1, \hat e_2,...,\hat e_{N-1}, \hat e_N^{(B_N)}$ \\ 
\hline
$C_N$, $BC_N$ & $\hat e_1, \hat e_2,...,\hat e_N$ \\ \hline
\end{tabular}
\vskip.5cm
\caption{\small The list of chosen symmetric functions forming a basis for the fundamental characters of the finite Lie algebra $R$. The exceptional symmetric functions in the table are given in \eqref{restofehat}.} 
\label{chartable}
\end{center}
\end{table}

\begin{defn}\label{gmacdodef} For $m\leq N_\g$,
the $\g$-Macdonald operators ${\mathcal D}_m^{(\g)}= {\mathcal D}_m^{(\g)}(x)$ are
$${\mathcal D}_m^{(\g)}:= {\mathcal D}_m^{(a,b,c,d)}, \qquad m\in[1,N_\g], $$
where $ {\mathcal D}_m^{(a,b,c,d)}$ are as in Definition \ref{KorMacdef} with
$(a,b,c,d)$ specialized according Table \ref{korspec}. For $N_\g<m\leq N$, we use the operators constructed by Macdonald,
described in Section \ref{appmacsec}:
\begin{eqnarray}
D_N^{(1)}:\quad &&
{\mathcal D}_a^{(D_N^{(1)})}:={\mathcal E}_{\omega_a}^{(D_N^{(1)})},\qquad a=N-1,N\label{dplus};\\
C_N^{(1)}:\quad && {\mathcal D}_N^{(C_N^{(1)})}:={\mathcal E}_{\omega_N}^{(C_N^{(1)})}, \label{cplus}\\
D_{N+1}^{(2)}:&&
{\mathcal D}_N^{(D^{(2)}_{N+1})}:={\mathcal E}_{\omega_N}^{(D^{(2)}_{N+1})} \label{dtwoplus}.
\end{eqnarray}
where ${\mathcal E}_{\omega_a}^{(D_N^{(1)})},{\mathcal E}_{\omega_N}^{(C_N^{(1)})},{\mathcal E}_{\omega_N}^{(D^{(2)}_{N+1})}$ are as in (\ref{MD2}-\ref{MD3}),\eqref{CNN},\eqref{D2NN} respectively.
\end{defn}

The choice of higher commuting difference operators \eqref{gmacdodef} is such that the eigenvalue equation for the $m$th $\g$-Macdonald operator is (see Theorem \ref{specmacG}):
\begin{equation}\label{macdo}
{\mathcal D}_m^{(\g)}(x;q,t) P_\lambda^{(\g)}(x)=\theta_m^{(\g)}\, {\hat e}_m^{(R^*)}(s) \, P_\lambda^{(\g)}(x), \qquad 
\end{equation}
where $\theta_m^{(\g)}$ are as in (\ref{defthone}-\ref{defthtwo}) and ${\hat e}_m^{(R)}(x)$ are listed in Table \ref{chartable} (as these involve the 
fundamental weights we choose the same eigenvalues for type $BC_N$ and $C_N$, see Remark \ref{Atwotworem}).

In particular, the first $\g$-Macdonald operators are simply the specialization of the Koornwinder-Macdonald operator \eqref{MKopone}. 
\begin{equation}
\D_1^{(\g)}(x;q,t) = t^{\rho_1^\g} \hat e_1(t^{\rho^\g}) + \sum_{i=1}^N \sum_{\epsilon=\pm1} \phi_{i,\epsilon}^{(\g)}(x;q,t) (\Gamma_i^\epsilon -1).
\end{equation}\label{macdops}
The factor $\sigma$ under the specialization is $t^{\xi_\g}$ (see Table \ref{tableone}), so that 
\begin{equation}\label{svarg}
s_i=q^{\lambda_i} t^{N+\xi_\g-i}=q^{\lambda_i} t^{\rho_i^{(\g)}}, \qquad (i=1,2,...,N) ,\end{equation}
where $\rho^{(\g)}=\rho(S)$ is the half-sum of positive roots of $S$ (except when $\g=A_{2N}^{(2)}$).
The functions $\phi_{i,\epsilon}^{(\g)} (x;q,t)$ are
\begin{eqnarray*}\renewcommand{\arraystretch}{2}
\phi_{i,\epsilon}^{(\g)} (x;q,t) = \prod_{j\neq i}\prod_{\epsilon'=\pm1}
\frac{t x_i^\epsilon x_j^{\epsilon'}-1}{x_i^\epsilon x_j^{\epsilon'}-1}\times
\left\{
\begin{array}{l l}\renewcommand{\arraystretch}{2}
1 , &\g= D_N^{(1)};\\ 
\displaystyle{\frac{t x_i^\epsilon-1}{x_i^\epsilon-1}}, & \g=B_N^{(1)};\\
\displaystyle{\frac{t x_i^{2\epsilon}-1}{x_i^{2\epsilon}-1}\frac{t q x_i^{2\epsilon}-1}{q x_i^{2\epsilon}-1}}, & \g=C_N^{(1)};\\
\displaystyle{\frac{t x_i^{2\epsilon}-1}{x_i^{2\epsilon}-1}}, & \g=A_{2N-1}^{(2)};\\
\displaystyle{\frac{t x_i^{\epsilon}-1}{x_i^{\epsilon}-1}\frac{t q^{1/2} x_i^{\epsilon}-1}{q^{1/2} x_i^{\epsilon}-1}}, & \g=D_{N+1}^{(2)};\\
\displaystyle{\frac{t x_i^{\epsilon}-1}{x_i^{\epsilon}-1}\frac{t q x_i^{2\epsilon}-1}{q x_i^{2\epsilon}-1}}, & \g=A_{2N}^{(2)}.\\
\end{array}
\right.
\end{eqnarray*}

The unique monic eigenfunction $P_\lambda^{(\g)}(x)$ of $D_1^{(\g)}(x;q,t)$ with eigenvalue $t^{\rho^\g_1} \hat{e}_1(s)$, where $\lambda$ is $\g$-partition, is the $\g$-type Macdonald (Laurent) polynomial.

\begin{defn}
A $\g$-partition is a set $\lambda=(\lambda_1,...,\lambda_N)$ such that $\sum_i \lambda_i e_i$ is a dominant integral weight of $R=R(\g)$. 
\end{defn}
The $\g$-partitions are simply integer partitions except in the cases $R=B_N, D_N$, where
\begin{eqnarray*}
\lambda\in \left\{\begin{array}{ll}
\left\{(\Z_+)^{N-1}\times \Z \right\}\cup 
\left\{(\Z_+\!\!+\!\!{\scriptstyle\frac{1}{2}})^{N-1}\times (\Z+\!\!{\scriptstyle\frac{1}{2}})\right\},\  
\lambda_1\geq \cdots \geq \lambda_{N-1}\geq |\lambda_N|,& R=D_N;\\ \\
(\Z_+)^N \cup (\Z_+\!\!+\!\!{\scriptstyle\frac{1}{2}})^{N},\ 
\lambda_1\geq \lambda_2\geq \cdots \geq \lambda_N\geq 0,& R=B_N.
\end{array}\right.
\end{eqnarray*}

 \begin{remark}
The $\g$-Macdonald polynomial $P_\lambda^{(\g)}(x)$ with $\lambda$ an integer partition is the specialization of the Koornwinder polynomial, indexed by the same partition, to the parameters of Table \ref{tableone}. 
For non-integer partitions, $\g$-Macdonald polynomials can be obtained from a different specialization of the parameters \cite{vandiej}, see Section \ref{bnproofsec} in type $B$. Alternatively, we can use the specialization of the
universal functions of Section \ref{sec:universal}, see remark \ref{accessrem}.
\end{remark}

\subsubsection{Duality}
\label{sec:duag}
The involution $*$ acting on the parameters $(a,b,c,d)$ implies an involution $\g\mapsto \g^*$ and $R\mapsto R^*$, as listed in Table \ref{tableone}. In particular, $\g=\g^*$ except in the cases $ (C_N^{(1)})^*= B_{N}^{(1)}$ and $(B_N^{(1)})^*= C_{N}^{(1)}$.

The duality relation \eqref{duapolsabcd}, specialized to $\g$-Macdonald polynomials is
\begin{equation}\label{duapols}
\frac{P_\lambda^{(\g)}(q^\mu t^{\rho^*})}{P^{(\g)}_\lambda(t^{\rho^*})}=\frac{P^{(\g^*)}_\mu(q^\lambda t^{\rho})}
{P^{(\g^*)}_{\mu}(t^{\rho})}.
\end{equation}
Note, however, that the range of validity of \eqref{duapols} is wider, as $\lambda$ can be any $\g$-partition, 
while $\mu$ is any $\g^*$-partition, and these are not necessarily integer partitions as in \eqref{duapolsabcd}.
First conjectured by Macdonald, the duality \eqref{duapols} was successively proved for all types in the case of integer partitions: 
for type A, it appears in \cite{macdo}, for other types the main proof is in \cite{Cherednik95}, 
supplemented by \cite{vDselfdual} and \cite{Sahi} upon specialization of \eqref{duapolsabcd}. See  Section \ref{sec:universal} for a more general duality statement.

\begin{table}
  \begin{center}
       \begin{tabular}{ |c|c|} 
\hline
 G & {\rm affine roots in} $\widehat{R}_{++}$
\\
\hline
$D_{N}^{(1)}$ & $n \delta +(e_i\pm e_j)\quad (1\leq i<j \leq N; n\geq 1)$ \\
\hline
$B_{N}^{(1)}$ &$ n\delta+ e_i$\ ($1\leq i\leq N$),\  $n \delta +(e_i\pm e_j)\  (1\leq i<j \leq N; n\geq 1)$ \\
\hline
$C_{N}^{(1)}$ &$n \delta+ 2e_i$\ ($1\leq i\leq N$),\  $n \delta +(e_i\pm e_j)\  (1\leq i<j \leq N; n\geq 1)$ \\
\hline
$A_{2N-1}^{(2)}$ &$2 n \delta+ 2e_i$\ ($1\leq i\leq N$),\  $n \delta +(e_i\pm e_j)\quad (1\leq i<j \leq N; n\geq 1)$ \\
\hline
$D_{N+1}^{(2)}$ &$\frac{n}{2} \delta+ e_i$\ ($1\leq i\leq N$),\  $n \delta +(e_i\pm e_j)\quad (1\leq i<j \leq N; n\geq 1)$ \\
\hline
$A_{2N}^{(2)}$ &$n \delta+ e_i$,\  $(2n-1)\delta+2 e_i$\ ($1\leq i\leq N$), $n \delta +(e_i\pm e_j)\quad (1\leq i<j \leq N; n\geq 1)$ \\
\hline
    \end{tabular}
    \vskip.5cm
     \caption{The subset $\widehat{R}_{++}$ of the affine roots for each affine algebra $\g$.}\label{posroots}
  \end{center} 

\end{table}

For $\alpha=n\delta + \alpha_1 + \cdots + \alpha_N\in\widehat{R}$, define $x^\alpha = q^{-n} x_1^{\alpha_1}\cdots x_N^{\alpha_N}$. The function $\Delta$ in Equation \eqref{deltabcd}  has the following specializations:
\begin{equation}
\Delta^{(\g)}(x) = \prod_{\alpha\in \widehat{R}_{++}}\frac{1-x^{-\alpha}}{1-t^{-1} x^{-\alpha}},
\label{deltaG}
\end{equation}
where $\widehat{R}_{++} = \{\alpha + n \delta\in \widehat{R} : \alpha\in R_+, n>0\}$ is a subset of the affine roots of $\g$, see
Table \ref{posroots}.
In all cases but $\g=A_{2N}^{(2)}$, $\Delta$ is closely related to a simplified version of Macdonald's function $\Delta^+$ \cite{macdoroot}, suitable for taking the dual $q$-Whittaker limit $t\to\infty$. 
The function $\Delta$ enters the duality relation \eqref{duapols} via the evaluation formulas (see (0.6) in \cite{Cherednik95})
\begin{equation}\label{normpol}
P^{(\g)}_\lambda(t^{\rho^*})
=t^{\rho^*\cdot\lambda}\frac{\Delta^{(\g^*)}(t^{\rho})}{\Delta^{(\g^*)}(q^\lambda t^{\rho})},\quad
P^{(\g^*)}_\mu(t^{\rho})=t^{\rho\cdot\mu}\frac{\Delta^{(\g)}(t^{\rho^*})}{\Delta^{(\g)}(q^\mu t^{\rho^*})}.
\end{equation}

\subsubsection{Pieri rules}
\label{sec:macdopieri}
As in the case of generic $(a,b,c,d)$, the Pieri rules follow from the eigenvalue equations \eqref{macdo} and the duality \eqref{duapols}. Start with \eqref{macdo} with $x=q^\mu\,t^{\rho^*}$, and specialize $\mu$ to a $\g^*$-partition:
\begin{equation}\label{Hmacdo}
{\mathcal D}_m^{(\g)}(q^\mu\,t^{\rho^*};q,t) P_\lambda^{(\g)}(q^\mu\,t^{\rho^*})=\theta_m^{(\g)}\, {\hat e}_m^{(R^*)}(q^\lambda t^\rho) \, P_\lambda^{(\g)}(q^\mu\,t^{\rho^*}),
\end{equation}
where $(x,\Gamma)$ is specialized to $(q^\mu t^{\rho^*},e^{\partial_\mu})$.
Following the same steps as in Section \ref{sec:koorpieri}, using the duality relation \eqref{duapols}, one obtains the $\g$-Pieri formulas:
\begin{thm}\label{mactopieri}
The Pieri rules for $\g$-Macdonald polynomials are
\begin{equation}
\cH_m^{(\g)}(s;q,t) \, P_\lambda^{(\g)}(x)= {\hat e}_m^{(R)}(x)\, P_\lambda^{(\g)}(x),
\label{PieriG}
\end{equation}
where the difference operators $\cH_m^{(\g)}(s;q,t)$ are given in terms of $\D^{(\g^*)}_m(s;q,t)$:
\begin{equation}
\cH_m^{(\g)}(s;q,t)=\frac{1}{\theta_m^{(\g^*)}}\, t^{\rho^*\cdot\lambda} \, \Delta^{(\g^*)}(s)^{-1}\, {\mathcal D}_m^{(\g^*)}(s;q,t)\, \Delta^{(\g^*)}(s) \,t^{-\rho^*\cdot\lambda} .
\label{PieriOpG}
\end{equation}
\end{thm}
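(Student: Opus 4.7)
The plan is to follow exactly the template already carried out for generic Koornwinder parameters in Section \ref{sec:koorpieri} (Theorem \ref{kortopieri}), now under the specializations of Table \ref{korspec}, so that duality connects eigenvalue equations for $\g$ with Pieri rules for $\g^*$ (and vice versa). The key new ingredient is that the $\g$-specialized duality \eqref{duapols} and evaluation formula \eqref{normpol} have a wider range of validity than their generic counterparts (they hold for arbitrary $\g$-partitions $\lambda$ and $\g^*$-partitions $\mu$), and that the $\g$-Macdonald operators ${\mathcal D}_m^{(\g)}$ have been chosen in Definition \ref{gmacdodef} so that their eigenvalues $\theta_m^{(\g)}\hat e_m^{(R^*)}(s)$ form a basis for the fundamental characters of $R^*$.

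First I would start from the eigenvalue equation \eqref{Hmacdo} for ${\mathcal D}_m^{(\g)}$ evaluated at the discrete points $x=q^\mu t^{\rho^*}$ with $\mu$ a $\g^*$-partition. The operator $\Gamma_i$ acts as $e^{\partial_{\mu_i}}$ under this evaluation. Substituting the duality relation
\begin{equation*}
P_\lambda^{(\g)}(q^\mu t^{\rho^*}) = P_\lambda^{(\g)}(t^{\rho^*})\,\frac{P_\mu^{(\g^*)}(q^\lambda t^\rho)}{P_\mu^{(\g^*)}(t^\rho)}
\end{equation*}
and the normalization \eqref{normpol} to rewrite $P_\mu^{(\g^*)}(t^\rho)^{-1}$ as $t^{-\rho\cdot\mu}\Delta^{(\g)}(q^\mu t^{\rho^*})/\Delta^{(\g)}(t^{\rho^*})$, one finds that after canceling the $\lambda$-independent prefactor $P_\lambda^{(\g)}(t^{\rho^*})/P_\mu^{(\g^*)}(t^\rho)$, the eigenvalue equation becomes
\begin{equation*}
\theta_m^{(\g)}\hat e_m^{(R^*)}(q^\lambda t^\rho)\,P_\mu^{(\g^*)}(q^\lambda t^\rho)
=\bigl[ t^{\rho\cdot\mu}\,\Delta^{(\g)}(q^\mu t^{\rho^*})^{-1}\,{\mathcal D}_m^{(\g)}(q^\mu t^{\rho^*};q,t)\,\Delta^{(\g)}(q^\mu t^{\rho^*})\,t^{-\rho\cdot\mu}\bigr]\,P_\mu^{(\g^*)}(q^\lambda t^\rho).
\end{equation*}

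Next I would apply the involution $*$ acting on the Koornwinder parameters (hence $\g\leftrightarrow\g^*$, $R\leftrightarrow R^*$, $\rho\leftrightarrow\rho^*$, $\theta_m^{(\g)}\leftrightarrow\theta_m^{(\g^*)}$) and relabel $\lambda\leftrightarrow\mu$. This converts the displayed identity into
\begin{equation*}
\hat e_m^{(R)}(q^\mu t^{\rho^*})\,P_\lambda^{(\g)}(q^\mu t^{\rho^*})
=\cH_m^{(\g)}(s;q,t)\bigl|_{s=q^\lambda t^\rho}\,P_\lambda^{(\g)}(q^\mu t^{\rho^*}),
\end{equation*}
which is precisely the specialization of \eqref{PieriG} at $x=q^\mu t^{\rho^*}$, with $\cH_m^{(\g)}$ defined by \eqref{PieriOpG}. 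The remaining step is to promote this discrete identity to an identity of Laurent polynomials in $x$. Since $P_\lambda^{(\g)}(x)\in\mathbb F[x^{\pm1}]^W$ and $\hat e_m^{(R)}(x)$ is also a symmetric Laurent polynomial, the product $\hat e_m^{(R)}(x)\,P_\lambda^{(\g)}(x)$ lies in the finite-dimensional space spanned by $\{P_\nu^{(\g)}(x)\}$ with $\nu$ ranging over a finite set of $\g$-partitions; the evaluation map $\nu\mapsto (P_\nu^{(\g)}(q^\mu t^{\rho^*}))_{\mu}$ on $\g^*$-partitions $\mu$ is known to be injective on this space (this is precisely what makes the duality well-posed and is a consequence of the triangular expansion of $P_\lambda^{(\g)}$ combined with nondegeneracy of the $t$-deformed Cherednik pairing). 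Hence the polynomial identity follows from its validity on sufficiently many discrete points.

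The main technical obstacle is the last step: verifying that the specialized duality and evaluation formulas remain valid, with the correct normalization, under all the specializations of Table \ref{korspec}, including the subtle cases $R=B_N,D_N$ where $\g$-partitions may have half-integer parts and where the higher-rank operators come from Macdonald's construction \eqref{dplus}--\eqref{dtwoplus} rather than from the Koornwinder-van Diejen family. For these operators one must check that the eigenvalue formula \eqref{macdo} holds with the stated $\hat e_m^{(R^*)}(s)$, so that the right-hand side of the Pieri rule \eqref{PieriG} indeed features the correct character $\hat e_m^{(R)}(x)$ after applying $*$. These verifications are carried out case by case and are relegated to Appendix \ref{appB} (and Theorem \ref{korpier}), whose computation of the first Pieri operator $\cH_1^{(\g)}(s;q,t)$ confirms the result.
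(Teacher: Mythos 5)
Your proposal is correct and follows essentially the same route as the paper: specialize the eigenvalue equation \eqref{Hmacdo} at $x=q^\mu t^{\rho^*}$, substitute the $\g$-duality \eqref{duapols} and the evaluation formula \eqref{normpol} to conjugate $\D_m^{(\g)}$ by $\Delta^{(\g)}$, apply the involution $*$ with $\lambda\leftrightarrow\mu$, and then lift the identity from the discrete specialization points back to a Laurent-polynomial identity. The paper compresses this to "following the same steps as in Section \ref{sec:koorpieri}", so your more explicit justification of the interpolation step and your flagging of the half-integer-partition and Macdonald-operator cases are consistent elaborations rather than deviations.
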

The explicit Pieri operators $\cH_1^{(\g)}(s;q,t)$ are listed in Section \ref{firpierapp}.

\subsection{The $q$-Whittaker limit}
\label{sec:qwhittak}
The Macdonald polynomials have a symmetry $(t,q)\mapsto (t^{-1},q^{-1})$. For certain root systems, the $q$-Whittaker polynomials are the $t\to 0$ limit of the Macdonald polynomials, and therefore the $q^{-1}$-Whittaker polynomials are the $t\to\infty$ limit of these polynomials. In this paper, we define the various functions and operators so that they have well-defined limits as $t\to\infty$. We refer to this as the {\it $q$-Whittaker limit} by slight abuse of terminology. We also call $q$-Whittaker polynomials what technically should be called $q^{-1}$-Whittaker polynomials.

\subsubsection{Whittaker difference operators}\label{sec:qDiffOps}
The eigenvalue equations and Pieri rules of the previous section have well-defined $q$-Whittaker limits. Define
\begin{equation}\label{whitpol}
\Pi_\lambda^{(\g)}(x)=\lim_{t\to\infty} P_\lambda^{(\g)}(x)
\end{equation}
and
\begin{equation}\label{macdoD}
D_a^{(\g)}(x;q)=\lim_{t\to \infty} (\theta_a^{(\g)})^{-2} \,\D_a^{(\g)}(x;q,t),\qquad (a=1,2,...,N).
\end{equation}
The first $q$-Whittaker difference operators are obtained from Equation \eqref{macdops}:
\begin{equation}
D_1^{(\g)}(x;q)=1+\sum_{i=1\atop \epsilon=\pm1}^N \phi_{i,\epsilon}^{(\g)}(x) (\Gamma_i^{\epsilon}-1), \label{uniG}
\end{equation}
with
\begin{eqnarray}
\phi_{i,\epsilon}^{(\g)}(x)&=&\prod_{j\neq i}\frac{x_i^\epsilon}{x_i^\epsilon-x_j}\, \frac{x_i^\epsilon x_j}{x_i^\epsilon x_j-1}
\times\left\{ \begin{array}{ll}
1 & (D_N^{(1)}); \\
\frac{x_i^\epsilon}{x_i^\epsilon-1}, & (B_N^{(1)}), \\
\frac{x_i^{2\epsilon}}{x_i^{2\epsilon}-1}\, \frac{q x_i^{2\epsilon}}{q x_i^{2\epsilon}-1}, & (C_N^{(1)}),\\
\frac{x_i^{2\epsilon}}{x_i^{2\epsilon}-1}, & (A_{2N-1}^{(2)} ),\\
\frac{x_i^\epsilon}{x_i^\epsilon-1}\, \frac{q^{\frac{1}{2}} x_i^\epsilon}{q^{\frac{1}{2}} x_i^\epsilon-1}, & (D_{N+1}^{(2)}),\\
\frac{x_i^\epsilon}{x_i^\epsilon-1}\,\frac{q x_i^{2\epsilon}}{q x_i^{2\epsilon}-1}, & (A_{2N}^{(2)}).
\end{array}\right. \label{phig}
\end{eqnarray}

The limiting eigenvalues of $D_m^{(\g)}(x;q)$ are not symmetric functions because of the dependence of $s$ on $t$. Instead, they are given by the dominant term in $t$ in the functions $\hat{e}^{(R^*)}_a(s)$, $\Lambda^{\omega_a^*}$.
The eigenvalue equations are (see Section \ref{appqwhit}, Theorem \ref{qwithakthm})
\begin{equation}\label{eigenqwhit}
D_a^{(\g)}(x;q)\,  \Pi_\lambda^{(\g)}(x)=\Lambda^{\omega_a^*}\, \Pi_\lambda^{(\g)}(x).
\end{equation}

\subsubsection{Pieri rules and Toda Hamiltonians}\label{TodaHamil}

The Pieri operators \eqref{PieriOpG} have well-defined limits as $t\to\infty$. Let
\begin{equation}\label{pierilimq}
H_m^{(\g)}(\lL;q):=\lim_{t\to \infty} \cH_m^{(\g)}(s=\lL t^{\rho^{(\g)}};q,t) .
\end{equation}
Using the Pieri operators from Section \ref{firpierapp} (see also Remark \ref{remlimops}), and taking the $t\to\infty$ limit, we obtain the following list of first Pieri operators.
\begin{thm}\label{hamilist}
For all $\g$, the first Pieri operators take the form
\begin{equation}
H_1^{(\g)}(\Lambda;q) = \sum_{i=1}^N(1- \Lambda^{-\alpha_{i-1}^*})T_i + \sum_{i=1}^{N_{R*}} (1-\Lambda^{-\alpha_i^*})T_i^{-1} + M^{(\g)}(\Lambda;q), \label{Hamil}
\end{equation}
where the roots $\al_i^*$ are the simple roots of $R^*$
and $\al_0^*=0$ by convention. 
Here, $N_{R^*}=N-1$ if $R^*=B_N$, $N-2$ if $R^*=D_N$, and is equal to $N$ otherwise.
The boundary terms  $M^{(\g)}(\Lambda;q)$ are as follows:
\begin{equation}
\renewcommand{\arraystretch}{1.5}
M^{(\g)} (\Lambda;q)= \left\{ \begin{array}{ll}
(1-\Lambda^{-\alpha_{N}^*})(T_{N}^{-1} + (1-\Lambda^{-\alpha_{N-1}^*})T_{N-1}^{-1}),& \g=D_N^{(1)},\\ 
(1-\Lambda^{-\alpha_N^*})(1-q\Lambda^{-\alpha_N^*})T_N^{-1} + \Lambda^{-\alpha_N^*}(q^{-1}\Lambda^{-\alpha_{N-1}^*} - (1+q^{-1})),&\g=B_N^{(1)},\\ 
0,& \g=C_N^{(1)}, A_{2N-1}^{(2)},\\ 
(1-\Lambda^{-\alpha_{N}^*})(1-q^{\frac12}\Lambda^{-\alpha_{N}^*})T_N^{-1},& \g=D_{N+1}^{(2)},\\ 
-\Lambda^{-\alpha_N^*}, & \g=A_{2N}^{(2)}.
\end{array}\right.\label{Hamil2}
\end{equation}
\end{thm}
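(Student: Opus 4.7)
The plan is to obtain $H_1^{(\g)}(\Lambda;q)$ by direct evaluation of the $t\to\infty$ limit of
$$\mathcal{H}_1^{(\g)}(s;q,t)=\frac{1}{\theta_1^{(\g^*)}}\, t^{\rho^{*}\cdot\lambda}\,\Delta^{(\g^*)}(s)^{-1}\,\mathcal{D}_1^{(\g^*)}(s;q,t)\,\Delta^{(\g^*)}(s)\,t^{-\rho^{*}\cdot\lambda}$$
under the substitution $s_i=\Lambda_i\, t^{\rho_i^{(\g)}}$. First, using the explicit first $\g^*$-Macdonald operator from \eqref{macdops}, I would compute the $\Delta^{(\g^*)}$-conjugation. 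Since $\mathcal{D}_1^{(\g^*)}$ is a first-order operator, the conjugation amounts to multiplying each coefficient $\phi_{i,\epsilon}^{(\g^*)}(s;q,t)$ by the ratio $\Delta^{(\g^*)}(\Gamma_i^\epsilon s)/\Delta^{(\g^*)}(s)$, which by the product formula \eqref{deltaG} reduces to a finite product over the affine roots involving $s_i$. This yields precisely the Koornwinder first Pieri operator of Appendix \ref{appB} (Theorem \ref{korpier}), specialized to the parameters of Table \ref{korspec} attached to $\g^*$.

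Next I would perform the substitution $s_i=\Lambda_i t^{\rho_i^{(\g)}}$ and the conjugation by $t^{-\rho^*\cdot\lambda}$, which sends each operator $\Gamma_i^{\pm 1}$ on $s$-variables into $t^{\pm\rho_i^*}\,T_i^{\pm 1}$ on the $\Lambda$-variables. For $1\leq i\leq N-1$ and $\epsilon=+1$, the factor $\prod_{j\neq i}\bigl(t s_i/s_j-1\bigr)/\bigl(s_i/s_j-1\bigr)\cdot\bigl(ts_is_j-1\bigr)/\bigl(s_is_j-1\bigr)$, once combined with the $\Delta^{(\g^*)}$-ratios and the weight $t^{\rho_i^*}$, simplifies in the $t\to\infty$ limit to $(1-\Lambda^{-\alpha_{i-1}^*})$; the analogous $\epsilon=-1$ calculation yields $(1-\Lambda^{-\alpha_i^*})$. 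The additive constant in $\mathcal{D}_1^{(\g^*)}$ and the $-1$ contributions from $(\Gamma_i^\epsilon-1)$ cancel after normalization by $\theta_1^{(\g^*)}$, except for residual pieces at the boundary $i=N$. This produces the universal bulk sum $\sum_{i=1}^N(1-\Lambda^{-\alpha_{i-1}^*})T_i+\sum_{i=1}^{N_{R^*}}(1-\Lambda^{-\alpha_i^*})T_i^{-1}$.

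The case-by-case analysis of the boundary correction $M^{(\g)}$ constitutes the main obstacle. Here one must carefully track the extra Koornwinder factor $(1-as_N^\epsilon)(1-bs_N^\epsilon)(1-cs_N^\epsilon)(1-ds_N^\epsilon)/(1-s_N^{2\epsilon})(1-qs_N^{2\epsilon})$ under each specialization of $(a,b,c,d)$ listed in Table \ref{korspec} for $\g^*$, as well as the endpoint pieces of $\Delta^{(\g^*)}$ coming from the short or long affine roots at position $N$ (see Table \ref{posroots}). For $\g=C_N^{(1)}$ and $A_{2N-1}^{(2)}$ these boundary contributions vanish in the limit after cancellation with the constant term, whereas for $\g=D_{N+1}^{(2)}$ and $A_{2N}^{(2)}$ the additional factors $(1-q^{\frac12}x_i^\epsilon)$ or $(1-qx_i^{2\epsilon})$ in $\phi_{N,\epsilon}^{(\g^*)}$ survive and yield the quadratic and linear pieces in $\Lambda^{-\alpha_N^*}$. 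The $D_N^{(1)}$ case requires the further step of using the extra Macdonald operators \eqref{dplus} on the forked end, which produces the mixed $(1-\Lambda^{-\alpha_N^*})(1-\Lambda^{-\alpha_{N-1}^*})T_{N-1}^{-1}$ term. The delicate case is $\g=B_N^{(1)}$, where the surviving denominator $(1-\Lambda_N^{-2})(1-q\Lambda_N^{-2})$ together with the leading $t$-behavior of the four numerator factors must be balanced against the finite part of the additive constant, yielding the characteristic combination $\Lambda^{-\alpha_N^*}\bigl(q^{-1}\Lambda^{-\alpha_{N-1}^*}-(1+q^{-1})\bigr)$ appearing in $M^{(B_N^{(1)})}$. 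In each case the final expression can be matched against \eqref{Hamil2} by collecting coefficients of $T_N^{\pm 1}$ and the $T$-independent remainder.
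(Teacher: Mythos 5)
Your overall strategy --- write $\cH_1^{(\g)}$ as the $\Delta^{(\g^*)}$-conjugation of $\D_1^{(\g^*)}$, recognize the result as the generic Koornwinder Pieri operator of Theorem \ref{korpier} under the appropriate specialization, substitute $s=\Lambda t^{\rho^{(\g)}}$, and take $t\to\infty$ term by term, treating the bulk uniformly and the boundary case by case --- is exactly the route the paper takes: the finite-$t$ specializations are listed explicitly in Appendix \ref{firpierapp} and the limits of the constant pieces $G^{(\g)}$ in Remark \ref{remlimops}. Your bulk analysis and your description of the $B_N^{(1)}$, $C_N^{(1)}$, $A_{2N-1}^{(2)}$, $D_{N+1}^{(2)}$ and $A_{2N}^{(2)}$ boundary terms are consistent with that computation.

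There is, however, one concrete misstep: the claim that the $D_N^{(1)}$ case ``requires the further step of using the extra Macdonald operators \eqref{dplus} on the forked end.'' It does not, and invoking them here would compute the wrong Pieri rule. The first Pieri rule (multiplication by $\hat e_1(x)$) is dual only to the eigenvalue equation of the \emph{first} Macdonald operator $\D_1^{(\g^*)}$; the spinor operators $\mathcal E_{\omega_{N-1}}^{(D_N^{(1)})},\mathcal E_{\omega_N}^{(D_N^{(1)})}$ enter only the Pieri rules for $m=N-1,N$. The mixed term $(1-\Lambda^{-\alpha_N^*})(1-\Lambda^{-\alpha_{N-1}^*})T_{N-1}^{-1}$ arises simply because in $R^*=D_N$ one has $\alpha_N^*=e_{N-1}+e_N$, i.e. $\Lambda^{-\alpha_N^*}=(\Lambda_{N-1}\Lambda_N)^{-1}$: in the specialization of Theorem \ref{korpier} the cross factors $\frac{1-t^{2N-i-j-1}\Lambda_i\Lambda_j}{1-t^{2N-i-j}\Lambda_i\Lambda_j}\,\frac{t^{2N-i-j+1}\Lambda_i\Lambda_j-q}{t^{2N-i-j}\Lambda_i\Lambda_j-q}$ survive the limit precisely for $(i,j)=(N,N-1)$ and $(N-1,N)$, where $2N-i-j=1$, producing the factor $1-(\Lambda_{N-1}\Lambda_N)^{-1}$ in the coefficients of $T_N^{-1}$ and $T_{N-1}^{-1}$. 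A smaller caution: since $\cH_1^{(\g)}$ is Theorem \ref{korpier} evaluated at the $(a,b,c,d)$ of $\g$ (so that the starred parameters appearing in its formula are those of $\g^*$), saying you specialize ``to the parameters attached to $\g^*$'' risks swapping the answers for the non-self-dual pair $B_N^{(1)}\leftrightarrow C_N^{(1)}$.
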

Explicitly, the first Pieri operators are:
\begin{eqnarray*}
&&H_1^{(D_N^{(1)})}(\lL)=T_1+\sum_{i=2}^N \left(1-\frac{\lL_i}{\lL_{i-1}}\right)T_i+\sum_{i=1}^{N-2}
 \left(1-\frac{\lL_{i+1}}{\lL_{i}}\right)T_i^{-1}\nonumber \\
&&\qquad\qquad\qquad +\left(1-\frac{\lL_N}{\lL_{N-1}}\right)\left(1-\frac{1}{\lL_{N-1}\lL_N}\right)T_{N-1}^{-1}+
 \left(1-\frac{1}{\lL_{N-1}\lL_N}\right)T_{N}^{-1},\nonumber\\
&&H_1^{(B_N^{(1)})}(\lL)=T_1+\sum_{i=2}^N \left(1-\frac{\lL_i}{\lL_{i-1}}\right)T_i+\sum_{i=1}^{N-1}
 \left(1-\frac{\lL_{i+1}}{\lL_{i}}\right)T_i^{-1}\nonumber \\
&&\qquad\qquad\qquad +\left(1-\frac{1}{\lL_N^2}\right)\left(1-\frac{q}{\lL_N^2}\right)T_N^{-1}
 +\frac{q^{-1}}{\lL_{N-1}\lL_N}-\frac{1+q^{-1}}{\lL_N^2},\nonumber\\
&&H_1^{(C_N^{(1)})}(\lL)=T_1+\sum_{i=2}^N \left(1-\frac{\lL_i}{\lL_{i-1}}\right)T_i+\sum_{i=1}^{N-1}
 \left(1-\frac{\lL_{i+1}}{\lL_{i}}\right)T_i^{-1}
+\left(1-\frac{1}{\lL_{N}}\right)T_N^{-1}, \nonumber \\
&&H_1^{(A_{2N-1}^{(2)})}(\lL)=T_1+\sum_{i=2}^N \left(1-\frac{\lL_i}{\lL_{i-1}}\right)T_i+\sum_{i=1}^{N-1}
 \left(1-\frac{\lL_{i+1}}{\lL_{i}}\right)T_i^{-1}
+\left(1-\frac{1}{\lL_{N}^2}\right)T_N^{-1}, \nonumber\\
&&H_1^{(D_{N+1}^{(2)})}(\lL)= T_1+\sum_{i=2}^N \left(1-\frac{\lL_i}{\lL_{i-1}}\right)T_i+\sum_{i=1}^{N-1}
\left(1-\frac{\lL_{i+1}}{\lL_{i}}\right)T_i^{-1}\nonumber\\
&&\qquad\qquad\qquad +\left(1-\frac{1}{\lL_N}\right)\left(1-\frac{q^{\frac{1}{2}}}{\lL_N}\right)T_N^{-1}
 -\frac{1+q^{-\frac{1}{2}}}{\lL_N},\nonumber\\
 &&H_1^{(A_{2N}^{(2)})}(\lL)=T_1+\sum_{i=2}^N \left(1-\frac{\lL_i}{\lL_{i-1}}\right)T_i+\sum_{i=1}^{N-1}
 \left(1-\frac{\lL_{i+1}}{\lL_{i}}\right)T_i^{-1}+\left(1-\frac{1}{\lL_{N}}\right)T_N^{-1}-\frac{1}{\lL_{N}}.
\end{eqnarray*}

The first $q$-Whittaker Pieri rule is
\begin{equation}\label{pieriwhit}H_1^{(\g)}(\lL)\, \Pi_\lambda^{(\g)}(x)= {\hat e}_1(x)\, \Pi_\lambda^{(\g)}(x).
\end{equation}
Alternatively this equation can be interpreted as the eigenvector equation for $q$-Whittaker functions
in which the roles of variables $\lL$ and $x$ are interchanged. 

\begin{remark}\label{todarem}
The Pieri operators $H_1^{(\g)}(\Lambda;q)$ match the relativistic $U_q(R)$-Toda Hamiltonians acting on functions of $\Lambda$ 
\cite{vDE,gotsy} upon the following correspondence: 
$$D_N^{(1)}\to D_N,\ \  D_{N+1}^{(2)}\to B_N, \ \ A_{2N-1}^{(2)}\to C_N.$$
The equivalence uses the Etingof automorphism \cite{Etingof} $T_i\mapsto T_{2\varpi_i}$, $\lL_i \mapsto \lL_i T_{-\varpi_i}$ and $v=q^\half$ in the notations of \cite{gotsy}.
To our knowledge, the cases $\g=B_N^{(1)},C_N^{(1)},A_{2N}^{(2)}$ do not appear in the literature in relation to
standard constructions of $q$-Whittaker functions for quantum groups. By a slight abuse of terminology, we still call the corresponding limits of Macdonald polynomials $q$-Whittaker functions, and the limiting Pieri operators Toda Hamiltonians, and keep our labeling 
with (twisted) affine algebras to avoid confusion. The same correspondence in the $q=1$ limit occurs in relation to factorization dynamics \cite{Williams}, where the $B_N,C_N$
cases  match classical Q-system evolutions for the twisted algebras $\g=D_{N+1}^{(2)},A_{2N-1}^{(2)}$ respectively.
\end{remark}

\subsubsection{Generalized $q$-Whittaker difference operators}\label{sec:genqW}

In this section, we consider the $\tau_+\in SL(2,\Z)$-action on the $q$-Whittaker difference operators of Section \ref{sec:qDiffOps}.
Let $\gamma(x)$ be as in Equation \eqref{gamma}. We will define ``translated" operators $D^{(\g)}_{a;n}$ for all $a=1,2,...,N$ and $n\in \Z$,
by suitable use of iterated conjugations with $\gamma^{-1}$. There is a subtlety arising from a distinction according to whether $a$ is a long or short label.
These do not necessarily correspond to long and short roots of $R$, but are determined instead by the Q-system evolutions described in the next section. 
\begin{defn}\label{longshortlabels}
All labels $a\in[1,N]$ are long except for the following cases:
$a=N$ for $B_N^{(1)}$, $a\in[1,N-1]$ for $C_N^{(1)}$, and $a\in[1,N]$  for $A_{2N}^{(2)}$.
\end{defn}

In general, we think of the integer $n$ as a discrete time, and as ${\rm Ad}_{\gamma^{-1}}$
as a ``time translation" operator. 

\noindent{\bf Definition of $D_{1;n}^{(\g)}(x)$ and properties.} 
We start with the  definition of the operators $D_{1;n}^{(\g)}(x)$, which depends on whether 
$a=1$ is a long or short label.

\begin{defn}\label{defoneD1}
When the label $a=1$ is long ($\g=D_N^{(1)},B_N^{(1)},A_{2N-1}^{(2)},D_{N+1}^{(2)}$), 
define, for any $n\in\Z$,
\begin{equation}\label{longdef}
D_{1;n}^{(\g)}(x):=
q^{-\frac{n}{2}} \, \gamma^{-n}\, D_1^{(\g)}(x;q)\, \gamma^n =q^{-\frac{n}{2}}+\sum_{i,\epsilon} \phi_{i,\epsilon}^{(\g)}(x) \, (x_i^{n\epsilon}\Gamma_i^{\epsilon}-q^{-\frac{n}{2}}),
\end{equation}
with $\phi_{i,\epsilon}^{(\g)}(x)$ as in \eqref{phig}.
\end{defn}
\begin{defn}\label{deftwoD1} When the label $a=1$ is short ($\g=C_N^{(1)},A_{2N}^{(2)}$) define
\begin{equation}\label{minusone}
D_{1;-1}^{(\g)}(x)=\sum_{i,\epsilon} \phi_{i,\epsilon}^{(\g)}(x)\, x_i^{-\epsilon}(\Gamma_i^{\epsilon}-1) ,
\end{equation}
with $\phi_{i,\epsilon}^{(\g)}(x)$ as in \eqref{phig}.
Then for all $n\in \Z$ and for $i=0,-1$, define
\begin{equation}\label{evolodd}
D_{1;2n+i}^{(\g)}(x) =q^{-n}\, \gamma^{-2n}\, D_{1,i}^{(\g)}(x;q)\, \gamma^{2n},%
\end{equation}
where $D^{(\g)}_{1,0}(x;q)=D^{(\g)}_1(x;q)$.
\end{defn}
Conjugating \eqref{uniG} and \eqref{minusone} by $\gamma^{-2n}$, we have, when $\g=C_N^{(1)},A_{2N}^{(2)}$,
\begin{eqnarray}
D_{1;2n}^{(\g)}(x)&=&q^{-n}+ \sum_{i,\epsilon} \phi_{i,\epsilon}^{(\g)}(x)\, (x_i^{2n\epsilon}\Gamma_i^{\epsilon}-q^{-n}),\label{defsevodGone}\\
D_{1;2n-1}^{(\g)}(x)&=&\sum_{i,\epsilon} \phi_{i,\epsilon}^{(\g)}(x)\, x_i^{-\epsilon}(x_i^{2n\epsilon}\Gamma_i^{\epsilon}-q^{-n})  .\label{defsevodG}
\end{eqnarray}

The main difference between the cases of Definitions \ref{defoneD1} and \ref{deftwoD1} is that in the latter case the translation splits 
into distinct even and odd time evolution equations (\ref{defsevodGone}-\ref{defsevodG}). 

\begin{lemma}\label{comuonelem}
We have the commutation relation
\begin{equation}
[D_{1;n}^{(\g)}(x),{\hat e}_1(x)]=(q-1) \, D_{1;n+1}^{(\g)}(x)+(q^{-1}-1)\, D_{1;n-1}^{(\g)}(x)\label{Gid}.
\end{equation}
\end{lemma}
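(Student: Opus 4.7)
The plan is to verify the identity by direct computation, exploiting the fact that $\hat e_1(x)=\sum_j(x_j+x_j^{-1})$ commutes with all $x_i$ and hence with every coefficient function $\phi_{i,\epsilon}^{(\g)}(x)$, so the commutator is driven entirely by moving $\hat e_1$ past the shift operators $\Gamma_i^{\epsilon}$. From $\Gamma_i^\epsilon x_j=q^{\epsilon\delta_{ij}}x_j\Gamma_i^\epsilon$ one obtains immediately
\begin{equation*}
[\Gamma_i^\epsilon,\hat e_1(x)]=\bigl[(q^\epsilon-1)x_i+(q^{-\epsilon}-1)x_i^{-1}\bigr]\Gamma_i^\epsilon.
\end{equation*}

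For the long-label case, I would substitute \eqref{longdef} and pull the constant $q^{-n/2}$ and each $\phi_{i,\epsilon}^{(\g)}(x)\,x_i^{n\epsilon}$ past $\hat e_1$ without cost, leaving
\begin{equation*}
[D_{1;n}^{(\g)},\hat e_1(x)]=\sum_{i,\epsilon}\phi_{i,\epsilon}^{(\g)}(x)\,x_i^{n\epsilon}\bigl[(q^\epsilon-1)x_i+(q^{-\epsilon}-1)x_i^{-1}\bigr]\Gamma_i^\epsilon.
\end{equation*}
Matching the two values of $\epsilon=\pm 1$ regroups this as $(q-1)\sum_{i,\epsilon}\phi_{i,\epsilon}^{(\g)}\,x_i^{(n+1)\epsilon}\Gamma_i^\epsilon+(q^{-1}-1)\sum_{i,\epsilon}\phi_{i,\epsilon}^{(\g)}\,x_i^{(n-1)\epsilon}\Gamma_i^\epsilon$, since for $\epsilon=-1$ the factor $(q^{-\epsilon}-1)x_i^{-1}=(q-1)x_i^{-1}$ combines into the $(q-1)$ bucket and conversely. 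Using \eqref{longdef} in reverse, each sum equals $D_{1;n\pm1}^{(\g)}-q^{-(n\pm1)/2}(1-S)$ with $S:=\sum_{i,\epsilon}\phi_{i,\epsilon}^{(\g)}(x)$, and the scalar remainder is proportional to $(q-1)q^{-(n+1)/2}+(q^{-1}-1)q^{-(n-1)/2}=q^{-(n-1)/2}\bigl[(q-1)q^{-1}+(q^{-1}-1)\bigr]=0$, yielding \eqref{Gid}.

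For the short labels ($\g=C_N^{(1)},A_{2N}^{(2)}$) I would run the same scheme on \eqref{defsevodGone} and \eqref{defsevodG} separately, checking $n$ even and $n$ odd. For $n=2n'$, the computation yields $(q-1)\sum_{i,\epsilon}\phi_{i,\epsilon}^{(\g)}\,x_i^{(2n'+1)\epsilon}\Gamma_i^\epsilon+(q^{-1}-1)\sum_{i,\epsilon}\phi_{i,\epsilon}^{(\g)}\,x_i^{(2n'-1)\epsilon}\Gamma_i^\epsilon$; these sums match $D_{1;2n'\pm 1}^{(\g)}$ up to multiples of $T:=\sum_{i,\epsilon}\phi_{i,\epsilon}^{(\g)}\,x_i^{-\epsilon}$, and the counterterms cancel by $(q-1)q^{-(n'+1)}+(q^{-1}-1)q^{-n'}=0$. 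For $n=2n'-1$, the same manipulation produces sums $\sum\phi_{i,\epsilon}^{(\g)}\,x_i^{2n'\epsilon}\Gamma_i^\epsilon$ and $\sum\phi_{i,\epsilon}^{(\g)}\,x_i^{2(n'-1)\epsilon}\Gamma_i^\epsilon$, which match $D_{1;2n'}^{(\g)}$ and $D_{1;2n'-2}^{(\g)}$ modulo $(1-S)$-corrections that cancel via $(q-1)q^{-n'}+(q^{-1}-1)q^{-n'+1}=0$. I do not anticipate any real obstacle; the only care needed is the bookkeeping of $\epsilon=\pm 1$ signs and of the additive constants, all of which collapse via the single identity $(q-1)+q(q^{-1}-1)=0$.
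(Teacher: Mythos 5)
Your proof is correct and takes essentially the same route as the paper's: compute $[\Gamma_i^\epsilon,\hat e_1(x)]=\sum_{\epsilon'}(q^{\epsilon'}-1)x_i^{\epsilon\epsilon'}\Gamma_i^\epsilon$ and regroup the resulting sum by $\epsilon'=\pm1$ into the $\Gamma$-parts of $D_{1;n\pm1}^{(\g)}$. The paper's proof is terser and leaves implicit the cancellation of the scalar counterterms coming from the constants in Definitions \ref{defoneD1}--\ref{deftwoD1}, which you verify explicitly via $(q-1)+q(q^{-1}-1)=0$; that bookkeeping is exactly right.
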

\begin{proof}
Using
$$[\Gamma_i^\epsilon,\hat e_1(x)]  = (q^{\epsilon}-1)x_i^\epsilon\Gamma_i^\epsilon + (q^{-\epsilon}-1)x_i^{-\epsilon}\Gamma_i^\epsilon  = \sum_{\epsilon'} (q^{\epsilon'}-1)x^{\epsilon \epsilon'}\Gamma^\epsilon.$$ 
we have
\begin{eqnarray*}
[D_{1,n}^{(\g)}, e_1^{(x)}]&=& \sum_{i=1}^N\sum_{\epsilon=\pm1} \phi_{i,\epsilon} x^{n \epsilon} [\Gamma_i^\epsilon,\hat e_1(x)] 
= \sum_{i,\epsilon,\epsilon'} \phi_{i,\epsilon} x^{(n+\epsilon')\epsilon}(q^{\epsilon'}-1)\Gamma^\epsilon.
\end{eqnarray*}
\end{proof}

\noindent{\bf Definition of higher generalized $q$-Whittaker operators. }

The higher, generalized $q$-Whittaker operators are denoted by $D_{a;n}^{(\g)}(x)$, with $a\in[1,N]$ and $n\in \Z$. 
They are defined as follows.
\begin{defn}\label{moredefn} 
For all $a\in[1,N]$ and for all $n$,
$$
D_{a,t_a n}^{(\g)}(x) = q^{-nt_a\omega^*_a\cdot \omega_a/2} \gamma^{-t_1 n} D_{a}^{(\g)}(x) \gamma^{t_1 n},
$$
where $D_a^{(\g)}(x)$ are defined in \eqref{macdoD} and $\omega_a$ $(\omega_a^*) $ are fundamental weights of $R$ ($R^*$), $t_a=2$ for short labels and $t_a=1$ for long labels  in Definition \ref{longshortlabels}.

\end{defn}

\begin{defn}\label{cadefs}
For the short labels $a\geq 2$ of $C_N^{(1)}$ and $A_{2N}^{(2)}$, define
\begin{equation}\label{oddshortn}
D_{a;-1}^{(\g)}:=\frac{(-1)^a}{q-1}[D_{1;-a}^{(\g)},D_{a-1;0}^{(\g)}]_{q^a},\qquad (n\in \Z),
\end{equation}
where $[A,B]_p= [A,B]_p=AB-p BA$. Then
$$D_{a;2n-1}^{(\g)}=q^{-n a}\, \gamma^{-2n}\, D_{a;-1}^{(\g)}\, \gamma^{2n} = \frac{(-1)^a}{q-1}[D_{1;2n-a}^{(\g)},D_{a-1;2n}^{(\g)}]_{q^a}.
$$
\end{defn}
Definition \ref{cadefs} anticipates on the desired connection to the quantum Q-systems of the next section and uses a recursive reformulation of the quantum Q-system solution obtained in \cite{DFK16}. 

Finally, to define the generalized difference operators with odd $n$ corresponding to the label $a=N$,  $\g=B_N^{(1)}$, we use
the Rains operators ${\mathcal R}_N^{(u,v)}$ and  $\widehat{\mathcal D}_N^{(a,b,c,d)}$of \eqref{rains} and \eqref{rainsop},   with the specialization $(a,b,c,d)=(t,-1,q^{1/2},-q^{1/2})$:
\begin{eqnarray*}
\widehat{\mathcal D}_N^{(B_N^{(1)})}&=&\widehat{\mathcal D}_N^{(t,-1,q^{1/2},-q^{1/2})}={\mathcal R}_N^{(1)}\, {\mathcal R}_N^{(0)} 
\end{eqnarray*}
with
$$
{\mathcal R}_N^{(1)}={\mathcal R}_N^{(t,-1)}, \qquad  {\mathcal R}_N^{(0)}={\mathcal R}_N^{(1,-1)} .
$$
In the $q$-Whittaker limit $t\to\infty$, these become 
\begin{eqnarray}
\widehat{D}_N^{(B_N^{(1)})}&=&\lim_{t\to \infty}t^{-N^2}\,   \widehat{\mathcal D}_N^{(B_N^{(1)})}= R_N^{(1)}\, R_N^{(0)} ,\label{rainsB}
\end{eqnarray}
with
\begin{eqnarray}\qquad R_N^{(0)}&=&\lim_{t\to\infty} t^{-N(N-1)/2} {\mathcal R}_N^{(1,-1)}=\sum_{\epsilon_1,...,\epsilon_N=\pm 1} 
\prod_{1\leq i<j\leq N}\frac{ x_i^{\epsilon_i}x_j^{\epsilon_j}}{x_i^{\epsilon_i}x_j^{\epsilon_j}-1}\, \prod_{i=1}^N \Gamma_i^{\epsilon_i/2}\label{rzero}\\
R_N^{(1)}&=&\lim_{t\to\infty} t^{-N(N+1)/2} {\mathcal R}_N^{(t,-1)}=\sum_{\epsilon_1,...,\epsilon_N=\pm 1} \prod_{i=1}^N \frac{x_i^{\epsilon_i}}{x_i^{\epsilon_i}-1}
\prod_{1\leq i<j\leq N}\frac{ x_i^{\epsilon_i}x_j^{\epsilon_j}}{x_i^{\epsilon_i}x_j^{\epsilon_j}-1}\, \prod_{i=1}^N \Gamma_i^{\epsilon_i/2}.\label{rone}
\end{eqnarray}
The factorized form of $\widehat{D}_N^{(B_N^{(1)})}$ can be used to define the discrete time evolution
as follows:
\begin{defn} Let $n\in \Z$. Define
\begin{eqnarray}
R_{N;n}^{(i)}&=& q^{-\frac{Nn}{8}} \gamma^{-n}\, R_N^{(i)} \, \gamma^{n},\qquad (i=0,1) , \nonumber\\
D^{(B_N^{(1)})}_{N;2n-1}&=& q^{-\frac{Nn}{4}} \, R_{N;n-1}^{(1)}\, R_{N;n}^{(0)}.\label{defDBNminusone}
\end{eqnarray}
\end{defn}
Note in particular that $D^{(B_N^{(1)})}_{N;2n-1}=q^{-\frac{n N}{2}}\, \gamma^{-n}\,D_{N;-1}^{(B_N^{(1)})}\gamma^{n}$, similarly to the even $n$ case
where $D^{(B_N^{(1)})}_{N;2n}=q^{-\frac{n N}{2}}\, \gamma^{-n}\,D_{N;0}^{(B_N^{(1)})}\gamma^{n}$.

\section{Quantum Q-systems and Macdonald operator conjecture}
\label{sec:proof}

For any affine algebra $\g$, there is a corresponding Q-system \cite{HKOTY,HKOTT}, which is a recursion relation satisfied by the characters of the KR-modules \cite{ChariMoura} of the Yangian $Y(\g)$. Their deformation into recursion relations for non-commuting variables, called quantum Q-systems, were first defined \cite{krKR} using the identification of the Q-system recursion relations as mutations in a cluster algebra. In that case, one may use the canonical quantization of the cluster algebra \cite{BZ}, and in \cite{krKR,Simon,SimonTwisted} it is shown that this quantization is related to the graded fusion characters for $\g$. In special cases, these are known to be related to $q$-Whittaker functions.

One of the main conjectures presented in \cite{DFKconj} is that for untwisted $\g$ in Table \ref{tableone}, 
some $q$-difference operators
$D_{a;n}^{(\g)}(x)$ satisfy
the quantum Q-system relations of type $\g$, and
that the operators 
$D_{a;1}^{(\g)}$ and $D_{a;-1}^{(\g)}$ act as raising and lowering operators when acting 
on the eigenfunctions $\Pi_\lambda^{(\g)}(\bx)$ of $D_{a;0}^{(\g)}$. The purpose of this section is to prove these statements for all $\g$ of Table \ref{tableone}.

\subsection{Statement of the main theorems}\label{sec:41}

\subsubsection{Quantum Q-systems}\label{QsysSec}
For each $\g$, we consider an algebra generated by invertible, non-commuting elements $\{Q_{a;n}^{\pm1}: a\in[1,N], n\in\Z\}$. The quantum Q-system of type $\g$ is a set of relations among these generators, which depend on the root data of $\g$. 

\begin{defn} \label{deflambda}
For each $\g$, let $\Lambda^{(\g)}$ be the $N\times N$ matrix defined by
\begin{equation}\label{deflambdag}
\Lambda^{(\g)}_{a,b}=\omega_a^*\cdot\omega_b , \qquad a,b\in[1,N],
\end{equation}
where $\omega_a$ and $\omega_a^*$ are the fundamental weights of type $R$ and $R^*$ respectively,
and $\cdot$ is the standard inner product.
\end{defn}
In the case of untwisted $\g$,  $\Lambda^{(\g)}$ is the inverse of the Cartan matrix $C$ of $R$,
since $\omega_a^*=\omega_a^\vee$.
For the case $\g=D_{N+1}^{(2)}$, $\Lambda^{(\g)}$ is the inverse of the {\it symmetrized} Cartan matrix of type $B_N$, and for $\g=A_{M}^{(2)}$, it is the inverse of the symmetrized Cartan matrix of type $C_{\lfloor \frac{M+1}{2}\rfloor}$:
\begin{eqnarray*}
\Lambda^{(D_{N+1}^{(2)})}_{a,b}&=& (D_{a,a})^{-1} \,\Lambda^{(B_N^{(1)})}_{a,b}=\left\{\begin{array}{ll}
\min(a,b) ,& (a,b<N) \\
\half \min(a,b), & (a=N,\, {\rm or}\, b=N)\\
\frac14 N, & (a=b=N),
\end{array}\right.\\
\Lambda^{(A_{2N-1}^{(2)})}_{a,b}&=&\Lambda^{(A_{2N}^{(2)})}_{a,b}=(D'_{a,a})^{-1} \,\Lambda^{(C_N^{(1)})}_{a,b}=\min(a,b).
\end{eqnarray*}
Here, $D={\rm diag}(1,1,...,1,2)$ and $D'=D^{-1}$ are such that $C=C^{(B_N)}D$ and $C=C^{(C_N)}D'$ are symmetric. These are chosen so that $\Lambda^{(\g)}_{1,1}=1$.
Defining $t_a=2$ for $\alpha_a$ a short simple root of $R$, $t_a=1$ for $\alpha_a$ long, $t_1/t_a$ is $D_{a,a}$ for type $B$
and $D'_{a,a}$ for type $C$.

The quantum Q-system relations are of two types. The first are $q$-commutation relations:
\begin{equation}\label{qsys1}
\Q_{a;t_a k+i}\,\Q_{b;t_b k+j}= q^{\Lambda^{(\g)}_{a,b}\, j-\Lambda^{(\g)}_{b,a}\,i}\,\Q_{b;t_b k+j}\, \Q_{a;t_a k+i}, \qquad (i,j=0,1,k\in \Z).
\end{equation}
The second type of relations are  evolution equations in the discrete time variable $n$, and
have the form $q^{\Lambda_{aa}} \Q_{a;n+1} \Q_{a;n-1} = \Q_{a;n}^2 - \mathcal T_{a;n}$ for some monomials $\mathcal T_{a;n}$. 
Let $\bar{N}_\g$ be the maximal integer such that $(\lL^{(\g)})_{a,a}=a$ for $a\leq\bar{N}_\g+1$:
$$\bar N_{D_N^{(1)}}=N-3,\quad \bar N_\g=N-2,\ (\g=B_N^{(1)},C_N^{(1)},D_{N+1}^{(2)}),\quad \bar N_{\g}=
 N-1,\ (\g=A_{2N}^{(2)}, A_{2N-1}^{(2)}).$$ 
The evolution equations are
\begin{equation}\label{qsys2}
q^{a}\, \Q_{a;n+1}\,\Q_{a;n-1}= \Q_{a;n}^2 - \Q_{a+1;n}\,\Q_{a-1;n}, \qquad a\in[1,\bar{N}_\g], \hbox{ all $\g$}.
\end{equation}
\begin{equation}\label{qsys3}\renewcommand{\arraystretch}{1.5}
\begin{array}{rcl}
D_N^{(1)}:\ \ \ q^{N-2}\, \Q_{N-2;n+1}\, \Q_{N-2;n-1}&=& \Q_{N-2;n}^2 -q^{-\frac{(N-2)n}{4}}\, \Q_{N-3;n}\,\Q_{N-1;n}\,\Q_{N;n} ,\\
q^{\frac{N}{4}}\,\Q_{N-1;n+1}\, \Q_{N-1;n-1}&=& \Q_{N-1;n}^2-q^{\frac{(N-4)n}{4}}\,\Q_{N-2;n} ,\\
q^{\frac{N}{4}}\,\Q_{N;n+1}\, \Q_{N;n-1}&=& \Q_{N;n}^2-q^{\frac{(N-4)n}{4}}\,\Q_{N-2;n} ,\\
B_N^{(1)}:\ \ \ q^{N-1}\,\Q_{N-1;n+1}\, \Q_{N-1;n-1}&=& \Q_{N-1;n}^2-\Q_{N-2;n}\Q_{N;2n}, \\
q^{\frac{N}{2}} \,\Q_{N;2n+1}\, \Q_{N;2n-1}&=& \Q_{N;2n}^2-q^{-n}\,\Q_{N-1;n}^2 ,\\
q^{\frac{N}{2}} \,\Q_{N;2n+2}\, \Q_{N;2n}&=& \Q_{N;2n+1}^2-q^{\frac{N}{2}-n-1}\,\Q_{N-1;n+1}\Q_{N-1;n}, \\
C_N^{(1)}:\ q^{N-1}\,\Q_{N-1;2n+1}\, \Q_{N-1;2n-1}&=& \Q_{N-1;2n}^2-q^{-\frac{Nn}{2}}\,\Q_{N-2;2n}\Q_{N;n}^2 ,\\
q^{N-1}\,\Q_{N-1;2n+2}\, \Q_{N-1;2n}&=& \Q_{N-1;2n+1}^2-q^{-\frac{Nn}{2}}\,\Q_{N-2;2n+1}\Q_{N;n+1}\Q_{N;n} ,\\
q^{\frac{N}{2}}\,\Q_{N;n+1}\, \Q_{N;n-1}&=& \Q_{N;n}^2-q^{\frac{(N-2)n}{2}}\, \Q_{N-1;2n}, \\
D_{N+1}^{(2)}:\ q^{N-1}\,\Q_{N-1;n+1}\, \Q_{N-1;n-1}&=& \Q_{N-1;n}^2-q^{-\frac{Nn}{4}}\, \Q_{N-2;n}\Q_{N;n}^2 ,\\
q^{\frac{N}{4}}\,\Q_{N;n+1}\, \Q_{N;n-1}&=& \Q_{N;n}^2-q^{\frac{(N-2)n}{4}}\,\Q_{N-1;n}, \\
A_{2N-1}^{(2)}:\ \ \ \ \ \ \ \ q^{N}\,\Q_{N;n+1}\, \Q_{N;n-1}&=& \Q_{N;n}^2-q^{-n}\, \Q_{N-1;n}^2,\\
A_{2N}^{(2)}:\ \ \ \ \ \ \ \ \ q^N\,\Q_{N;2n+1}\, \Q_{N;2n-1}&=& \Q_{N;2n}^2-q^{-n}  \Q_{N-1;2n} \Q_{N;2n},\\
q^N\,\Q_{N,2n+2}\, \Q_{N,2n}&=& \Q_{N;2n+1}^2-q^{-n}\, \Q_{N-1;2n+1} \Q_{N;2n+1}.
\end{array}\end{equation}

\begin{remark}\label{qsyscalgrem} If $\g\neq A_{2N}^{(2)}$, the evolution equations above 
are equivalent to the quantization of the Q-system
cluster algebras. These correspond to exchange matrices \cite{clusK,clusDFK,Williams}:
$B=\begin{pmatrix} C^{t}-C& -C^{t} \\ C & 0 \end{pmatrix}$ for untwisted $\g$ and
$B=\begin{pmatrix} 0 & -C \\ C & 0 \end{pmatrix}$ for twisted $\g$, where $C$ is the Cartan matrix of $R$. These correspond to the initial cluster data $(Q_{a;i}: a\in[1,N], i=0,1)$.
We choose a skew-symmetric $q$-commutation matrix to be the inverse of the skew-symmetrized matrix $B$,
of the form $\begin{pmatrix} 0 & \Lambda \\ -\Lambda^{t} & \Lambda^{t}-\Lambda\end{pmatrix}$ with
 $\lL=\lL^{(\g)}$ as in Definition \ref{deflambda}. 
The quantized cluster variables obey the commutation relations \eqref{qsys1}, and 
the relevant quantum mutations \cite{clusDFK} are
\begin{equation}\label{qmutation} q^{\Lambda^{(\g)}_{a,a}}\, \Q_{a;n+1}\Q_{a;n-1}= (\Q_{a;n})^2- q^{\half D_{a,a}^{-1}} : \mathcal \mathcal T_{a;n} : \qquad (a=1,2,...,N), \end{equation}
with $D_{a,a}=1$ except for $a=N$ in types $A_{2N-1}^{(2)}, D_{N+1}^{(2)}$, where it is equal to $\half, 2$, respectively. The monomial $T_{a;k}$ is the product of $\Q_{b;k'}$ appearing as the second term in the right hand side of each Q-system relation, not including any factors of $q$,
and the normal ordering $: \cdot :$ is defined as
$:\prod X_i^{\mu_i}{:} = q^{-\half\sum_{i<j} a_{i,j}\mu_i\mu_j}  X_{1}^{\mu_1} \cdots X_k^{\mu_k}$ if
 $X_iX_j=q^{a_{i,j}}X_jX_i$.
The  quantum Q-system relations (\ref{qsys2}-\ref{qsys3}) are equivalent to the quantum mutations \eqref{qmutation} after a renormalization of the cluster variables
(see \cite{DFKconj} Lemma 4.4).
\end{remark}

In this section, we prove the following Theorem, which is one of the main results of this paper:
\begin{thm}\label{Qsysconj} For each $\g$ in Table \ref{tableone}, 
the limit $t\to\infty$ of the generalized $\g$-Macdonald operators $D_{a;n}^{(\g)}(x,q)$ of Section \ref{sec:genqW} satisfy the corresponding quantum Q-system relations (\ref{qsys1}-\ref{qsys3}).
\end{thm}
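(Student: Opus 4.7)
The plan is to generalize the Fourier-dual strategy carried out for type $A$ in Sections \ref{sec:26}--\ref{sec:theproof} to each affine algebra $\g$ in Table \ref{tableone}. Rather than working directly with the explicit difference operators $D_{a;n}^{(\g)}(x)$, I would introduce, for each $\g$, a family of candidate ``Fourier duals'' $\bar D_{a;n}^{(\g)}(\Lambda)\in\mathbb T_\Lambda$ and prove they satisfy the \emph{opposite} quantum Q-system relations by direct construction. The initial data is fixed by the $q$-Whittaker eigenvalue equation \eqref{eigenqwhit}, which gives $\bar D_{a;0}^{(\g)}(\Lambda)=\Lambda^{\omega_a^*}$. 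A second layer of initial data ($\bar D_{a;1}^{(\g)}$ for long labels, and additionally $\bar D_{a;-1}^{(\g)}$ for short labels in $C_N^{(1)}$, $A_{2N}^{(2)}$ or the exceptional case $a=N$ in $B_N^{(1)}$) is determined up to scalar by the desired opposite $q$-commutation relations \eqref{qsys1}. Once these initial $\bar D_{a;i}^{(\g)}$ are chosen, all remaining $\bar D_{a;n}^{(\g)}$ are \emph{defined} by the opposite Q-system recursions obtained from \eqref{qsys2}--\eqref{qsys3}.

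Next I would construct an explicit time-translation operator $g^{(\g)}(\Lambda)$ on $\mathbb T_\Lambda$ such that $\bar D_{a;n+t_a}^{(\g)}=q^{-\omega_a^*\cdot\omega_a\, t_a/2}\,g^{(\g)}\,\bar D_{a;n}^{(\g)}\,(g^{(\g)})^{-1}$ (this is the content of Theorems \ref{gfunctionA} and \ref{longshort}). By analogy with \eqref{gtypeA}, I expect $g^{(\g)}$ to factor as $g_T^{(\g)}\,g_\Lambda^{(\g)}$, where $g_T^{(\g)}$ is a quadratic Gaussian in the variables $T_a$ (chosen so that conjugation shifts $\bar D_{a;0}^{(\g)}=\Lambda^{\omega_a^*}$ into $\bar D_{a;t_a}^{(\g)}$) and $g_\Lambda^{(\g)}$ is an explicit product of quantum dilogarithms in the Weyl variables $\Lambda^{-\alpha_i^*}$ (tuned to produce the subtraction terms $\mathcal T_{a;n}$ in the Q-system). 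The uniqueness of $g^{(\g)}$, up to a central scalar, follows because the initial-data relations determine the adjoint action of $g^{(\g)}$ on a set of generators of $\mathbb T_\Lambda$.

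With $g^{(\g)}(\Lambda)$ in hand, I would verify by direct computation that it commutes with the first Pieri operator $H_1^{(\g)}(\Lambda)$ of \eqref{Hamil}--\eqref{Hamil2}, as in Theorem \ref{commutationtypeA}. This is the crucial algebraic input. Then, using the triangularity of the Pieri equation \eqref{pieriwhit}, an analog of Lemma \ref{uniquelem} shows that any solution of $H_1^{(\g)}\,f=\hat e_1(x)\,f$ admitting an appropriate series expansion is uniquely determined by its leading coefficient. Combining this uniqueness with a Campbell--Hausdorff computation as in the proof of Theorem \ref{fourierduathm} then yields the Fourier duality $g^{(\g)}(\Lambda)\,\Pi_\lambda^{(\g)}(x)=\gamma(x)^{t_1}\,\Pi_\lambda^{(\g)}(x)$ (Theorem \ref{lemtwo}), with $t_1$ as specified. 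At this point the main theorem follows exactly as in Section \ref{sec:theproof}: iterating the conjugation identity and using the eigenvalue equation yields $D_{a;n}^{(\g)}(x)\,\Pi_\lambda^{(\g)}(x)=\bar D_{a;n}^{(\g)}(\Lambda)\,\Pi_\lambda^{(\g)}(x)$, and completeness of the $q$-Whittaker basis $\{\Pi_\lambda^{(\g)}\}$ transfers the opposite Q-system satisfied by $\bar D^{(\g)}$ back to the Q-system \eqref{qsys1}--\eqref{qsys3} for $D^{(\g)}$.

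The main obstacle will be the case-by-case handling of short labels and of the exceptional label $a=N$ in $B_N^{(1)}$. For short labels the time translation doubles, so the even/odd parity decomposition of Definition \ref{deftwoD1} and Definition \ref{cadefs} must be matched on the $\Lambda$-side by constructing $\bar D_{a;-1}^{(\g)}$ consistently with both the $q^a$-commutators in Definition \ref{cadefs} and the opposite Q-system evolutions in \eqref{qsys3}; this requires choosing the scalar normalizations and the dilogarithm factors in $g_\Lambda^{(\g)}$ with care. For $a=N$ in $B_N^{(1)}$ the odd-time operator is built from the factorized Rains operators \eqref{rainsB}--\eqref{rone}, so I would mirror this factorization on the $\Lambda$-side and verify the identification at the level of $\bar D_{N;-1}^{(B_N^{(1)})}$ separately, using the compatibility relations among the Rains factors and the Gaussian $\gamma(x)$. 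Once these exceptional cases are handled, the uniform argument above completes the proof of Theorem \ref{Qsysconj}, with the finer statement of Theorem \ref{raiseconj} following from the explicit form of $\bar D_{a;0}^{(\g)}=\Lambda^{\omega_a^*}$ and of $\bar D_{a;1}^{(\g)}$ as the Weyl-monomial $\Lambda^{\omega_a^*}T^{\omega_a}$ (and its short-label analog).
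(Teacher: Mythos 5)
Your overall architecture --- defining $\bar D_{a;n}^{(\g)}(\Lambda)$ by the opposite quantum Q-system from monomial initial data, constructing the time-translation operator $g^{(\g)}(\Lambda)$, checking that it commutes with $H_1^{(\g)}(\Lambda)$, and deducing $g^{(\g)}\Pi_\lambda^{(\g)}=\gamma^{t_1}\Pi_\lambda^{(\g)}$ from the uniqueness of series solutions of the first Pieri equation --- is exactly the paper's (Sections \ref{sec:42}--\ref{sec:43} and Appendix \ref{appC}). For long labels, and for even times of short labels, the conjugation argument you describe goes through verbatim.

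The gap is in the odd-time cases, which you correctly flag as the obstacle but whose resolution is not a matter of ``choosing the scalar normalizations and the dilogarithm factors in $g_\Lambda^{(\g)}$ with care.'' Conjugation by $g^{(\g)}$ advances a short label by $t_a=2$, so no choice of $g^{(\g)}$ can reach $D_{1;\pm1}^{(\g)}$ from $D_{1;0}^{(\g)}$ when $\g=C_N^{(1)},A_{2N}^{(2)}$: the operator $D_{1;-1}^{(\g)}(x)$ of \eqref{minusone} is an independent ansatz, and one must prove separately that it is Fourier-dual to $\bar D_{1;-1}^{(\g)}(\Lambda)$. The paper does this by a genuinely different mechanism (Lemmas \ref{comuonelem} and \ref{firstlemCAe}--\ref{shortoddthm}): the commutator identity $[D_{1;0}^{(\g)},\hat e_1(x)]=(q-1)D_{1;1}^{(\g)}+(q^{-1}-1)D_{1;-1}^{(\g)}$, matched against $[H_1^{(\g)},\bar D_{1;0}^{(\g)}]$ through the Pieri rule, shows that the discrepancies $\Sigma_{\pm1}=(D_{1;\pm1}-\bar D_{1;\pm1})\Pi_\lambda^{(\g)}$ are proportional to each other and themselves satisfy the first Pieri equation, hence equal $\alpha(x)\Pi_\lambda^{(\g)}$ by uniqueness; evaluating at $\lambda=0$ kills $\alpha$. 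Higher short labels then follow from the commutator recursion of \cite{DFK16} built into Definition \ref{cadefs}. Likewise, for $a=N$ and $n$ odd in $B_N^{(1)}$, the Rains factors $R_N^{(0)},R_N^{(1)}$ do not preserve the eigenbasis but intertwine $\Pi_\lambda$ with the eigenfunctions $\Pi'_\lambda$ of a companion specialization $B_N^{(1)\,'}$, so ``mirroring the factorization on the $\Lambda$-side'' forces you to set up an entire parallel theory --- a primed quantum Q-system (Definition \ref{bprimeqsys}), primed operators $\bar D'$, a primed $g'$ and Pieri operator, and the duality $g'\Pi'_\lambda=\gamma\Pi'_\lambda$ --- before the product defining $D^{(B_N^{(1)})}_{N;2n-1}$ can be evaluated on $\Pi_\lambda$. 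Without these two supplementary arguments the proof is incomplete.
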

Therefore the algebra generated by the elements $\Q_{a;k}$ subject to the quantum Q-system relations has a functional representation given by the difference operators of Section \ref{sec:genqW}.

\subsubsection{Raising/lowering operator conditions}

We refer to the polynomials $\Pi^{(\g)}_\lambda(x)$ \eqref{whitpol}, which are the common eigenfuctions of
$D_{a;0}^{(\g)}(x;q)=D_{a}^{(\g)}(x;q)$, as $q$-Whittaker polynomials.
For non-twisted $\g$, we conjectured in \cite{DFKconj} that $D_{a;1}^{(\g)}$, $D_{a;-1}^{(\g)}$ are raising and lowering operators acting on $\Pi_\lambda^{(\g)}(x)$. The following are the statement of this result for all $\g$ in Table \ref{tableone}, and will be proven in this Section:
\begin{thm}\label{raiseconj}
For any $\g$ in Table \ref{tableone}, 
\begin{eqnarray}
D_{a;0}^{(\g)}(x)\, \Pi_\lambda^{(\g)}(x)&=&\lL^{\omega_a^*}\,\Pi_\lambda^{(\g)}(x)\label{eigenD},\\
D_{a;1}^{(\g)}(x)\, \Pi_\lambda^{(\g)}(x)&=& \lL^{\omega_a^*}\, \Pi_{\lambda+\omega_a}^{(\g)}(x),
\label{raiseD}
\end{eqnarray}
where $\omega_a$ and $\omega_a^*$ are fundamental weights of $R$ or $R^*$, respectively.
\end{thm}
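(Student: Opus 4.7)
The first identity \eqref{eigenD} is nothing but the $q$-Whittaker eigenvalue equation \eqref{eigenqwhit}, which is the $t\to\infty$ limit of \eqref{macdo}. For the raising identity \eqref{raiseD}, my plan is to invoke Theorem \ref{mainthm} (extended in Section \ref{sec:43} from type $A$ to all $\g$) to transfer the problem to the Fourier-transformed side, i.e.\ to prove
$$
\bar D_{a;1}^{(\g)}(\Lambda)\,\Pi_\lambda^{(\g)}(x) \;=\; \Lambda^{\omega_a^*}\,\Pi_{\lambda+\omega_a}^{(\g)}(x).
$$
The question then reduces to computing $\bar D_{a;1}^{(\g)}$ in closed form, and evaluating its action on $\Pi_\lambda^{(\g)}$.

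The central claim is that, for every $\g$ in Table \ref{tableone} and every label $a\in[1,N]$,
$$
\bar D_{a;1}^{(\g)}(\Lambda) \;=\; \Lambda^{\omega_a^*}\,T^{\omega_a}.
$$
Given this formula, the right action of Remark \ref{leftrightrem}, under which $\Lambda_i$ is diagonal on $\Pi_\lambda^{(\g)}$ with eigenvalue $q^{\lambda_i}$ and $T_i$ shifts $\lambda_i\mapsto \lambda_i+1$, yields immediately $\bar D_{a;1}^{(\g)}\,\Pi_\lambda^{(\g)} = q^{\omega_a^*\cdot \lambda}\,\Pi_{\lambda+\omega_a}^{(\g)} = \Lambda^{\omega_a^*}\,\Pi_{\lambda+\omega_a}^{(\g)}$, which is the required relation. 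To establish the claim for long labels, I would start from the known initial datum $\bar D_{a;0}^{(\g)} = \Lambda^{\omega_a^*}$ and use the conjugation by $g^{(\g)}(\Lambda)$ supplied by Theorem \ref{longshort}: the explicit form of $g^{(\g)}$ arranged as the product of a $g_T$-factor (a Gaussian in $\log T_i$) and a $g_\Lambda$-factor (quantum dilogarithms in $\Lambda^{-\alpha_i^*}$) shows that $g\,\Lambda^{\omega_a^*}\,g^{-1}$ picks up exactly the extra shift $T^{\omega_a}$ up to the normalization $q^{\omega_a^*\cdot\omega_a/2}$ of Definition \ref{moredefn}, since only the $g_T$ factor moves past $\Lambda^{\omega_a^*}$. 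Uniqueness of this result is guaranteed by the opposite $q$-commutation relations of \eqref{qsys1}, which, using $T^{\omega_b}\Lambda^{\omega_a^*} = q^{\omega_a^*\cdot\omega_b}\Lambda^{\omega_a^*}T^{\omega_b}$ together with $\Lambda^{(\g)}_{a,b} = \omega_a^*\cdot\omega_b$, are satisfied by $\{\Lambda^{\omega_a^*}T^{\omega_a}\}_a$ and fix it up to a scalar normalized by Definition \ref{moredefn}.

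The short labels of $C_N^{(1)}$ and $A_{2N}^{(2)}$, together with the boundary case $\g=B_N^{(1)}$ with $a=N$, require separate handling. For the short labels, $D_{a;1}$ is constructed as a $q^a$-commutator in Definition \ref{cadefs}, and I would compute $\bar D_{a;1}^{(\g)}$ either by directly evaluating the Fourier dual of that commutator---using the already established formulae for $\bar D_{1;2n-1}^{(\g)}$ and $\bar D_{a-1;2n}^{(\g)}$---or by conjugating $\bar D_{a;-1}^{(\g)}$ by the short-label $g^{(\g)}$ of Theorem \ref{longshort}. For $\g=B_N^{(1)}$ with $a=N$, one uses the Rains-operator factorization \eqref{rainsB} on the Fourier side, which gives the analogous monomial answer. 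The main technical obstacle is to verify uniformly, across this bestiary of twisted and untwisted root systems, that the normalization constants from Definition \ref{moredefn} combine with the boundary terms from $g^{(\g)}$ to produce the clean monomial form $\Lambda^{\omega_a^*}T^{\omega_a}$; this amounts to a careful case-by-case computation tracking the Cartan data encoded in $\Lambda^{(\g)}$ of Definition \ref{deflambda}.
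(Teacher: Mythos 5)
Your proposal is correct and follows essentially the same route as the paper: Theorem \ref{raiseconj} is obtained there as the $n=0,1$ special case of the Fourier-transform identity \eqref{identcrucial} (Theorem \ref{foutranthm}), combined with the monomial forms $\barD_{a;0}=\lL^{\omega_a^*}$ and $\barD_{a;1}=\lL^{\omega_a^*}T^{\omega_a}$ acting on $\Pi_\lambda^{(\g)}$ as in Remark \ref{leftrightrem}. The only superfluous part of your argument is the effort spent ``deriving'' $\barD_{a;1}^{(\g)}=\lL^{\omega_a^*}T^{\omega_a}$ via conjugation by $g^{(\g)}$ and case analysis of short labels: in the paper's logic this is simply the chosen initial datum (Definition \ref{firstdefops}), and all the real work is concentrated in proving \eqref{identcrucial}.
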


Combining Theorem \ref{raiseconj} and Theorem \ref{Qsysconj} with the relevant quantum Q-system relations, it follows that $D_{a;-1}^{(\g)}(x)$ is a lowering operator:
\begin{cor}\label{lowerconj}
For all $\g$, we have the following lowering operator conditions
\begin{equation}\label{lowerD}
D_{a;-1}^{(\g)}(x)\, \Pi_\lambda^{(\g)}(x)=\lL^{\omega_a^*}\,(1-\lL^{-\al_a^*})\, \Pi_{\lambda-\omega_a}^{(\g)}(x),\end{equation}
where $\al_a^*$ are the simple roots of $R^*$.
\end{cor}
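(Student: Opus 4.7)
The plan is to combine the quantum Q-system evolution from Theorem \ref{Qsysconj} with the raising and eigenvalue properties of Theorem \ref{raiseconj}. For a generic long label $a\in[1,\bar{N}_\g]$, I will specialize the Q-system relation \eqref{qsys2} at $n=0$ to obtain the operator identity
$$q^{a}\,D_{a;1}^{(\g)}(x)\,D_{a;-1}^{(\g)}(x)=(D_{a;0}^{(\g)}(x))^{2}-D_{a+1;0}^{(\g)}(x)\,D_{a-1;0}^{(\g)}(x),$$
and apply both sides to $\Pi_\lambda^{(\g)}(x)$. By \eqref{eigenD} the right-hand side acts diagonally, with eigenvalue $\lL^{2\omega_a^*}-\lL^{\omega_{a-1}^*+\omega_{a+1}^*}=\lL^{2\omega_a^*}(1-\lL^{-\alpha_a^*})$, where I use the standard identity $2\omega_a^*-\omega_{a-1}^*-\omega_{a+1}^*=\alpha_a^*$ holding in the $R^*$ root system for such labels.

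Next I will show that $D_{a;-1}^{(\g)}\Pi_\lambda^{(\g)}$ must be a scalar multiple of $\Pi_{\lambda-\omega_a}^{(\g)}$. Expanding $D_{a;-1}^{(\g)}\Pi_\lambda^{(\g)}=\sum_\mu c_\mu \Pi_\mu^{(\g)}$ in the $q$-Whittaker basis and applying $D_{a;1}^{(\g)}$ by the raising relation \eqref{raiseD}, each term contributes a nonzero multiple of $\Pi_{\mu+\omega_a}^{(\g)}$; matching with the diagonal action above forces $\mu=\lambda-\omega_a$ as the unique contributing index. Reading off the overall scalar, using $\lL^{\omega_a^*}|_{\lambda-\omega_a}=q^{-\omega_a^*\cdot\omega_a}\lL^{\omega_a^*}$ together with the normalization $\omega_a^*\cdot\omega_a=\lL^{(\g)}_{a,a}=a$ valid in the range $a\in[1,\bar{N}_\g]$, the $q$-powers cancel and the prefactor collapses exactly to $\lL^{\omega_a^*}(1-\lL^{-\alpha_a^*})$, yielding \eqref{lowerD} in this generic range.

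The remaining boundary long labels $a>\bar{N}_\g$ and the short labels of Definition \ref{longshortlabels} will be handled by the same mechanism, using the more intricate relations in \eqref{qsys3} in place of \eqref{qsys2}. For short labels one may additionally invoke Definition \ref{cadefs}, which already expresses $D_{a;-1}^{(\g)}$ as a commutator of operators whose action on $\Pi_\lambda^{(\g)}$ is controlled by \eqref{eigenD}--\eqref{raiseD}. In each case the strategy is identical: solve the relevant Q-system relation for a product involving $D_{a;-1}^{(\g)}$, apply to $\Pi_\lambda^{(\g)}$ with the $D_{b;0}^{(\g)}$'s acting diagonally on the right-hand side, and isolate the coefficient of $\Pi_{\lambda-\omega_a}^{(\g)}$ by the bijectivity of $D_{a;1}^{(\g)}$ on the $q$-Whittaker basis.

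The main obstacle will be the careful bookkeeping of $q$-power prefactors and the analogues of the weight-root identity $2\omega_a^*-\omega_{a-1}^*-\omega_{a+1}^*=\alpha_a^*$ across the exceptional labels, where each $\g$ contributes slightly different correction monomials in \eqref{qsys3} (involving factors like $\Q_{N;n}^2$, $\Q_{N-1;2n}\Q_{N;2n}$, or $q^{-n}$ twists). Verifying that these conspire to produce the universal scalar $\lL^{\omega_a^*}(1-\lL^{-\alpha_a^*})$ demands a case-by-case check, but is routine given the structure already developed in Sections \ref{sec:genqW}--\ref{sec:41}.
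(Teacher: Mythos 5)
Your argument is correct and is exactly the derivation the paper intends: it states that the Corollary "follows by combining Theorem \ref{raiseconj} and Theorem \ref{Qsysconj} with the relevant quantum Q-system relations," and your proposal fills in precisely that argument (Q-system relation at $n=0$ with diagonal right-hand side, injectivity of the raising operator on the basis to isolate $\mu=\lambda-\omega_a$, and the weight identity $2\omega_a^*-\omega_{a-1}^*-\omega_{a+1}^*=\al_a^*$ together with $\omega_a\cdot\omega_a^*=\Lambda^{(\g)}_{a,a}$ to fix the scalar). Note that the case-by-case bookkeeping at the exceptional and short labels can be bypassed entirely by instead invoking Theorem \ref{foutranthm} at $n=-1$ together with the uniform formula \eqref{genericm1a} for $\barD_{a;-1}$, which yields \eqref{lowerD} in one line for all $a\in[1,N]$.
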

\noindent Note that the prefactor guarantees that the result is $0$ whenever $\lambda-\omega_a$ is not a dominant integral $R$-weight, i.e. not a $\g$-partition.

The proof follows the steps of Section \ref{Atype}, using the Fourier transformed operators $\bar D_{a;n}^{(\g)}(\lL)$.

\subsection{Opposite quantum Q-systems and integrability}\label{sec:42}
For each $\g$, we consider the solutions $\bar\cQ_{a;n}$ which satisfy the quantum Q-system with the opposite multiplication, referred to as the quantum \barQ-system. 
We construct a representation $\barD_{a;n}(\Lambda)$ of the solutions $\bar \Q_{a;n}$ of the quantum \barQ-system, subject to appropriate initial data, in terms of $q$-difference operators written in terms of the quantum torus ${\mathbb T}_\lL=\langle\lL_i,T_i\rangle_{i=1}^N$. Elements in this algebra act on functions of $\Lambda$ as $q$-difference operators in $\Lambda$.
The
initial data $\barD_{a;0}(\lL)$ are deduced from the eigenvalue equation \eqref{eigenD}. 
These are supplemented with a choice
of $\barD_{a;1}(\lL)$
ensuring that the quantum commutation relations opposite to those of Eq. \eqref{qsys1} are satisfied:
\begin{equation}\label{cacommence} \barD_{a;0}\, \barD_{b;1}=q^{- \lL^{(\g)}_{a,b} } \, \barD_{b;1}\, \barD_{a;0},
\end{equation}
with $\lL^{(\g)}$ as in  \eqref{deflambda}.

\begin{defn}\label{firstdefops}
For all $\g$,  define
$$\barD_{a;0}:= \barD_{a;0}^{(\g)}(\lL)=\Lambda^{\omega_a^*}, \qquad \barD_{a;1}:=\barD_{a;1}^{(\g)}(\lL) =\Lambda^{\omega_a^*}\, T^{\omega_a},$$
where $\omega_a,\omega_a^*$ are fundamental weights of $R,R^*$.
\end{defn}
These obey the relations \eqref{cacommence}, since 
$\lL^{\omega_a^*}T^{\omega_b}=q^{-\omega_a^*\cdot\omega_b}T^{\omega_b}\lL^{\omega_a^*}$.
Since all Q-system evolutions are two-step recursion relations, the following are uniquely defined:
\begin{defn}\label{Def:Dbar}
Define $\barD_{a;n}^{(\g)}(\lL) = \bar\Q_{a;n}$ for all $n\neq 0,1$ to be the solutions of the $\g$-type quantum \barQ-system relations subject to the initial data in Definition \ref{firstdefops}.
\end{defn}

\begin{remark}
Due to the Laurent property of quantum cluster algebras \cite{BZ}, the solutions $\barD_{a;n}$ are Laurent in the initial data $\{\barD_{a;i}: i=0,1,a\in[1,N]\}$. Since these are monomials in $\{T_a,\Lambda_a,a\in[1,N]\}$, {\em all} quantum cluster variables are Laurent  in the quantum torus generators, therefore they
are $q$-difference operators.
Although this argument doesn't apply to $\g=A_{2N}^{(2)}$, we will show that all solutions of the \barQ-system are Laurent in this case also.
\end{remark}

\subsubsection{Time translation operator $g$}
\begin{thm}\label{longshort}
Let $\barD_{a,n}^{(\g)}$ be as in Definition \ref{Def:Dbar}.
For each $\g$ there exists an element $g=g^{(\g)}$ in a completion of the quantum torus ${\mathbb T}_\lL$, such that for all $n\in \Z$:
\begin{eqnarray}
\barD_{a,n}&=&q^{-\frac{n}{2}\lL^{(\g)}_{a,a}}\, g^n\, \barD_{a,0} \, g^{-n} , \qquad \hbox{$a$ long},\label{evolutlong}\\
\barD_{a,2n+i}&=&q^{-n\lL^{(\g)}_{a,a}}\, g^n\, \barD_{a,i} \, g^{-n},\qquad i=0,1\label{evolutshort} , \ \hbox{$a$ short}.
\end{eqnarray}
These elements are
\begin{equation}
\renewcommand{\arraystretch}{2.5}
g^{(\g)} =\left\{
\begin{array}{ll}
g_T\, g_\lL \,  \left(\frac{1}{\lL_{N-1}\lL_N};q\right)^{-1}_\infty, & \g=D_N^{(1)},\\
\left(g_T^{1/2}\,  \left(\frac{1}{\lL_N^2};q\right)^{-1}_\infty \right)^2\, g_\lL,&  \g=B_N^{(1)},  \\
(g_T \,g_\lL)^2  \left(\frac{1}{\lL_N};q\right)^{-1}_\infty, & \g=C_N^{(1)}, \\
g_T\, g_\lL\,  \left(\frac{1}{\lL_N^2};q^2\right)^{-1}_\infty, &  \g=A_{2N-1}^{(2)} ,\\
g_T\, g_\lL\, \prod_{n=0}^\infty \left(\frac{1}{\lL_N};q^{\frac12}\right)_\infty^{-1}, &  \g=D_{N+1}^{(2)},\\
 g_T\, g_\lL\, \left(q^{\frac{1}{2}}\frac{1}{\lL_N};q\right)^{-1}_\infty\,
g_T\, g_\lL\, \left(\frac{1}{\lL_N};q\right)_\infty^{-1}, & \g=A_{2N}^{(2)} ,
\end{array}\right.
\label{variousg}
\end{equation}
where we use the shorthand $g_T, g_\Lambda$ of Equations \eqref{gtvalue} and \eqref{glambdavalue}.
\end{thm}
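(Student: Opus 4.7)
The plan is to follow the strategy used in the proof of Theorem \ref{gfunctionA} for type $A_{N-1}^{(1)}$, adapting it to accommodate the long/short label distinction of Definition \ref{longshortlabels} and the various boundary terms in the quantum Q-systems \eqref{qsys3}.

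First I would reduce the problem to verifying finitely many ``initial'' relations. Assuming $g$ satisfies (\ref{evolutlong}--\ref{evolutshort}), conjugating the commutation relations \eqref{cacommence} and the Q-system relations \eqref{qsys2}--\eqref{qsys3} evaluated at $n=1$ (or at $n=0,1$ in the short-label cases) by powers of $g$ propagates the relations to all times $n\in\Z$, as in the type $A$ case. So it suffices to construct $g$ such that $g\,\barD_{a,0}\,g^{-1}=q^{\lL^{(\g)}_{a,a}/2}\,\barD_{a,t_a}$ for long labels, and the pair $g\,\barD_{a,0}\,g^{-1}=q^{\lL^{(\g)}_{a,a}}\,\barD_{a,2}$, $g\,\barD_{a,1}\,g^{-1}=q^{\lL^{(\g)}_{a,a}}\,\barD_{a,3}$ for short labels, the right-hand sides being computed explicitly from Definition \ref{firstdefops} via one or two steps of the Q-system recursion.

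Next I would factorize $g^{(\g)}=g_T\,g_\Lambda\,g_\infty$, promoted to its square in the short-label-dominant cases $\g=C_N^{(1)}, A_{2N}^{(2)}$, where $g_T$ commutes with all $T_a$, $g_\Lambda$ commutes with all $\Lambda_a$, and $g_\infty$ gathers the quantum dilogarithm factors needed to absorb the boundary pieces of \eqref{qsys3}. The $T$-part is determined by matching $g\,\Lambda^{\omega_a^*}\,g^{-1}$ with the expected $q^{\omega_a^*\cdot\omega_a/2}\Lambda^{\omega_a^*}T^{\omega_a}$, using the identity $e^{(\log T_a)^2/(2\log q)}\,\Lambda_a\,e^{-(\log T_a)^2/(2\log q)}=q^{1/2}\Lambda_a T_a$ from the type $A$ proof, with the exponents weighted by $D_{a,a}^{-1}$ for the symmetrized Cartan cases $D_{N+1}^{(2)}$ and $A_{2N-1}^{(2)}$.

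The $g_\Lambda$ and $g_\infty$ factors are then pinned down by forcing the first-step Q-system relation. For each interior index $a\leq\bar N_\g$, the interior relation in \eqref{qsys2} reproduces the type $A$ computation and contributes a factor $(\Lambda^{-\alpha_a^*};q)_\infty^{-1}$ to $g_\Lambda$, using the pushing identity $(\Lambda^{-\alpha};q)_\infty T^{\omega_b}=T^{\omega_b}(q^{\alpha\cdot\omega_b}\Lambda^{-\alpha};q)_\infty$. The boundary relations in \eqref{qsys3}, which involve squared dilogarithm factors, bases shifted to $q^{1/2}$ (for $\g=D_{N+1}^{(2)}$) or $q^2$ (for $\g=A_{2N-1}^{(2)}$), and cross terms like $\Lambda_{N-1}\Lambda_N$ in type $D_N^{(1)}$, determine $g_\infty$ case by case and reproduce the formulas in \eqref{variousg}. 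The verification in each case is a Baker-Campbell-Hausdorff computation coupled with the dilogarithm pushing identity above.

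The main obstacle will be the case $\g=A_{2N}^{(2)}$. Its quantum Q-system is \emph{not} the quantization of a cluster algebra (cf.\ Remark \ref{qsyscalgrem}), so one cannot invoke the Laurent property to conclude that the $\barD_{a,n}$ lie in the quantum torus; this has to be verified directly from the candidate $g$. Moreover, the coexistence of short labels at every $a$ together with a genuinely non-cluster boundary evolution forces an asymmetric two-factor structure $g\sim g_T\,g_\Lambda\,(q^{1/2}\Lambda_N^{-1};q)_\infty^{-1}\,g_T\,g_\Lambda\,(\Lambda_N^{-1};q)_\infty^{-1}$, whose correctness must be checked directly against the first two Q-system steps rather than deduced from a uniform pattern. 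The rest of the proof is a careful case-by-case bookkeeping of the quantum dilogarithm identities.
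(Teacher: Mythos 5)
Your proposal follows essentially the same route as the paper: reduce to finitely many base-case conjugation identities by showing that ${\rm Ad}_g$ is compatible with the opposite quantum Q-system evolution, then build $g$ as a product of a Gaussian $T$-part and quantum-dilogarithm $\Lambda$-parts and verify the base cases with the exchange/pushing identities. The only deviations are organizational: the paper uses $\barD_{a;-1}$ (and, for the short labels of $B_N^{(1)},C_N^{(1)},A_{2N}^{(2)}$, half-evolution factorizations $g=g_1g_2$) rather than $\barD_{a;3}$ as base data, and it keeps the plain unweighted $g_T$ even in the symmetrized cases $D_{N+1}^{(2)},A_{2N-1}^{(2)}$ — the fractional weights $\omega_a^*$ already supply the correct powers of $T_a$ via \eqref{gtexch} — so your proposed $D_{a,a}^{-1}$-weighting of the Gaussian exponents should be dropped to match \eqref{variousg}.
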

Note the $\Lambda$-dependence of $g$ is only via $\Lambda^{-\alpha_a^*}$ where $\alpha_a^*$ are the simple roots of $R^*$.
\begin{proof} The proof is by induction on $n$. The inductive step relies on the fact
that Equations \eqref{evolutlong} and \eqref{evolutshort}, which can be written as
\begin{equation}\label{evolutboth}
\barD_{a;m+t_a} = q^{-\Lambda_{a,a}^{(\g)} t_a/2} g\barD_{a;m} g^{-1},\quad  t_a=\left\{ \begin{matrix}1 & {\rm if} \ a\ {\rm long}\\
2 & {\rm if} \ a\ {\rm short}
\end{matrix}\right. ,
\end{equation}
are compatible with the \barQ-system evolution, from which $\barD_{a;n}$ are defined. 
Define for all labels $a$: %
$\bar{\mathcal T}_{a;n}:=\bar\Q_{a;n}^2 -q^{\Lambda_{a,a}} \bar\Q_{a;n-1} \bar\Q_{a;n+1}.
$
If \eqref{evolutboth} hold for all  $m\leq n$,  then they hold for $m=n+1$ iff $g \bar{\mathcal T}_{a;n}g^{-1} = q^{\Lambda_{a,a}^{(\g)}t_a }\bar{\mathcal T}_{a;n+t_a}$. This is easily checked case-by-case.
To complete the proof, we must check (\ref{evolutlong}-\ref{evolutshort}) for some initial values of $n$.
Using the form of $\bar{\mathcal T}_{a;n}$, we see that for long roots, it is enough  to show that \eqref{evolutlong} holds for $n=-1$ and $n=0$, so we need the explicit expressions for 
$\barD_{a,n}$ with $n=-1,0,1$ to start the induction, and for short roots, we also need the expression for $n=2$:
\begin{eqnarray}
\barD_{a,-1} &=& \barD_{a,0} (1-\Lambda^{-\alpha_a^*}) T^{-\omega_a} ,\label{genericm1a} \qquad a\in[1,N],\\
\barD_{a,2} &=& \barD_{a,1} (1-\Lambda^{-\alpha_a^*}T^{-\alpha_a}) T^{\omega_a},\label{generic2a} \qquad \hbox{$a$ short.}
 \end{eqnarray}
 
Let
$h_{\al}(u)=(\lL^{-\al};q^u)_\infty^{-1}$, $h_\al=h_\al(1)$ and $h_{\al}^+=(q^\half \lL^{-\al};q)_\infty^{-1}$.
Then $g_\lL=\prod_{a=1}^{N-1} h_{\al_a^*}$.
It is useful to rewrite the $g$ operators of \eqref{variousg} as
\begin{eqnarray*}
g&=& g_T\,g_\lL\,h_{\al_N^*}(u),\quad \g=D_N^{(1)},A_{2N-1}^{(2)},D_{N+1}^{(2)} , \ u=1,2,\half\ {\rm resp.},
\nonumber \\
g&=& g_1 g_2\ \hbox{ where }\ g_1=\left\{ \begin{array}{l} 
g_T^\half \,h_{\al_N^*},\\ %
g_T\,g_\lL,\\%
g_T \,g_\lL\, h_{\al_N^*}^+,\\%
\end{array}\right.\  \hbox{ and } \
g_2=\left\{ \begin{array}{ll} 
g_1\,g_\lL,& \g=B_N^{(1)},\\
g_1\, h_{\al_N^*}, &\g=C_N^{(1)},\\
g_T g_\lL h_{\al_N^*},& \g=A_{2N}^{(2)},
\end{array}\right.
\end{eqnarray*}
We use the exchange relations, valid for any $x,y\in\mathbb Q$,
\begin{eqnarray}
(g_T)^x \,(\lL_a)^y &=&q^{\frac{x y^2}{2}} \,(\lL_a)^y\, (T_a)^{xy}\, (g_T)^x , \label{gtexch}\\
h_{\al_b^*}\, T^{\omega_a}&=& T^{\omega_a}\,(1-\lL^{-\al_a^*})\, h_{\al_b^*},\qquad
h_{\al_b^*}^{-1}\, T^{-\omega_a}=(1-\lL^{-\al_a^*}) T^{-\omega_a}\,h_{\al_b^*}^{-1} .\label{hexch}
\end{eqnarray}

{\em Long labels.} In this case, we need only verify that \eqref{evolutlong} hold for $n=\pm1$,
Note that $\barD_{a;0}=\Lambda^{\omega_a^*}$ commutes with $g_\Lambda$.
In the self-dual cases $\g=D_N^{(1)}, A_{2N-1}^{(2)},D_{N+1}^{(2)}$, all labels are long
and Equation \eqref{evolutlong} with $n=1$, $g \barD_{a;0} g^{-1}=q^{\Lambda_{a,a}/2}\barD_{a;1}$, follows from
$$g_T \,\barD_{a;0}=  %
q^{\frac{\Lambda_{a,a}}{2}} \lL^{\omega_a^*} T^{\omega_a}  g_T  =q^{\frac{\Lambda_{a,a}}{2}} \barD_{a;1}\, g_T.$$
For $n=-1$ it is equivalent to
\begin{eqnarray*}
g^{-1}\, \barD_{a;0}&=& h_{\al_N^*}^{-1}g_\lL^{-1}g_T^{-1} \,\lL^{\omega_a^*} =
q^{-\frac{\lL_{a,a}}{2}} \, \lL^{\omega_a^*} \,g_\lL^{-1}T^{-\omega_a}\, h_{\al_N^*}^{-1}\,g_\lL^{-1}\\
&=& q^{-\frac{\lL_{a,a}}{2}}\, \barD_{a;0}\,(1-\lL^{-\al_a^*})T^{-\omega_a}\,g^{-1}=q^{-\frac{\lL_{a,a}}{2}} \, \barD_{a;-1}\,g^{-1} .
\end{eqnarray*}
For the long labels $a<N$ of $B_N^{(1)}$, $\omega_a^*=\omega_a$, $\Lambda_{a,a}=a$ and \eqref{evolutlong} with $n=\pm1$ follow from
\begin{equation*}g_T^{1/2}\,h_{\al_N^*} \,g_T^{1/2}\,\barD_{a;0}= q^{\frac{a}{4}} g_T^{1/2}\,\lL^{\omega_a^*}\,T^{\omega_a/2}\,h_{\al_N^*} \,g_T^{1/2}\\
= q^{\frac a2} \,\lL^{\omega_a^*} T^{\omega_a}g_T^{1/2}\,h_{\al_N^*} \,g_T^{1/2}= q^{\frac a2} \barD_{a;1}g_T^{1/2}\,h_{\al_N^*} \,g_T^{1/2}
\end{equation*}
and
\begin{eqnarray*}
g^{-1}\, \barD_{a;0}&=&g_\lL^{-1}(h_{\al_N^*}^{-1}g_T^{-1/2})^2\,\lL^{\omega_a^*} 
=q^{-\frac{a}{2}}\,\lL^{\omega_a^*}\,
g_\lL^{-1}  T^{-\omega_a^*} \,h_{\al_N^*}^{-1}g_T^{-1/2}h_{\al_N^*}^{-1}g_T^{-1/2}\\
&=&q^{-\frac{a}{2}}\, 
\barD_{a,0}\,(1-\lL^{-\al_a^*})T^{-\omega_a^*} \,g^{-1}=q^{-\frac{\Lambda_{a,a}}{2}}\, \barD_{a;-1}\,g^{-1}.
\end{eqnarray*}
For the long label $a=N$ of $\g=C_N^{(1)}$, $2\omega_N^*=\omega_N$, \eqref{evolutlong} with $n=\pm1$ follows from
\begin{eqnarray*}g_T g_\lL g_T \,\barD_{N;0}&=&q^{\frac{N}{2}} g_T \lL^{\omega_N^*} g_\lL  T^{\omega_N^*}\, g_T 
=q^N \lL^{\omega_N^*} T^{2\omega_N^*}g_T g_\lL g_T=q^N\barD_{N;1}\, g_T g_\lL g_T 
\end{eqnarray*}
and
\begin{eqnarray*}
&&g^{-1}\, \barD_{N;0}=h_{\al_N^*}^{-1}g_\lL^{-1}g_T^{-1}g_\lL^{-1}g_T^{-1}\lL^{\omega_N^*} = 
q^{-\frac{N}{4}}\, h_{\al_N^*}^{-1}g_\lL^{-1}g_T^{-1}\lL^{\omega_N^*}T^{-\omega_N^*}g_\lL^{-1}g_T^{-1}\\
&&\quad=q^{-\frac{N}{2}}\,\lL^{\omega_N^*}\,h_{\al_N^*}^{-1}T^{-2\omega_N^*}\,(g_\lL^{-1}g_T^{-1})^2=q^{-\frac{N}{2}}\,\barD_{N;0}\,(1-\lL^{-\al_N^*}) T^{-2\omega_N^*}\,g^{-1}=q^{-\frac{N}{2}}\,\barD_{N;-1}\, g^{-1}.
\end{eqnarray*}
Therefore \eqref{evolutlong} holds when $n=\pm1$ and $a$ long.

{\em Short labels.} Next we verify \eqref{evolutshort} for short labels $a$. We claim the following half-evolution equations hold 
in types $B_N^{(1)}, C_N^{(1)}, A_{2N}^{(2)}$:
\begin{eqnarray} q^{-\half \lL_{a,a}} g_2\, \barD_{a;0}\,g_2^{-1}&=&\barD_{a;1},\label{g2one}\\
q^{-\half \lL_{a,a}} g_1\, \barD_{a;1}\,g_1^{-1}&=&\barD_{a;2}.\label{g2two}
\end{eqnarray}
To see this, Equation \eqref{g2one} for $B_N^{(1)}$ with $a=N$, where $\omega_N^*=2\omega_N$, follows from
$$g_2\, \barD_{N;0}\,g_2^{-1}=g_T^{\half}\lL^{\omega_N^*}g_T^{-\half}=q^{\frac N4} \lL^{\omega_N^*}T^{\omega_N}
=q^{\frac{\Lambda_{N,N}}{2}}\, \barD_{N;1}.$$
For the short labels of $C_N^{(1)},A_{2N}^{(2)}$, where $\omega_a=\omega_a^*$, it follows from
$$g_2\, \barD_{a;0}\,g_2^{-1}=g_T\lL^{\omega_a^*}g_T^{-1}=q^{\frac{a}{2}}\, \lL^{\omega_a^*} \,T^{\omega_a^*}=
q^{\frac{\Lambda_{a,a}}{2}} \,\barD_{a;1}.$$
Equation \eqref{g2two} $\g=B_N^{(1)}$ with $a=N$, where $\al_N^*=2\al_N$, follows from
\begin{eqnarray*}
g_1\, \barD_{N;1}&=& g_T^\half\, \lL^{\omega_N^*} h_{\al_N^*} T^{\omega_N}=g_T^\half 
\lL^{\omega_N^*} T^{\omega_N}(1-\lL^{-\al_N^*})\, h_{\al_N^*}\\
&=& q^{\frac{N}{4}}\, \lL^{\omega_N^*} T^{2\omega_N}(1-q \lL^{-\al_N^*}T^{-\al_N^*/2} )\,g_T^\half\,h_{\al_N^*}
=q^{\frac{\Lambda_{N,N}}{2}}\, \barD_{N;2} \,g_1.
\end{eqnarray*}
When $a\leq N-1$ for $C_N^{(1)},A_{2N}^{(2)}$, for $a\leq N-1$, $\al_a^*=\al_a$ and
\begin{eqnarray*}
g_1\, \barD_{a;1}&=& g_T \lL^{\omega_a^*}\, g_\lL T^{\omega_a} =g_T \,
\lL^{\omega_a^*}\,T^{\omega_a} \,(1-\lL^{-\al_a^*})\, g_\lL\\
&=& q^{\frac{a}{2}} \, \barD_{a;1}\, (1-\lL^{-\al_a^*}T^{-\al_a^*})\,T^{\omega_a}\,g_T\,g_\lL=q^{\frac{a}{2}} \, \barD_{a;2}\,g_1 .
\end{eqnarray*}
For $A_{2N}^{(2)}$, $a=N$, 
\begin{eqnarray*}
g_1\, \barD_{N;1}&=& g_T \, \lL^{\omega_N^*}\,g_\lL\, h_{\al_N^*}^+ T^{\omega_N} =
g_T \, \lL^{\omega_N^*} (1-q^{-\half} \lL^{-\al_N^*}) T^{\omega_N}\,g_\lL\, h_{\al_N^*}^+\\
&=&q^{\frac{N}{2}}\, \barD_{N;1}\, (1- \lL^{-\al_N^*}T^{-\al_N^*})\,g_T\,g_\lL\, h_{\al_N^*}^+=
q^{\frac{N}{2}}\, \barD_{N;2}\,g_1.
\end{eqnarray*}
Equations \eqref{g2one},\eqref{g2two} imply \eqref{evolutshort} follows for $i=1,n=1$.
We  also have the half-evolution equations for short labels:
\begin{eqnarray} q^{\half \lL_{a,a}} g_1^{-1}\, \barD_{a;1}\,g_1&=&\barD_{a;0},\label{gone}\\
q^{\half \lL_{a,a}} g_2^{-1}\, \barD_{a;0}\,g_2&=&\barD_{a;-1}.\label{gtwo}
\end{eqnarray}
Equation \eqref{gone} is equivalent to
$\barD_{a;1}=q^{-\half \lL_{a,a}}g_1 \,\barD_{a;0}\,g_1^{-1}$, which is unchanged if we replace $g_1$ by $g_2$,
as only the $T$-dependent part $g_T$ acts on $\barD_{a;0}$, and the equation is therefore equivalent to
\eqref{g2one}.
To show  \eqref{gtwo} for $B_N^{(1)}$,
\begin{eqnarray*}
g_2^{-1}\, \barD_{N;0}&=& g_\lL^{-1}h_{\al_N^*}^{-1}\,g_T^{-\half} \lL^{\omega_N^*} 
=q^{\frac{N}{2}} \,\lL^{\omega_N^*}\, g_\lL^{-1}h_{\al_N^*}^{-1}\, T^{-\omega_N} \,g_T^{-\half}\\
&=& q^{\frac{N}{2}} \,\lL^{\omega_N^*}\,(1-\lL^{-\al_N^*}) \,T^{-\omega_N}g_\lL^{-1}\,h_{\al_N^*}^{-1}\,g_T^{-\half} 
= q^{\frac{N}{2}} \,\barD_{N;-1}\, g_2^{-1} ,
\end{eqnarray*}
and for $a$ short in $C_N^{(1)},A_{2N}^{(2)}$,
\begin{equation*}
g_2^{-1}\, \barD_{a;0}= q^{\frac{a}{2}} \, \lL^{\omega_a^*}\,h_{\al_N^*}^{-1}g_\lL^{-1}  T^{-\omega_a}\, g_T^{-1} 
= q^{\frac{a}{2}} \, \lL^{\omega_a^*}(1-\lL^{-\al_a^*}) T^{-\omega_a}\,h_{\al_N^*}^{-1}g_\lL^{-1} \, g_T^{-1}
=q^{\frac{a}{2}}\, \barD_{a;-1}\,g_2^{-1} .
\end{equation*}
These half-evolutions imply Equation \eqref{evolutshort} with $i=1,n=-1$.

The Theorem follows by induction.
\end{proof}

\subsubsection{Integrability and conserved quantities}

We claim that the time-translation operators $g$ commute with the Pieri operators for each $\g$, which therefore have an interpretation as the conserved quantities of the quantum \barQ-system.
\begin{thm} \label{lemone}
For all $\g$, the operator $g^{(\g)}(\lL)$ commutes with the first Pieri operator $H_1^{(\g)}(\lL)$.
\end{thm}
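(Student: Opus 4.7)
The approach I would take is direct verification, generalizing the telescoping computation of Theorem \ref{commutationtypeA}. Recall that in type $A$, the key identity was that conjugation by $g$ sends $(1-\Lambda^{-\alpha_{a-1}})T_a$ to $T_a - \Lambda^{-\alpha_a}T_{a+1}$ for each $a\in [1,N]$ (with the convention $\Lambda^{-\alpha_0}=\Lambda^{-\alpha_N}=0$), and the resulting expressions telescope to reproduce $H_1$. The plan is to carry out a similar case-by-case analysis for each remaining affine type in Table \ref{tableone}.

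First, I would decompose the first Pieri operator $H_1^{(\g)}(\Lambda)$ of Theorem \ref{hamilist} into three contributions: a ``forward'' piece $H_+^{(\g)} = \sum_{i=1}^N (1-\Lambda^{-\alpha_{i-1}^*})T_i$, a ``backward'' piece $H_-^{(\g)} = \sum_{i=1}^{N_{R^*}}(1-\Lambda^{-\alpha_i^*})T_i^{-1}$, and the boundary correction $M^{(\g)}(\Lambda;q)$. Correspondingly, $g^{(\g)}$ as given in \eqref{variousg} splits into the Gaussian $g_T$ (or its ``halved'' or ``doubled'' variants for the short-label types), the bulk dilogarithm factor $g_\Lambda = \prod_{a=1}^{N-1} h_{\alpha_a^*}$, and a boundary dilogarithm built from $h_{\alpha_N^*}$, $h_{\alpha_N^*}^+$, or one of the $q^{1/2}$- and $q^2$-variants. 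The goal is to show that the bulk part of $g^{(\g)}$ conjugates $H_+^{(\g)}$ and $H_-^{(\g)}$ by a telescoping mechanism analogous to type $A$, producing a residual boundary contribution that is cancelled exactly by the conjugation of $M^{(\g)}$ by the boundary factor in $g^{(\g)}$.

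The computational inputs are the exchange relations \eqref{gtexch} for $g_T$ with $\Lambda$ and \eqref{hexch} for $h_{\alpha}$ with $T^{\omega_a}$, together with their straightforward variants for the $q^{1/2}$- and $q^2$-shifted dilogarithms and for $T_i^{-1}$ in place of $T_i$. A typical intermediate identity has the shape
\begin{equation*}
g^{(\g)}\,(1-\Lambda^{-\alpha_{i-1}^*})T_i\,(g^{(\g)})^{-1} = T_i - \Lambda^{-\alpha_i^*}T_{i+1},
\end{equation*}
valid for interior labels $i$, and an analogous identity
\begin{equation*}
g^{(\g)}\,(1-\Lambda^{-\alpha_i^*})T_i^{-1}\,(g^{(\g)})^{-1} = T_i^{-1} - \Lambda^{-\alpha_{i-1}^*}T_{i-1}^{-1}
\end{equation*}
for the backward terms. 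Summing over $i$ then yields back $H_+^{(\g)}$ and $H_-^{(\g)}$ up to ``end-of-chain'' terms at $i = N$ (and $i=1$ for the backward part), which must combine with the conjugated $M^{(\g)}$ to close the computation.

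The main obstacle, and the place requiring the most care, is the behaviour at the short-root end of the Dynkin diagram of $R^*$, where $g^{(\g)}$ involves either a modified dilogarithm $h_{\alpha_N^*}^+$ (for $A_{2N}^{(2)}$), a squared factor $(g_T^{1/2} h_{\alpha_N^*})^2$ (for $B_N^{(1)}$), or a $q$-shifted Pochhammer $(\Lambda^{-\alpha_N^*};q^2)_\infty^{-1}$ or $(\Lambda^{-\alpha_N^*};q^{1/2})_\infty^{-1}$ (for $A_{2N-1}^{(2)}$ and $D_{N+1}^{(2)}$). In each case, the quadratic structure of $M^{(\g)}$ in \eqref{Hamil2}, e.g.\ $(1-\Lambda^{-\alpha_N^*})(1-q\Lambda^{-\alpha_N^*})T_N^{-1}$ for $B_N^{(1)}$, is precisely what is needed to match the output of conjugating the squared or shifted dilogarithm, while the extra additive scalars $-\Lambda^{-\alpha_N^*}$, $-(1+q^{-1})\Lambda^{-\alpha_N^*}$, etc., arise from the non-telescoping leftover. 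Since this is a purely mechanical but type-dependent verification, I would carry out a representative calculation (say $\g = C_N^{(1)}$) in the main text and relegate the remaining cases to Appendix \ref{appC}.
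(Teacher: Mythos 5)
Your proposal is correct and follows essentially the same route as the paper's Appendix \ref{appC}: split $H_1^{(\g)}$ into a bulk part that telescopes under conjugation by $g_T\,g_\lL$ (the two displayed identities you write are exactly the paper's \eqref{stepzero} and \eqref{comtinv}) plus boundary terms matched case by case against the boundary dilogarithm factors. The only point needing slightly more care than you indicate is that for $\g=C_N^{(1)},A_{2N}^{(2)}$ (and the squared factor in $B_N^{(1)}$) the operator $g^{(\g)}$ contains \emph{two} copies of $g_T g_\lL$, so the telescoping must be applied in two stages with an intermediate partially-commuted Hamiltonian, which is how the paper organizes Section \ref{CAsecs}.
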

The proof is by explicit calculation for each $\g$. It is given in  Appendix \ref{appC}. 

In what follows, we need to generalize to type $\g$ the statement of Section \ref{sec:unique} about the uniqueness of the solutions to the first Pieri equation. Starting with
$D_1^{(\g)}(x) \Pi_\lambda^{(\g)}(x)=\lL_1\, \Pi_\lambda^{(\g)}(x)$, writing $\Pi_\lambda^{(\g)}(x)=x^\lambda p_\lambda^{(\g)}(x)$, and
conjugating \eqref{uniG} with $x^{-\lambda}$ results in the equation
$$ \left(\frac{1}{\lL_1}+\sum_{i=1\atop \epsilon=\pm 1}^N \phi_{i,\epsilon}^{(\g)}(x) 
\left(\frac{\lL_i^\epsilon}{\lL_1} \Gamma_i^\epsilon -\frac{1}{\lL_1}\right)-1\right)\, p_\lambda^{(\g)}(x)=0 .$$
The difference operator is polynomial in $\{\lL^{-\al_i^*}\}$, $\al_i^*$ the simple roots of $R^*$, so we may analytically continue the solution 
$p_\lambda^{(\g)}(x)$ to $\hat p_\lambda^{(\g)}(x)$, $\lambda\in \C^N$, 
and then consider $\Lambda$ to be a formal parameter. The coefficients $\phi_{i,\epsilon}^{(\g)}(x)$ have series expansions 
in the variables $x^{-\al_i}$, $\al_i$ the simple roots of $R$, hence we may expand $\hat p_\lambda^{(\g)}(x)=\sum_{\beta\in Q_+(R)} c_\beta^{(\g)}(\lL)x^{-\beta}$, as well as $c_\beta^{(\g)}(\lL)= \sum_{\delta\in Q_+(R^*)} c_{\beta,\delta}^{(\g)} \lL^{-\delta}$. When dealing with formal variables, we may exchange the summations and write a new series expansion,
$$ \hat p_\lambda^{(\g)}(x)= \sum_{\delta\in Q_+(R^*)} \hat c_\delta^{(\g)}(x) \lL^{-\delta}, \ \ \hat c_\delta^{(\g)}(x)=\sum_{\beta\in Q_+(R)} c_{\beta,\delta}^{(\g)} x^{-\beta} .$$
The first Pieri equation \eqref{pieriwhit} is also easily extended to formal $\lambda$, as the 
dependence on $\lL^{-\alpha_a^*}$ is polynomial, while $T_i: \Lambda_j\mapsto q^{\delta_{ij}}\Lambda_j$.

\begin{lemma}\label{uniqueglem}
Let $\Theta_\lambda(x)$ ($\lambda$ arbitrary) be a solution of the first Pieri equation \eqref{pieriwhit}, such that
$\Theta_\lambda(x)=x^\lambda\sum_{\beta\in Q_+(R^*)} \tau_\beta(x)\lL^{-\beta}$.
Then when $\lambda$ is evaluated as a $\g$-partition,
$$\Pi_\lambda^{(\g)}=\frac{\hat c_0^{(\g)}(x)}{\tau_0(x)}\, \Theta_\lambda(x).$$
\end{lemma}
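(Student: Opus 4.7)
The proof extends the type $A$ argument of Lemma \ref{uniquelem} to the general root data $R^*$. The plan is first to write $\Theta_\lambda(x) = x^\lambda\,\theta_\lambda(x)$ with $\theta_\lambda(x) = \sum_{\beta \in Q_+(R^*)} \tau_\beta(x)\,\Lambda^{-\beta}$, and substitute into the first Pieri equation \eqref{pieriwhit}. Using $T_i^{\epsilon} x^\lambda = x_i^{\epsilon} x^\lambda T_i^{\epsilon}$ and the explicit form \eqref{Hamil}--\eqref{Hamil2} of $H_1^{(\g)}(\Lambda)$, after conjugating out $x^\lambda$ the equation reduces to $L^{(\g)}(x;\Lambda,T)\,\theta_\lambda(x) = 0$, where $L^{(\g)}$ is a polynomial in the variables $\{\Lambda^{-\alpha_a^*}\}_{a=1}^N$ with $x$-dependent coefficients and shifts $T_i^{\pm 1}$. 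This polynomial dependence on the $\Lambda^{-\alpha_a^*}$ is the essential simplification of the $q$-Whittaker limit, and is what makes the recursion triangular rather than rational.

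Matching coefficients of $\Lambda^{-\beta}$ in $L^{(\g)}\theta_\lambda = 0$ for each $\beta \in Q_+(R^*)$ yields a triangular linear system: $\tau_\beta(x)$ is expressed in terms of $\tau_{\beta'}(x)$ for $\beta' \prec \beta$, with the diagonal coefficient coming from the $\Lambda$-independent piece of $L^{(\g)}$. The latter is generically non-vanishing in $x$ for all $\beta \neq 0$ (by inspection of \eqref{Hamil} it takes the form $\sum_i x_i(q^{m_i(\beta)}-1) + (\text{dual})$ with generically nonzero exponents), so each $\tau_\beta$ is uniquely determined as a rational function of $x$ proportional to $\tau_0(x)$. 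The same triangular recursion governs the coefficients $\hat c_\delta^{(\g)}(x)$ of the analytic continuation $\hat p_\lambda^{(\g)}(x) = \sum_{\delta}\hat c_\delta^{(\g)}(x)\,\Lambda^{-\delta}$: indeed, the eigenvalue equation $D_1^{(\g)}(x)\,\Pi_\lambda^{(\g)}(x) = \Lambda^{\omega_1^*}\,\Pi_\lambda^{(\g)}(x)$ extends to arbitrary $\lambda$ because of its polynomial $\Lambda^{-\alpha_a^*}$-dependence after conjugation by $x^{-\lambda}$, and the Pieri equation \eqref{pieriwhit}, valid on $\Pi_\lambda^{(\g)}$ for every $\g$-partition, propagates to a formal identity $(H_1^{(\g)}(\Lambda)-\hat e_1(x))\,\hat p_\lambda^{(\g)}(x) = 0$ by the same polynomiality argument.

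Consequently $\hat c_\delta^{(\g)}(x)/\hat c_0^{(\g)}(x) = \tau_\delta(x)/\tau_0(x)$ for all $\delta \in Q_+(R^*)$, so $\hat p_\lambda^{(\g)}(x) = (\hat c_0^{(\g)}(x)/\tau_0(x))\,\theta_\lambda(x)$ as formal series, whence $x^\lambda\,\hat p_\lambda^{(\g)}(x) = (\hat c_0^{(\g)}(x)/\tau_0(x))\,\Theta_\lambda(x)$. Specializing $\lambda$ to a $\g$-partition, where $x^\lambda\,\hat p_\lambda^{(\g)}(x) = \Pi_\lambda^{(\g)}(x)$, gives the claim. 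The main obstacle, requiring case-by-case verification, is to check that the boundary terms $M^{(\g)}(\Lambda;q)$ in \eqref{Hamil2}---which carry the non-trivial root-system dependence and, for $\g = B_N^{(1)}, D_{N+1}^{(2)}, A_{2N}^{(2)}$, contain purely multiplicative pieces without $T_i^{\pm 1}$ shifts---contribute only off-diagonally to the triangular recursion. This follows from the observation that each such piece is of strictly positive degree in $\Lambda^{-\alpha_N^*}$ and therefore shifts $\beta$ by a nonzero root of $R^*$, leaving the diagonal coefficient unaffected.
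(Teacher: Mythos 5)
Your proof is correct and follows essentially the same route as the paper's: both arguments reduce the Pieri equation, after conjugating out $x^\lambda$, to a triangular linear system that determines the coefficients $\tau_\beta(x)$ (and likewise $\hat c_\beta^{(\g)}(x)$) uniquely up to the leading one, and then conclude proportionality and specialize to $\g$-partitions. Your additional verification that the $T$-free pieces of the boundary terms $M^{(\g)}$ act strictly off-diagonally is a useful elaboration of a point the paper leaves implicit, but it does not constitute a different method.
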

\begin{proof}
The Pieri equation is a triangular linear system for the coefficients $\tau_\beta(x)$, determined uniquely for $\beta\neq0$ up to the factor $\tau_0(x)$. The same holds for the coefficients $\hat c_\beta^{(\g)}(x)$ in the expansion of $\hat p_\lambda^{(\g)}(x)$, up to $\hat c_0^{(\g)}(x)$. We deduce that $x^\lambda \hat p_\lambda^{(\g)}(x)=
\frac{\hat c_0^{(\g)}(x)}{\tau_0(x)}\, \Theta_\lambda(x)$. The Lemma follows by specialization to 
$\g$-partitions.
\end{proof}
\begin{thm}\label{lemtwo}
The action of $g(\Lambda)$ on $\Pi_\lambda^{(\g)}(x)$ is equivalent to acting by a Gaussian in $x$:
\begin{equation}\label{ftga}
g^{(\g)}(\lL)\, \Pi_\lambda^{(\g)}(x) =\gamma^{(\g)}(x)\, \Pi_\lambda^{(\g)}(x),\ \ \gamma^{(\g)}(x)=\gamma(x)^{t_1},
\end{equation}
where 
 $t_1=2$ if $1$ is a short label (i.e. $\g=C_N^{(1)},A_{2N}^{(2)}$), and is equal to 1 otherwise.
\end{thm}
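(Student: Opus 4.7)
The plan is to generalize the proof of Theorem \ref{fourierduathm} from type $A$ to all $\g$, using the commutation of $g^{(\g)}(\Lambda)$ with the first Pieri operator (Theorem \ref{lemone}), the uniqueness Lemma \ref{uniqueglem} for solutions of the first Pieri equation, and a Campbell-Hausdorff computation to fix the proportionality constant.

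First I would act with $\gamma^{(\g)}(x)^{-1} g^{(\g)}(\Lambda)$ on both sides of the first Pieri equation $H_1^{(\g)}(\Lambda) \Pi_\lambda^{(\g)}(x) = \hat e_1(x) \Pi_\lambda^{(\g)}(x)$. Since $\gamma^{(\g)}(x)$ is a function of $x$ only, it commutes with $H_1^{(\g)}(\Lambda)$ (which is built from generators of $\mathbb T_\Lambda$) and with $\hat e_1(x)$. By Theorem \ref{lemone}, $g^{(\g)}(\Lambda)$ commutes with $H_1^{(\g)}(\Lambda)$. Hence $\Theta_\lambda(x):=\gamma^{(\g)}(x)^{-1}\, g^{(\g)}(\Lambda)\, \Pi_\lambda^{(\g)}(x)$, viewed as a function of $\lambda \in \C^N$ extended from the universal solution $x^\lambda \hat p_\lambda^{(\g)}(x)$, satisfies the same first Pieri equation as $\Pi_\lambda^{(\g)}(x)$.

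Next I would check that $\Theta_\lambda(x)$ has the series form $x^\lambda \sum_{\beta \in Q_+(R^*)} \tau_\beta(x)\Lambda^{-\beta}$ required by Lemma \ref{uniqueglem}. The factor $g^{(\g)}(\Lambda)$ is a product of $g_T$-type pieces (formal exponentials in $\partial_\lambda^2$) and $g_\Lambda$-type pieces of the form $h_\alpha(u) = (\Lambda^{-\alpha};q^u)_\infty^{-1}$, each of which is a power series in $\Lambda^{-\alpha_i^*}$ with leading term $1$. Multiplying by the scalar $\gamma^{(\g)}(x)^{-1}$ preserves the $x^\lambda$ prefactor times a series in $\{x^{-\alpha_i}\}$ structure, so the hypotheses of Lemma \ref{uniqueglem} apply. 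This yields $\Pi_\lambda^{(\g)}(x) = \bigl(\hat c_0^{(\g)}(x)/\tau_0(x)\bigr) \Theta_\lambda(x)$ for $\lambda$ a $\g$-partition.

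The remaining step is to identify the proportionality constant by computing $\tau_0(x)$, the leading coefficient in the $\Lambda^{-\beta}$ expansion of $x^{-\lambda}\Theta_\lambda(x)$. The $g_\Lambda$-type factors contribute $1$ to this leading coefficient, so $\tau_0(x)$ is determined by the action of the $g_T$-pieces on $x^\lambda \hat c_0^{(\g)}(x)$, conjugated by $\gamma^{(\g)}(x)^{-1}$. Applying the Campbell-Hausdorff identity \eqref{CH} with $a=\sum_i \partial_{\lambda_i}^2/(2\log q)$ and $b=\sum_i \lambda_i \log x_i$, one obtains $g_T\, x^\lambda = \gamma(x)\, x^\lambda\, g_T'$, where $g_T'$ acts only by shifting $\Lambda$ and so preserves the leading term $\hat c_0^{(\g)}(x)$. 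Iterating this identity according to the structure of $g^{(\g)}$ in \eqref{variousg}—once for the self-dual long-label cases ($D_N^{(1)}, A_{2N-1}^{(2)}, D_{N+1}^{(2)}$) and $B_N^{(1)}$ (where the two $g_T^{1/2}$'s combine into one $g_T$), and twice for the short-label cases ($C_N^{(1)}, A_{2N}^{(2)}$, where $g^{(\g)}$ contains two copies of $g_T g_\Lambda$)—produces exactly $\gamma(x)^{t_1}=\gamma^{(\g)}(x)$, so that $\tau_0(x)=\hat c_0^{(\g)}(x)$ and the proportionality constant equals $1$.

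The main obstacle I anticipate is a careful case-by-case verification that the Campbell-Hausdorff bookkeeping produces precisely $\gamma^{(\g)}(x)=\gamma(x)^{t_1}$ with the correct exponent, particularly in the short-label cases $\g=C_N^{(1)}, A_{2N}^{(2)}$ where $g^{(\g)}$ has a ``squared'' structure with intercalated $h_{\alpha_N^*}$ factors. One must ensure that in commuting these factors past $x^\lambda$ and $g_T$, no extraneous factors of $q$ or additional multiplicative contributions survive in the leading $\Lambda^0$ coefficient. This is where the precise choice of $g^{(\g)}$ in \eqref{variousg}—in particular the placement of the $g_\Lambda$ and $h_{\alpha_N^*}$ factors—becomes crucial, and the identity is expected to hold by a direct calculation, once the Campbell-Hausdorff commutators between $\partial_\lambda^2$ terms and the $x^\lambda$ factor are tracked through both halves of the product.
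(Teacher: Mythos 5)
Your proposal is correct and follows essentially the same route as the paper: commute $g^{(\g)}(\Lambda)$ through the first Pieri equation using Theorem \ref{lemone}, invoke the uniqueness Lemma \ref{uniqueglem}, and fix the proportionality constant by noting that only the $g_T$-factors (totalling $g_T^{t_1}$) contribute to the leading $\Lambda^0$-coefficient, with $g_T^{t_1}x^\lambda=\gamma(x)^{t_1}x^\lambda\cdot(\text{a shift operator preserving the leading term})$. The paper phrases this last step via the exchange relation \eqref{gtexch} rather than Campbell--Hausdorff, but that is the same computation, and your worry about the intercalated $h_{\al_N^*}$ factors in the short-label cases resolves exactly as you anticipate, since $q$-shifts of $\Lambda$ preserve the $1+O(\Lambda^{-\al^*})$ form.
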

\noindent In other words, $g^{(\g)}(\Lambda)$ is the Fourier transform of $\gamma(x)^{t_1}$.
\begin{proof} 
Like $\Pi_\lambda^{(\g)}(x)$, the quantity $g^{(\g)}(\lL)\Pi_\lambda^{(\g)}(x)$ can be continued to 
arbitrary $\lambda$, and we write it as $g^{(\g)}(\lL) x^{\lambda} \hat p_\lambda^{(\g)}(x)$.
Multiplying the first Pieri equation by $g^{(\g)}$ on the left, we get:
$$g^{(\g)}\,H_1^{(\g)}(\lL)\, x^{\lambda} \hat p_\lambda^{(\g)}(x)=H_1^{(\g)}(\lL)\,\Big(g^{(\g)}\,x^{\lambda} \hat p_\lambda^{(\g)}(x)\Big)
={\hat e}_1(x)\,\Big(g^{(\g)}\,x^{\lambda} \hat p_\lambda^{(\g)}(x)\Big),$$
hence both $g^{(\g)}\,x^{\lambda} \hat p_\lambda^{(\g)}(x)$ and $x^{\lambda} \hat p_\lambda^{(\g)}(x)$ obey the same first Pieri rule. Apply Lemma \ref{uniqueglem} to the function 
$\Theta_\lambda(x)=g^{(\g)}\,x^{\lambda} \hat p_\lambda^{(\g)}(x)
=g^{(\g)}\,x^{\lambda}(\hat c_0^{(\g)}(x)+O(\lL^{-\al_i^*}))$. 
Expanding $g^{(\g)}=1+O(\lL^{-\alpha_a^*})$, we see that the leading order term
in $g^{(\g)}\,x^{\lambda} \hat p_\lambda^{(\g)}(x)$ has only contributions from the action of the $g_T$ parts of $g^{(\g)}$ on the leading term $x^\lambda$. The total contribution of the $g_T$ terms is $g_T^{t_1}$, as directly read off \eqref{variousg}. 
Finally noting that $(g_T)^{t_1} \,x^\lambda = \gamma(x)^{t_1} \, x^\lambda T^{t_1\xi}(g_T)^{t_1}$, where $x=q^\xi$ (using \eqref{gtexch}), we find that the leading term is $\tau_0(x)=\gamma(x)^{t_1}\hat c_0^{(\g)}(x)$,
and the Theorem follows from the relation $\tau_0(x)/\hat c_0^{(\g)}(x)=\gamma(x)^{t_1}$.
\end{proof}

\begin{remark}\label{newajim} As noted in Remark \ref{ajim}, in the cases $\g=D_{N+1}^{(2)},A_{2N-1}^{(2)},D_{N}^{(1)}$,
the relation \eqref{ftga} boils down to the recursion relation of \cite{FJMMfermio}
(Theorem 3.1) for the coefficients $J_\beta^\mu=J_\beta(x=q^\mu)$ for the root systems $R=B_N, C_N, D_N $ respectively (see Table \ref{korspec}). These are the coefficients in the series expansion
$\tilde \Pi^{(\g)}_\lambda(x)=x^\lambda g^{(\g)}_\Lambda\,\Pi^{(\g)}_\lambda(x)=x^\lambda\sum_{\beta \in Q_+}J_\beta(x)\Lambda^{-\beta} $, up to a rescaling $q\to q^2$ in the cases $\g=D_N^{(1)},D_{N+1}^{(2)}$. 
Here we use the notation
$$g^{(\g)}_\Lambda = g_\Lambda \times \left\{ \begin{matrix} &1/\left(\frac{1}{\Lambda_{N}};q^{1/2}\right)_\infty & {\rm for}\ \g=D_{N+1}^{(2)},\,R=B_N\\
&1/\left(\frac{1}{\Lambda_{N}^2};q^2\right)_\infty & {\rm for}\ \g=A_{2N-1}^{(2)},\,R=C_N\\
&1/\left(\frac{1}{\Lambda_{N-1}\Lambda_N};q\right)_\infty & {\rm for}\ \g=D_{N}^{(1)},\,R=D_N
\end{matrix}\right. .$$
\end{remark}

\begin{cor}\label{integrability}
The Pieri operators $H_a^{(\g)}(\lL)$, $a=1,2,...,N$ of Eq. \eqref{pierilimq} are algebraically independent conserved quantities of the $\g$-quantum \barQ-systems.
\end{cor}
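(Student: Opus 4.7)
The plan is to replicate the argument of Corollary \ref{identiftoda} from the type-$A$ case, with Theorem \ref{lemtwo} playing the role of Theorem \ref{fourierduathm} and the $\g$-Pieri rule \eqref{PieriG} (in its $q$-Whittaker limit) replacing \eqref{firsthamil}. First I would combine the two identities $H_a^{(\g)}(\Lambda)\,\Pi_\lambda^{(\g)}(x)=\hat e_a^{(R)}(x)\,\Pi_\lambda^{(\g)}(x)$ and $g^{(\g)}(\Lambda)\,\Pi_\lambda^{(\g)}(x)=\gamma(x)^{t_1}\,\Pi_\lambda^{(\g)}(x)$ by acting with $g^{(\g)}(\Lambda)$ on the left of the Pieri rule. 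Commuting $g^{(\g)}(\Lambda)$ freely past the multiplication operator $\hat e_a^{(R)}(x)$ (they act on disjoint sets of variables), using Theorem \ref{lemtwo} to replace $g^{(\g)}\Pi_\lambda^{(\g)}$ by $\gamma^{t_1}\Pi_\lambda^{(\g)}$, then commuting $\gamma^{t_1}$ past $\hat e_a^{(R)}(x)$ (both multiplication operators in $x$), and finally running the Pieri rule and Fourier duality backwards, gives $g^{(\g)}H_a^{(\g)}\,\Pi_\lambda^{(\g)}=H_a^{(\g)}g^{(\g)}\,\Pi_\lambda^{(\g)}$ for every $\g$-partition $\lambda$. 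Since the operators $\Lambda_i$ act diagonally with distinct eigenvalues $q^{\lambda_i}$ on the $\Pi_\lambda^{(\g)}$ and the $T_i$ shift the index $\lambda$, this family is a separating module for (a suitable completion of) $\mathbb T_\Lambda$, so the commutator $[g^{(\g)},H_a^{(\g)}]$ must vanish as an operator for every $a\in[1,N]$.

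Conservation then follows exactly as in Corollary \ref{identiftoda}: each $H_a^{(\g)}$ is a Laurent polynomial in $\mathbb T_\Lambda$, hence in the initial data $\{\bar D_{b;0}^{(\g)},\bar D_{b;1}^{(\g)}\}_{b\in[1,N]}$ of the opposite quantum Q-system, and by Theorem \ref{longshort} the discrete time translation $n\mapsto n+t_b$ of this initial data is implemented, up to central scalars, by conjugation by $g^{(\g)}$; commutation with $g^{(\g)}$ therefore forces $H_a^{(\g)}$ to be unchanged under the evolution. Algebraic independence is then immediate: any polynomial relation $P(H_1^{(\g)},\dots,H_N^{(\g)})=0$, evaluated on $\Pi_\lambda^{(\g)}$ via the Pieri rule, yields $P(\hat e_1^{(R)}(x),\dots,\hat e_N^{(R)}(x))\equiv 0$, which is impossible since the symmetric functions $\hat e_a^{(R)}$ listed in Table \ref{chartable} form a free generating set of the $W$-invariant Laurent polynomials of $x$.

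The hard part is not computational but lies in ensuring that the separation statement used above is rigorous for each algebra $\g$. For the untwisted and most twisted types this is transparent from the diagonal action of $\Lambda_i$ and the shift action of $T_i$ on the $\Pi_\lambda^{(\g)}$, both already exploited throughout the paper in the definition of the Fourier transform $f(x)\Pi_\lambda^{(\g)}=\bar f(\Lambda)\Pi_\lambda^{(\g)}$. The case $\g=A_{2N}^{(2)}$ requires some extra care, since there the solutions of the opposite Q-system are not a priori Laurent in the initial cluster and the time translation splits into two half-steps as in the proof of Theorem \ref{longshort}; however, the commutation $[g^{(\g)},H_a^{(\g)}]=0$ is insensitive to this refinement, so the conservation statement still follows directly once Theorem \ref{lemtwo} is in place, and the entire argument goes through uniformly across all $\g$ in Table \ref{tableone}.
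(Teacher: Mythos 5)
Your proposal is correct and follows essentially the same route as the paper: combine the Pieri rule with Theorem \ref{lemtwo} to deduce $g^{(\g)}H_a^{(\g)}\Pi_\lambda^{(\g)}=H_a^{(\g)}g^{(\g)}\Pi_\lambda^{(\g)}$, upgrade this to $[g^{(\g)},H_a^{(\g)}]=0$ via completeness of the $\Pi_\lambda^{(\g)}$, then obtain conservation from the Laurent property of $H_a^{(\g)}$ in the initial data together with the time-translation role of $g^{(\g)}$ (as in Corollary \ref{identiftoda}), and independence from that of the $\hat e_a^{(R)}(x)$. The only cosmetic difference is that the paper acts with $(g^{(\g)})^{-1}\gamma(x)^{t_1}$ rather than with $g^{(\g)}$ alone, which is an equivalent manipulation.
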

\begin{proof}
To see that the time translation operator $g^{(\g)}(\lL)$  commutes with all higher Pieri operators $H_a^{(\g)}(\lL)$, $a\in[1,N]$, 
act with $(g^{(\g)}(\lL))^{-1}\gamma(x)^{t_1}$ on
the Pieri equations, and use Theorem \ref{lemtwo} 
$$g^{(\g)}(\lL)^{-1}\gamma(x)^{t_1}\,H_a^{(\g)}(\lL)\, \Pi_\lambda^{(\g)}(x)=g^{(\g)}(\lL)^{-1}H_a^{(\g)}(\lL)\,g^{(\g)}(\lL)\,\Pi_\lambda^{(\g)}(x)
={\hat e}_a^{(\g)}(x)\, \Pi_\lambda^{(\g)}(x),$$
which implies that $g^{(\g)}(\lL)^{-1}H_a^{(\g)}(\lL)\,g^{(\g)}(\lL)=H_a^{(\g)}(\lL)$ by the definition of the Pieri operators. The statement follows by noting that any dependence between $H_a^{(\g)}(\lL)$ would imply a dependence between $\hat e_a^{(\g)}(x)$, which are independent. 
\end{proof}

\subsection{Proof of  Theorems \ref{Qsysconj} and \ref{raiseconj}}\label{sec:43}
The proof of both Theorems relies on the following.
\begin{thm} \label{foutranthm}
For all $\g=D_N^{(1)},B_N^{(1)},C_N^{(1)},D_{N+1}^{(2)},A_{2N-1}^{(2)},A_{2N}^{(2)}$ we have the relation
\begin{equation}\label{identcrucial}
D_{a;n}^{(\g)}(x)\,\Pi_\lambda^{(\g)}(x)=\barD_{a;n}^{(\g)}(\lL)\,\Pi_\lambda^{(\g)}(x),\qquad a\in[1,N], n\in\Z
\end{equation}
valid for any $\g$-partition $\lambda$.
\end{thm}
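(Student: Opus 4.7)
The plan is to mimic the type $A^{(1)}_{N-1}$ proof of Theorem \ref{mainthm} in Section \ref{sec:theproof}, with a case split dictated by the long/short dichotomy of Definition \ref{longshortlabels}. The three ingredients driving the argument are: (i) the Fourier-duality relation $g^{(\g)}(\Lambda)\Pi_\lambda^{(\g)}(x) = \gamma(x)^{t_1}\Pi_\lambda^{(\g)}(x)$ of Theorem \ref{lemtwo}; (ii) the fact that operators acting in $x$-variables commute with operators acting in $\Lambda$-variables; and (iii) the identity $\Lambda^{(\g)}_{a,a} = \omega_a^*\cdot\omega_a$ from Definition \ref{deflambda}, which makes the scalar prefactors arising from Definition \ref{moredefn} and Theorem \ref{longshort} match on the nose.

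First I would carry out a uniform inductive step. Starting from the equality $D_{a;0}^{(\g)}\Pi_\lambda^{(\g)} = \Lambda^{\omega_a^*}\Pi_\lambda^{(\g)} = \bar D_{a;0}^{(\g)}\Pi_\lambda^{(\g)}$ (equation \eqref{eigenqwhit} together with Definition \ref{firstdefops}), I would act on the left by $\gamma^{-t_1 n}g^n$ for arbitrary $n\in\Z$, slide $g^n$ past $D_{a;0}^{(\g)}(x)$ using (ii), replace $\gamma^{t_1 n}\Pi_\lambda^{(\g)}$ by $g^n\Pi_\lambda^{(\g)}$ via the iteration of (i), and rewrite both sides using Definition \ref{moredefn} together with the evolution relations of Theorem \ref{longshort}. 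Together with (iii) this yields $D_{a;t_a n}^{(\g)}\Pi_\lambda^{(\g)} = \bar D_{a;t_a n}^{(\g)}\Pi_\lambda^{(\g)}$ for all $n\in\Z$. For long labels this already exhausts every discrete time, and in particular the theorem is complete for the algebras $\g = D_N^{(1)}, A_{2N-1}^{(2)}, D_{N+1}^{(2)}$ in which every label is long, as well as for all long labels of $B_N^{(1)}$ and $C_N^{(1)}$.

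For short labels the conjugation argument covers only the even times $n=2m$. To cover the odd times it suffices, by applying the identical conjugation argument to a different base point, to establish the single identity $D_{a;-1}^{(\g)}\Pi_\lambda^{(\g)} = \bar D_{a;-1}^{(\g)}\Pi_\lambda^{(\g)} = \Lambda^{\omega_a^*}(1-\Lambda^{-\alpha_a^*})T^{-\omega_a}\Pi_\lambda^{(\g)}$ (cf.\ \eqref{genericm1a}) for each short $a$. I would split this verification into three subcases matching the three different definitions of $D_{a;-1}^{(\g)}$: for $a=1$ short (in $\g = C_N^{(1)}, A_{2N}^{(2)}$), I would substitute the explicit expression \eqref{minusone} into $D_{1;-1}^{(\g)}\Pi_\lambda^{(\g)}$ and identify it with the desired monomial by combining the first $\g$-Macdonald eigenvalue equation \eqref{eigenqwhit} with the first Pieri rule \eqref{pieriwhit}; for $a\geq 2$ short, I would use the $q$-commutator Definition \ref{cadefs}, invoke the already-settled cases $D_{1;-a}^{(\g)}\Pi_\lambda^{(\g)} = \bar D_{1;-a}^{(\g)}\Pi_\lambda^{(\g)}$ and $D_{a-1;0}^{(\g)}\Pi_\lambda^{(\g)} = \bar D_{a-1;0}^{(\g)}\Pi_\lambda^{(\g)}$, and verify that the resulting $q$-commutator on the $\Lambda$-side collapses to $\bar D_{a;-1}^{(\g)}$ through the corresponding opposite \barQ-system relation in \eqref{qsys2}--\eqref{qsys3}; for the isolated case $\g=B_N^{(1)}, a=N$, I would analyze the Rains factorization \eqref{defDBNminusone} directly, using the $q$-Whittaker limits \eqref{rzero}--\eqref{rone} of the intertwiners \eqref{rainskor} specialized to the $B_N^{(1)}$ parameter values of Table \ref{tableone}.

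The hardest step will be the Rains-operator subcase $\g=B_N^{(1)}, a=N$: here the clean $g$--$\gamma$ conjugation is replaced by an intertwining between Koornwinder families with shifted parameters $(a,b,c,d)$, and the half-step shifts $\Gamma_i^{\pm 1/2}$ appearing in the factorization must be tracked by hand through the $t\to\infty$ limit. Once all three base-case subcases are secured, the uniform inductive step delivers \eqref{identcrucial} for every $(\g,a,n)$. Theorem \ref{raiseconj} will then follow at once by unpacking $\bar D_{a;1}^{(\g)}\Pi_\lambda^{(\g)} = \Lambda^{\omega_a^*}T^{\omega_a}\Pi_\lambda^{(\g)} = \Lambda^{\omega_a^*}\Pi_{\lambda+\omega_a}^{(\g)}$, and Theorem \ref{Qsysconj} will follow by transporting the opposite quantum \barQ-system relations satisfied by $\bar D_{a;n}^{(\g)}$ (Definition \ref{Def:Dbar}) back to the $D_{a;n}^{(\g)}$-side via \eqref{identcrucial}.
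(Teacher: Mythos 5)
Your overall architecture --- conjugation by $\gamma^{-t_1 n} g^n$ for long labels and even short times, then a base case at an odd time for each short label, split into the three subcases $a=1$, $a\geq 2$, and $(\g,a)=(B_N^{(1)},N)$ --- is exactly the paper's. But two of your base cases are under-specified in a way that hides the real work. For $a=1$ short in $C_N^{(1)},A_{2N}^{(2)}$ there is no direct termwise identification of $D_{1;-1}^{(\g)}(x)\,\Pi_\lambda^{(\g)}$ with $\bar D_{1;-1}^{(\g)}(\lL)\,\Pi_\lambda^{(\g)}$ obtainable by ``combining'' \eqref{eigenqwhit} with \eqref{pieriwhit}. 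The paper's mechanism is indirect: the commutator identity $[D_{1;0}^{(\g)},\hat e_1(x)]=(q-1)D_{1;1}^{(\g)}+(q^{-1}-1)D_{1;-1}^{(\g)}$ of Lemma \ref{comuonelem}, applied to $\Pi_\lambda^{(\g)}$ and compared with $[H_1^{(\g)},\bar D_{1;0}^{(\g)}]$, yields only the single linear relation $\Sigma_1=q^{-1}\Sigma_{-1}$ between the two discrepancies $\Sigma_{\pm1}=(D_{1;\pm1}^{(\g)}-\bar D_{1;\pm1}^{(\g)})\Pi_\lambda^{(\g)}$. One then needs a second application of the commutator identity (using the already-settled even cases $n=0,2$) to show that $\Sigma_1$ solves the same first Pieri equation as $\Pi_\lambda^{(\g)}$, the uniqueness Lemma \ref{uniqueglem} to conclude $\Sigma_1=\alpha(x)\,\Pi_\lambda^{(\g)}$, and finally evaluation at $\lambda=\emptyset$ to kill $\alpha$. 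Without the uniqueness argument this case does not close.

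The second gap is in $(\g,a)=(B_N^{(1)},N)$. The Rains factors $R_N^{(0)},R_N^{(1)}$ do not preserve the family $\{\Pi_\lambda\}$: they intertwine it with the eigenfunctions $\Pi'_\lambda$ of a companion specialization $B_N^{(1)\,'}$ with parameters $(1,-1,tq^{1/2},-q^{1/2})$. To evaluate $D_{N;1}=q^{-N/4}R_{N;0}^{(1)}R_{N;1}^{(0)}$ on $\Pi_\lambda$ one must therefore build the entire primed apparatus --- the $B_N^{(1)\,'}$ opposite quantum Q-system of Definition \ref{bprimeqsys}, its time-translation operator $g'(\lL)$ in \eqref{gprime}, its first Pieri operator, the commutation of $g'$ with that Pieri operator, and the analogue $g'\Pi'_\lambda=\gamma\Pi'_\lambda$ of Theorem \ref{lemtwo} --- because the intermediate state after applying $R_{N;1}^{(0)}$ lives in the primed family and its $\gamma$-translate must be re-expressed through $g'$. ``Tracking the half-step shifts by hand through the $t\to\infty$ limit'' will not produce this; one also has to extend \eqref{backnforthB} to half-integer partitions via the factorization \eqref{factopol}. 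These are missing ideas, not merely missing details, even though the rest of your plan (the uniform conjugation step, the $q$-commutator reduction for $a\geq2$ short via the opposite \barQ-system, and the derivation of Theorems \ref{raiseconj} and \ref{Qsysconj} from \eqref{identcrucial}) coincides with the paper's proof.
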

Theorem \ref{Qsysconj} follows from Theorem \ref{foutranthm}, as any relation satisfied by the difference operators $\{\barD_{a;n}\}$ implies the opposite relation for the difference operators $\{ D_{a;n}\}$. Theorem \ref{raiseconj} is the
particular case of \eqref{identcrucial} with $n=0,1$. We provide a proof of  \eqref{identcrucial} for long labels, short labels except for $B_N^{(1)}$,
and for the short label of $B_N^{(1)}$ separately.

\subsubsection{Long labels and even time short labels}

In this case, the proof is similar to that in type $A_{N-1}^{(1)}$. Acting on both sides of the eigenvalue equation \eqref{eigenD}, which we write as \eqref{identcrucial} with $n=0$,
with $\gamma(x)^{-t_1n}\,(g^{(\g)}(\lL))^n$ and using Theorem \ref{lemtwo},
\begin{eqnarray*}
&& \gamma^{-t_1n}\,D_{a;0}^{(\g)}(x) \, (g^{(\g)})^n\,\Pi_\lambda^{(\g)}(x)=\gamma^{-t_1n}\,D_{a,0}^{(\g)}(x) \,\gamma^{{t_1n}} \, 
\Pi_\lambda^{(\g)}(x)
=q^{\frac{\Lambda_{a,a}t_a n}{2}}\,D_{a;nt_a}^{(\g)}(x)\,\Pi_\lambda^{(\g)}(x)\\
&&=(g^{(\g)})^n\,\barD_{a;0}^{(\g)}(\lL)\,\gamma^{-t_1n}\, \Pi_\lambda^{(\g)}(x)
=(g^{(\g)})^n\,\barD_{a;0}^{(\g)}(\lL)\,(g^{(\g)})^{-n}\,\Pi_\lambda^{(\g)}(x)= q^{\frac{\Lambda_{a,a}t_a n}{2}}\,\barD_{a;nt_a}^{(\g)}\,\Pi_\lambda^{(\g)}(x),
\end{eqnarray*}
where we have used Definition \ref{moredefn}. Therefore, Eq. \eqref{identcrucial} holds for $n$ a multiple of $t_a$.

\subsubsection{Short labels, odd $n$ for $\g=C_N^{(1)}, A_{2N}^{(2)}$}
In both cases,  $a=1$ is a short label, $t_1=2$.
\begin{lemma}\label{firstlemCAe}
For $\g=C_N^{(1)},A_{2N}^{(2)}$ and for any $\g$-partition $\lambda$,
$$\Big(D_{1,1}^{(\g)}(x)-\barD_{1,1}^{(\g)}(\lL)\Big)\,\Pi_\lambda^{(\g)}(x)=q^{-1} \Big(D_{1,-1}^{(\g)}(x)-\barD_{1,-1}^{(\g)}(\lL)\Big)\,\Pi_\lambda^{(\g)}(x).$$
\end{lemma}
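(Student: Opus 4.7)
The plan is to compute $[D_{1,0}^{(\g)}(x),\hat e_1(x)]\,\Pi_\lambda^{(\g)}(x)$ in two ways and compare. The first way uses Lemma \ref{comuonelem} at $n=0$, which gives
$$[D_{1,0}^{(\g)}(x),\hat e_1(x)] \;=\; (q-1)\,D_{1,1}^{(\g)}(x) + (q^{-1}-1)\,D_{1,-1}^{(\g)}(x).$$
The second way expands the commutator and invokes the $n=0$ case of \eqref{identcrucial} (the eigenvalue equation $D_{1,0}^{(\g)}\Pi_\lambda^{(\g)}=\Lambda_1\Pi_\lambda^{(\g)}$, since $\bar D_{1,0}^{(\g)}=\Lambda^{\omega_1^*}=\Lambda_1$) together with the first Pieri rule $\hat e_1(x)\Pi_\lambda^{(\g)}=H_1^{(\g)}(\Lambda)\Pi_\lambda^{(\g)}$. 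Because $D_{1,0}^{(\g)}(x)$ is a difference operator in $x$ and $H_1^{(\g)}(\Lambda)$ one in $\Lambda$, they commute as operators, and a short manipulation yields
$$[D_{1,0}^{(\g)}(x),\hat e_1(x)]\,\Pi_\lambda^{(\g)}(x) \;=\; [H_1^{(\g)}(\Lambda),\,\Lambda_1]\,\Pi_\lambda^{(\g)}(x).$$

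The next step is a direct calculation in the quantum torus $\mathbb T_\Lambda$. For both $\g=C_N^{(1)}$ and $\g=A_{2N}^{(2)}$, inspection of Theorem \ref{hamilist} shows that only the $T_1$ and $T_1^{-1}$ summands of $H_1^{(\g)}$ fail to commute with $\Lambda_1$ (the boundary term $M^{(\g)}$ involves only $\Lambda_N$ and $T_N$, which commute with $\Lambda_1$ for $N\geq 2$). Their coefficients are $1$ and $1-\Lambda^{-\alpha_1^*}$ respectively, the former arising from the convention $\Lambda^{-\alpha_0^*}=0$. Using $[T_1^{\pm 1},\Lambda_1]=(q^{\pm 1}-1)\Lambda_1 T_1^{\pm 1}$ and comparing with $\bar D_{1,1}^{(\g)}=\Lambda_1 T_1$ from Definition \ref{firstdefops} and $\bar D_{1,-1}^{(\g)}=\Lambda_1(1-\Lambda^{-\alpha_1^*})T_1^{-1}$ from \eqref{genericm1a} produces the key identity
$$[H_1^{(\g)}(\Lambda),\Lambda_1] \;=\; (q-1)\,\bar D_{1,1}^{(\g)} + (q^{-1}-1)\,\bar D_{1,-1}^{(\g)}.$$

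Equating the two expressions for $[D_{1,0}^{(\g)}(x),\hat e_1(x)]\,\Pi_\lambda^{(\g)}(x)$ and rearranging gives
$$(q-1)\bigl(D_{1,1}^{(\g)}-\bar D_{1,1}^{(\g)}\bigr)\Pi_\lambda^{(\g)}(x) + (q^{-1}-1)\bigl(D_{1,-1}^{(\g)}-\bar D_{1,-1}^{(\g)}\bigr)\Pi_\lambda^{(\g)}(x) = 0,$$
which upon dividing by $q-1$ and using $(1-q^{-1})/(q-1)=q^{-1}$ becomes precisely the claimed identity. The only substantive step is the explicit commutator computation, which hinges on the clean ``affine-end'' coefficients of $H_1^{(\g)}$ being exactly what is needed to reproduce $\bar D_{1,\pm 1}^{(\g)}$; everything else is bookkeeping, and the argument is uniform across the two cases $\g=C_N^{(1)}$ and $\g=A_{2N}^{(2)}$.
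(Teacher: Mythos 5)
Your proposal is correct and follows essentially the same route as the paper: both compute $[D_{1,0}^{(\g)}(x),\hat e_1(x)]\,\Pi_\lambda^{(\g)}$ once via Lemma \ref{comuonelem} and once by converting to $[H_1^{(\g)}(\Lambda),\Lambda_1]\,\Pi_\lambda^{(\g)}$ through the Pieri rule and the $n=0$ eigenvalue equation, then evaluate the latter commutator explicitly (only the $T_1$ and $(1-\Lambda^{-\alpha_1^*})T_1^{-1}$ terms contribute) to recover $(q-1)\bar D_{1,1}^{(\g)}+(q^{-1}-1)\bar D_{1,-1}^{(\g)}$. All the intermediate formulas check out against the paper's Definitions \ref{firstdefops} and \eqref{genericm1a}, so nothing further is needed.
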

\begin{proof}
Applying  Equation \ref{Gid} with $n=0$, acting on
$\Pi_\lambda^{(\g)}(x)$,
\begin{eqnarray}
&&[D_{1,0}^{(\g)}(x),{\hat e}_1(x)]\,\Pi_\lambda^{(\g)}(x)=
\left(D_{1,0}^{(\g)}(x)H_1^{(\g)}(\lL)-{\hat e}_1(x)\barD_{1,0}^{(\g)}(\lL)\right)\,\Pi_\lambda^{(\g)}(x)\nonumber\\
&&\quad =\left[H_1^{(\g)}(\lL),\barD_{1,0}^{(\g)}(\lL)\right]\,\Pi_\lambda^{(\g)}(x)=(q-1)\left( D_{1,1}^{(\g)}(x) -q^{-1}\, D_{1,-1}^{(\g)}(x)\right)\, \Pi_\lambda^{(\g)}(x),
\label{tobecomp} 
\end{eqnarray}
where we used the  Pieri rule \eqref{pieriwhit}, and the eigenvalue equation \eqref{identcrucial} with $n=0, a=1$.
The only terms in $H_1^{(\g)}(\lL)$ of Equations (\ref{Hamil}-\ref{Hamil2}) which fail to commute with $\barD_{1,0}^{(\g)}(\lL)=\lL_1$ 
are $T_1+\left(1-\frac{\lL_2}{\lL_1}\right)T_1^{-1}$ if $N\geq 2$. Therefore,
\begin{eqnarray*}[H_1^{(\g)}(\lL),\barD_{1,0}^{(\g)}(\lL)]&=&[T_1+\left(1-\frac{\lL_2}{\lL_1}\right)T_1^{-1},\lL_1]= (q-1) \,\lL_1 \,T_1+(q^{-1}-1) (\lL_1-\lL_2)T_1^{-1}\\
&=&(q-1)\barD_{1,1}^{(\g)}(\lL)+(q^{-1}-1)\barD_{1,-1}^{(\g)}(\lL),
\end{eqnarray*}
where used $\barD_{1,-1}^{(\g)}(\lL)=(\lL_1-\lL_2)T_1^{-1}$. The lemma follows.
\end{proof}

Defining $\Sigma_n^{(\g)} =(D_{1,n}^{(\g)}(x)-\barD_{1,n}^{(\g)}(\lL))\,\Pi_\lambda^{(\g)}(x)$, Lemma \ref{firstlemCAe} says that
$\Sigma_1^{(\g)}=q^{-1}\,\Sigma_{-1}^{(\g)}$.
\begin{lemma}\label{seclemCAe}
For $\g=C_N^{(1)},A_{2N}^{(2)}$, we have:
$$H_1^{(\g)}(\lL)\, \Sigma_i^{(\g)}= {\hat e}_1(x) \, \Sigma_i^{(\g)}, \qquad (i=\pm 1). $$
\end{lemma}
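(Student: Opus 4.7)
The plan is to reduce the claim to an operator identity in the quantum torus $\mathbb{T}_\Lambda$, which can be proven via a direct computation supplemented by a conjugation argument using the time-translation operator $g^{(\g)}$.

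First, by Lemma \ref{firstlemCAe} one has $\Sigma_1^{(\g)} = q^{-1}\Sigma_{-1}^{(\g)}$, so it suffices to treat the case $i = -1$. I would then expand $(H_1^{(\g)}(\Lambda) - \hat{e}_1(x))\Sigma_{-1}^{(\g)}$ using that $D_{1,-1}^{(\g)}(x)$ commutes with $H_1^{(\g)}(\Lambda)$ (they act on disjoint sets of variables), that $\barD_{1,-1}^{(\g)}(\Lambda)$ commutes with $\hat e_1(x)$, together with the first Pieri rule $H_1^{(\g)}(\Lambda)\Pi_\lambda^{(\g)} = \hat e_1(x)\Pi_\lambda^{(\g)}$. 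This yields
\begin{equation*}
(H_1^{(\g)}(\Lambda) - \hat{e}_1(x))\Sigma_{-1}^{(\g)} = \bigl([D_{1,-1}^{(\g)}(x),\hat{e}_1(x)] - [H_1^{(\g)}(\Lambda),\barD_{1,-1}^{(\g)}(\Lambda)]\bigr)\Pi_\lambda^{(\g)}(x).
\end{equation*}

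Next I would apply Lemma \ref{comuonelem} (Equation \eqref{Gid}) at $n = -1$ to obtain $[D_{1,-1}^{(\g)},\hat e_1] = (q-1)D_{1,0}^{(\g)} + (q^{-1}-1)D_{1,-2}^{(\g)}$. Since Theorem \ref{foutranthm} has already been established for even times ($n = 0$ and $n = -2$ are both multiples of $t_1 = 2$ for the short labels under consideration), one may replace $D_{1,0}^{(\g)}\Pi_\lambda^{(\g)}$ by $\barD_{1,0}^{(\g)}\Pi_\lambda^{(\g)}$ and $D_{1,-2}^{(\g)}\Pi_\lambda^{(\g)}$ by $\barD_{1,-2}^{(\g)}\Pi_\lambda^{(\g)}$. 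The lemma thus reduces to proving the operator identity in $\mathbb{T}_\Lambda$
\begin{equation*}
[H_1^{(\g)}(\Lambda),\barD_{1,-1}^{(\g)}(\Lambda)] = (q-1)\barD_{1,0}^{(\g)}(\Lambda) + (q^{-1}-1)\barD_{1,-2}^{(\g)}(\Lambda).
\end{equation*}

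To establish this, I would first verify, by a direct computation in $\mathbb{T}_\Lambda$, the companion identity at $n=1$,
\begin{equation*}
[H_1^{(\g)}(\Lambda),\barD_{1,1}^{(\g)}] = (q-1)\barD_{1,2}^{(\g)} + (q^{-1}-1)\barD_{1,0}^{(\g)},
\end{equation*}
using the explicit forms $\barD_{1,0}^{(\g)} = \Lambda_1$, $\barD_{1,1}^{(\g)} = \Lambda_1 T_1$ and $\barD_{1,2}^{(\g)} = \Lambda_1 T_1^2 - q^{-1}\Lambda_2 T_1 T_2$ (from \eqref{generic2a}). The identity at $n = -1$ would then follow by conjugating the $n=1$ identity by $(g^{(\g)})^{-1}$: Theorem \ref{lemone} gives $[g^{(\g)},H_1^{(\g)}]=0$, while the short-label evolution equations of Theorem \ref{longshort} (Equation \eqref{evolutshort}) yield $g^{(\g)}\barD_{1,-1}^{(\g)}(g^{(\g)})^{-1} = q\barD_{1,1}^{(\g)}$, $g^{(\g)}\barD_{1,0}^{(\g)}(g^{(\g)})^{-1} = q\barD_{1,2}^{(\g)}$, and $g^{(\g)}\barD_{1,-2}^{(\g)}(g^{(\g)})^{-1} = q\barD_{1,0}^{(\g)}$.

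The main obstacle I expect is the direct verification of the $n=1$ identity uniformly across $\g = C_N^{(1)}$ and $\g = A_{2N}^{(2)}$: for $N \geq 2$ the boundary terms of $H_1^{(\g)}$ (involving only $\Lambda_N, T_N$, including the extra $-1/\Lambda_N$ for $A_{2N}^{(2)}$) commute with $\barD_{1,1}^{(\g)} = \Lambda_1 T_1$, so the only nontrivial contributions come from $T_1$, $(1-\Lambda_2/\Lambda_1)T_2$, and $(1-\Lambda_2/\Lambda_1)T_1^{-1}$; a short computation using $T_i\Lambda_j = q^{\delta_{ij}}\Lambda_j T_i$ then matches the right-hand side.
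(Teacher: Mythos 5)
Your proof is correct and follows essentially the same strategy as the paper's: expand $(H_1-\hat e_1)\Sigma_i$ using the commutativity of $x$- and $\Lambda$-operators and the first Pieri rule, invoke Lemma \ref{comuonelem}, cancel via the already-established even-time cases of \eqref{identcrucial}, and absorb the remaining terms into the $\Lambda$-side commutator $[H_1^{(\g)},\barD_{1,\pm1}^{(\g)}]=(q-1)\barD_{1,\pm1+1}^{(\g)}+(q^{-1}-1)\barD_{1,\pm1-1}^{(\g)}$ (you work at $i=-1$, the paper at $i=1$; the two are equivalent by Lemma \ref{firstlemCAe}). The only substantive difference is that the paper leaves this last commutator identity implicit, whereas you verify it explicitly at $n=1$ and propagate it by conjugation with $g^{(\g)}$ via Theorems \ref{lemone} and \ref{longshort} — a welcome addition of detail rather than a change of method.
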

\begin{proof}
As $\Sigma_{-1}^{(\g)}=q\, \Sigma_1^{(\g)}$, we may restrict ourselves to $i=1$.
\begin{eqnarray}
H_1^{(\g)}(\lL)\, \Sigma_1^{(\g)}&=&\Big( D_{1,1}^{(\g)}(x)\,H_1^{(\g)}(\lL)\, -H_1^{(\g)}(\lL)\,\barD_{1,1}^{(\g)}(\lL)\Big)\,\Pi_\lambda^{(\g)}(x)\nonumber \\
&=&\Big\{ {\hat e}_1(x)\,D_{1,1}^{(\g)}(x)+(q-1) \big(D_{1,2}^{(\g)}(x)-q^{-1}D_{1,0}^{(\g)}(x)\big)  \nonumber \\
&&\qquad \quad -\barD_{1,1}^{(\g)}(\lL)\,H_1^{(\g)}(\lL) -(q-1)\big(\barD_{1,2}^{(\g)}(\lL)-q^{-1}\barD_{1,0}^{(\g)}(\lL)\big) \Big\}\,\Pi_\lambda^{(\g)}(x)\nonumber \\
&=& \Big\{ {\hat e}_1(x)\,D_{1,1}^{(\g)}(x) -\barD_{1,1}^{(\g)}(\lL)\,H_1^{(\g)}(\lL)\Big\}\,\Pi_\lambda^{(\g)}(x)={\hat e}_1(x) \,\Sigma_1^{(\g)}, \label{tobecont}
\end{eqnarray}
where we used Lemma \ref{comuonelem},  as well as the $a=1$, $n=0,2$ cases of 
\eqref{identcrucial} proven above.
\end{proof}
Lemma \ref{firstlemCAe}, together with a uniqueness argument (Lemma \ref{uniqueglem}), implies that
there exists a function $\al^{(\g)}(x)$ 
such that $\Sigma_1^{(\g)}=\al^{(\g)}(x) \,\Pi_\lambda^{(\g)}(x)$, $\Sigma_{-1}^{(\g)}=q\,\al^{(\g)}(x) \,\Pi_\lambda^{(\g)}(x)$:
$$\{ D_{1,1}^{(\g)}(x)-{\barD}_{1,1}^{(\g)}(\lL)-\al^{(\g)}(x)\}\,\Pi_\lambda^{(\g)}(x)=\{ D_{1,-1}^{(\g)}(x)-\barD_{1,-1}^{(\g)}(\lL)-q\,\al^{(\g)}(x)\}\,\Pi_\lambda^{(\g)}(x)=0.$$
Using this equation with $\lambda=0$, 
 $D_{1,-1}^{(\g)}(x)\, \Pi_0^{(\g)}(x)=D_{1,-1}^{(\g)}(x)\, 1=0$, using  \eqref{minusone}.
Similarly,
$\barD_{1,-1}^{(\g)}(\lL)\,\Pi_\emptyset^{(\g)}(x)=(\lL_1-\lL_2)T_1^{-1}\, \Pi_\lambda^{(\g)}(x)\vert_{\lambda=0}=0$, since the prefactor vanishes. 
Therefore, $\al^{(\g)}(x)=0$ and $\Sigma_1^{(\g)}=\Sigma_{-1}^{(\g)}=0$. 
Multiplying $\Sigma_{-1}^{(\g)}$ by $(g^{(\g)})^n \gamma^{-2 n}$, we conclude that for $\g=C_N^{(1)},A_{2N}^{(2)}$, $a=1$, $n\in \Z$ and any integer partition $\lambda$, Equation \eqref{identcrucial} holds.

This result can be extended to all short labels for these algebras, as follows.
\begin{lemma}\label{shortoddthm}
Equation \eqref{identcrucial} holds for all short labels $a$ and $n$ odd in the case of  $\g=C_N^{(1)},A_{2N}^{(2)}$.
\end{lemma}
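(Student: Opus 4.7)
The plan is to bootstrap from two cases of \eqref{identcrucial} already established: the relation for $a=1$ and all $n\in\Z$ (obtained via the $\Sigma_n^{(\g)}$ argument above), and the relation for arbitrary labels $a$ at even indices (i.e.\ at multiples of $t_a$, from the previous subsection). The bridge is Definition \ref{cadefs}, which presents $D_{a;-1}^{(\g)}$ for a short label $a\geq 2$ as the $q$-commutator
\[
D_{a;-1}^{(\g)}=\frac{(-1)^a}{q-1}\bigl[D_{1;-a}^{(\g)},D_{a-1;0}^{(\g)}\bigr]_{q^a},
\]
both of whose arguments fall into one of the two already-proven cases.

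First I would act this identity on $\Pi_\lambda^{(\g)}(x)$. Since the operators $D_{\cdot;\cdot}^{(\g)}(x)\in\mathbb T_x$ act on $x$ while the $\barD_{\cdot;\cdot}^{(\g)}(\Lambda)\in\mathbb T_\Lambda$ act on the discrete index $\lambda$, the two families commute, and replacing each factor on the right-hand side via \eqref{identcrucial} converts it into a $q$-commutator of the Fourier duals $\barD_{1;-a}^{(\g)}$ and $\barD_{a-1;0}^{(\g)}$, but with the order of multiplication reversed. It remains to identify this opposite $q$-commutator with $\barD_{a;-1}^{(\g)}$. The second step is therefore to derive, in the opposite \barQ-system, the analogous identity
\[
\barD_{a;-1}^{(\g)}=\frac{(-1)^a}{q-1}\bigl[\barD_{a-1;0}^{(\g)},\barD_{1;-a}^{(\g)}\bigr]_{q^a},
\]
by transplanting the cluster-algebraic recursion of \cite{DFK16} to the \barQ-system. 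Since the \barQ-system is the original quantum Q-system with multiplication reversed, that recursion goes through verbatim with the natural reversal of all products.

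Combining these two steps yields \eqref{identcrucial} for the pair $(a,-1)$ at every short label $a\geq 2$. The extension to arbitrary odd $n=2m-1$ is then immediate: conjugating the relation $D_{a;-1}^{(\g)}\Pi_\lambda^{(\g)}=\barD_{a;-1}^{(\g)}\Pi_\lambda^{(\g)}$ by $\gamma(x)^{-2m}$ on the $x$-side and by $g^{(\g)}(\Lambda)^m$ on the $\Lambda$-side---which agree when acting on $\Pi_\lambda^{(\g)}(x)$ by Theorem \ref{lemtwo}, since $t_1=2$ in these cases---produces \eqref{identcrucial} for all odd $n$ by virtue of Definition \ref{moredefn} and \eqref{evolutshort}. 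The main obstacle will be the careful bookkeeping of the reversed multiplication when transplanting the commutator identity of \cite{DFK16} to the \barQ-system; once that is in hand, the rest of the argument is mechanical.
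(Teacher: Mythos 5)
Your proposal is correct and follows essentially the same route as the paper: both rest on Definition \ref{cadefs}, on transplanting the $q$-commutator recursion of \cite{DFK16} to the opposite \barQ-system (with the arguments of the commutator reversed), and on the two previously established cases of \eqref{identcrucial} ($a=1$ for all $n$, and even times for all labels). The only cosmetic difference is that you verify the identity at $n=-1$ and then conjugate by $\gamma^{-2m}$ and $g^m$, whereas the paper writes the commutator identity directly at general odd time $2n-1$; the two are equivalent by Definition \ref{cadefs} and \eqref{evolutshort}.
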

\begin{proof}
Let $N_\g$ be as in Equation \eqref{NGdef}. Then
if $a\leq N_\g-1$, the relations of the quantum \barQ-system are identical to those of type $A_{N_\g-1}^{(1)}$, and hence we can apply
Theorem 2.8 of \cite{DFK16},  which states that any solution of the relations for $a=1,2,...,N_\g-1$
$$ q^a\,\cQ_{a;n+1}\,\cQ_{a;n-1}=(\cQ_{a;n})^2-\cQ_{a+1;n}\cQ_{a-1;n},\qquad \cQ_{a;n}\cQ_{b;n+1}=q^{{\min}(a,b)}\,\cQ_{b;n+1}\,\cQ_{a;n},$$
satisfies 
$$(-1)^a (q-1) \cQ_{a;n}=[\cQ_{1;n-a+1},\cQ_{a-1;n+1}]_{q^a},\qquad (a=1,2,...,N_\g). $$
Therefore the $\g$-quantum \barQ-system solution $\barD_{a;n}^{(\g)}$ obeys the opposite relations
$$(-1)^a (q-1) \,\barD_{a;n}^{(\g)}=[\barD_{a-1;n+1}^{(\g)},\barD_{1;n-a+1}^{(\g)}]_{q^a}.$$
Using this and Definition \ref{cadefs},
we deduce that
\begin{eqnarray*}&&\{D_{a;2n-1}^{(\g)}(x)-\barD_{a;2n-1}^{(\g)}(\lL)\}\, \Pi_\lambda^{(\g)}(x)\\
&&\qquad\qquad = \frac{(-1)^a}{q-1} \{[D_{1;2n-a}^{(\g)}(x),D_{a-1;2n}^{(\g)}(x)]_{q^a}-[\barD_{a-1;2n}^{(\g)}(\lL),\barD_{1;2n-a}^{(\g)}(\lL)]_{q^a}\}\,\Pi_\lambda^{(\g)}(x)=0,
\end{eqnarray*}
by use of Eq. \eqref{identcrucial} for $D_{a-1;2n}^{(\g)}$ and $D_{1;2n-a}^{(\g)}$ with $a$ even 
and for
$D_{1;2n-a}^{(\g)}$ when $a$ is odd. 
\end{proof}

\subsubsection{The case of $B_N^{(1)}$ for odd $n$}
\label{bnproofsec}
It remains to prove \eqref{identcrucial} 
for $\g=B_N^{(1)}$, $a=N$, $n$ odd. The methods above are inapplicable, and instead we use expression \eqref{defDBNminusone} for $D_{N;2n-1}^{(B_N^{(1)})}$ in terms of Rains operators. 

Recall that  $B_N^{(1)}$-partitions are both integer and half-integer partitions, due to the spin representation with highest weight $\omega_N$ and character  $s_{\omega_N}(x)=\prod_{i=1}^N\frac{1+x_i}{\sqrt{x_i}}$.
Denote
\begin{equation}\label{special}
P_\lambda= P_\lambda^{(B_N^{(1)})}=P_\lambda^{(t,-1,q^\half,-q^\half)}, \qquad
\tilde P_\lambda= P_\lambda^{(t,-q,q^\half,-q^\half)} .
\end{equation}
There is a factorization of Macdonald polynomials \cite{vandiej},
\begin{equation}\label{factopol}
P_{\lambda+\omega_N}(x)=s_{\omega_N}(x)\, \tilde P_{\lambda}(x) , \qquad \hbox{$\lambda$ integer partition}.
\end{equation}
The parameter specialization \eqref{special} for $\tilde P_\lambda$ is obtained by conjugating 
${\mathcal D}_1^{(B_N^{(1)})}(x,q,t)$ by $s_{\omega_N}^{-1}$ and identifying the resulting parameters $(a,b,c,d)$.
We  must therefore consider the action of the difference operators on both functions 
$\Pi_\lambda=\lim_{t\to\infty}P_\lambda$ and $\tilde \Pi_\lambda=\lim_{t\to\infty} \tilde P_\lambda$ with $\lambda$ integer partitions.

Moreover, the Rains operators map the eigenfunctions of the $B_{N}^{(1)}$-type difference 
operators to those of
$B_N^{(1)\,'}$-type, corresponding to different parameters. We denote the corresponding $q$-Whittaker limit of the eigenfunctions functions as $\Pi_\lambda'$. To these parameters there corresponds a different quantization of the Q-system, a
time-translation operator $g'(\lL)$, and Hamiltonians.

Specializing to the $B_N^{(1)}$ parameters $(a,b,c,d)=(t,-1,q^\half,-q^\half)$ and taking the $t\to\infty$ limit as in \eqref{rainsB}, 
\eqref{rainskor} become
\begin{eqnarray}
R_{N}^{(0)}(x)\, \Pi_\lambda&=&(\lL_1\lL_2\cdots \lL_N)^\half \left(1+\frac{1}{\lL_N}\right) \, \Pi_\lambda',\nonumber \\
R_{N}^{(1)}(x)\, \Pi_\lambda'&=&(\lL_1\lL_2\cdots \lL_N)^\half\, \Pi_\lambda, \label{backnforthB}
\end{eqnarray}
for integer partitions $\lambda$. 
We identify the $B_N^{(1)\ '}$ specialization to be $(a,b,c,d)=(t q^\half,-q^\half,1,-1)$,
$$\Pi_\lambda'= \Pi_\lambda^{(B_N^{(1)\ '})}=\lim_{t\to\infty} P_\lambda^{(t q^\half,-q^\half,1,-1)}.
$$

\begin{lemma}
Equations \eqref{backnforthB} also hold for half-integer partitions.
\end{lemma}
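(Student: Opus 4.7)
The plan is to reduce the half-integer case to the integer case via the spin factorization \eqref{factopol}. Write a half-integer $B_N^{(1)}$-partition as $\lambda+\omega_N$ with $\lambda$ an integer partition. Taking the $q$-Whittaker limit of \eqref{factopol} gives $\Pi_{\lambda+\omega_N}(x)=s_{\omega_N}(x)\,\tilde\Pi_\lambda(x)$. First I would establish an analogous factorization for the primed polynomials: by conjugating $\mathcal D_1^{(B_N^{(1)'})}(x;q,t)$ with parameters $(tq^{1/2},-q^{1/2},1,-1)$ by $s_{\omega_N}^{-1}$, one obtains a Koornwinder-type operator with shifted parameters whose polynomial eigenfunctions give, in the $q$-Whittaker limit, a factorization $\Pi'_{\lambda+\omega_N}(x)=s_{\omega_N}(x)\,\tilde\Pi'_\lambda(x)$.

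Next I would compute the conjugates $s_{\omega_N}^{-1} R_N^{(0)} s_{\omega_N}$ and $s_{\omega_N}^{-1} R_N^{(1)} s_{\omega_N}$ explicitly. Since $s_{\omega_N}(x)=\prod_i(1+x_i)/\sqrt{x_i}$, conjugating $\Gamma_i^{\epsilon_i/2}$ through this product inserts a scalar factor $q^{-\epsilon_i/4}(1+q^{\epsilon_i/2}x_i)/(1+x_i)$ into each summand. In the case of $R_N^{(0)}=R_N^{(1,-1)}$, the coefficient numerator $(1-x_i^{\epsilon_i})(1+x_i^{\epsilon_i})$ cancels the denominator $1-x_i^{2\epsilon_i}$ and, after combining with the new factor, one recognizes a Rains operator appropriate to the parameters $(t,-q,q^{1/2},-q^{1/2})$ of $\tilde P_\lambda$. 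A parallel calculation identifies the conjugate of $R_N^{(1)}=R_N^{(t,-1)}$ as the Rains operator relevant to $\tilde\Pi'_\lambda$.

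With the conjugates identified, the desired identities \eqref{backnforthB} at the half-integer partition $\lambda+\omega_N$ reduce, via the factorizations, to Rains-type intertwining relations \eqref{rainskor} applied to $\tilde P_\lambda$ (and its primed counterpart) at the integer partition $\lambda$. Since \eqref{rainskor} is valid for \emph{arbitrary} Koornwinder parameters $(a,b,c,d)$, these intertwiners hold for the shifted parameter sets occurring here. Passing to the $t\to\infty$ limit and re-multiplying by $s_{\omega_N}(x)$ on both sides of the resulting equations yields \eqref{backnforthB} evaluated at $\Lambda_i=q^{\lambda_i+1/2}$.

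The main obstacle is the bookkeeping in the conjugation step: one must track the parameter shifts for each Rains operator carefully under $s_{\omega_N}$-conjugation, and confirm that the accumulated scalar prefactors (involving the $q^{-\epsilon_i/4}$ and $(1+q^{\epsilon_i/2}x_i)/(1+x_i)$ terms together with the $t\to\infty$ normalization) reassemble into precisely $(\Lambda_1\cdots\Lambda_N)^{1/2}(1+\Lambda_N^{-1})$ and $(\Lambda_1\cdots\Lambda_N)^{1/2}$. A secondary but routine subtlety is verifying the primed factorization $\Pi'_{\lambda+\omega_N}=s_{\omega_N}\tilde\Pi'_\lambda$ with the \emph{same} spin character; this should follow because, in the $q$-Whittaker limit, only the $a$- and $b$-parameters contribute to the spin factor, and these retain the form $(t\,q^{1/2},-q^{1/2})$ after specialization.
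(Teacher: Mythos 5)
Your proposal is correct and follows essentially the same route as the paper: reduce to integer partitions via the spin factorization $P_{\lambda+\omega_N}=s_{\omega_N}\tilde P_\lambda$ (and its primed analogue), identify the $s_{\omega_N}$-conjugates of $R_N^{(0)},R_N^{(1)}$ as Rains operators for the shifted parameter set $(t,-q,q^{1/2},-q^{1/2})$, apply \eqref{rainskor} at those parameters, take $t\to\infty$, and restore the $s_{\omega_N}$ factors to recover \eqref{backnforthB} at $\tilde\Lambda=q^{\lambda+\omega_N}$. The bookkeeping concerns you flag are exactly the computations the paper carries out.
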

\begin{proof}
Using the specialization $(a,b,c,d)=(t,-q,q^\half,-q^\half)$ as in \eqref{special},
in the limit $t\to\infty$ \eqref{rainskor} becomes
$$
\tilde R^{(0)}_N\, \tilde \Pi_\lambda=q^{\frac{N}{4}}\, (\lL_1\lL_2\cdots \lL_N)^\half \left( 1+\frac{q^{-\half}}{\lL_N}\right)\, \tilde \Pi_\lambda',\ \
\tilde R^{(1)}_N\, \tilde \Pi_\lambda' =q^{\frac{N}{4}}\, (\lL_1\lL_2\cdots \lL_N)^\half \, \tilde \Pi_\lambda,
$$
where 
\begin{eqnarray*}
\tilde R^{(0)}_N&=&\lim_{t\to\infty} t^{-N(N-1)/2} \,q^{-\frac{N}{4}} \mathcal R^{(1,-q^\half)}_N= s_{\omega_N}^{-1}\, R_N^{(0)} \,s_{\omega_N},\\
\tilde R^{(1)}_N&=&\lim_{t\to\infty} t^{-N(N+1)/2}\, q^{-\frac{N}{4}} \mathcal R^{(t,-q^\half)}_N=s_{\omega_N}^{-1}\, R_N^{(1)} \,s_{\omega_N},
\end{eqnarray*}
by use of the limit of \eqref{factopol}. Here,
$R_N^{(0)}$ and $R_N^{(1)}$ are as in \eqref{rainsB}. Restoring the factors $s_{\omega_N}$, 
\begin{eqnarray*}
R_{N}^{(0)}(x)\,s_{\omega_N}\,\tilde \Pi_\lambda&=&(\tilde \lL_1\tilde \lL_2\cdots \tilde\lL_N')^\half \left(1+\frac{1}{\tilde\lL_N}\right) \,s_{\omega_N}\, 
\tilde\Pi_\lambda',\nonumber \\
R_{N}^{(1)}(x)\,s_{\omega_N}\,\tilde \Pi_\lambda'&=&(\tilde\lL_1\tilde\lL_2\cdots \tilde\lL_N)^\half\,s_{\omega_N}\, 
 \tilde\Pi_\lambda, 
\end{eqnarray*}
where $\tilde\lL=q^{\omega_N}\lL=q^{\omega_N+\lambda}$. Combining this with the limit of \eqref{factopol} implies that \eqref{backnforthB}  is satisfied for half-integer partitions $\tilde\lambda=\omega_N+\lambda$ as well as integer partitions.
\end{proof}

Combining equations \eqref{backnforthB} leads to the eigenvalue equation for $\hat D^{(B_N^{(1)})}(x)$ of \eqref{rainsB}
$$\hat D_N^{(B_N^{(1)})}(x)\, \Pi_\lambda(x)=R_{N}^{(1)}(x)\,R_{N}^{(0)}(x)\, \Pi_\lambda=\lL_1\lL_2\cdots \lL_N \left(1+\frac{1}{\lL_N}\right) \, \Pi_\lambda ,$$
which is consistent with the relation $\hat D_N^{(B_N^{(1)})}=R_{N}^{(1)}\,R_{N}^{(0)}=D_{N,0}+D_{N-1,0}$ and the eigenvalues $\barD_{a,0}$ in  Definition \eqref{firstdefops}.

The $q$-Whittaker limit of the difference operators for the ${B_N^{(1)}}'$ parameters are obtained as in \eqref{DlimW}, and are
denoted by $D_{a}'(x)= D_{a}^{(B_N^{(1)\, '})}(x)$, $a=1,2,...,N$. The corresponding limit 
of the Rains operator \eqref{rainsop} is
$\hat{D}_N'(x)= R_N^{(0)}(x)\, R_N^{(1)}(x) =D_N'(x)+D_{N-1}'(x)$. 
This identification follows from the $t\to \infty$ limit of Lemma \ref{rainstokor}, and from the eigenvalues,
which are identical to those of the $B_N^{(1)}$ case, since $\xi_\g=\xi_{\g'}$. 
As in the $B_N^{(1)}$ case, 
We define the generalized Macdonald operators
\begin{eqnarray}
D_{a;n}'(x)&=&q^{-na/2}\, \gamma^{-n}\, D_{a}'(x)\, \gamma^{n}, \qquad (a=1,2,...,N-1),\nonumber \\
D_{N;2n}'(x)&=&q^{-nN/2}\, \gamma^{-n}\, D_{N}'(x)\, \gamma^{n},\nonumber \\
D_{N;2n+1}'(x)&=&q^{-nN/4}\, R_{N;n+1}^{(0)}(x)\, R_{N;n}^{(1)}(x)=q^{-nN/2}\, \gamma^{-n}\, D_{N;1}'(x)\, \gamma^{n}.
\label{defBNprime}
\end{eqnarray}
Note the reversal of the order in the last product and the different time index compared to \eqref{defDBNminusone}. 
The operators $D_{a;n}'$ will be shown to satisfy the set of recursion relations below.
\begin{defn}\label{bprimeqsys}
The type ${B_N^{(1)}}'$-quantum Q-system relations\footnote{This ``quantum Q-system" is new, and
we call it a Q-system by analogy with the other $\g$ cases. 
}
are the same as the type $B_N^{(1)}$ relations in \eqref{qsys1}--\eqref{qsys3} except for the two equations with $a=N$:
\begin{eqnarray*}
q^{N/2} \Q_{N;2n+2}\,\Q_{N;2n}&=&\Q_{N;2n+1}^2-q^{\frac{N-1}{2}-n} \,\Q_{N-1;n+1}\,\Q_{N-1;n},\\
q^{N/2} \Q_{N;2n+1}\,\Q_{N;2n-1}&=&(\Q_{N;2n}-q^{-\frac{n}{2}} \,\Q_{N-1;n})(\Q_{N;2n}+q^{\frac{1-n}{2}} \,\Q_{N-1;n}).
\end{eqnarray*}
\end{defn}
\noindent In particular, the q-commutation relations are as in type $B_{N}^{(1)}$ \eqref{qsys1}.

The eigenvalue equation for  $D_a'$ corresponding to any $\lambda$ is
$D_{a;0}'(x)\, \Pi_\lambda'= \lL^{\omega_a^*} \, \Pi_\lambda'$,
with $\omega_a^*$  a fundamental weight of  type $C_N$. Therefore, 
the candidate Fourier transforms $\barD_{a,n}'(\Lambda)$ are defined  so that they satisfy the opposite quantum Q-system to that of Definition \ref{bprimeqsys}, subject to the same initial data as in type $B_N^{(1)}$,
$\barD'_{a;0}=\lL^{\omega_a^*}$ and
$\barD_{a;1}'=\lL^{\omega_a^*}T^{\omega_a}$,
$\barD_{N;1}'=\lL^{\omega_N^*}T^{\omega_N}$.
Together with the recursions of Def. \ref{bprimeqsys}, this determines $\barD_{a;n}'$ for all $a,n$.
In particular,
\begin{eqnarray*}
\barD_{a;-1}'&=&\barD_{a;0}'\left( 1-\frac{\lL_{a+1}}{\lL_a}\right) T^{-\omega_a} \qquad (a=1,2,...,N-1),\\
\barD_{N;-1}'&=&\barD_{N;0}'\left(1-\frac{1}{\lL_N}\right)\left(1+\frac{q^\half}{\lL_N}\right) T^{-\omega_N}, \ \ 
\barD_{N;2}'
=\barD_{N;1}' \left(1-q^\half\frac{1}{\lL_N^2}\frac{1}{T_{N} }\right)T^{\omega_N}.
\end{eqnarray*}
As before, these are sufficient to determine the form of the time-translation operator $g'(\Lambda)$.
\begin{lemma}
The operator
\begin{eqnarray}
g'&=&g_T^\half \, g_{\lL_N}' \, g_T^\half \, g_{\lL_N}'' \, g_\lL ,\label{gprime}\\
g_{\lL_N}' &=&
\frac{1}{(q^\half\lL_N^{-2};q)_\infty}
,\quad g_{\lL_N}'' =
\frac{1}{(\lL_N^{-1};q^\half)_\infty \, (-q^\half \lL_N^{-1};q^\half)_\infty},
\nonumber
\end{eqnarray}
where $g_T$ is as in \eqref{gtvalue} and  $g_\lL$ as in \eqref{glambdavalue},
is the time translation operator for the $B_N^{(1) \, '}$ opposite quantum Q-system. That is, for all $n\in \Z$,
\begin{eqnarray*} \barD_{a;n}'&=& q^{-an/2} \, (g')^n\, \barD_{a}'\, (g')^{-n} \qquad (a=1,2,...,N-1) ,\\
\barD_{N;2n+i}'&=& q^{-Nn/2} (g')^n\, \barD_{N;i}'\, (g')^{-n} ,\quad i=0,1.
\end{eqnarray*}
\end{lemma}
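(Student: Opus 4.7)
The plan is to mirror the proof of Theorem \ref{longshort}, with the key observation that both $\barD_{a;n}'$ and the operators $q^{-nt_a\Lambda_{a,a}/2}(g')^n \barD_{a;i}'(g')^{-n}$ are determined by two-step recursions once the appropriate initial values are fixed. Thus the inductive step reduces to verifying compatibility of the adjoint action of $g'$ with the $B_N^{(1)\prime}$-quantum Q-system evolution from Definition \ref{bprimeqsys}: writing $\bar{\mathcal T}'_{a;n}:=(\barD'_{a;n})^2 - q^{\Lambda_{a,a}}\barD'_{a;n-1}\barD'_{a;n+1}$, one must check $g' \bar{\mathcal T}'_{a;n}(g')^{-1} = q^{\Lambda_{a,a}t_a}\bar{\mathcal T}'_{a;n+t_a}$ for each $a$. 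Because the only Q-system relations that differ from type $B_N^{(1)}$ are the two relations with $a=N$, the bulk of this compatibility check is a direct transcription of the argument in Appendix \ref{appC}; the genuinely new content lies in the $a=N$ case together with the initial values.

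The next step is to verify the initial half-evolution identities. For the long labels $a\in[1,N-1]$, one has $\barD'_{a;i}=\Lambda^{\omega_a^*}(T^{\omega_a})^i$ with $\omega_a^*$ a $C_N$-weight not involving $e_N$, so the new factors $g'_{\Lambda_N}, g''_{\Lambda_N}$ commute with $\barD'_{a;0}$, and the computations reduce to those for $B_N^{(1)}$ using the exchange relations \eqref{gtexch} and \eqref{hexch}. For $a=N$, split $g'=g_1'g_2'$ with $g_1'=g_T^{1/2}g_{\Lambda_N}'$ and $g_2'=g_T^{1/2}g_{\Lambda_N}''g_\Lambda$, and establish the four half-evolution identities
\begin{align*}
q^{-\Lambda_{N,N}/2}g_2'\,\barD_{N;0}'\,(g_2')^{-1}&=\barD_{N;1}',&
q^{-\Lambda_{N,N}/2}g_1'\,\barD_{N;1}'\,(g_1')^{-1}&=\barD_{N;2}',\\
q^{\Lambda_{N,N}/2}(g_1')^{-1}\barD_{N;1}'\,g_1'&=\barD_{N;0}',&
q^{\Lambda_{N,N}/2}(g_2')^{-1}\barD_{N;0}'\,g_2'&=\barD_{N;-1}'.
\end{align*}
Each of these reduces, after moving $g_T^{1/2}$ past $\Lambda^{\omega_N^*}$ via \eqref{gtexch}, to an exchange identity between $g_{\Lambda_N}''$ (resp.\ $g_{\Lambda_N}'$) and $T^{\omega_N}$; the point is that
\begin{equation*}
g_{\Lambda_N}''\,T_N = T_N(1-\Lambda_N^{-1})(1+q^{1/2}\Lambda_N^{-1})\,g_{\Lambda_N}'',\qquad
g_{\Lambda_N}'\,T_N = T_N(1-q^{1/2}\Lambda_N^{-2})\,g_{\Lambda_N}',
\end{equation*}
which follow from the telescoping of $(q^{1/2};q^{1/2})_\infty$ and $(q;q)_\infty$ products, and which produce exactly the prefactors appearing in $\barD_{N;-1}'$ and $\barD_{N;2}'$ respectively.

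The main obstacle will be the $a=N$ computation. The reason the dilogarithm factor decomposes as $g_{\Lambda_N}'\cdot g_{\Lambda_N}''$ rather than a single $(q;q)_\infty^{-1}$ (as in the $B_N^{(1)}$ case, cf.\ Theorem \ref{longshort}) is dictated by the asymmetry between the two new $a=N$ recursions in Definition \ref{bprimeqsys}: one step of the evolution produces the factor $(1-\Lambda_N^{-1})(1+q^{1/2}\Lambda_N^{-1})$ with a half-integer $q$-shift, while the other produces $(1-q^{1/2}\Lambda_N^{-2})$. Matching both forces the particular pairing of $(q^{1/2};q^{1/2})$-Pochhammer symbols appearing in $g_{\Lambda_N}''$, and the insertion of $g_T^{1/2}$ between $g_{\Lambda_N}'$ and $g_{\Lambda_N}''$ to realize the required half-step in the $T_N$-shift. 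Once this is unwound, the verification becomes a finite book-keeping exercise; the inductive step then follows automatically from the recursion-compatibility identity above, since $\bar{\mathcal T}'_{a;n}$ is by construction a monomial in $\{\barD'_{b;m}\}$ whose conjugation by $g'$ is computable from the cases already established.
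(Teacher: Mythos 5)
Your proposal is correct and follows exactly the route the paper uses for the analogous Theorem \ref{longshort} (the lemma itself is stated in the paper without an explicit proof): verify the four half-evolution identities at the short label $a=N$ together with the $n=\pm1$ cases for the long labels, then induct via compatibility of ${\rm Ad}_{g'}$ with the two-step recursions of Definition \ref{bprimeqsys}, the only genuinely new relations being the two $a=N$ evolutions. One correction is needed: your displayed exchange identities are false as written with $T_N$, since conjugating $g_{\lL_N}''$ through a \emph{full} step $T_N$ shifts $\lL_N^{-1}\mapsto q\,\lL_N^{-1}$ and peels off \emph{four} factors, $(1-\lL_N^{-1})(1-q^{1/2}\lL_N^{-1})(1+q^{1/2}\lL_N^{-1})(1+q\lL_N^{-1})$. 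What you actually need (and what your prose correctly attributes to the exchange with $T^{\omega_N}$) are the half-step versions
\[
g_{\lL_N}''\,T_N^{1/2}=T_N^{1/2}\,(1-\lL_N^{-1})(1+q^{1/2}\lL_N^{-1})\,g_{\lL_N}'',\qquad
g_{\lL_N}'\,T_N^{1/2}=T_N^{1/2}\,(1-q^{1/2}\lL_N^{-2})\,g_{\lL_N}',
\]
which hold because $\omega_N=\tfrac12(e_1+\cdots+e_N)$ for $B_N$, so $T^{\omega_N}$ contains $T_N^{1/2}$ and the $q$-shift is by $q^{1/2}$ (respectively $q$ on $\lL_N^{-2}$). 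With this substitution the prefactors match $\barD_{N;-1}'$ and $\barD_{N;2}'$ exactly and the rest of your argument goes through.
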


The Pieri rules for $B_N^{(1)\,'}$ are obtained by duality. Using Theorem \ref{korpier} with
$(a,b,c,d)=(1,-1,t q^\half,-q^\half)$, $(a^*,b^*,c^*,d^*)=(t^\half,-t^{-\half},t^\half q^\half,-t^{-\half} q^\half)$, the $q$-Whittaker limit of the first Pieri operator is
\begin{eqnarray*}&&H_1^{(B_N^{(1)\,'})}(\lL) =T_1+\sum_{a=2}^N \left(1-\frac{\lL_{a}}{\lL_{a-1}}\right) T_a +\sum_{a=1}^{N-1} \left(1-\frac{\lL_{a+1}}{\lL_{a}}\right) T_a^{-1}\\
&&+\left(1-\frac{1}{\lL_{N}}\right)\left(1+\frac{q}{\lL_{N}}\right)\left(1-\frac{q}{\lL_{N}^2}\right) T_N^{-1}+ \frac{q^\half-q^{-\half}}{\lL_N} -\frac{q^\half+q^{-\half}}{\lL_N^2}+\frac{q^{-\half}}{\lL_{N-1}\lL_N}.
\end{eqnarray*}
By direct calculation, this operator commutes with $g'(\Lambda)$ and as a consequence
\begin{equation}\label{cmhamprime}
g'(\lL)\, \Pi_\lambda'(x)=\gamma(x)\, \Pi_\lambda'(x)\ .
\end{equation}

We want to show that $D_{N;1}(x)\Pi_\lambda= \barD_{N;1}(\Lambda)\Pi_\lambda$ (case $a=N,n=1$ of \eqref{identcrucial}). Using \ref{lemtwo}, Definition \eqref{defDBNminusone} and the relations \eqref{backnforthB}, we find
\begin{eqnarray}
D_{N;1}(x)\, \Pi_\lambda&=& q^{-\frac{N}{4}}\,R_{N;0}^{(1)} \,R_{N;1}^{(0)} \, \Pi_\lambda
\nonumber\\
&=&q^{-\frac{3N}{8}}\,g\, (\lL_1\cdots \lL_N)^\half \left(1+\frac{1}{\lL_N}\right)\, (g')^{-1} \,(\lL_1\cdots \lL_N)^\half \, \Pi_\lambda\label{presque}.
\end{eqnarray}
Recall the notation
$h_{\al_N^*}=\prod_{n=0}^{\infty} \left( 1-\frac{q^n}{\lL_N^2}\right)^{-1}$, then we have the relations
$\left(1+\frac{1}{\lL_N}\right)(g_{\lL_N}'')^{-1}=h_{\al_N^*}^{-1}$
and $T_N^{\frac{1}{4}} (g_{\lL_N}')^{-1}=h_{\al_N^*}^{-1}T_N^{\frac{1}{4}}$. These allow to compute
\begin{eqnarray}&&\!\!\!\!\!\!\!\!\!\!\!\!\!\!\!\!\!\!\!\!g\, (\lL_1\cdots \lL_N)^\half \left(1+\frac{1}{\lL_N}\right) \,(g')^{-1}\,(\lL_1\cdots \lL_N)^\half\nonumber \\
&=& g_T^\half \, h_{\al_N^*}\,g_T^\half\, h_{\al_N^*}\,g_\lL(\lL_1\cdots \lL_N)^\half \!\!\left(1+\frac{1}{\lL_N}\right)
g_\lL^{-1}\,(g_{\lL_N}'')^{-1} g_T^{-\half}\,(g_{\lL_N}')^{-1}g_T^{-\half}\,(\lL_1\cdots \lL_N)^\half\nonumber \\
&=& g_T^\half \, h_{\al_N^*}\,g_T^\half\, (\lL_1\cdots \lL_N)^\half 
g_T^{-\half}\,(g_{\lL_N}')^{-1}g_T^{-\half}\,(\lL_1\cdots \lL_N)^\half\nonumber \\
&=& q^{\frac{N}{16}} g_T^\half \, h_{\al_N^*}\, (\lL_1\cdots \lL_N)^\half(T_1\cdots T_N)^{\frac{1}{4}}\,(g_{\lL_N}')^{-1}\,g_T^{-\half}\,(\lL_1\cdots \lL_N)^\half\nonumber \\
&=& q^{\frac{N}{16}} g_T^\half \,  (\lL_1\cdots \lL_N)^\half(T_1\cdots T_N)^{\frac{1}{4}}\,g_T^{-\half}\,(\lL_1\cdots \lL_N)^\half\nonumber \\
&=&q^{\frac{N}{8}} (\lL_1\cdots \lL_N)^\half\, (T_1\cdots T_N)^{\half}\,(\lL_1\cdots \lL_N)^\half\nonumber \\
\qquad &=&q^{\frac{3N}{8}}\,\lL_1\cdots \lL_N\, (T_1\cdots T_N)^\half=q^{\frac{3N}{8}}\, \barD_{N;1}. \label{cestca}
\end{eqnarray}
Combining this with \eqref{presque} results in the relation
$D_{N;1}(x)\Pi_\lambda= \barD_{N;1}(\Lambda)\Pi_\lambda$. Multiplying by $\gamma^{-n}g^n$ and using \eqref{cmhamprime}:
\begin{equation}\label{cestlafin}
D_{N;2n+1}^{(B_N^{(1)})}(x)\, \Pi_\lambda^{(B_N^{(1)})}(x)=\barD_{N;2n+1}^{(B_N^{(1)})}(\lL)\, \Pi_\lambda^{(B_N^{(1)})}(x),\qquad (n\in \Z).
\end{equation}
This completes the proof of \eqref{identcrucial}, and Theorems \ref{Qsysconj} and \ref{raiseconj} follow.

\begin{cor}
The $D'$-operators satisfy the quantum $B_N^{(1)\,'}$-quantum Q-system of Definition \ref{bprimeqsys}. Moreover we have the raising
operator conditions for any $B_N^{(1)}$-partition $\lambda$:
$$ D_{a;1}'(x)\,\Pi_\lambda'= \lL^{\omega_a^*}\, \Pi_{\lambda+\omega_a}', \qquad (a\in [1,N]),
$$
$\omega_a,\omega_a^*$ the fundamental weights of $B_N,C_N$.
\end{cor}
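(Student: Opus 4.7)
The plan is to mirror the proof of Theorems \ref{Qsysconj} and \ref{raiseconj} in the $B_N^{(1)}$ case, exploiting the fact that all structural ingredients for $B_N^{(1)\,'}$ have just been assembled: the eigenvalue equation $D_{a;0}'(x)\Pi_\lambda' = \Lambda^{\omega_a^*}\Pi_\lambda'$, the candidate Fourier duals $\bar D_{a;n}'(\Lambda)$ defined via the opposite of Definition \ref{bprimeqsys} from the initial data in Definition \ref{firstdefops}, the explicit time-translation operator $g'(\Lambda)$ in \eqref{gprime}, the first Pieri operator $H_1^{(B_N^{(1)\,'})}(\Lambda)$, and the intertwining $g'(\Lambda)\Pi_\lambda'(x) = \gamma(x)\Pi_\lambda'(x)$ in \eqref{cmhamprime}. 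As in Section \ref{sec:43}, the goal reduces to establishing the Fourier-transform identity
\[
D_{a;n}'(x)\,\Pi_\lambda'(x) = \bar D_{a;n}'(\Lambda)\,\Pi_\lambda'(x), \qquad a\in[1,N],\ n\in\Z,
\]
for every $B_N^{(1)}$-partition $\lambda$ (equivalently every $B_N^{(1)\,'}$-partition, since the two sets coincide). The $n=1$ case will yield the raising operator statement, and the opposite quantum Q-system satisfied by the $\bar D'_{a;n}$ will yield the Q-system relations for the $D'_{a;n}$.

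First I would dispense with the long labels $a\in[1,N-1]$: by Definition \eqref{defBNprime} one has $D_{a;n}'(x) = q^{-na/2}\gamma^{-n}D_a'(x)\gamma^n$, and by the $g'$-evolution $\bar D_{a;n}'(\Lambda) = q^{-na/2}(g')^n\bar D_{a;0}'(\Lambda)(g')^{-n}$. Conjugating the eigenvalue equation for $D_a'$ by $\gamma^{-n}(g')^n$ and applying \eqref{cmhamprime} exactly as in the long-label argument of Section \ref{sec:43} then gives the desired identity. The short-label case $a=N$ with even $n=2m$ is handled identically, using the half-step evolution $D_{N;2m}' = q^{-mN/2}\gamma^{-m}D_N'\gamma^m$ and $\bar D_{N;2m}' = q^{-mN/2}(g')^m\bar D_{N;0}'(g')^{-m}$.

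The main obstacle is the odd-$n$ case $a=N$, which requires a ``Rains-factorization'' argument in the spirit of Section \ref{bnproofsec}. The key observation is that, under the $t\to\infty$ limit of the Rains intertwiners \eqref{rainskor} for the specialization $(a,b,c,d)=(tq^{1/2},-q^{1/2},1,-1)$, the operators $R_N^{(0)},R_N^{(1)}$ act between $\Pi_\lambda$ and $\Pi_\lambda'$ according to \eqref{backnforthB}. Using $D'_{N;2m+1}(x) = q^{-mN/4}R^{(0)}_{N;m+1}(x)R^{(1)}_{N;m}(x)$ from \eqref{defBNprime} (note the reversed order compared to \eqref{defDBNminusone}), I would compute $D'_{N;1}(x)\Pi_\lambda'(x)$ by inserting $g \gamma^{-1}$ between the two Rains factors, using $g\Pi_\lambda = \gamma\Pi_\lambda$ in type $B_N^{(1)}$ and $g'\Pi_\lambda' = \gamma\Pi_\lambda'$, and then collapsing the resulting product of dilogarithm factors. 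This is the mirror of the calculation \eqref{presque}--\eqref{cestca}: the exchange relations \eqref{gtexch}--\eqref{hexch} between $g_T$, the $h$-factors in $g_{\Lambda_N}',g_{\Lambda_N}''$ and the eigenvalue $(\Lambda_1\cdots\Lambda_N)^{1/2}(1+\Lambda_N^{-1})$ should telescope to $q^{\frac{3N}{8}}\bar D_{N;1}'(\Lambda)=q^{\frac{3N}{8}}\Lambda^{\omega_N^*}T^{\omega_N}$, giving $D_{N;1}'\Pi_\lambda' = \bar D_{N;1}'\Pi_\lambda'$. The extension to all odd $n$ then follows by conjugating with $\gamma^{-m}(g')^m$ as in \eqref{cestlafin}.

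With the Fourier identity established in all cases, the quantum Q-system relations of Definition \ref{bprimeqsys} for the $D'_{a;n}$ follow formally: the $\bar D'_{a;n}$ satisfy the opposite system by construction, and the Fourier transform reverses multiplication; meanwhile $D'_{a;1}\Pi_\lambda' = \Lambda^{\omega_a^*}T^{\omega_a}\Pi_\lambda' = \Lambda^{\omega_a^*}\Pi_{\lambda+\omega_a}'$ is the raising operator statement, the shift by $T^{\omega_a}$ producing $\Pi_{\lambda+\omega_a}'$ by the definition of the right action of the quantum torus on the basis of $q$-Whittaker functions (cf.\ Remark \ref{leftrightrem}). The principal verification to carry out explicitly is therefore the telescoping computation for the odd-$n$, $a=N$ case, which is the only place where the asymmetry between $g$ and $g'$ (in particular between $g''_{\Lambda_N}$ and its $B_N^{(1)}$ counterpart) enters nontrivially.
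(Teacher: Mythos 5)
Your proposal is correct and follows essentially the same route as the paper: long labels and even times by conjugating the eigenvalue equation with $\gamma^{-n}(g')^n$ and invoking $g'\Pi_\lambda'=\gamma\Pi_\lambda'$, and the odd-time $a=N$ case via the reversed-order Rains factorization together with \eqref{backnforthB}. The only (cosmetic) difference is that the paper does not redo the telescoping: it observes that the resulting expression is exactly the already-established identity \eqref{cestca} conjugated by $(\lL_1\cdots\lL_N)^{1/2}$, which immediately yields $D'_{N;1}\Pi_\lambda'=\barD'_{N;1}\Pi_\lambda'$.
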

\begin{proof}
Starting from the eigenvalue equations
$D'_{a}(x)\, \Pi_\lambda'=\barD_{a}'\, \Pi_\lambda'$ for $a=1,2,...,N$ and 
multiplying with $\gamma^{-n}(g')^n$, and still denoting $\bar X$ the prime Fourier transform of $X$,
$$ D'_{a;t_a n}(x) \Pi_\lambda'=q^{-na/2} \gamma^{-n}\,D'_{a}(x)\,\gamma^n \,\Pi_\lambda'=
q^{-na/2} (g')^n\,\barD'_{a}\,(g')^{-n} \,\Pi_\lambda'=\barD'_{a;t_a n}\, \Pi _\lambda'.$$
For the short label $a=N$, using \eqref{defBNprime},  \eqref{backnforthB}, \eqref{cestca} and $\barD_{N;1}=\barD_{N;1}'=\lL^{\omega_N^*}T^{\omega_N}$:
\begin{eqnarray*}
D'_{N;1}(x)\,\Pi _\lambda' &=&R_{N;1}^{(0)}\, R_{N}^{(1)}\, \Pi _\lambda' =q^{-\frac{N}{8}}\,(\lL_1\cdots \lL_N)^{\half}g\, (\lL_1\cdots \lL_N)^\half \left(1+\frac{1}{\lL_N}\right) \,(g')^{-1}\, \Pi_\lambda
\\
&=&q^{-\frac{N}{8}}\, (\lL_1\cdots \lL_N)^{\half} \,q^{\frac{3N}{8}}\,\bar D_{N;1}\,(\lL_1\cdots \lL_N)^{-\half}\,\Pi_\lambda
=\barD_{N;1}'\,\Pi_\lambda'.
\end{eqnarray*}
The relation is easily extended by multiplication with 
$\gamma^{-n}(g')^n$, resulting in $D'_{N;2n+1}(x)\,\Pi _\lambda' =\barD_{N;2n+1}'\,\Pi_\lambda'$ for all $n\in \Z$. 
The Corollary follows.

\end{proof}

\section{Universal Solutions}
\label{sec:universal}

The relations between Macdonald eigenvalue equations and Pieri rules are embodied by the duality property of Macdonald polynomials \eqref{duapols}.
We now discuss a reformulation of this duality in terms of universal solutions along the same lines as the case of type $A$,
treated in Section \ref{universAsec}. This allows to re-prove the main results of this paper in terms of universal solutions.

\subsection{Universal solutions}\label{sec:51}

\subsubsection{Universal Koornwinder-Macdonald and $\g$-Macdonald eigenvalue solutions}
As in the case of type A, it is extremely fruitful to think directly of $x$ and $s$
as dual variables, whose roles may be interchanged. 
To this end, we use changes of variables:
$$
x_i=q^{\mu_i}\, t^{{\rho_i}^*}=q^{\mu_i}\, a t^{N-i}, \quad s_i=q^{\lambda_i}\, t^{\rho_i}=q^{\lambda_i}\, \sigma t^{N-i}, \qquad \lambda,\mu\in \C^N,
$$
where we use the notations of Section \ref{secduabcd}, see Equation \eqref{rhosabcd}.
Note that if we write $P_\lambda^{(a,b,c,d)}(x)=x^\lambda p_\lambda^{(a,b,c,d)}(x)$ and substitute this into the 
Koornwinder-Macdonald eigenvalue equation \eqref{eigenKo}, we obtain
$$\left(\frac{(1-t^N)(1+\sigma t^{N-1})}{1-t}-\sigma t^{N-1} \hat e_1(s)+\sum_{i=1\atop\epsilon=\pm 1}^N \Phi_{i,\epsilon}^{(a,b,c,d)}(x)(\lL_i^\epsilon \Gamma_i^\epsilon-1) \right) \, p_\lambda^{(a,b,c,d)}(x;s)=0,$$
where we have conjugated \eqref{MKopone} with $x^{-\lambda}$. Noting that $\Phi_{i,\epsilon}^{(a,b,c,d)}(x)$ can be expanded in series 
of the variables $x^{-\al_i}$, $\al_i$ the $B_N$ simple roots, this suggests what we call a universal solution $P^{(a,b,c,d)}(s;x)$ of the 
Koornwinder-Macdonald eigenvalue equation \eqref{eigenKo}
in the form 
\begin{equation}\label{sumK}
P^{(a,b,c,d)}(x;s)=q^{\lambda\cdot\mu}\,\sum_{\beta\in Q_+} c_\beta^{(a,b,c,d)}(s)\, x^{-\beta},\ \ 
c_0^{(a,b,c,d)}(s)=1,\end{equation}
where $Q_+$ denotes the positive cone of the root lattice of $B_N$.
The  normalizing prefactor for $P^{(a,b,c,d)}(x;s)$ is such that 
$q^{\lambda\cdot\mu}=x^\lambda \, t^{-\rho^*\cdot\lambda} =s^\mu \, t^{-\rho\cdot\mu}$,
and is invariant under the interchange of $\lambda\leftrightarrow \mu$, therefore under $x\leftrightarrow s$.
Equation \eqref{eigenKo} for $P^{(a,b,c,d)}(x;s)$ and generic $s$,
\begin{equation}\label{genericeigen}
{\mathcal D}_1^{(a,b,c,d)}\,P^{(a,b,c,d)}(x;s)=\sigma t^{N-1}\,\hat e_1(s)\,P^{(a,b,c,d)}(x;s),
\end{equation}
is equivalent to a linear triangular, generically nonsingular, system for the
coefficients $c_\beta^{(a,b,c,d)}(s)$, which uniquely fixes them for all $\beta\in Q_+$. The solution $P^{(a,b,c,d)}(x;s)$ is therefore unique,
and we refer to it as {\it the universal Koornwinder-Macdonald solution}.
The normalization ${c}_0^{(a,b,c,d)}(s)$ uniquely fixes the solution, which is otherwise determined up to the overall normalization given by this coefficient.

Specializing the Koornwinder parameters $(a,b,c,d)$ according to Table \ref{korspec}, 
we obtain the universal solutions
$P^{(\g)}(x;s)$ of the first $\g$-Macdonald eigenvalue equations \eqref{macdo}:
\begin{equation}\label{genericg}
{\mathcal D}_1^{(\g)}\, P^{(\g)}(x;s)=t^{\rho_1}\,\hat e_1(s)\, P^{(\g)}(x;s).
\end{equation}
These universal solutions have the expansion
\begin{equation}\label{sumG} P^{(\g)}(x;s)=q^{\mu\cdot \lambda}\, \sum_{\beta\in Q_+(R)} c_\beta^{(\g)}(s) \, x^{-\beta},\ \ 
c_0^{(\g)}(s)=1,
\end{equation}
with the coefficients $c_\beta^{(\g)}(s)$ uniquely determined by \eqref{genericg}, up to the normalization $c_0^{(\g)}(s)$.

\begin{remark}
The apparent discrepancy between $Q_+$
in \eqref{sumK} and $Q_+(R)$ in \eqref{sumG} is just an artifact of the specializations. 
For instance using the $C_N^{(1)}$ specialization $(a,b,c,d)=t^{1/2}(1,-1,q^{1/2},-q^{1/2})$
leads to an operator $\D_1^{(C_N)}(x;q,t)$ which has an expansion as a series of the variables $\{x_{i+1}/x_i\}_{1\leq i\leq N-1}$ and $x_N^{-2}$, as opposed to
the generic case $\D_1^{(a,b,c,d)}(x;q,t)$ which has an expansion as a series of the variables $\{x_{i+1}/x_i\}_{1\leq i\leq N-1}$ and $x_N^{-1}$. This simply means that the series solution of the Macdonald eigenvalue equation at the specific $C_N^{(1)}$ specialization is an {\it even} function of $x_N^{-1}$, i.e. the coefficients of odd powers of $x_N^{-1}$ vanish.
\end{remark}

\begin{remark}\label{accessrem}
Specializing $s=q^\lambda t^\rho$ to $\lambda$ an integer partition, and using the uniqueness of the universal solution, we recover
the Koornwinder polynomial
\begin{equation}\label{unitokorc} 
P^{(a,b,c,d)}(x;s=q^\lambda t^\rho)=t^{-\rho^* \cdot\lambda}\, P_\lambda^{(a,b,c,d)}(x).
\end{equation}
The specialization therefore truncates the series \eqref{sumK} to finitely many terms.
Specializing the Koornwinder parameters to the values in Table \ref{tableone},
\begin{equation}\label{unitomac} 
P^{(\g)}(x;s=q^\lambda t^{\rho})= t^{-\rho^* \cdot\lambda}\, P_\lambda^{(\g)}(x),
\end{equation}
for $\lambda$ an integer partition, $\rho=\rho^{(\g)}$ and $\rho^*=\rho^{(\g^*)}$. Moreover, by uniqueness,
\eqref{unitomac} holds also for $\lambda$ any $\g$-partition, for example the non-integer partitions in the cases $\g=D_N^{(1)},B_N^{(1)},D_{N+1}^{(2)}$.
This presentation is therefore more economical, as a {\it single} universal function $P^{(a,b,c,d)}(x;s)$
contains the information on {\it all} the $\g$-Macdonald polynomials as well.
\end{remark}

Another important specialization of the universal Koornwinder function $P^{(a,b,c,d)}(x;s)$ is the Koornwinder Baker-Akhiezer quasi-polynomials introduced by Chalykh \cite{Chalykh}. The latter correspond to specializing the paramaters $a,b,c,d,t$ to arbitrary negative integer powers of $q$. The effect is again a truncation of the series to finitely many terms.

\subsubsection{Universal Pieri solutions}

We also define the universal solution  $Q^{(a,b,c,d)}(s;x)$ of the first Pieri equation \eqref{PieriK}:
\begin{equation}\label{expieri}
Q^{(a,b,c,d)}(s;x)=q^{\lambda\cdot\mu}\, \sum_{\beta\in Q_+} {\bar c}_\beta^{(a,b,c,d)}(x)\, s^{-\beta},\ \ {\bar c}_0^{(a,b,c,d)}(x)=1
\end{equation}
subject to:
$${\hat e}_1(x)\, Q^{(a,b,c,d)}(s;x)= \hat{\mathcal H}_1^{(a,b,c,d)}(s;q,t)\, Q^{(a,b,c,d)}(s;x),$$
where
\begin{eqnarray}
\hat{\mathcal  H}_1^{(a,b,c,d)}(s;q,t)&:=&t^{-\rho^*\cdot \lambda} \,{\mathcal H}_1^{(a,b,c,d)}(s;q,t)\, t^{\rho^*\cdot \lambda}\nonumber\\
&=&\frac{1}{a t^{N-1}}\,
\Delta^{(a^*,b^*,c^*,d^*)}(s)^{-1}\, {\mathcal D}_1^{(a^*,b^*,c^*,d^*)}(s;q,t)\,\Delta^{(a^*,b^*,c^*,d^*)}(s).\label{pieriHamabcd}
\end{eqnarray}
The series expansion \eqref{expieri} exists because $\hat{\mathcal H}_1^{(a,b,c,d)}(s;q,t)$
has a series expansion in the variables $s^{-\al_i}$. As above, the normalization 
${\bar c}_0^{(a,b,c,d)}(x)=1$ uniquely fixes the solution.

Specializing the parameters $(a,b,c,d)$ as in Table \ref{korspec}, we have the universal solutions  to 
the first $\g$-Pieri equations \eqref{PieriG}:
\begin{equation}\label{Upieri}
{\hat e}_1(x)\, Q^{(\g)}(s;x)=\hat\cH_1^{(\g)}(s;q,t)\, Q^{(\g)}(s;x),
\end{equation}
where we have used the fact that ${\hat e}_1^{(R)}(x)={\hat e}_1(x)$ for all $R$, and
$\hat\cH_1^{(\g)}(s=\Lambda t^\rho;q,t):=t^{-\rho^*\cdot \lambda}\,\cH_1^{(\g)}(\Lambda;q,t)\,t^{\rho^*\cdot \lambda}$,
with $\cH_1^{(\g)}(\Lambda;q,t)$ as in \eqref{PieriOpG}. The solution $Q^{(\g)}(s;x)$  is a series of the form
$$Q^{(\g)}(s;x) = q^{\mu\cdot\lambda} \sum_{\beta\in Q_+^*} {\bar c}_\beta^{(\g)}(x)\, s^{-\beta},\ \ {\bar c}_0^{(\g)}(x)=1,
$$
where the sum extends over the positive root cone $Q_+^*$ of $R^*$. 
Due to triangularity, the coefficients of the series are uniquely determined by \eqref{Upieri}.

\subsection{Duality}\label{sec:52}

\subsubsection{Duality of universal functions}

The duality of Macdonald polynomials can be extended to the universal functions as follows.

\begin{thm}\label{UKconj}
The functions $Q^{(a,b,c,d)}(s;x)$ and $P^{(a,b,c,d)}(x;s)$ 
are related via
\begin{equation} \label{DUoneabcd}
Q^{(a,b,c,d)}(s;x)=\frac{P^{(a,b,c,d)}(x;s)}{\Delta^{(a,b,c,d)}(x)},
\end{equation}
with $\Delta^{(a,b,c,d)}$ as in \eqref{deltabcd}.
\end{thm}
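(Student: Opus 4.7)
The plan is to adapt the strategy of Theorem \ref{macpiedelta} from type $A$ to the Koornwinder setting, to obtain the $\g$-Macdonald case by specializing $(a,b,c,d)$ as in Table \ref{korspec}, and to deduce the extended duality $Q^{(a^*,b^*,c^*,d^*)}(x;s)=Q^{(a,b,c,d)}(s;x)$ by a further uniqueness argument.

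First I would verify that $P^{(a,b,c,d)}(x;s)$ satisfies, in addition to the Koornwinder-Macdonald eigenvalue equation \eqref{genericeigen}, the dual Pieri equation
$$\hat e_1(x)\,P^{(a,b,c,d)}(x;s)=\hat{\mathcal H}_1^{(a,b,c,d)}(s;q,t)\,P^{(a,b,c,d)}(x;s),$$
which is the bispectral incarnation of Koornwinder DAHA duality: specializing $s=q^\lambda t^\rho$ with $\lambda$ an integer partition reduces it to the Koornwinder Pieri rule \eqref{PieriK}, and uniqueness of formal triangular Pieri solutions then extends it to generic $s$. Expanding each coefficient $c_\beta^{(a,b,c,d)}(s)$ as a power series in $s^{-\alpha_i^*}$ and swapping the summation orders recasts $P^{(a,b,c,d)}(x;s)=q^{\lambda\cdot\mu}\sum_{\delta\in Q_+^*}\hat c_\delta(x)\,s^{-\delta}$. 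Since $\hat{\mathcal H}_1^{(a,b,c,d)}$ is triangular with respect to the $Q_+^*$-grading, this equation determines such a series up to an $s$-independent prefactor, so comparing with \eqref{expieri}, where $Q^{(a,b,c,d)}(s;x)$ is normalized by $\bar c_0^{(a,b,c,d)}(x)=1$, yields $P^{(a,b,c,d)}(x;s)=\hat c_0(x)\,Q^{(a,b,c,d)}(s;x)$.

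To identify $\hat c_0(x)$ with $\Delta^{(a,b,c,d)}(x)$, I would take the asymptotic limit $s^{-\alpha_i^*}\to 0$ in each Koornwinder-Macdonald eigenvalue equation \eqref{eigenvecabcd} for $m=1,\ldots,N$. The eigenvalue $\theta_m\hat e_m(s)$, with $\theta_m=\sigma^m t^{m(N-(m+1)/2)}$, is dominated by $\theta_m s_1\cdots s_m$; on the operator side, only the summand of $\mathcal D_m^{(a,b,c,d)}$ with index set $I=\{1,\ldots,m\}$ and positive shifts $\Gamma_i^{+1}$ produces a contribution of matching order, using $q^{\lambda_i}=s_i/(\sigma t^{N-i})$ to reduce the scalar prefactor to $\theta_m^{-1}s_1\cdots s_m$. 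After cancellation this yields the family of functional equations
$$\hat c_0(x)=\theta_m^{-2}\,\Phi^{(a,b,c,d)}_{[1,m],+}(x)\,\Gamma_1\cdots\Gamma_m\,\hat c_0(x),\qquad m=1,\ldots,N,$$
where $\Phi^{(a,b,c,d)}_{I,+}(x)$ denotes the coefficient of $\prod_{i\in I}\Gamma_i^{+1}$ in van Diejen's $\mathcal D_m^{(a,b,c,d)}$. A direct computation using the shift identity $(qa;q)_\infty=(a;q)_\infty/(1-a)$ on each product in \eqref{deltabcd} verifies that $\Delta^{(a,b,c,d)}(x)$ obeys the same system, so the ratio $\hat c_0(x)/\Delta^{(a,b,c,d)}(x)$ is a formal power series in $x^{-\alpha_i}$ invariant under $\Gamma_1\cdots\Gamma_m$ for every $m$, hence under each individual $\Gamma_i$ by successive differences, hence constant; the common normalization $1$ at $x^{-\alpha_i}\to 0$ forces this constant to equal $1$. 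The $\g$-statement $Q^{(\g)}(s;x)=P^{(\g)}(x;s)/\Delta^{(\g)}(x)$ follows by substituting Table \ref{korspec}, and the extended duality by observing, thanks to the involutivity $(a^{**},\ldots)=(a,\ldots)$ and the definition \eqref{pieriHamabcd}, that $Q^{(a^*,\ldots)}(x;s)$ and $Q^{(a,\ldots)}(s;x)$ are both normalized triangular solutions of the same Pieri system in $s$.

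The main obstacle I foresee is the combinatorial verification of the $m$-th functional equation for $m\geq 2$: matching the shift ratio $\Delta^{(a,b,c,d)}(qx_1,\ldots,qx_m,x_{m+1},\ldots,x_N)/\Delta^{(a,b,c,d)}(x)$ against $\theta_m^2/\Phi^{(a,b,c,d)}_{[1,m],+}(x)$ requires tracking every shifted $q$-Pochhammer factor in \eqref{deltabcd} against van Diejen's explicit weights. The case $m=1$ already illustrates the mechanism cleanly: $\Phi_{1,+}(x)$ has leading behavior $abcd\,t^{2(N-1)}/q=\theta_1^2$, the factor $\prod_\alpha(1-\alpha x_1)/[(1-x_1^2)(1-qx_1^2)]$ rewrites (up to a scalar) as $\prod_\alpha(1-1/(\alpha x_1))/[(1-1/x_1^2)(1-q^{-1}/x_1^2)]$, and the off-diagonal ratios $(1-x_j^\epsilon/(tx_1))/(1-x_j^\epsilon/x_1)$ match precisely the $x_1\to qx_1$ shifts of the factors in \eqref{deltabcd}.
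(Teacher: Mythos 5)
Your proposal is correct and follows essentially the same route as the paper's proof: bispectrality of $P^{(a,b,c,d)}(x;s)$, re-expansion in powers of $s^{-\alpha_i}$ plus uniqueness of the triangular Pieri solution to get $P=\hat c_0(x)\,Q$, and identification of $\hat c_0(x)$ with $\Delta^{(a,b,c,d)}(x)$ by showing both satisfy the functional equations obtained from the leading $s$-asymptotics of the $m$-th eigenvalue equations (the paper's Eq.~\eqref{yeah}, with $d_{[1,m],(+)^m,m}$ computed from the van Diejen weights). The only minor deviation is that you justify the Pieri property of $P(x;s)$ by specialization-plus-uniqueness, whereas the paper cites the known solution of the bispectral problem; both are adequate.
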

\begin{proof}
The proof is similar to that of Theorem \ref{macpiedelta}.
The universal solution $P^{(a,b,c,d)}(x;s)$ also obeys 
the Pieri equation \eqref{pieriHamabcd}, as a consequence of the existence of a solution to the bispectral problem \cite{Cherednik95,vDselfdual,Sahi,Chalykh},
i.e. of both Koornwinder eigenvalue and Pieri equations, and of the uniqueness of the solution up to the overall normalization determined by the leading coefficient. Expanding the coefficients 
$c_\beta^{(a,b,c,d)}(s)=\sum_{\delta\in Q_+} c_{\beta,\delta}^{(a,b,c,d)} s^{-\delta}$
allows to write an expansion
$$P^{(a,b,c,d)}(x;s)=q^{\lambda\cdot \mu} \sum_{\delta\in Q_+} \hat c_\delta^{(a,b,c,d)}(x)\, s^{-\delta}, \ \ 
\hat c_\delta^{(a,b,c,d)}(x)=\sum_{\beta\in Q_+} c_{\beta,\delta}^{(a,b,c,d)}x^{-\beta} .$$
Ehe Pieri equation uniquely fixes the coefficients in this expansion, up to the overall factor $\hat c_0^{(a,b,c,d)}(x)$,  hence $P^{(a,b,c,d)}(x;s)=\hat c_0^{(a,b,c,d)}(x)\, Q^{(a,b,c,d)}(s;x)$ as a series in $s^{-\al_i}$. 

To compute $\hat c_0^{(a,b,c,d)}(x)$, we note that it can be extracted as the 
successive limits $s_1\to\infty$, $s_2\to \infty$, ..., $s_N\to \infty$ of $P^{(a,b,c,d)}(x;s)$. Let us examine
the $m$ eigenvalue equation \eqref{eigenvecabcd}, for $P^{(a,b,c,d)}(x;s)=q^{\lambda.\mu}(\hat c_0^{(a,b,c,d)}(x)+O(\lL^{-\al_i}))$. Writing 
$${\mathcal D}_m^{(a,b,c,d)}=d_m^{(a,b,c,d)}(x)+
\sum_{I\subset [1,N]\atop |I|\leq m} \sum_{\epsilon_k=\pm 1\atop k\in I}
d_{I,\epsilon,m}(x) \prod_{k\in I} \Gamma_k^{\epsilon_k},
$$ 
we have
$$\left(d_m^{(a,b,c,d)}(x)-\theta_m\,\hat e_m(x)+\sum_{I\subset [1,N]\atop |I|\leq m} \sum_{\epsilon_k=\pm 1\atop k\in I}
d_{I,\epsilon,m}(x) \prod_{k\in I} \lL_k^{\epsilon_k}\,\Gamma_k^{\epsilon_k}\right) (\hat c_0^{(a,b,c,d)}(x)+O(\lL^{-\al_i}))=0,$$
where $\theta_m=\sigma^m t^{m(N-\frac{m+1}{2})}$. Dividing by $\lL_1\lL_2\cdots \lL_m$, we find
that all the terms have a factor of the form $\lL^{-\beta}$ for some $\beta\in Q_+$, and only those with $\beta=0$
survive in the limit $|\lL_1|>>|\lL_2|>>\cdots >>|\lL_m|>>1$. This leaves us with 
\begin{equation}\label{yeah} \left( \Gamma_1\Gamma_2\cdots \Gamma_m -\frac{\theta_m^2}{d_{[1,m],(+)^m,m}(x)}\right)\hat c_0^{(a,b,c,d)}(x)=0.\end{equation}
Let us compute $d_{[1,m],(+)^m,m}(x)$. From Definition \ref{malphadef}, and the explicit expression of the 
van Diejen operators (\ref{vandiealpha}-\ref{defV}), we easily see that the coefficient of  $\Gamma_1\Gamma_2\cdots \Gamma_m$ in ${\mathcal D}_m^{(a,b,c,d)}$ comes from the highest order
van Diejen operator ${\mathcal V}_m^{(a,b,c,d)}$, and more precisely from the term $s=1$, $J_1=J=\{1,2,...,m\}$
in \eqref{vandiealpha}:
\begin{eqnarray*}
d_{[1,m],(+)^m,m}(x)&=&\prod_{i=1}^m \frac{(1-a x_i)(1-b x_i)(1-c x_i)(1-d x_i)}{(1-x_i^2)(1-q x_i^2)}
\prod_{1\leq i<j\leq m} \frac{1-t x_ix_j}{1-x_ix_j} \frac{1-q t x_ix_j}{1-q x_ix_j}\\
&&\qquad \times \prod_{1\leq i\leq m<j\leq N} \frac{1-t x_ix_j}{1-x_ix_j}  \frac{t x_i-x_j}{x_i-x_j}.
\end{eqnarray*}
Using
\begin{eqnarray*}
\frac{\theta_m^2}{d_{[1,m],(+)^m,m}(x)}&=&
\prod_{i=1}^m \frac{(1-\frac{1}{x_i^2})(1-\frac{1}{q x_i^2})}{(1-\frac{1}{a x_i})(1-\frac{1}{b x_i})(1-\frac{1}{c x_i})(1-\frac{1}{d x_i})}
\prod_{1\leq i<j\leq m} \frac{1-\frac{1}{x_ix_j}}{1-\frac{1}{t x_ix_j}} \frac{1-\frac{1}{q x_ix_j}}{1-\frac{1}{q t x_ix_j}}\\
&&\qquad \times \prod_{1\leq i\leq m<j\leq N} \frac{1-\frac{1}{x_ix_j}}{1-\frac{1}{t x_ix_j}}  \frac{1-\frac{x_j}{x_i}}{1-\frac{x_j}{t x_i}}
\end{eqnarray*}
we conclude that $\hat c_0^{(a,b,c,d)}(x)$ and $\Delta^{(a,b,c,d)}(x)$ of \eqref{deltabcd} both obey 
\eqref{yeah} for $m=1,2,...,N$, so their ratio must be a constant as it is invariant under 
the action of each $\Gamma_i$. This constant is $1$, by noting that 
$q^{-\lambda\cdot\mu}P^{(a,b,c,d)}(x;s)\to 1$
and $\Delta^{(a,b,c,d)}\to 1$ in the limit when all $x^{-\al_i}\to 0$.
The Theorem follows.
\end{proof}

Using the specializations of the parameters $(a,b,c,d)$ as in Table \ref{korspec},
\begin{cor}\label{UGconj}
The functions $Q^{(\g)}(s;x)$ and $P^{(\g)}(x;s)$ 
are related via
\begin{equation} \label{DUoneG}
Q^{(\g)}(s;x)=\frac{P^{(\g)}(x;s)}{\Delta^{(\g)}(x)},
\end{equation}
with $\Delta^{(\g)}$ as in \eqref{deltaG}.
\end{cor}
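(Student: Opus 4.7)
The plan is to deduce Corollary \ref{UGconj} directly from Theorem \ref{UKconj} by specializing the Koornwinder parameters $(a,b,c,d)$ according to Table \ref{korspec} for each $\g$. The three objects appearing in the identity, namely $P^{(a,b,c,d)}(x;s)$, $Q^{(a,b,c,d)}(s;x)$, and $\Delta^{(a,b,c,d)}(x)$, must each specialize correctly to their $\g$-analogues; the corollary then follows by dividing the generic identity by $\Delta^{(a,b,c,d)}(x)$ and passing to the specialization.

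First, I would verify that $P^{(a,b,c,d)}(x;s)$ specializes to $P^{(\g)}(x;s)$. Under the Table \ref{korspec} substitution, the operator ${\mathcal D}_1^{(a,b,c,d)}$ becomes precisely ${\mathcal D}_1^{(\g)}$ (by construction of the $\g$-Macdonald operator, Section \ref{macdopsec}), so the triangular recursion for the coefficients $c_\beta^{(a,b,c,d)}(s)$ specializes to the triangular recursion uniquely defining $c_\beta^{(\g)}(s)$. Since both solutions are normalized by $c_0=1$, the specialization is exact. The analogous argument applies to $Q^{(a,b,c,d)}(s;x)$: under the dual parameter specialization $(a^*,b^*,c^*,d^*)\leftrightarrow\g^*$, the Pieri operator $\hat{\mathcal H}_1^{(a,b,c,d)}(s;q,t)$ from \eqref{pieriHamabcd} becomes $\hat{\mathcal H}_1^{(\g)}(s;q,t)$, so uniqueness of the series solution with $\bar c_0=1$ identifies $Q^{(a,b,c,d)}(s;x)|_{\g}=Q^{(\g)}(s;x)$.

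Second, I would check that $\Delta^{(a,b,c,d)}(x)$ specializes to $\Delta^{(\g)}(x)$, which is the computational heart of the proof. In \eqref{deltabcd}, the $(qx_j^{\pm 1}/tx_i;q)_\infty^{-1}$ factors already match the affine roots $n\delta + (e_i\mp e_j)$ in Table \ref{posroots}. The four Pochhammers $(q/(ax_i);q)_\infty,\ldots,(q/(dx_i);q)_\infty$ in the denominator, together with the $(q/x_i^2;q)_\infty$ in the numerator, must combine under each $(a,b,c,d)$ specialization to produce exactly the remaining factors $\prod_{n\delta+\alpha\in \widehat{R}_{++}}(1-x^{-(n\delta+\alpha)})/(1-t^{-1}x^{-(n\delta+\alpha)})$ for $\alpha\in\{e_i,2e_i\}$. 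For instance, for $\g=D_N^{(1)}$ with $(a,b,c,d)=(1,-1,q^{1/2},-q^{1/2})$, the products $(1-a/x_i)(1-b/x_i)(1-c q^n/x_i)(1-dq^n/x_i)$ reassemble, after a standard $q$-Pochhammer manipulation, into a single factor $(q/x_i^2;q)_\infty$ which cancels the numerator — reflecting the absence of short roots $e_i$ from $\widehat{R}_{++}$ for $D_N^{(1)}$.

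The main obstacle will be the case-by-case bookkeeping of this $\Delta$-specialization, and in particular the case $\g=A_{2N}^{(2)}$, where the root system $BC_N$ is non-reduced: the affine roots include both $n\delta+e_i$ and $(2n-1)\delta+2e_i$, and Table \ref{korspec} gives $(a,b,c,d)=(t,-1,t^{1/2}q^{1/2},-t^{1/2}q^{1/2})$, for which the two pairs of Pochhammers split asymmetrically — one pair contributing the odd short-root factors and the other the even ones. For each $\g$ the matching is a short direct manipulation of $q$-Pochhammer identities of the form $(u;q)_\infty(-u;q)_\infty=(u^2;q^2)_\infty$ and $(u;q^{1/2})_\infty=(u;q)_\infty(q^{1/2}u;q)_\infty$. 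Once this is checked for each $\g$ in Table \ref{tableone}, dividing both sides of Theorem \ref{UKconj} by $\Delta^{(a,b,c,d)}(x)$ and specializing yields Corollary \ref{UGconj}.
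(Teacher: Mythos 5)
Your proposal is correct and follows the same route as the paper: the corollary is obtained from Theorem \ref{UKconj} purely by specializing $(a,b,c,d)$ according to Table \ref{korspec}, with the specialization $\Delta^{(a,b,c,d)}\to\Delta^{(\g)}$ being exactly the identity the paper records in \eqref{deltaG} and the identifications $P^{(a,b,c,d)}\to P^{(\g)}$, $Q^{(a,b,c,d)}\to Q^{(\g)}$ following from uniqueness of the normalized triangular series solutions, as you argue. The paper treats these verifications as immediate, so your write-up simply makes explicit what the paper leaves implicit.
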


It is now a simple exercise to relate the universal $(a,b,c,d)$ Koornwinder-Pieri solution to the universal 
solution of the $(a^*,b^*,c^*,d^*)$ Koornwinder-Macdonald eigenvalue equation. As a result we have the following duality relation between universal  Pieri solutions.

\begin{thm}\label{DUAthm}
We have the following duality formulas:
\begin{equation} Q^{(a^*,b^*,c^*,d^*)}(x;s)=Q^{(a,b,c,d)}(s;x), \label{dUabcdthm1}\end{equation}
and their $\g$ specializations:
\begin{equation}Q^{(\g^*)}(x;s)=Q^{(\g)}(s;x). \label{dUathm1}\end{equation}
\end{thm}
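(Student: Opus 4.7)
\medskip

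\noindent\textbf{Proof plan for Theorem \ref{DUAthm}.} The plan is to combine Theorem \ref{UKconj} with the bispectrality of the universal Koornwinder function and the uniqueness of the first Pieri solution as a formal series in $s^{-\beta}$. First, apply Theorem \ref{UKconj} with the starred parameters and with the roles of $x$ and $s$ interchanged, yielding
\begin{equation*}
Q^{(a^*,b^*,c^*,d^*)}(x;s) \;=\; \frac{P^{(a^*,b^*,c^*,d^*)}(s;x)}{\Delta^{(a^*,b^*,c^*,d^*)}(s)} .
\end{equation*}
Thus the claim \eqref{dUabcdthm1} is equivalent to
\begin{equation*}
\Delta^{(a^*,b^*,c^*,d^*)}(s)\, Q^{(a,b,c,d)}(s;x) \;=\; P^{(a^*,b^*,c^*,d^*)}(s;x) ,
\end{equation*}
i.e. the reduction transfers the duality question from $Q$-functions to a relation between $P^{(a^*,b^*,c^*,d^*)}(s;x)$ and $\Delta^{(a^*,b^*,c^*,d^*)}(s)\, Q^{(a,b,c,d)}(s;x)$ as formal series in $s^{-\alpha_i}$.

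Next, I would show that both sides of this equality satisfy the same first Pieri equation in the variable $s$. The right-hand side satisfies \eqref{Upieri} by construction of $Q^{(a,b,c,d)}(s;x)$. For the left-hand side, use the bispectral property already invoked in the proof of Theorem \ref{UKconj}: $P^{(a^*,b^*,c^*,d^*)}(s;x)$ is an eigenfunction of $\mathcal D_1^{(a^*,b^*,c^*,d^*)}(s)$ with eigenvalue $\sigma^* t^{N-1}\hat e_1(x) = a\, t^{N-1}\hat e_1(x)$, since $\sigma^* = a$. Conjugating by $\Delta^{(a^*,b^*,c^*,d^*)}(s)^{-1}$ and dividing by $a\, t^{N-1}$, the definition \eqref{pieriHamabcd} of $\hat{\mathcal H}_1^{(a,b,c,d)}(s)$ implies
\begin{equation*}
\hat{\mathcal H}_1^{(a,b,c,d)}(s)\left(\Delta^{(a^*,b^*,c^*,d^*)}(s)^{-1}\, P^{(a^*,b^*,c^*,d^*)}(s;x)\right) \;=\; \hat e_1(x)\left(\Delta^{(a^*,b^*,c^*,d^*)}(s)^{-1}\, P^{(a^*,b^*,c^*,d^*)}(s;x)\right).
\end{equation*}
So $\Delta^{(a^*,b^*,c^*,d^*)}(s)^{-1}\,P^{(a^*,b^*,c^*,d^*)}(s;x)$ is a solution to the same Pieri equation as $Q^{(a,b,c,d)}(s;x)$.

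Uniqueness of the series solution to \eqref{Upieri} then forces the two to be equal, provided their leading coefficients (the $s\to\infty$ limits after dividing by $q^{\lambda\cdot\mu}$) agree. For $Q^{(a,b,c,d)}(s;x)$ this limit is $\bar c_0^{(a,b,c,d)}(x)=1$ by definition. For $\Delta^{(a^*,b^*,c^*,d^*)}(s)^{-1}\,P^{(a^*,b^*,c^*,d^*)}(s;x)$ one uses that $\Delta^{(a^*,b^*,c^*,d^*)}(s) = 1 + O(s^{-\alpha_i})$ from \eqref{deltabcd} and that the expansion of $P^{(a^*,b^*,c^*,d^*)}(s;x)$ as a series in $s^{-\beta}$ has leading coefficient $q^{\lambda\cdot\mu}$ (this is precisely the content of \eqref{sumK} with the roles of $x,s$ swapped and parameters starred). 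Both limits are therefore $q^{\lambda\cdot\mu}$, yielding \eqref{dUabcdthm1}. The $\g$-specialization \eqref{dUathm1} then follows by substituting the values of Table \ref{korspec} for $(a,b,c,d)$ and using the fact that this specialization intertwines the involutions $(a,b,c,d)\leftrightarrow(a^*,b^*,c^*,d^*)$ with $\g\leftrightarrow \g^*$, together with Corollary \ref{UGconj} in place of Theorem \ref{UKconj}.

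The only potentially delicate point is the non-singularity of the triangular system defining the coefficients $\bar c_\beta(x)$ of \eqref{expieri}: one must check that $\hat{\mathcal H}_1^{(a,b,c,d)}(s)-\hat e_1(x)$ acts diagonally on $s^{-\beta}$ with generically nonzero eigenvalue, so that the Pieri series solution is determined uniquely up to the normalization $\bar c_0(x)$. This is the analogue of the argument in Section \ref{sec:unique} and Lemma \ref{uniqueglem}, and transfers verbatim to the $(a,b,c,d)$-setting because $\hat{\mathcal H}_1^{(a,b,c,d)}(s)$ is, up to $\Delta$-conjugation, the Koornwinder operator in $s$ with dual parameters, whose series solution is already known to be unique.
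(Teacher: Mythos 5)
Your proof is correct and takes essentially the same approach as the paper: both arguments rest on the conjugation relation \eqref{pieriHamabcd} between $\hat{\mathcal H}_1^{(a,b,c,d)}(s)$ and $\mathcal D_1^{(a^*,b^*,c^*,d^*)}(s)$, an application of Theorem \ref{UKconj} at the starred parameters, and a uniqueness-plus-leading-coefficient argument. The only difference is one of direction — the paper converts the Pieri equation for $Q^{(a,b,c,d)}(s;x)$ into the dual eigenvalue equation and invokes uniqueness of the eigenvalue series, whereas you convert the eigenvalue equation for $P^{(a^*,b^*,c^*,d^*)}(s;x)$ into the Pieri equation and invoke uniqueness of the Pieri series — which is a mirror-image reorganization of the same proof.
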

\begin{proof} For conciseness, we omit the superscripts $(a,b,c,d)$ and use the superscript $*$ to stand for $(a^*,b^*,c^*,d^*)$. Starting from the equation $\hat {\mathcal H}_1(s)\,Q(s;x)=\hat e_1(x)\, Q(s;x)$, using \eqref{pieriHamabcd}, we have
$${\mathcal D}_1^*(s) \, \Delta^*(s)Q(s;x)=\sigma t^{N-1}\, \hat e_1(s)\, \Delta^*(s)Q(s;x). $$
Interchanging the variables $x\leftrightarrow s$, we find that $\Delta^*(x)Q(x;s)$ is a solution to the
$(a^*,b^*,c^*,d^*)$ eigenvalue equation. Moreover, using the normalization of 
$Q(x;s)$ with $s$ and $x$ interchanged, we have for small
$\{x^{-\al_i}\}$: $\Delta^*(x)Q(x;s)=q^{\lambda\cdot\mu}\big(1+O(\{x^{-\al_i}\})\big)$. We conclude that 
$\Delta^*(x)Q(x;s)=P^*(x;s)$ by uniqueness of the solution. The Theorem follows from
$P^*(x;s)=\Delta^*(x)Q^*(s;x)$ by Theorem \ref{UKconj} applied to $(a^*,b^*,c^*,d^*)$.
\end{proof}

This can alternatively be rephrased as duality between Koornwinder-Macdonald eigenvalue universal solutions:
\begin{equation} \Delta^{(a^*,b^*,c^*,d^*)}(s) P^{(a,b,c,d)}(x;s)=\Delta^{(a,b,c,d)}(x)\, P^{(a^*,b^*,c^*,d^*)}(s;x), \label{dUabcdthm2}\end{equation}
and their specializations:
\begin{equation}\Delta^{(\g^*)}(s)\, P^{(\g)}(x;s)=\Delta^{(\g)}(x)\,P^{(\g^*)}(s;x). \label{dUathm2}\end{equation}

Some of the above relations appear in different guises in the literature: explicitly in the type A case \cite{ShiraishiNoumi} (see also Section \ref{universAsec}), implicitly
for the other types \cite{Cherednik95,cheredMehta} where universal functions are obtained as $x,s$-symmetric
reproducing kernels. We now detail the explicit link between the universal function duality relation \eqref{dUabcdthm2}
and the Koornwinder polynomial duality \eqref{duapolsabcd}.

\begin{thm}
The universal function duality relation \eqref{dUabcdthm2} reduces to the Koornwinder polynomial duality relation \eqref{duapolsabcd} upon specializing the variables $x=q^\mu t^{\rho^*}$ and $s=q^\lambda t^\rho$ for $\lambda,\mu$ integer partitions.
\end{thm}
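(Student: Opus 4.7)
The plan is to substitute $x=q^\mu t^{\rho^*}$, $s=q^\lambda t^\rho$ (with $\lambda,\mu$ integer partitions) directly into the universal duality \eqref{dUabcdthm2}, and then unwind using two separate tools: the specialization formula \eqref{unitokorc} from Remark \ref{accessrem} and the van Diejen evaluation formula \eqref{normpolabcd}. The key observation making the reduction clean is that the $\lambda=\mu=0$ case of the specialized universal duality already gives me a ``free'' evaluation symmetry for $\Delta$.

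First, I would exploit the case $\lambda=\mu=0$. Since $P_0^{(a,b,c,d)}=1$, the specialized identity collapses to
\[
\Delta^{(a^*,b^*,c^*,d^*)}(t^\rho)\;=\;\Delta^{(a,b,c,d)}(t^{\rho^*}).
\]
I will record this as Lemma~0 of the proof.

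Next, for general partitions $\lambda,\mu$, plugging $P^{(a,b,c,d)}(q^\mu t^{\rho^*};q^\lambda t^\rho)=t^{-\rho^*\cdot\lambda}P^{(a,b,c,d)}_\lambda(q^\mu t^{\rho^*})$ and the dual relation $P^{(a^*,b^*,c^*,d^*)}(q^\lambda t^\rho;q^\mu t^{\rho^*})=t^{-\rho\cdot\mu}P^{(a^*,b^*,c^*,d^*)}_\mu(q^\lambda t^\rho)$ into \eqref{dUabcdthm2} and rearranging yields
\[
\frac{P_\lambda^{(a,b,c,d)}(q^\mu t^{\rho^*})}{P_\mu^{(a^*,b^*,c^*,d^*)}(q^\lambda t^\rho)}
\;=\;
t^{\rho^*\cdot\lambda-\rho\cdot\mu}\,\frac{\Delta^{(a,b,c,d)}(q^\mu t^{\rho^*})}{\Delta^{(a^*,b^*,c^*,d^*)}(q^\lambda t^\rho)}.
\]
Dividing the two formulas of \eqref{normpolabcd} produces
\[
\frac{P_\lambda^{(a,b,c,d)}(t^{\rho^*})}{P_\mu^{(a^*,b^*,c^*,d^*)}(t^\rho)}
\;=\;
t^{\rho^*\cdot\lambda-\rho\cdot\mu}\,\frac{\Delta^{(a^*,b^*,c^*,d^*)}(t^\rho)\,\Delta^{(a,b,c,d)}(q^\mu t^{\rho^*})}{\Delta^{(a^*,b^*,c^*,d^*)}(q^\lambda t^\rho)\,\Delta^{(a,b,c,d)}(t^{\rho^*})}.
\]
Taking the ratio of the first display to the second makes the $\Delta(q^\mu t^{\rho^*})$, $\Delta(q^\lambda t^\rho)$ and $t^{\rho^*\cdot\lambda-\rho\cdot\mu}$ factors cancel, leaving
\[
\frac{P_\lambda^{(a,b,c,d)}(q^\mu t^{\rho^*})\,P_\mu^{(a^*,b^*,c^*,d^*)}(t^\rho)}{P_\mu^{(a^*,b^*,c^*,d^*)}(q^\lambda t^\rho)\,P_\lambda^{(a,b,c,d)}(t^{\rho^*})}
\;=\;\frac{\Delta^{(a,b,c,d)}(t^{\rho^*})}{\Delta^{(a^*,b^*,c^*,d^*)}(t^\rho)}.
\]
By Lemma~0 the right-hand side is $1$, and rearranging this equality is precisely the Koornwinder duality \eqref{duapolsabcd}.

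There is no real obstacle here: the proof is essentially bookkeeping. The only non-trivial piece is noticing that the $\lambda=\mu=0$ specialization supplies the ``evaluation symmetry'' $\Delta^{(a,b,c,d)}(t^{\rho^*})=\Delta^{(a^*,b^*,c^*,d^*)}(t^\rho)$ for free from the universal duality, so that no separate computation of $\Delta$-products is needed. The $\g$-type specialization \eqref{dUathm2} is then an immediate corollary obtained by specializing the Koornwinder parameters $(a,b,c,d)$ according to Table~\ref{korspec}, provided one uses that $P^{(\g)}(x;q^\lambda t^\rho)=t^{-\rho^*\cdot\lambda}P_\lambda^{(\g)}(x)$ from \eqref{unitomac} and the corresponding $\g$-versions of \eqref{normpol}.
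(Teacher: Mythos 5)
Your proof is correct and follows essentially the same route as the paper: double specialization of \eqref{dUabcdthm2}, followed by cancellation against the ratio of the evaluation formulas \eqref{normpolabcd}. The only difference is that where the paper simply cites the evaluation symmetry $\Delta^{(a^*,b^*,c^*,d^*)}(t^\rho)=\Delta^{(a,b,c,d)}(t^{\rho^*})$ as a known identity, you extract it from the $\lambda=\mu=0$ case of the specialized universal duality itself (using $P_\emptyset=1$) --- a legitimate and pleasantly self-contained touch, since the universal duality is the hypothesis being specialized.
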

\begin{proof}
Starting from the universal function $P^{(a,b,c,d)}(x;s)$, we use the specialization
$s=q^\lambda t^\rho$, leading to the 
Koornwinder polynomial
$P^{(a,b,c,d)}(x;q^\lambda t^\rho)=t^{-\rho^*\cdot\lambda} \, P^{(a,b,c,d)}_\lambda(x)$. Similarly, using the specialization
$x=q^\mu t^{\rho^*}$ on the function $P^{(a^*,b^*,c^*,d^*)}(s;x)$ leads to 
$P^{(a^*,b^*,c^*,d^*)}(s,q^\mu t^{\rho^*})=t^{-\rho\cdot\mu} \, P^{(a^*,b^*,c^*,d^*)}_\mu(s)$.
The double specialization $s=q^\lambda t^\rho,x=q^\mu t^{\rho^*}$ of \eqref{dUabcdthm2} results in
\begin{equation}\label{duaP}
t^{-\rho^*\cdot \lambda}\, \Delta^{(a^*,b^*,c^*,d^*)}(q^\lambda t^\rho) \, P^{(a,b,c,d)}_\lambda(q^\mu t^{\rho^*})=t^{-\rho\cdot \mu}\, \Delta^{(a,b,c,d)}(q^\mu t^{\rho^*})\, P_\mu^{(a^*,b^*,c^*,d^*)}(q^\lambda t^{\rho}).
\end{equation}
The relation \eqref{duapolsabcd} follows from the following identity:
\begin{equation}\label{idenDelta}
\frac{\Delta^{(a^*,b^*,c^*,d^*)}(q^\lambda t^\rho)}{\Delta^{(a,b,c,d)}(q^\mu t^{\rho^*})} =t^{\rho^*\cdot \lambda-\rho\cdot \mu}
\,\frac{P_\mu^{(a^*,b^*,c^*,d^*)}(t^{\rho})}{P^{(a,b,c,d)}_\lambda(t^{\rho^*})},
\end{equation}
itself a consequence of \eqref{normpolabcd}, and of the identity $\Delta^{(a^*,b^*,c^*,d^*)}(t^\rho)=\Delta^{(a,b,c,d)}(t^{\rho^*})$.
The Theorem follows.
\end{proof}

Similarly, under the suitable specialization, the $\g$-Macdonald duality relation \eqref{dUathm2} 
reduces to the Macdonald polynomial duality \eqref{duapols} for $x=q^\mu t^{\rho^*}$ and $s=q^\lambda t^\rho$,
where $\lambda$ is any $\g$-partition, and $\mu$ any $\g^*$-partition.

\subsubsection{Duality of universal solutions in the $q$-Whittaker limit}

Universal solutions of the $\g$-Macdonald eigenvector and $\g$-Pieri equations simplify drastically in the $t\to\infty$ limit.
They read:
\begin{equation}\label{draone} \Pi^{(\g)}(x;\lL):=\lim_{t\to\infty} t^{\rho^*\cdot\lambda} \,P^{(\g)}(x;\lL t^\rho)=x^\lambda \sum_{\beta\in Q_+(R)} c_\beta^{(\g)}(\lL) \,x^{-\beta}, 
\end{equation}
and
\begin{equation}\label{dratwo}{\bf K}^{(\g)}(\lL;x):=\lim_{t\to\infty} t^{\rho^*\cdot\lambda} \, Q^{(\g)}(\lL t^\rho;x)=
x^\lambda \sum_{\beta\in Q_+(R^*)}
{\bar c}_\beta^{(\g)}(x)\, \lL^{-\beta},
\end{equation}
with $c_0^{(\g)}(\lL)={\bar c}_0^{(\g)}(x)=1$.
Note that the limit $t\to\infty$ has broken the previous symmetry $x\leftrightarrow s$,
as the $s$ variable itself contained a $t$-dependent factor. However, Corollary \ref{UGconj} of the previous section turns into the following.

\begin{thm}\label{classmodwhit}
The universal solutions $\Pi^{(\g)}(x;\lL)$ and ${\bf K}^{(\g)}(\lL;x)$ are related via:
\begin{equation}\label{pikappa}
\Pi^{(\g)}(x;\lL)=\bar\Delta^{(\g)}(x)\, {\bf K}^{(\g)}(\lL;x),\ \
\bar\Delta^{(\g)}(x):=\lim_{t\to\infty} \Delta^{(\g)}(x) =\prod_{\alpha \in \widehat{\mathcal R}_{++}(\g)} 
(1-x^{-\alpha}).
\end{equation}
\end{thm}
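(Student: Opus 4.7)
The plan is to deduce Theorem \ref{classmodwhit} directly from Corollary \ref{UGconj} by taking the $q$-Whittaker limit $t\to\infty$, adjusted by an appropriate normalization. Concretely, starting from the identity
$$
\Delta^{(\g)}(x)\,Q^{(\g)}(s;x) = P^{(\g)}(x;s),
$$
I would specialize $s = \Lambda t^\rho$ on both sides, multiply by $t^{\rho^*\cdot\lambda}$, and then pass to the limit using the definitions \eqref{draone} and \eqref{dratwo}. Since $\Delta^{(\g)}(x)$ does not involve $s$, the normalization factor bundles entirely with the $P$ and $Q$ factors, giving formally
$$
\bar\Delta^{(\g)}(x)\,{\bf K}^{(\g)}(\Lambda;x) = \Pi^{(\g)}(x;\Lambda),
$$
which is the desired relation.

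The main verifications break into three steps. First, one checks that $\lim_{t\to\infty}\Delta^{(\g)}(x) = \bar\Delta^{(\g)}(x)$ as stated: this is immediate from \eqref{deltaG}, since each factor $(1-x^{-\alpha})/(1-t^{-1}x^{-\alpha})$ converges termwise to $1-x^{-\alpha}$ in the ring $\mathbb{C}[[x^{-\alpha_i}]]$ as $t^{-1}\to 0$, and the infinite product converges in the appropriate completion because $\widehat{R}_{++}$ has finitely many roots at each degree in $q$. Second, one checks that the left- and right-hand sides of the rescaled Corollary \ref{UGconj} have well-defined $t\to\infty$ limits coefficient by coefficient; this is where the normalization factor $t^{\rho^*\cdot\lambda}$ is critical, as $q^{\lambda\cdot\mu}\,t^{\rho^*\cdot\lambda} = x^{\lambda}$ cancels the implicit $t$-dependence in the overall prefactor of the universal series. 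Third, one must argue that the resulting limits indeed coincide with the series $\Pi^{(\g)}(x;\Lambda)$ and ${\bf K}^{(\g)}(\Lambda;x)$ defined in \eqref{draone} and \eqref{dratwo}, which is a matter of matching coefficients.

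The main obstacle is the second step: ensuring that the coefficients $c_\beta^{(\g)}(\Lambda t^\rho)$ and $\bar c_\beta^{(\g)}(x)$ have well-defined, finite limits as $t\to\infty$ when the eigenvalue parameter is $s=\Lambda t^\rho$. The cleanest route is to observe that the universal $\Pi^{(\g)}$ and ${\bf K}^{(\g)}$ may be characterized intrinsically as the unique triangular series solutions (with leading coefficient $x^\lambda$) to the $q$-Whittaker eigenvalue equations \eqref{eigenqwhit} and $q$-Whittaker Pieri equations \eqref{pieriwhit} respectively, obtained as termwise limits of $\mathcal D_1^{(\g)}$ and $\hat{\mathcal H}_1^{(\g)}$ after conjugation by $x^{-\lambda}$ and $s^{-\lambda}$. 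The existence of these limits then reduces to checking that the relevant difference operators, after appropriate rescaling, admit well-defined $t\to\infty$ limits whose actions on series in $x^{-\alpha_i}$ and $\Lambda^{-\alpha_i^*}$ give nonsingular triangular systems with unique solutions. Both of these facts have already been established in Sections \ref{sec:qDiffOps} and \ref{TodaHamil}.

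Once the limits exist, the identity of Corollary \ref{UGconj} passes to the limit termwise, since multiplication by the infinite product $\Delta^{(\g)}(x)$ is compatible with termwise limits in the completion, and the theorem follows without further work. An equivalent and slightly more robust alternative would be to prove the theorem directly from the uniqueness of series solutions: $\bar\Delta^{(\g)}(x)\,{\bf K}^{(\g)}(\Lambda;x)$ satisfies the same eigenvalue equation \eqref{UeigenvalueW} (after commuting the limit of $\Delta$ through the difference operators, exactly as in the proof of Theorem \ref{UKconj}), has the same leading coefficient $x^\lambda$, and is thus forced to equal $\Pi^{(\g)}(x;\Lambda)$.
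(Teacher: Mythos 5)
Your proposal is correct and follows essentially the same route as the paper, which presents Theorem \ref{classmodwhit} simply as the $t\to\infty$ limit of Corollary \ref{UGconj} after inserting $s=\Lambda t^\rho$ and the normalization $t^{\rho^*\cdot\lambda}$ built into the definitions \eqref{draone}--\eqref{dratwo}. Your additional care about termwise existence of the limits (via the triangularity of the limiting eigenvalue and Pieri systems) and the alternative uniqueness argument are both sound and in fact supply detail the paper leaves implicit.
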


The eigenvalue and Pieri equations are:
\begin{eqnarray}
D_a^{(\g)}(x)\, \Pi^{(\g)}(x;\lL)&=&\lL^{\omega_a^*}\, \Pi^{(\g)}(x;\lL), \label{eigenwhitU}\\
H_m^{(\g)}(\lL)\, {\bf K}^{(\g)}(\lL;x)&=&{\hat e}_m^{(R)}(x)\, {\bf K}^{(\g)}(\lL;x). \label{pieriwhitU}
\end{eqnarray}
Both equations for $a,m=1$ turn into simple triangular recursion relations for the 
coefficients $c_\beta^{(\g)}(\lL)$ and ${\bar c}_\beta^{(\g)}(x)$ respectively.

\subsection{Fourier transform and proof of the Macdonald- Q-system conjecture}\label{sec:53}

Like in the case of type A, we may reformulate the Fourier transform \eqref{FT} in the $q$-Whittaker limit in terms of universal solutions via:
$$f(x) \, \Pi^{(\g)}(x;\lL)={\bar f}(\lL)\, \Pi^{(\g)}(x;\lL)\quad \Leftrightarrow \quad f(x) \, {\bf K}^{(\g)}(\lL;x)={\bar f}(\lL)\, {\bf K}^{(\g)}(\lL;x).$$
The main result of Section \ref{sec:proof}  is the relation \eqref{identcrucial} which expresses the fact that $\bar D_{a;n}^{(\g)}(\lL)$ is the Fourier transform of $D_{a;n}^{(\g)}(x)$.
In terms of the universal Pieri solution, we expect:
$$D_{a;n}^{(\g)}(x)\, {\bf K}^{(\g)}(\lL;x)=\bar D_{a;n}^{(\g)}(\lL)\, {\bf K}^{(\g)}(\lL;x).$$
As a consequence any relation satisfied by the $\bar D$'s is satisfied by the $D$'s in the opposite direction, thus proving  the Macdonald Q-system conjecture.
The proof of these identities is identical to that in Section 4, and relies on the Fourier duality between $g^{(\g)}(\lL)$ and $\gamma^{(\g)}(x)$, whose adjoint action respectively generates the discrete time translation in the $\lL$ and $x$ pictures:
$$\gamma^{(\g)}(x)\, {\bf K}^{(\g)}(\lL;x)=g^{(\g)}(\lL)\, {\bf K}^{(\g)}(\lL;x).$$
The proofs rely on a uniqueness argument which can be rephrased as follows. The universal Pieri solution ${\bf K}^{(\g)}(\lL;x)$ \eqref{dratwo} is fixed by the Pieri equation \eqref{pieriwhitU} for $m=1$, up to an overall multiplicative function independent of $\lL$, and fixed by the leading term normalization ${\bar c}_0^{(\g)}(x)=1$.
Any other series solution of this Pieri equation is therefore proportional to ${\bf K}^{(\g)}(\lL;x)$, by a factor independent of $\lL$. 

As we saw in Section \ref{bnproofsec}, the case of odd times $a=N$ for $\g=B_N^{(1)}$ required the use of Rains operators, and the mapping to
a companion theory $B_N^{(1)\, '}$ with its own $q$-Whittaker polynomials and quantum Q-system. Let us rephrase the action of Rains operators at finite $t$ \eqref{rainskor} in the $B_N^{(1)}$ specialization $(a,b,c,d)=(t,-1,q^\half,-q^\half)$ of Table \ref{korspec} in terms of universal Macdonald functions. Let us denote for short $P(x;s)\equiv P^{(B_N^{(1)})}(x;s)$ and $P'(x;s)\equiv P^{(B_N^{(1)\,'})}(x;s)$ the respective universal Macdonald solutions. As both $B_N^{(1)},B_N^{(1)\, '}$ share the same $\xi_\g=\xi_{\g'}=\half$, i.e. $\sigma=\sigma'=t^\half$ and $R(\g)=R(\g')$ while $t^{\rho^*_i}=t^{N-i+1}$ and $t^{(\rho')^*_i}=q^\half t^{N-i+1}$,
we may rewrite \eqref{rainskor} as:
$$ \mathcal R_N^{(1,-1)}(x) \, P(x;s)= F(s)\, P'(x;s), \qquad  \mathcal R_N^{(t,-1)}(x)\, P'(x;s)= F'(s)\, P(x;s), $$
with 
$$F(s)=\prod_{i=1}^N (1+t^{-\half} s_i),\quad F'(s)= t^{N(N+1)/2}\prod_{i=1}^N (1+t^{-\half} s_i^{-1}), $$
where we have used the same notations as in the proof of Lemma \ref{rainstokor}, with 
$u=ab/\sigma=-t^\half$, and taken into account the product formula $q^{|\lambda |}=t^{-N^2/2}\,\prod_{i=1}^N s_i$.
Taking the limit $t\to\infty$ results in:
$$R_N^{(0)} \, \Pi(x;\lL)= (\lL_1\lL_2\cdots \lL_N)^\half \,\left(1+\frac{1}{\lL_N}\right) \, \Pi'(x;\lL),
\ \ R_N^{(1)} \, \Pi'(x;\lL) =(\lL_1\lL_2\cdots \lL_N)^\half\, \Pi(x;\lL).$$
The steps of the proof can then be repeated identically, in particular establishing that 
$\gamma(x)\,\Pi'(x;\lL)=g'(\lL)\, \Pi'(x;\lL)$, and then $D_{a;n}'\, \Pi'(x;\lL)=\bar D_{a;n}'\, \Pi'(x;\lL)$ for all $a\in [1,N]$ and $n\in \Z$,
as well as finally $D_{a;n}\, \Pi(x;\lL)=\bar D_{a;n}\, \Pi(x;\lL)$, from which the main Theorems follow.

\section{Discussion}\label{sec:conclusion}
\subsection{Companion quantum Q systems}\label{sec:threenew}

Part of the proof of our main theorem, concerning the short label in type $B_N^{(1)}$ (Section \ref{bnproofsec}), revealed that acting with the Rains operators \eqref{rainskor} and their $\tau_+$-translates on the $q$-Whittaker functions gives rise to a companion system $B_N^{(1)\,'}$ \ref{bprimeqsys} to the $B_N^{(1)}$ quantum Q-system. This new system has the same classical ($q\to 1$ limit) as the $B_N^{(1)}$ quantum Q-system but corresponds to a different specialization of the Koornwinder parameters $(a,b,c,d)$, and has different Pieri operators and time translation operator \eqref{gprime}.

One can obtain two further companion systems using the Rains operators, illustrated in Figure \ref{fig:nine}, 
from the theories at specialization parameters corresponding to
$\g=A_{2N-1}^{(2)},A_{2N}^{(2)}$. These also have the same classical limit as the respective quantum Q-systems, and may be considered as alternative quantizations. In type $A_{2N}^{(2)}$, which is not a cluster algebra mutation in the first place, the companion system is, in some sense, simpler and more natural from the quantization point of view, and appears to be the one  related to the graded tensor product character formulas of KR-modules \cite{SimonTwisted}. 

\begin{figure}\label{fig:nine}
\begin{tikzcd}
&B_{N}^{(1)}  \!\!\! \ar[d,bend left,pos=5/9,"\mathcal R^{(0)}_N"] &\!\!\!\! \!\!\!\!(t,-1,q^\half,-q^\half) & & & A_{2N}^{(2)}   \ar[d,bend left,pos=5/9,"\mathcal R^{(2)}_N"]  &\!\!\!\! \!\!\!\!\!\!\!\! \!\!\!\! \!\!\!\!(t,-1,t^\half q^\half,-t^\half q^\half)\\
&B_{N}^{(1)\, '}  \!\!\! \ar[u,bend left,pos=4/9,"\mathcal R^{(1)}_N"] &\!\!\!\! \!\!\!\!(1,-1,t q^\half,-q^\half)& & & A_{2N}^{(2)\, '} \ar[u,bend left,pos=4/9,"\mathcal R^{(1)}_N"] &\!\!\!\! \!\!\!\!\!\!\!\! \!\!\!\! \!\!\!\!(q^\half t,-q^\half,t^\half ,-t^\half )\\
&A_{2N-1}^{(2)} \ar[d,bend left,pos=5/9,"\mathcal R^{(0)}_N"] &\!\!\!\! \!\!\!\!(t^\half,-t^\half,q^\half,-q^\half) \quad & & & D_N^{(1)}\ \  \ar[loop left,"\mathcal R^{(0)}_N"] &\!\!\!\! \!\!\!\!\!\!\!\!\!\!\!\! (1,-1,q^\half,-q^\half) \\
&A_{2N-1}^{(2)\, '} \ar[u,bend left,pos=4/9,"\mathcal R^{(2)}_N"] &\!\!\!\! \!\!\!\!(1,-1,t^\half q^\half,-t^\half q^\half) \quad & & & D_{N+1}^{(2)} \,\, \ar[loop left,"\mathcal R^{(1)}_N"] &\!\!\!\! \!\!\!\!\!\!\!\!\!\!\!\! (t,-1,t q^\half,-q^\half)\\
& & & & & C_{N}^{(1)} \ \  \ar[loop left,"\mathcal R^{(2)}_N"] &\!\!\!\! \!\!\!\!\!\!\!\!\!\!\!\!(t^\half,-t^\half,t^\half q^\half,-t^\half q^\half)  
\end{tikzcd}
\caption{The nine families of Koornwinder-Macdonald operators/polynomials. We have indicated the Rains operators that intertwine the various theories, and the specializations of the Koornwinder parameters $(a,b,c,d)$.}\label{ninesystems}
\end{figure}

Using the definition of the Rains operators \eqref{rains}, together with the properties \eqref{rainskor} of Section \ref{rainsubsec}, 
from with the specializations of Table \ref{korspec}, one obtains from the $\g$-Macdonald polynomials 
$P_\lambda^{(\g)}(x)$ companion polynomials $P_\lambda^{(\g')}(x)$, using 
three different specializations of the  Rains operators, 
${\mathcal R}_N^{(0)}={\mathcal R}_N^{(1,-1)}$,  ${\mathcal R}_N^{(1)}={\mathcal R}_N^{(t,-1)}$ and ${\mathcal R}_N^{(2)}={\mathcal R}_N^{(t^\half,-t^\half)}$. 
In the cases $\g=D_N^{(1)},D_{N+1}^{(2)},C_N^{(1)}$, the Rains operators
leave the $\g$-Macdonald polynomials invariant up to some scalar, so that $\g'=\g$. In those cases, the Rains operator is itself a Koornwinder-Macdonald operator, as shown in Lemma \ref{lesautres}. 
Only the specializations $\g=B_N^{(1)},A_{2N-1}^{(2)}$ and $A_{2N}^{(2)}$ are mapped to new companion theories $\g'\neq \g$.

In the $q$-Whittaker limit, the Rains operators tend to $R_N^{(0)},R_N^{(1)}$
of  (\ref{rzero}-\ref{rone}) and to
$$R^{(2)}_N= \lim_{t\to\infty} t^{-N(N+1)/2} {\mathcal R}_N^{(t^\half,-t^\half)}=
\sum_{\epsilon_1,\ldots,\epsilon_N=\pm 1} \prod_{i=1}^N \frac{x_i^{2\epsilon_i}}{x_i^{2\epsilon_i}-1} \prod_{1\leq i<j\leq N}
\frac{x_i^{\epsilon_i}x_j^{\epsilon_j}}{x_i^{\epsilon_i}x_j^{\epsilon_j}-1} \prod_{i=1}^N \Gamma_i^{{\epsilon_i}/2}.$$
These map the $q$-Whittaker polynomials $\Pi_\lambda=\Pi_\lambda^{(\g)}$ to a multiple of the
monic companions $\Pi_\lambda'=\Pi_\lambda^{(\g')}=\lim_{t\to\infty} P_\lambda^{(\g')}$.
More precisely (c.f. \eqref{backnforthB} for $B_N^{(1)}$), \eqref{rainskor} implies the following mapping of polynomials:
$$
\begin{array}{rll}
A_{2N-1}^{(2)}:&  R^{(0)}_N\, \Pi_\lambda=(\lL_1\lL_2\cdots \lL_N)^\half\left(1+\frac{1}{\lL_N}\right) \Pi_\lambda',&
R^{(2)}_N\, \Pi_\lambda'= (\lL_1\lL_2\cdots \lL_N)^\half \Pi_\lambda,\\
A_{2N}^{(2)}:&  R^{(2)}_N\, \Pi_\lambda
= (\lL_1\lL_2\cdots \lL_N)^\half \Pi_\lambda',\quad &
R^{(1)}_N\, \Pi_\lambda'= (\lL_1\lL_2\cdots \lL_N)^\half \Pi_\lambda.
\end{array}
$$

For the new companion cases $A_{2N-1}^{(2)\,'},A_{2N}^{(2)\,'}$, we define
companion Koornwinder-Macdonald operators by specializing Definition \ref{malphadef} for $m=1,2,...,N$ at the appropriate values of the parameters as in Figure \ref{fig:nine}. Their  $q$-Whittaker limits are denoted by $D_{m}'=D_{m}^{(\g')}$, and their $\tau_+$-translates are defined for all $a\in[1,N]$
and $n\in \Z$ as follows:
\begin{eqnarray*}
&&A_{2N-1}^{(2)\,'}:\ D'_{a;2n+i}=q^{-na}\,\gamma^{-n}\, D'_{a;i} \, \gamma^n, \quad (i=-1,0)\\
&&A_{2N}^{(2)\,'}:\ \ \ \ \ \ D'_{a;n}=q^{-na/2}\,\gamma^{-n}\, D'_{a} \, \gamma^n,
\end{eqnarray*}
where $D'_{a;-1}$ is defined as in \eqref{minusone} with the suitable function $\phi^{(\g')}_{i,\epsilon}(x)$. Note that all labels of $A_{2N-1}^{(2)\,'}$
are short and all labels of $A_{2N}^{(2)\,'}$ are long, which is the reverse of the case for the un-primed theories.
Using the methods of this paper, it can be shown that these new operators obey new quantum Q-systems, with the same $q$-commutation relations and recursion relations as their non-primed companions, except for the label $N$, where
$$
\renewcommand{\arraystretch}{1.5}
\begin{array}{rl}
A_{2N-1}^{(2)\,'}:&  q^N\,  Q_{N,2n+2}\,Q_{N,2n}=Q_{N,2n+1}^2-q^{-2n} \,Q_{N-1,2n+1}^2,\\
& q^N\,  Q_{N,2n+1}\,Q_{N,2n-1}=(Q_{N,2n}-q^{-n} Q_{N-1,2n})(Q_{N,2n}+q^{1-n} Q_{N-1,2n}),\\
A_{2N}^{(2)\,'}:& q^N\,  Q_{N,n+1}\,Q_{N,n-1}=Q_{N,n}^2-q^{-\frac{n}{2}} Q_{N,n}Q_{N-1,n} .
\end{array}
$$
for all $n\in \Z$. To derive these results, we first define the
candidate Fourier transforms $\bar D'_{a;n}$ via the 
opposite quantum Q-systems and same initial 
data as the non-primed companions (that is, the same values of $\omega_a,\omega_a^*$), and compute the time translation operators:
\begin{eqnarray*}
&&A_{2N-1}^{(2)\,'}:\ \ \ g'= g_T g_\lL \frac{1}{(\lL_N^{-2};q^2)_\infty}
g_T g_\lL \frac{1}{(\lL_N^{-1};q)_\infty(-q\lL_N^{-1};q)_\infty},\\
&&A_{2N}^{(2)\,'}:\ \ \ \ \ g'= g_T g_\lL \, \frac{1}{(\lL_N^{-1};q)_\infty},
\end{eqnarray*}
which commute, respectively, with the two Pieri operators or Toda Hamiltonians
\begin{eqnarray*}
&&A_{2N-1}^{(2)\,'}:\ \ \ H_1'(\lL)=T_1+\sum_{i=2}^N {\scriptstyle \left(1-\frac{\lL_i}{\lL_{i-1}}\right)}T_i+\sum_{i=1}^{N-1}{\scriptstyle \left(1-\frac{\lL_{i+1}}{\lL_{i}}\right)} T_i^{-1} 
+{\scriptstyle \left(1-\frac{1}{\lL_{N}}\right)\left(1+\frac{q}{\lL_{N}}\right)} T_N^{-1}, \\
&&A_{2N}^{(2)\,'}:\ \ \ \ \ H_1'(\lL)=T_1+\sum_{i=2}^N {\scriptstyle \left(1-\frac{\lL_i}{\lL_{i-1}}\right)}T_i+\sum_{i=1}^{N-1}
{\scriptstyle \left(1-\frac{\lL_{i+1}}{\lL_{i}}\right)}T_i^{-1}+{\scriptstyle \left(1-\frac{1}{\lL_{N}}\right)}T_N^{-1} -{\scriptstyle \frac{q^{-\half}}{\lL_N}}
\end{eqnarray*}
obtained from the $q$-Whittaker limit of the specialized operators of Theorem \eqref{korpier}.  After proving that
$g'\Pi_\lambda=\gamma^{t_1}\Pi_\lambda'$ ($t_1=2$ for $A_{2N-1}^{(2)\,'}$, $t_1=1$ for $A_{2N}^{(2)\,'}$), 
one concludes that $D_{a;n}\Pi_\lambda'=\bar D_{a;n}'\Pi_\lambda'$ and the quantum Q-system relations for $D_{a;n}$ follow.

Finally, matching the specialization of the formula \eqref{deltabcd} for $\Delta^{(\g')}$ with the form  of the product over affine roots \eqref{deltaG}, we make the following identification of affine roots corresponding to $\g'$:
\begin{eqnarray*}
&&B_N^{(1)\,'}:\ \ \ \widehat{R}_{++} = \scriptstyle{\left\{
(n-\half)\delta+e_i:  \ 1\leq i\leq N, \ n\geq 1\right\} \sqcup \left\{ n\delta+(e_i\pm e_j):\  1\leq i<j\leq N,\  n\geq 1 \right\}}\\
&&A_{2N-1}^{(2)\,'}:\ \widehat{R}_{++} = \scriptstyle{\left\{ (2n-1)\delta+2 e_i: \ 1\leq i\leq N,\  n\geq 1\right\} \sqcup\left\{ n\delta+(e_i\pm e_j): \ 1\leq i<j\leq N,\ n\geq 1\right\}}\\
&&A_{2N}^{(2)\,'}:\ \ \ \widehat{R}_{++} = \scriptstyle{\left\{(n-\half)\delta+e_i,\, 2n\delta+2 e_i,: \ 1\leq i\leq N,\  n\geq 1\right\}\sqcup\left\{ n\delta+(e_i\pm e_j): \ 1\leq i<j\leq N,\ n\geq 1\right\}}.
\end{eqnarray*}

\subsection{Universal solutions and Path models}\label{sec:paths}

In Section \ref{sec:universal}, we  introduced universal solutions $P(x;s)$ and $Q(s;x)$ for the
various eigenvalue equations and Pieri rules, by solving an infinite triangular linear system of equations for the expansion coefficients in each case. Remarkably,
the solutions can all be put in the form of path models, 
such as that for the $q$-Whittaker vectors in the $A_{N-1}^{(1)}$ case 
\cite{DKT}, see also \cite{gotsy}. As shown in Example \ref{pathAex}, this results in an interpretation of the solution of the triangular system as a partition function for weighted paths on the relevant positive root cone $Q_+$ or $Q_+^*$.

Let us start with the case of $Q(s;x)=q^{\lambda\cdot\mu}\sum_{\beta\in Q_+^*} {\bar c}_\beta(x) \, s^{-\beta}$, $\bar c_0(x)=1$, the series solution of 
the Pieri equation $\cH_1(s) \, Q(s;x)=\hat e_1(x)\, Q(s;x)$ (either the Koornwinder or $\g$-Macdonald Pieri rule). 
The important fact is that $\cH_1(s)$ is a $q$-difference operator in the variable $s$
of the form $\cH_1(s)=\sum_{i=1}^N\sum_{\epsilon=0,\pm 1} h_{i,\epsilon}(s) T_i^\epsilon$, where the coefficients $h_{i,\epsilon}$ are  rational functions
of the variables $\{s^{-\al_i^*}\}$, with $\al_i^*$ the simple roots of $R^*$.
Factoring out the common denominator in the coefficients, $h_{i,\epsilon}(s)= p_{i,\epsilon}(s)/q(s)$ with $p_{i,\epsilon}(s)$ and $q(s)$ 
some {\it polynomials} in $\{s^{-\al_i^*}\}$, the Pieri equation is equivalent to
$\left(\hat e_1(x)\, q(s)-\sum_{i=1}^N\sum_{\epsilon=0,\pm 1} p_{i,\epsilon}(s) x_i^\epsilon T_i^\epsilon \right){q^{-\lambda\cdot\mu}Q(s;x)}=0.$
The difference operator can be written as
$\sum_{i=1}^N\sum_{\epsilon=0,\pm 1} \sum_{\al\in U^*} h_{i,\epsilon,\al}(x) s^{-\al} T_i^\epsilon$ 
for some $x$-dependent coefficients $h_{i,\epsilon,\al}(x)$, where $\al$ is summed over a {\it finite} subset $U^*$ of $Q_+^*$.
Noting that $T_i^\epsilon\, s^{-\beta}=q^{-\epsilon e_i\cdot\beta}s^{-\beta}$ and that $s^{-\al}\, s^{-\beta}=s^{-(\al+\beta)}$,
and collecting the coefficient of $s^{-\beta}$ in the Pieri equation, we obtain
$\sum_{i=1}^N\sum_{\epsilon=0,\pm 1} \sum_{\al\in U^*} h_{i,\epsilon,\al}(x)\, q^{-\epsilon e_i\cdot(\beta-\al)}\,{\bar c}_{\beta-\al}(x)=0,$
or equivalently
$$\left(\sum_{i,\epsilon} h_{i,\epsilon,0}(x) q^{-\epsilon e_i\cdot\beta} \right){\bar c}_{\beta}(x)=-
\sum_{i,\epsilon}\sum_{\al\in U^*\setminus \{0\}} h_{i,\epsilon,\al}(x) q^{-\epsilon e_i\cdot(\beta-\al)} {\bar c}_{\beta-\al}(x).$$
The factor on the left hand side is non-vanishing for generic $x$ and is therefore invertible.
The path model interpretation goes as follows. Given the initial data $\bar c_0(x)=1$, the coefficient ${\bar c}_{\beta}(x)$ 
is the partition function of paths from $0$ to $\beta$ in $Q^*_+$, consisting of steps
in the finite subset $U^*\setminus \{0\}$. Each path has a weight equal to the product of vertex and edge weights along the path, 
defined respectively as
$$ w_\varphi= \left(\sum_{i,\epsilon} h_{i,\epsilon,0}(x) q^{-\epsilon e_i\cdot\varphi} \right)^{-1},\quad  w_0=1,\quad
w_{\varphi-\al,\varphi}= -\sum_{i,\epsilon}h_{i,\epsilon,\al}(x) q^{-\epsilon e_i\cdot(\varphi-\al)}$$
for any $\varphi\in Q_+^*$, $\al \in U^*\setminus \{0\}$. The weight of a path $p$ is a product over its vertex and edge sets $v(p),e(p)$:
$$w(p)= \prod_{ \varphi\in  v(p)} w_\varphi  \times \prod_{(\varphi-\al,\varphi)\in e(p)} w_{\varphi-\al,\varphi} ,
$$
and the coefficient $\bar c_\beta(x)$ is the partition function of all such paths
$$ {\bar c}_{\beta}(x)=\sum_{{\rm paths}\, p\ {\rm in}\ Q_+^*\atop {\rm from}\ 0\to\beta} w(p) .$$
A similar argument leads to a formulation of the Macdonald eigenvalue equation universal solution $P(x;s)$ in terms of a path model on $Q_+$.

The path model for the Pieri solution simplifies drastically in the $q$-Whittaker limit
as the Hamiltonians are directly polynomials of the $\lL^{-\al_i^*}$, giving rise to a small set of steps $U^*\setminus\{0\}$. For instance, by inspection of (\ref{Hamil}-\ref{Hamil2}), we find that for all $\g$, $w_\varphi^{-1}=\sum_{i=1}^N\sum_{\epsilon=\pm1} x_i^{\epsilon}(1-q^{-\epsilon\beta\cdot e_i})$.

\begin{example}
In the case $\g=A_4^{(2)}$, the $q$-Whittaker limit of the Pieri equation is
\begin{eqnarray*}
0&=&\left(T_1+\left(1-\frac{\lL_2}{\lL_1}\right) (T_2+T_1^{-1})+\left(1-\frac{1}{\lL_2}\right) T_2^{-1} -\frac{1}{\lL_2}-(x_1+x_1^{-1}+x_2+x_2^{-1})\right)\\
& \times& x_1^{\lambda_1}x_2^{\lambda_2} \sum_{n,m\geq 0}
{\tilde c}_{n,m}(x) \left(\frac{\lL_2}{\lL_1}\right)^n\left(\frac{1}{\lL_2}\right)^m
\end{eqnarray*}
expressed in terms of the variables $\lL^{-\al_1}=\frac{\lL_2}{\lL_1},\lL^{-\al_2}=\frac{1}{\lL_2}$,
$\al_i$ the simple roots of $B_2$, and  with the normalization ${\tilde c}_{0,0}(x) =1$.
The is equivalent to the recursion relation
$${\tilde c}_{n,m}(x)=\frac{q^{n-1}(q^{-m}x_2+x_1^{-1}){\tilde c}_{n-1,m}(x)+(q^{m-n-1}x_2^{-1}+1){\tilde c}_{n,m-1}(x)}{x_1(q^{-n}-1)+x_1^{-1}(q^n-1)+x_2(q^{n-m}-1)+x_2^{-1}(q^{m-n}-1)},$$
which has the form ${\tilde c}_{n,m}(x)=a_{n,m}{\tilde c}_{n-1,m}(x)+b_{n,m}{\tilde c}_{n,m-1}(x)$, the solution is
$${\tilde c}_{n,m}(x) =\sum_{{\rm paths}\, p: (0,0)\to (n,m)} \prod_{{\rm steps}\,   s\in p} w_s \ ,$$
where the is sum over all the paths on $\Z_+^2$ with steps $(1,0)$ and $(0,1)$, from the origin to $(n,m)$, of the product of step weights $w_s$ where 
$w_s=a_{i,j}$ for a horizontal step $(i-1,j)\to (i,j)$ and $w_s=b_{i,j}$ for a vertical step $(i,j-1)\to (i,j)$.
\end{example}

\subsection{Universal solutions and $q$-Whittaker functions}\label{sec:whitak}
We have used the terminology $q$-Whittaker functions for the $t\to\infty$ limits of Koornwinder-Macdonald polynomials. Strictly speaking,
$\Pi_\lambda^{(\g)}(x)$ is a class-$1$ $q$-Whittaker function for the quantum universal enveloping algebra of a simple Lie algebra only in the cases where the Pieri operators are known to be $q$-deformed quantum Toda Hamiltonians.
These are related to the quantum Q-system of types
$\g=A_{N-1}^{(1)},D_N^{(1)},D_{N+1}^{(2)},A_{2N-1}^{(2)}$, whose conserved quantities are the relativistic Toda Hamiltonians,
of types $A_{N-1},D_N,B_N,C_N$, respectively (see Remark \ref{todarem}). 
By analogy, we call all the eigenfunctions $\Pi_\lambda^{(\g)}(x)$ $q$-Whittaker functions for all $\g$ in this paper.
These share a number of properties.

In the $q$-Whittaker limit, the universal series solutions of the $\g$-Macdonald eigenvalue equations and the Pieri equations (\ref{draone}-\ref{dratwo}) of Section \ref{sec:universal},
when specialized to $\lambda$ a $\g$-partition, reduce respectively to the analogues of 
class-1 $q$-Whittaker functions $\Pi_\lambda^{(\g)}(x)=\Pi^{(\g)}(x;q^\lambda)$, and the analogues of fundamental
$q$-Whittaker functions
$\Psi_\lambda^{(\g)}(x)=\mathrm K^{(\g)}(x;q^\lambda)$. The former is a Weyl-symmetric polynomial in $x$, whereas the latter is a non-symmetric series solution of the relativistic Toda equation with prescribed leading term. It is associated with the highest weight Verma module $V_\mu$, where $\mu$ is obtained from the variable $x=q^{\rho+\mu}$.
In the theory of Whittaker functions, the fundamental Whittaker function is convergent only in a particular Weyl chamber. The class-1 function is a linear combination of fundamental Whittaker functions, regular in all the Weyl chambers.
It is  obtained via a symmetrization over the Weyl group action on the fundamental solutions with suitable coefficients.

A subtlety arises in the study of convergence of the above series. In the definition of $\bar\Delta^{(\g)}(x)$ as an infinite product, it is assumed that $|q|<1$ for convergence.
On the other hand, the series $\mathrm K^{(\g)}(q^\lambda;x)$ or $\Psi_\lambda^{(\g)}(x)$ is well-behaved for $|q|>1$.
Obviously $\Pi_\lambda^{(\g)}(x)$, being a polynomial, makes sense for both $|q|>1$ and $|q|<1$. 
Note that the infinite product
$$\tilde \Delta^{(\g)}=\frac{1}{\prod_{n=0}^\infty \prod_{\al\in R_+} (1-q^{-n} x^{-\al})} $$
is another solution of \eqref{eyh}, convergent for $|q|>1$ instead of $|q|<1$. (To see this, note that conjugating 
$\Gamma_i^{\pm 1}$ with $\bar \Delta^{(\g)}(x)$ or $\tilde \Delta^{(\g)}(x)$ yields the same result.). The remarkable fact is that as a convergent series for $|q|>1$, 
$\tilde \Delta^{(\g)}(x)\, \Psi_\lambda^{(\g)}(x)$ is not equal to the polynomial class-$1$ $q$-Whittaker function $\Pi_\lambda^{(\g)}(x)$. We conjecture\footnote{Such a symmetrization formula exists relating classical fundamental and class-$1$ Whittaker functions.} that for general $\g$ 
it requires a symmetrization over the Weyl group, valid for $|q|>1$:
\begin{equation} \label{conjsym}
\Pi_\lambda^{(\g)}(x) =\sum_{w\in W} \tilde \Delta^{(\g)}(wx)\, \Psi_\lambda^{(\g)}(wx).
\end{equation}

We end up with two characterizations of $\mathrm K(\lL;x)$: (1) as a series with $|q|<1$ in the variables $\lL^{-\al_i}$, equal to $\Pi^{(\g)}(x;\lL)/\bar \Delta^{(\g)}(x)$
(by Theorem \ref{classmodwhit}) and (2) as a series with $|q|>1$ equal to $ \Psi_\lambda^{(\g)}(x)$ when $\lambda$ is a $\g$-partition. The polynomials $\Pi_\lambda^{(\g)}(x)$ are expressed in terms of both, but with very different formulas for $|q|<1$ and $|q|>1$.

\begin{example}
Let us illustrate the above in the simplest case of $A_1^{(1)}$. Denoting by $u=x_2/x_1$,
$v=\lL_2/\lL_1$,
\begin{eqnarray*}p(u;v)&:=&x_1^{-\lambda_1}x_2^{-\lambda_2}\,\Pi(x;\lL)=\sum_{n=0}^\infty  u^n \,\prod_{i=0}^{n-1} 
\frac{1-v q^i}{1-q^{-i-1}}, \\
k(v;u)&:=&x_1^{-\lambda_1}x_2^{-\lambda_2}\,\mathrm K(\lL;x)=\sum_{n=0}^\infty v^n\,
\frac{q^{n^2} u^n}{\prod_{i=1}^{n}(1-q^i)(1- u q^i)} .\end{eqnarray*}
With $\bar \Delta(u)=(q u;q)_{\infty}$ and $\tilde \Delta(u)=1/(u;q^{-1})_{\infty}$, 
\begin{eqnarray*}
p(u;v)&=& \bar \Delta(u)\, k(v;u) \qquad (|q|<1),\\
\Pi_{\lambda_1,\lambda_2}(x)&=& x_1^{\lambda_1}x_2^{\lambda_2}\, \tilde \Delta(u)\,  k(q^{\lambda_2-\lambda_1};u)+ 
x_2^{\lambda_1}x_1^{\lambda_2}\,\tilde \Delta(u^{-1})\,  k(q^{\lambda_2-\lambda_1};u^{-1})\quad (|q|>1),\\
&=& x_1^{\lambda_1}x_2^{\lambda_2}\,p(u;q^{\lambda_2-\lambda_1})\quad (|q|<1),
\end{eqnarray*}
where the second and third lines hold for any integers $\lambda_1\geq \lambda_2\geq 0$.
\end{example}

\subsection{Summary/Perspectives}\label{sec:summary}

We have proved the Macdonald operator-quantum Q-sys\-tem conjectures, which state that suitably defined,
$\tau_+$-translated $\g$-Macdonald difference operators, in the $q$-Whittaker limit, obey the $\g$-type quantum Q-system 
relations, and may as such be considered as cluster variables in a suitable quantum cluster algebra in all cases except $A_{2N}^{(2)}$.
The proofs cover the cases $\g=A_{N-1}^{(1)},D_N^{(1)},B_N^{(1)}, C_N^{(1)}, A_{2N-1}^{(2)},A_{2N}^{(2)},D_{N+1}^{(2)}$, and rely strongly on the duality between the Koornwinder/Macdonald eigenvalue equations and the associated Pieri rules. 

We have proved that the conserved quantities of the $\g$-quantum Q-systems are the $\g$-Pieri operators in the $q$-Whittaker limit,
which in a number of cases can themselves be identified with known $q$-difference Toda Hamiltonians. Our construction provides explicit expressions for all (higher) Hamiltonians as well. It would be interesting to explore the combinatorial content of these, possibly in the language of cluster integrable systems
\cite{GoncharovKenyon}. 

We have constructed time-translation operators $g(\lL)$ for all $\g$ that commute with the corresponding $q$-difference Toda Hamiltonians. More generally, in the spirit of \cite{SS}, it would be desirable to construct commuting Baxter $Q$-operators $Q(u;\lL)$ 
that coincide with $g(\lL)^{-1}$ at $u=1$, but are in general quantum-dilogarithmic generating functions for the Hamiltonians. In all cases but 
$A_{2N}^{(2)}$, such a construction should exist in terms of mutations of the corresponding quantum cluster algebra.

As another by-product of our proofs, we have unearthed a remarkable structure involving three more
companion theories $B_N^{(1)\,'}$, $A_{2N-1}^{(2)\,'}$ and $A_{2N}^{(2)\,'}$ and their associated quantum
Q-systems. It would be interesting to further investigate their quantum Laurent property, as well as their combinatorial content, in particular the meaning of the associated fermionic sums which in the known cases provide q-multiplicities of decompositions of KR modules onto irreducibles (see \cite{krKR,DFK15}).

The quantum Q-system relates to the $q$-Whittaker limit of Macdonald-Koornwinder theory, but it is natural to consider the extension of some of our results to finite $t$. The time-shifted Macdonald operators $\mathcal D^{(\g)}_{a;n}(x;q,t)$ by iterated conjugation by $\gamma(x) $ were considered in \cite{DFKqt} in type $A_{N-1}^{(1)}$, and are generators of the spherical DAHA or, in the limit $N\to\infty$, the quantum toroidal algebra of $\mathfrak{gl}_1$ or the elliptic Hall algebra.
In the Koornwinder we again have elements of the corresponding spherical DAHA but the limit $N\to\infty$ is still to be understood. 

Moreover, in type $A_{N-1}^{(1)}$ a $t$-deformed analogue of the time translation operator $g(\lL)$ was defined in \cite{Langmann} (see Proposition 4.2),
and conjectured to act diagonally on the so-called Non-stationary Ruijsenaars function, introduced in \cite{ShiraishiLaumon} as the (conjectural) universal series solution to the elliptic Ruijsenaars operator eigenvalue equation, also related to the geometry of affine Laumon spaces. We expect this operator to play the same role for the finite $t$ macdonald case as $g(\Lambda)$ for the q-Whittaker limit $t\to\infty$, i.e. to be the (Macdonald) Fourier transform of the Gaussian for finite $t$. 
We believe there should exist a cluster algebra formulation of this operator.
It would also be extremely interesting to investigate this operator for general Koornwinder theory.

\appendix

\section{Derivation of the $\g$-Macdonald operators}\label{appendixA}
\label{appA}

We combine several constructions \cite{macdoroot,vandiej,Rains} of commuting difference operators corresponding to the affine algebras in Table \ref{tableone}. The goal is to construct an appropriate set of $N$ commuting operators for each $\g$, with  eigenvalues proportional to the symmetric functions in Table \ref{chartable}, which form a basis for the space spanned by the irreducible fundamental characters of the Lie algebras $R$.
This choice of $\g$-Macdonald difference operators is designed such that their $q$-Whittaker limits satisfy the type $\g$ quantum Q-systems.

\subsection{Macdonald's operators}
\label{appmacsec}

For each affine algebra $\g$ in Table \ref{tableone}, except for the case of $A_{2n}^{(2)}$, and for each minuscule co-weight of $S$, Macdonald defines a  difference operator with eigenvalue which is a fundamental character of $R^*$ \cite{macdoroot}.

Let $\{e_i\}_{1\leq i\leq N}$ be the standard basis of $\R^N$ with the standard inner product $(\cdot,\cdot)$.  For the set of variables $x=(x_1,...,x_N)$, we denote $x^v=x_1^{v_1}\cdots x_N^{v_N}$ for any $v=\sum_i v_i e_i$.
\begin{table}

\renewcommand{\arraystretch}{1}
\begin{equation*}
\begin{array}{|c|l|l|}
\hline
\hbox{Algebra} & \hbox{Positive roots $R_+$} & \hbox{fundamental weights $\omega_a$}\\ \hline
B_N & e_i,\ i\in[1,N]; & 
\displaystyle{\omega_i=\sum_{k=1}^i e_k,\ i\in[1,N-1]};\\
& e_i\pm e_j,\ 1\leq i< j\leq N&\displaystyle{  \omega_N=\frac{1}{2}\sum_{k=1}^N e_k}\\ \hline
C_N &e_i\pm e_j,\ 1\leq i< j\leq N; \ 2e_i, \ i\in[1,N] &\displaystyle{\omega_i=\sum_{k=1}^i e_k,\ i\in[1,N] }\\ \hline
D_N & e_i\pm e_j, \ 1\leq i<j\leq N& \displaystyle{
\omega_i=\sum_{k=1}^i e_k,\  i\in[1,N-2]; }\\
&& \displaystyle{\omega_N=\frac{1}{2}\sum_{k=1}^N e_k,\ \omega_{N-1}=\omega_N-e_N}\\ \hline
\end{array}
\end{equation*}
\caption{Positive roots and fundamental weights of the finite dimensional algebras of types BCD.}\label{rootsandweights}
\end{table}

There is a surjective map ${}_*: { R}\to { S}$, $\al\mapsto \al_*=\al/u_\al\in S$, for some real $u_\al$. 
In the case where $R\neq S$, $u_\al=\frac{(\al,\al)}{2}$, so that $u_\al=1$ in all cases but for the short roots of type $B_N$ or the long roots of type $C_N$, in which case it is equal to $\frac12$ or 2, respectively.

Let $\pi=\sum \pi_i e_i$, and define\footnote{We choose Macdonald's parameters $t_\al=t$ for all $\alpha$,
independently of the length $\al$. This allows us to obtain the dual $q$-Whittaker limit by simply taking $t\to\infty$.}
\begin{equation}\label{macphi}
\Phi_\pi:=\prod_{\al\in { R}_+\atop (\pi,\al_*)=1}\frac{1-t x^\al}{1-x^\al} , \qquad \Gamma_\pi=\prod_i \Gamma_i^{\pi_i}.
\end{equation}
For each minuscule weight $\pi$ of ${S}^\vee=\{ 2\frac{\al}{(\al,\al)}, \, \al\in S\}$ , i.e. a weight such that $(\pi,\al_*)\in \{0, \pm1\}$
for all $\al\in { R}$, there is a Macdonald difference operator ${\mathcal E}_\pi$ which acts on functions $f(x)$ by the symmetrization
\begin{equation}\label{macmac}{\mathcal E}_\pi f=\sum_{w\in W}w\left(\Phi_\pi \Gamma_\pi f \right).
\end{equation}
We list below the explicit formulas for each case treated in \cite{macdoroot}.
The construction refers to the positive roots and fundamental weights for the simple Lie groups of types $BCD$ in Table \ref{rootsandweights}.

\subsubsection{Macdonald operators for $D_N^{(1)}$}
\label{DNopsec}
Here, $R=S$ is the root system of type $D_N$. 
There are three minuscule weights: $\omega_1=e_1$, $\omega_{N-1}$
and $\omega_N$. 
Equation \eqref{macphi} becomes
\begin{eqnarray*}
\Phi_{\omega_1}&=& \prod_{j=2}^N \frac{1-t x_1 x_j}{1-x_1x_j}\frac{tx_1-x_j}{x_1-x_j} ,\ \
\Phi_{\omega_{N-1}}=  \prod_{1\leq i<j\leq N-1} \frac{1-t x_i x_j}{1-x_ix_j} \prod_{i=1}^{N-1}\frac{t x_i-x_N}{x_i-x_N},\\
\Phi_{\omega_N}&=&    \prod_{1\leq i<j\leq N} \frac{1-t x_i x_j}{1-x_ix_j}.
\end{eqnarray*}
The Weyl group of $D_N$ acts on the set $(x_1,x_2,...,x_N)$ by permutations of the indices and 
inversions of an even number of variables. The three Macdonald operators are
\begin{eqnarray}
{\mathcal E}_{\omega_1}^{(D_N^{(1)})}&=&\sum_{\epsilon=\pm 1}\sum_{i=1}^N \prod_{j\neq i} \frac{1-t x_i^\epsilon x_j}{1-x_i^\epsilon x_j}
\frac{t x_i^\epsilon-x_j}{x_i^\epsilon-x_j} \Gamma_i^{\epsilon} ,\label{MD1}\\
{\mathcal E}_{\omega_{N-1}}^{(D_N^{(1)})}&=&  \sum_{\epsilon_1,...,\epsilon_N=\pm 1\atop \epsilon_1\epsilon_2\cdots \epsilon_N=-1}  
\prod_{1\leq i<j\leq N} \frac{1-t x_i^{\epsilon_i} x_j^{\epsilon_j}}{1-x_i^{\epsilon_i}x_j^{\epsilon_j}} 
\prod_{i=1}^N \Gamma_i^{\frac{\epsilon_i}{2}}, \label{MD2}\\
{\mathcal E}_{\omega_N}^{(D_N^{(1)})}&=&  
\sum_{\epsilon_1,...,\epsilon_N=\pm 1\atop \epsilon_1\epsilon_2\cdots \epsilon_N=1}  \prod_{1\leq i<j\leq N} \frac{1-t x_i^{\epsilon_i} x_j^{\epsilon_j}}{1-x_i^{\epsilon_i}x_j^{\epsilon_j}} \prod_{i=1}^N \Gamma_i^{\frac{\epsilon_i}{2}}.\label{MD3}
\end{eqnarray}
These will be identified below as  $\D_a^{(D_N^{(1)})}(x;q,t)={\mathcal E}_{\omega_{a}}^{(D_N^{(1)})}$ with $a=1,N-1,N$.

\subsubsection{Macdonald operator for $B_N^{(1)}$ }
\label{BNopsec}
Here, $R=S$ is the root system of $B_N$
There is a unique minuscule weight of $S^\vee=C_N$,
$\pi=e_1=\omega_1$, with 
$$\Phi_{\omega_1}=\frac{1-t x_1}{1-x_1}\prod_{j=2}^N \frac{1-t x_1 x_j}{1-x_1x_j}\frac{t x_1-x_j}{x_1-x_j}.
$$
The Weyl group $W\simeq S_N \ltimes \Z_2$ is generated by all permutations and inversions of the variables in the set $x=(x_1,x_2,...,x_N)$, so the corresponding difference operator is
$$
{\mathcal E}_{\omega_1}^{(B_N^{(1)})}=\sum_{\epsilon=\pm 1}
\sum_{i=1}^N \frac{1-t x_i^{\epsilon}}{1-x_i^{\epsilon}}
\prod_{j\neq i} \frac{1-t x_i^{\epsilon} x_j}{1-x_i^{\epsilon}x_j}\frac{t x_i^{\epsilon} -x_j}{x_i^{\epsilon}-x_j}
\Gamma_i^{\epsilon}.
$$
This will be identified as  $\D_1^{(B_N^{(1)})}(x;q,t)={\mathcal E}_{\omega_1}^{(B_N^{(1)})}$.

\subsubsection{Macdonald operator for $\g=C_N^{(1)}$ }
\label{CNopsec}
Here, $R=S$ is the root system of type $C_N$. 
There is a unique minuscule weight of $S^\vee=B_N$, $\pi=\frac{1}{2}\sum_{i=1}^N e_i=\omega_N$,
with
$$
\Phi_{\omega_N}=\prod_{i=1}^N \frac{1-t x_i^2}{1-x_i^2} \prod_{1\leq i<j\leq N}\frac{1-t x_i x_j}{1-x_ix_j}.
$$
The Weyl group is is the same as for type $B_N$, resulting in the difference operator
\begin{equation}\label{CNN}
{\mathcal E}_{\omega_N}^{(C_N^{(1)})}=\sum_{\epsilon_1,...,\epsilon_N=\pm 1}\prod_{i=1}^N
\frac{1-t x_i^{2\epsilon_i}}{1-x_i^{2\epsilon_i}} 
 \prod_{1\leq i<j\leq N}  \frac{1-t x_i^{\epsilon_i}x_j^{\epsilon_j}}{1-x_i^{\epsilon_i}x_j^{\epsilon_j}}
\prod_{i=1}^N \Gamma_i^{\frac{\epsilon_i}{2}}.
\end{equation}
This will be identified as $\D_N^{(C_N^{(1)})}(x;q,t)={\mathcal E}_{\omega_N}^{(C_N^{(1)})}$.

\subsubsection{Macdonald operator for $\g=A_{2N-1}^{(2)}$ }
\label{A2Nmopsec} Here, $(R,S)=(C_N,B_N)$.
The map $*: R\to S$ is given by $(e_i\pm e_j)_*=e_i\pm e_j$, and $(2e_i)_*=e_i$.
There is a unique minuscule weight of $S^\vee=C_N$, $\pi=e_1=\omega_1$, and
$$
\Phi_{\omega_1}=\frac{1-t x_1^2}{1-x_1^2} \prod_{j=2}^{N}\frac{1-t x_1 x_j}{1-x_1x_j}\frac{t x_1-x_j}{x_1-x_j}.
$$
Summing over the Weyl group of type $C_N$,
$$
{\mathcal E}_{\omega_1}^{(A_{2N-1}^{(2)})}=\sum_{\epsilon=\pm 1}
\sum_{i=1}^N \frac{1-t x_i^{2\epsilon}}{1-x_i^{2\epsilon}}
\prod_{j\neq i} \frac{1-t x_i^{\epsilon} x_j}{1-x_i^{\epsilon}x_j}\frac{t x_i^{\epsilon} -x_j}{x_i^{\epsilon}-x_j}
\Gamma_i^{\epsilon}.
$$
This will be identified as $\D_1^{(A_{2N-1}^{(2)})}(x;q,t)={\mathcal E}_{\omega_1}^{(A_{2N-1}^{(2)})}$.

\begin{remark}\label{atworem}
The algebra $A_{2N-1}^{(2)}$ is obtained from $A_{2N-1}^{(1)}$ by a folding procedure using the natural $\Z_2$ automorphism. Remarkably,
this extends to the difference operators as follows. Consider the specialization $\tau$ of $x=(x_1,x_2,...,x_{2N})$ obtained by setting $x_{2N+1-i}=x_i^{-1}$, $i=1,2,...,N$,
and accordingly $\Gamma_{2N+1-i}=\Gamma_i^{-1}$. We have
$$\tau\big(\D_1^{(A_{2N-1}^{(1)})}(x;q,t)\big)=\D_1^{(A_{2N-1}^{(2)})}(x;q,t). $$
However, the $A_{2N-1}^{(1)}$-Macdonald polynomials specialized via $\tau$ have a non-trivial decomposition onto the basis of 
$A_{2N-1}^{(2)}$-Macdonald polynomials.
\end{remark}

\subsubsection{Macdonald operator for $\g=D_{N+1}^{(2)}$ }
\label{D2Nopsec} Here, $(R,S) = (B_N,C_N)$.
The map $*: R\to S$ is given by $(e_i\pm e_j)_*=e_i\pm e_j$, and $(e_i)_*=2e_i$. There is a unique minuscule weight $\pi=\frac{1}{2}\sum_{i=1}^N e_i=\omega_N$ of type $S^\vee=B_N$, so that
$$
\Phi_{\omega_N}=\prod_{i=1}^N \frac{1-t x_i}{1-x_i} \prod_{1\leq i<j\leq N}\frac{1-t x_i x_j}{1-x_ix_j}.
$$
Summing over the Weyl group of type $C_N$ gives
\begin{equation}\label{D2NN}
{\mathcal E}_{\omega_N}^{(D_{N+1}^{(2)})}= \sum_{\epsilon_1,...,\epsilon_N=\pm 1} 
\prod_{i=1}^N \frac{1-t x_i^{\epsilon_i}}{1-x_i^{\epsilon_i}}
\prod_{1\leq i<j\leq N} \frac{1-t x_i^{\epsilon_i} x_j^{\epsilon_j}}{1-x_i^{\epsilon_i} x_j^{\epsilon_j}} \prod_{i=1}^N \Gamma_i^{\epsilon_i/2}.
\end{equation}
This will be identified as $\D_N^{(D_{N+1}^{(2)})}(x;q,t)={\mathcal E}_{\omega_N}^{(D_{N+1}^{(2)})}$.

\subsection{Higher order Koornwinder-Macdonald difference operators}
For generic parameters $(a,b,c,d,q,t)$, Koornwinder defined the first order $q$-difference operator whose eigenfunctions are the Koornwinder polynomials, invariant under the Weyl group of type $C$.
Consequently, van Diejen \cite{vandiej} defined a commuting family of higher order difference operators with the same eigenfunctions. We recall this construction in \ref{vandisec}. 
Using the spectrum of these operators, we construct in  \ref{combsec} 
linear combinations of these, such that their eigenvalues are proportional to 
elementary symmetric functions ${\hat e}_a(s)$. 
We add to this certain higher order operators due to Rains \cite{Rains}.
Upon specialization of the parameters (a,b,c,d), we combine this in Sect. \ref{usumacdo},
with Macdonald's construction of Section \ref{appmacsec}.

\subsubsection{van Diejen's higher order Koornwinder difference operators}
\label{vandisec}

\begin{defn}\label{defVandiej}
The van Diejen operator of order $m\in[1,N]$ is
 \begin{equation}\label{vandiealpha}
 {\mathcal V}_m^{(a,b,c,d)}:=\sum_{J\subset [1,N],\ |J|=m\atop \epsilon_j=\pm 1 \ \forall \ j\in J}\sum_{s=1}^m (-1)^{s-1}
 \sum_{\emptyset\subsetneq J_1\subsetneq \cdots \subsetneq J_s=J}
 \prod_{r=1}^s V_{\{x\},\{\epsilon\};J_r \setminus J_{r-1};K_r}^{(a,b,c,d)} (\prod_{j\in J_1} \Gamma_j^{\epsilon_j}-1),
\end{equation}
where $J_0=\emptyset$, $K_r=[1,N]\setminus J_r$ and
\begin{eqnarray}V_{\{x\},\{\epsilon\};J;K}^{(a,b,c,d)}&:=&
\prod_{i\in J} \frac{(1-a x_i^{\epsilon_i})(1- b x_i^{\epsilon_i})(1-c x_i^{\epsilon_i})(1-d x_i^{\epsilon_i})}{(1- x_i^{2\epsilon_i})(1- qx_i^{2\epsilon_i})} \nonumber \\
&&\times 
\prod_{{i<j}\atop {i,j\in J}}\frac{1-t x_i^{\epsilon_i}x_j^{\epsilon_j}}{1-x_i^{\epsilon_i}x_j^{\epsilon_j}}\frac{1-qt x_i^{\epsilon_i}x_j^{\epsilon_j}}{1-qx_i^{\epsilon_i}x_j^{\epsilon_j}} 
\prod_{i\in J\atop j\in K}  \frac{1-t x_i^{\epsilon_i}x_j}{1-x_i^{\epsilon_i}x_j}
 \frac{t x_i^{\epsilon_i}-x_j}{x_i^{\epsilon_i}-x_j},\label{defV}
\end{eqnarray}
where $J,K\subset [1,N]$ are such that $J\cap K=\emptyset$.
\end{defn}

When $m=1$, the van Diejen operator is the Koornwinder operators of Equation \eqref{koorn}.
\begin{example} 
Define
\begin{equation}\label{defAabcd}
A^{(a,b,c,d)}(x)=\frac{(1-ax)(1-bx)(1-cx)(1-dx)}{(1-x^2)(1-q x^2)}.\end{equation}
Then
\begin{eqnarray*}
{\mathcal V}_2^{(a,b,c,d)}&=&\sum_{1\leq i_1<i_2\leq N\atop \epsilon_1,\epsilon_2=\pm 1}\prod_{\ell=1}^2\left(
A^{(a,b,c,d)}(x_{i_\ell}^{\epsilon_\ell})
\prod_{k\neq i_1,i_2} \frac{1-t x_{i_\ell}^{\epsilon_\ell}x_k}{1-x_{i_\ell}^{\epsilon_\ell}x_k}
 \frac{t x_{i_\ell}^{\epsilon_\ell}-x_k}{x_{i_\ell}^{\epsilon_\ell}-x_k}\right)\\
&&\times \left\{ 
\frac{1-t x_{i_1}^{\epsilon_1}x_{i_2}^{\epsilon_2}}{1-x_{i_1}^{\epsilon_1}x_{i_2}^{\epsilon_2}}\frac{1-q t x_{i_1}^{\epsilon_1}x_{i_2}^{\epsilon_2}}{1-q x_{i_1}^{\epsilon_1}x_{i_2}^{\epsilon_2}}
(\Gamma_{i_1}^{\epsilon_1}\Gamma_{i_2}^{\epsilon_2}-1) -  \frac{1-t x_{i_1}^{\epsilon_1}x_{i_2}}{1-x_{i_1}^{\epsilon_1}x_{i_2}}
 \frac{t x_{i_1}^{\epsilon_1}-x_{i_2}}{x_{i_1}^{\epsilon_1}-x_{i_2}}(\Gamma_{i_1}^{\epsilon_1}-1)\right. \\
 && \qquad\qquad\qquad\left. - \frac{1-t x_{i_2}^{\epsilon_2}x_{i_1}}{1-x_{i_2}^{\epsilon_2}x_{i_1}}
 \frac{t x_{i_2}^{\epsilon_2}-x_{i_1}}{x_{i_2}^{\epsilon_2}-x_{i_1}} (\Gamma_{i_2}^{\epsilon_2}-1) \right\},
\end{eqnarray*}
where the sum over $s$ in \eqref{vandiealpha} decomposes into three terms, with $s=1, J_1=J=\{i_1,i_2\}$, 
$s=2, J_1=\{i_1\}, J_2=J=\{i_1,i_2\}$ and $s=2, J_1=\{i_2\}, J_2=J=\{i_1,i_2\}$.

\end{example}

The operators \eqref{vandiealpha} form a commuting family of difference operators
with common eigenfunctions being the Koornwinder polynomials.
We  give a description of their eigenvalues. 
Let $\sigma =\sqrt{\frac{abcd}{q}}$. 
Recall the elementary and complete symmetric functions $e_k(x)=s_{1^k}(x)$
and $h_k(x)=s_{m}(x)$.

\begin{defn}\label{ABCDeigen}
For arbitrary $\lambda_1,...,\lambda_N$, and $k,m\in [1,N]$, we define the collections of variables
\begin{eqnarray*}
u^{(k)}&:=&\{\sigma t^{k-i}\}_{1\leq i\leq k},\\
v&:=&\{ s_i+s_i^{-1}\}_{1\leq i\leq N},
\\
w_{(m)}&:=&\{ \sigma t^{N-i} +\sigma^{-1}t^{-(N-i)} \}_{m\leq i\leq N},
\end{eqnarray*}
with $s_i=\sigma t^{N-i} q^{\lambda_i}$ as usual,
and the functions
\begin{eqnarray}
d_{m}^{(k)}&:=&\sigma^m\,t^{m(k-\frac{m+1}{2})}\, {\hat e}_m(u^{(k)}),\label{Evalzero}\\
d_{\lambda;m}&:=& \sigma^m\,t^{m(N-\frac{m+1}{2})}\, {\hat e}_m(s),\label{Eval}\\
f_{\lambda;m}&:=& \sigma^m\,t^{m(N-\frac{m+1}{2})}\, \sum_{j=0}^m (-1)^j\, e_{m-j}(v)\, h_{j}(w_{(m)}) .\label{Fval} 
\end{eqnarray}
\end{defn}
The spectral theorem for van Diejen operators is
\begin{thm}\label{speckorthm}\cite{vandiej}
The (monic) symmetric Koornwinder polynomials $P_\lambda^{(a,b,c,d)}(x)$ satisfy
\begin{equation}\label{korevalpha}
{\mathcal V}_m^{(a,b,c,d)}\, P_\lambda^{(a,b,c,d)}(x) = f_{\lambda;m}\, P_\lambda^{(a,b,c,d)}(x) .
\end{equation}
\end{thm}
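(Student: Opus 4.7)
The plan is to establish the three standard pillars of a spectral theorem for a commuting family of difference operators acting on $W$-invariant Laurent polynomials: (a) each ${\mathcal V}_m^{(a,b,c,d)}$ preserves the ring $\mathbb F[x,x^{-1}]^W$; (b) the operators commute among themselves and with the Koornwinder operator ${\mathcal K}_1 = {\mathcal V}_1$; (c) each ${\mathcal V}_m$ is upper-triangular in the dominance order on partitions when expressed in the monomial symmetric basis $\{m_\lambda\}$, with an explicitly computable diagonal. Granting (a)--(c), the theorem follows because $P_\lambda^{(a,b,c,d)}(x)$ is already characterized as the unique monic symmetric Laurent polynomial with leading term $x^\lambda$ in the eigenspace of ${\mathcal V}_1$, so it must coincide with the simultaneous eigenvector with diagonal entry $c_\lambda^{(m)}$ under each ${\mathcal V}_m$.

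For (a), the potential poles along $x_i^{\epsilon_i} x_j = 1$ and $x_i^{\epsilon_i} = x_j$ in (\ref{defV}) cancel upon $W$-symmetrization, exactly as in the $m=1$ Koornwinder case, while the inclusion-exclusion subtraction $\prod_{j\in J_1} \Gamma_j^{\epsilon_j} - 1$ in (\ref{vandiealpha}) is engineered to kill the residues arising from the factor $A^{(a,b,c,d)}(x_i^{\epsilon_i})$ at $x_i^{\pm}=1, q^{-1/2}, \dots$; this is the key design feature of van Diejen's construction. For (b), the conceptually clean route is to realize ${\mathcal V}_m^{(a,b,c,d)}$ as the symmetrization of the $m$-th elementary symmetric function in the Cherednik--Dunkl--Noumi $Y$-operators for the DAHA of type $(\widetilde C_N, C_N)$: commutativity is then inherited from the commutativity of symmetric functions of the $Y_i$, and the ${\mathcal V}_m$ sit inside the spherical subalgebra. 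Direct commutator computation is possible but tedious.

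For (c), it suffices to track the coefficient of $x^\lambda$ in ${\mathcal V}_m \cdot x^\lambda$: each shift $\prod_{i\in J_1}\Gamma_i^{\epsilon_i}$ produces $q^{\sum_{i\in J_1}\epsilon_i \lambda_i} x^\lambda$ times a rational coefficient whose leading behavior under $|x_1|\gg|x_2|\gg\cdots\gg 1$ is controlled by (\ref{defV}). Subleading terms involve strictly smaller partitions in dominance order, giving triangularity. The diagonal entry $c_\lambda^{(m)}$ is then the constant term in $x^{-\lambda}{\mathcal V}_m x^\lambda$ in the limit of large $|x|$.

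The main obstacle, and the genuine technical core, is matching this constant term to the compact combinatorial formula $f_{\lambda;m} = \sigma^m t^{m(N-(m+1)/2)}\sum_{j=0}^m (-1)^j e_{m-j}(v) h_j(w_{(m)})$ of (\ref{Fval}). The most efficient way I see to do this is via a generating function: assemble $\sum_{m\geq 0} z^m\, {\rm diag}\,{\mathcal V}_m$ and recognize the nested inclusion-exclusion over chains $\emptyset \subsetneq J_1 \subsetneq \cdots \subsetneq J_s = J$ in (\ref{vandiealpha}) as producing, after the leading-order extraction, the ratio of factorized products $\prod_i (1+z s_i)(1+z s_i^{-1})$ (responsible for $e_{m-j}(v)$) against $\prod_i (1+ z \sigma t^{N-i})(1 + z\sigma^{-1}t^{-(N-i)})$ (responsible for $h_j(w_{(m)})$ via geometric expansion). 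Verifying this identity — reducing van Diejen's complicated ansatz to the clean symmetric-function eigenvalue — is where one must commit to a careful bookkeeping; everything else is formal.
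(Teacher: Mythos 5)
The paper does not prove this statement: Theorem \ref{speckorthm} is quoted verbatim from van Diejen's paper \cite{vandiej}, and everything nontrivial the present paper does with it (Lemma \ref{finlutcomb}, Theorem \ref{ABCDmacdopol}) happens \emph{downstream}, converting the eigenvalue $f_{\lambda;m}$ into $\hat e_m(s)$. So there is no in-paper proof to match yours against; the relevant comparison is with van Diejen's argument, and your scaffolding (a)--(c) is indeed the standard one he uses: stability of $\mathbb F[x,x^{-1}]^W$, commutation with $\K_1={\mathcal V}_1$ whose spectrum is generically simple, triangularity in dominance order, and extraction of the diagonal entry as the leading coefficient of ${\mathcal V}_m\,x^\lambda$ in the regime $|x_1|\gg\cdots\gg|x_N|\gg 1$. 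Up to that point your proposal is a correct outline.

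The genuine gap is exactly where you flag it, and the shortcut you sketch for closing it does not work as stated. First, the identification of generating functions is wrong: $e_{m-j}(v)$ with $v=\{s_i+s_i^{-1}\}$ is the coefficient of $z^{m-j}$ in $\prod_i\bigl(1+z(s_i+s_i^{-1})\bigr)$, \emph{not} in $\prod_i(1+zs_i)(1+zs_i^{-1})$ --- the latter generates ${\hat e}_k(s)$ of \eqref{Chatdef}, a different symmetric function (each factor carries an extra $z^2$). Likewise $h_j(w_{(m)})$ is generated by $\prod_{i=m}^N(1-zw_i)^{-1}$ in the combined variables $w_i=\sigma t^{N-i}+\sigma^{-1}t^{-(N-i)}$, not by a product of two separate geometric series per index. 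Second, the alphabet $w_{(m)}$ depends on $m$ (it runs over $m\le i\le N$), so the sum $\sum_m z^m\,\mathrm{diag}\,{\mathcal V}_m$ does not assemble into a single ratio of products; the eigenvalue formula \eqref{Fval} is genuinely a convolution with an $m$-dependent second factor. Van Diejen's actual derivation of $f_{\lambda;m}$ is a careful induction on the nested chains $\emptyset\subsetneq J_1\subsetneq\cdots\subsetneq J_s$ in \eqref{vandiealpha}, and that bookkeeping is the entire content of the theorem --- the remaining steps (a)--(c) are, as you say, formal. As it stands your proposal establishes that $P_\lambda^{(a,b,c,d)}$ is an eigenvector of ${\mathcal V}_m$ with \emph{some} eigenvalue, but not that the eigenvalue equals $f_{\lambda;m}$.
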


\subsubsection{Koornwinder-Macdonald operators}\label{MacKorsec}
\label{combsec}

Using the spectral theorem \ref{korevalpha}, we can construct  linear combinations of van Diejen's operators with eigenvalues  equal to $d_{\lambda;m}$. To do this we prove two combinatorial lemmas about symmetric functions.
 Given a set of variables $x=\{x_1,...,x_N\}$, we define associated set
${\tilde x}=\{x_i+x_i^{-1}, i\in[1,N]\}$,  and if $\beta\leq N$, define $x^{[\beta]}=\{x_1,...,x_{\beta}\}$ and $\tilde x^{[\beta]} = \{x_i+x_i^{-1}, i\in [1,\beta]\}$. In particular, $x^{[0]}=\emptyset$.

\begin{lemma}\label{lemeh}
For all $r\geq0$ and $n\geq 1$, 
\begin{equation}\label{theta}\theta_{r,n}(u):=\sum_{\ell=0}^{r}\, 
(-1)^\ell e_{r-\ell}(u^{[n-\ell]}) h_\ell(u^{[n-\ell+1]})=\delta_{r,0}.
\end{equation}
\end{lemma}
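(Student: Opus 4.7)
I would prove the identity by induction on $n$, establishing the recursion $\theta_{r,n}(u)=\theta_{r,n-1}(u)$ for $n\ge 2$ and settling the base case $\theta_{r,1}(u)=\delta_{r,0}$ by direct calculation, using the boundary convention $u^{[m]}=\emptyset$ for $m\le 0$ (under which $e_k(\emptyset)=h_k(\emptyset)=\delta_{k,0}$). For $n=1$, $r=1$ the only cancellation is between the $\ell=0$ term $e_1(u_1)=u_1$ and the $\ell=1$ term $-h_1(u_1)=-u_1$; for $r\ge 2$ all contributions vanish individually.

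The inductive step will rest on the two classical variable-adding recursions $h_\ell(u^{[m]})=h_\ell(u^{[m-1]})+u_m\,h_{\ell-1}(u^{[m]})$ and $e_k(u^{[m]})=e_k(u^{[m-1]})+u_m\,e_{k-1}(u^{[m-1]})$, applied in sequence. First, inserting the $h$-recursion with $m=n-\ell+1$ into each term of $\theta_{r,n}$ and re-indexing the residual piece by $\ell\mapsto\ell+1$ gives a decomposition $\theta_{r,n}(u)=\rho_{r,n}(u)-\Sigma_{r,n}(u)$, where $\rho_{r,n}(u):=\sum_\ell(-1)^\ell e_{r-\ell}(u^{[n-\ell]})\,h_\ell(u^{[n-\ell]})$ is the analog of $\theta_{r,n}$ with matched variable sets and $\Sigma_{r,n}(u):=\sum_{\ell=0}^{r-1}(-1)^\ell u_{n-\ell}\,e_{r-\ell-1}(u^{[n-\ell-1]})\,h_\ell(u^{[n-\ell]})$ is an auxiliary sum. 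Second, inserting the $e$-recursion with $m=n-\ell$ into $\rho_{r,n}(u)$ yields $\rho_{r,n}(u)=\theta_{r,n-1}(u)+\Sigma_{r,n}(u)$: the first piece is precisely $\theta_{r,n-1}(u)$ because the staggering $u^{[(n-1)-\ell]},\,u^{[(n-1)-\ell+1]}$ matches the defining one after shifting $n\mapsto n-1$, while the second piece reproduces $\Sigma_{r,n}(u)$ term for term. The two copies of $\Sigma_{r,n}$ cancel, yielding $\theta_{r,n}(u)=\theta_{r,n-1}(u)$ and closing the induction.

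No serious obstacle is anticipated: the whole identity is essentially an interplay between the two fundamental recursions when the variable sets are staggered. The only points requiring care are bookkeeping of summation ranges and boundary conventions ($e_{-1}=h_{-1}=0$ and $u^{[m]}=\emptyset$ for $m\le 0$), so that the $\ell=0$ term produced by the $h$-recursion (where $h_{-1}=0$) and the $\ell=r$ term produced by the $e$-recursion (where $e_{-1}=0$) contribute no spurious boundary pieces and the index shift in $\Sigma_{r,n}$ is compatible with both sums.
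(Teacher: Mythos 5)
Your argument is correct, but it is genuinely different from the paper's. The paper proves the identity in one shot by a sign-reversing, fixed-point-free involution: it reads $e_{r-\ell}(u^{[n-\ell]})$ as a partition function of $r-\ell$ fermions on $[1,n-\ell]$ and $(-1)^\ell h_\ell(u^{[n-\ell+1]})$ as that of $\ell$ bosons on $[1,n-\ell+1]$, and cancels all configurations pairwise by moving the rightmost extremal particle between the fermionic and bosonic sectors. You instead run an induction on $n$, telescoping $\theta_{r,n}=\theta_{r,n-1}$ by applying the one-variable recursions for $h$ and then for $e$ and cancelling the two copies of the auxiliary sum $\Sigma_{r,n}$; I checked the two decomposition steps and the index shifts, and they do match (the first piece produced by the $e$-recursion is exactly $\theta_{r,n-1}$ because $n-\ell=(n-1)-\ell+1$). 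Your route is more routine algebra and arguably easier to verify line by line, at the cost of the boundary bookkeeping you flag: the recursions are applied at arguments where $u^{[m]}$ may be empty (e.g.\ $m=n-\ell\le 0$ when $\ell\ge n$), and there the naive recursion can fail; the clean fix is to observe first that every term of $\theta_{r,n}$, $\rho_{r,n}$ and $\Sigma_{r,n}$ with $\ell>n$ or $r>n$ vanishes outright, after which all remaining applications of the recursions are legitimate. The paper's involution buys a uniform, convention-free cancellation (the constraints $\max(F)\le n-|B|$, $\max(B)\le n-|B|+1$ encode the staggering directly), whereas your induction buys elementary self-containedness; both are complete proofs.
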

\begin{proof}
When $r=0$ the sum is trivially equal to 1. We consider $r>0$.
We define two types of integer configurations. A fermionic configuration on $\{1,...,p\}$ with $j$ particles, denoted by $F$, is a set of $j$ distinct integers in the set $[1,p]$. A bosonic configuration on $\{1,...,p'\}$ with $j'$ particles, denoted by $B$, is a sequence of $j'$ integers in $[1,p']$ which are not necessarily distinct. We assign a weight $w_F=\prod_{i\in F} u_i$ to a fermionic configuration, and a weight $w_B=\prod_{i\in B} (-u_i)$ to a bosonic configuration. Moreover, to the pair $(F,B)$, we assign a weight $w_{F,B}=w_F w_B$. The partition function of $j$ fermions on $[1,p]$ is $e_j(u^{[p]})$, and the partition function of $j'$ bosons on $[1,p']$ is $h_{j'}(-u^{[p']})=(-1)^{j'}h_{j'}(u^{[p']})$. 

We define the following set of pairs of fermionic and bosonic configurations:
$$\mathcal S_{n,r}=\left\{(F,B) \vert\, t_F:=\max(F)\leq n-|B|, t_B:=\max(B) \leq n-|B|+1, |F|+|B|=r\right\}$$
where $|X|$ is the cardinality of the set $X$, and if $|X|=0$ we define $\max(X)=0$.
The identity \eqref{theta} is an identity for the partition function:
$$
Z_{n,r} := \sum_{(F,B)\in \mathcal S_{n,r}} w_{F,B} = \delta_{r,0}.
$$

To prove this, for any $r\geq1$ we construct a fixed point-free involution $\Phi$ on the set $\mathcal S_{n,r}$, such that $\Phi(w_F w_B) = - w_F w_B$.
If such an involution exists, the partition function for any $r>0$ vanishes:
$$\sum_{(F,B)\in \mathcal S_{n,r}} w_{(F,B)}=\frac{1}{2}\sum_{(F,B)\in \mathcal S_{n,r}} (w_{(F,B)}+w_{\Phi(F,B)})=0\ .$$

\begin{figure}
\includegraphics[width=.9\textwidth]{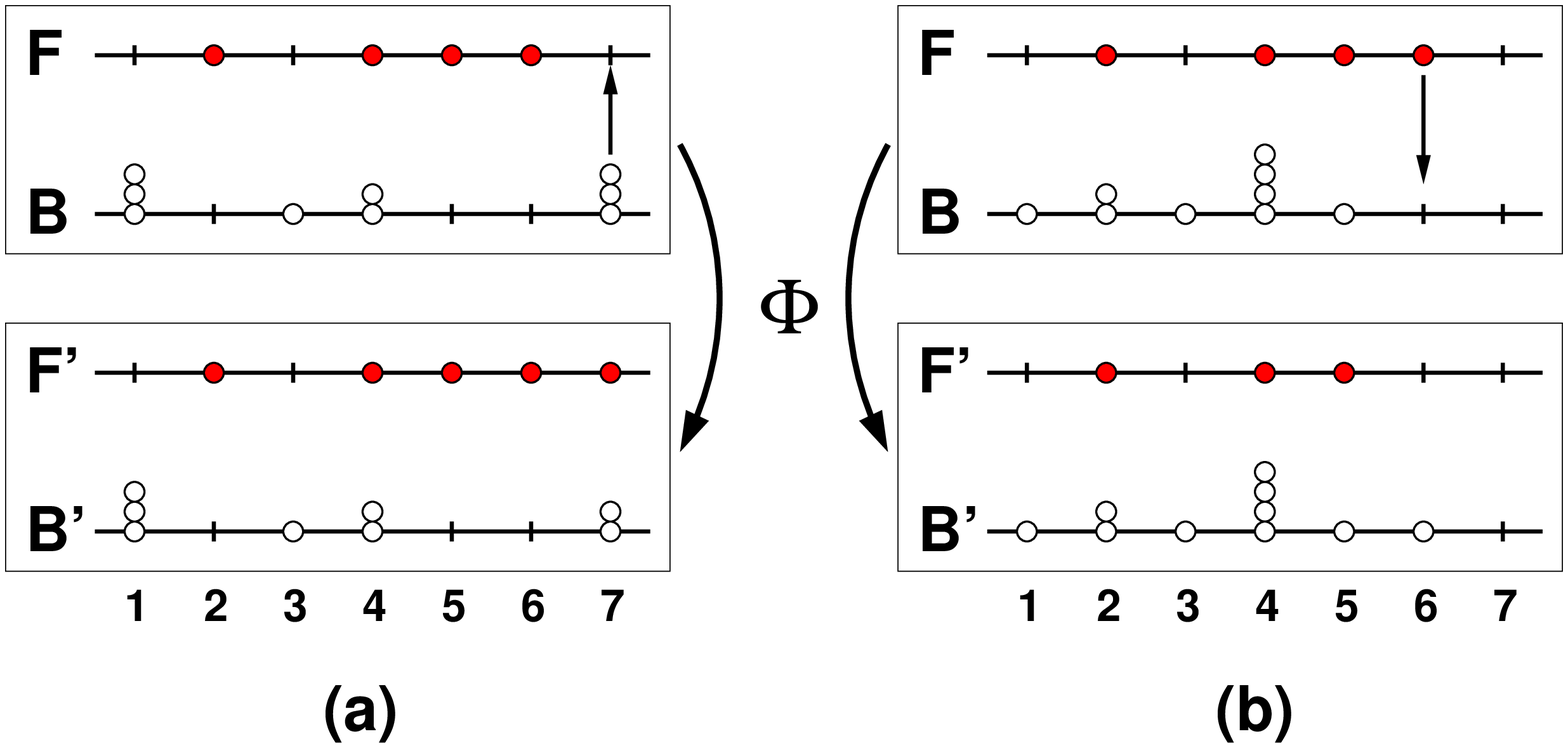}
\caption{An illustration of the involution $\Phi$. Case (a) has $t_F=6<t_B=7$, hence we move the topmost rightmost bosonic particle to a fermionic particle at position $t_F'=t_B$. Case (b) has $t_F=6\geq t_B=5$, hence we move the fermionic particle to a bosonic one at position $t_{B'}=t_F$.}\label{PhiPic}
\end{figure}

The involution $\Phi$ is illustrated in Figure \ref{PhiPic} in the language of particles, namely by considering $F,B$ as the sets of integer coordinates of particles along the integer line: e.g. in the case of Fig. \ref{PhiPic}(a) we have $F=\{2,4,5,6\}$ and $B=\{1,1,1,3,4,4,7,7,7\}$. Let $(F,B)\in\mathcal S_{n,r}$.
The map $\Phi$ acts by moving one particle between $F$ and $B$, thus preserving $|F|+|B|=r$ and reversing the sign of the weight. It is defined as $\Phi(F,B) = (F',B')$, where
\begin{itemize}
\item[(a)] If $t_B>t_F$:
{ $F'=F\cup t_B$, $B'=B\setminus t_B$. Since $|B'|=|B|-1$, $t_{F'} = t_B \leq n-|B|+1 = n-|B'|$ and $t_{B'} \leq t_B \leq n-|B|+1 = n-|B'| < n-|B'|+1$.  Therefore, $(F',B')\in\mathcal S_{n,r}$,  while $w_{F',B'}=-w_{F,B}$.
}
\item[(b)] If $t_B\leq t_F$:
{ $F' = F \setminus t_F$ and $B'=B\cup t_F$. Then $t_{F'} \leq t_{F}-1 \leq n-|B|-1 = n-|B'|$ and 
$t_{B'} = t_F \leq n-|B| = n-|B'|+1$, so that $(F',B')\in\mathcal S_{n,r}$, and $w_{F',B'}=-w_{F,B}$.
}
\end{itemize}
The map $\Phi$ is clearly an involution. When $r>0$, $\Phi$ has no fixed points,
since one can always move a particle. The Lemma follows.
\end{proof}

\begin{lemma}\label{finlutcomb}
There is an identity on symmetric functions:
\begin{equation}\label{genid}
{\hat e}_m(x)=\sum_{j=0}^m {\hat e}_{m-j}({ y}^{[N-j]}) 
\sum_{k=0}^{j} (-1)^k e_{j-k}({\tilde x}) h_{k}({\tilde  y}^{[N-j+1]}).
\end{equation}
\end{lemma}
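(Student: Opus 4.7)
\medskip
\noindent\textbf{Proof plan.} The plan is to reduce \eqref{genid} to the previously established identity \eqref{theta} via two generating-function manipulations. First I would interchange the order of summation in the RHS of \eqref{genid} by setting $\ell=j-k$, so that
\begin{equation*}
\mathrm{RHS} = \sum_{\ell\geq 0} e_\ell(\tilde x)\, T_{m-\ell,\,N-\ell}(y), \qquad T_{r,n}(y):=\sum_{k=0}^{r} (-1)^k\, \hat e_{r-k}(y^{[n-k]})\, h_{k}(\tilde y^{[n-k+1]}).
\end{equation*}
Thus the lemma becomes the claim that $T_{r,n}(y)$, viewed as a function of $y$, matches the binomial coefficient appearing in the expansion of $\hat e_m(x)$ in the $e_\ell(\tilde x)$.

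\smallskip
\noindent The next step is to show the surprising fact that $T_{r,n}(y)$ is independent of $y$, and in particular $T_{r,n}(y) = [z^r](1+z^2)^n$. To this end I would compute the generating function
\begin{equation*}
\sum_{r\geq 0} z^r\, T_{r,n}(y) = \sum_k (-z)^k h_k(\tilde y^{[n-k+1]})\, \hat E(z;y^{[n-k]}),
\end{equation*}
and use the factorization $1+z\tilde y_i+z^2 = (1+z^2) + z\tilde y_i$ to expand
\begin{equation*}
\hat E(z;y^{[n-k]}) = \prod_{i=1}^{n-k}\bigl((1+z^2)+z\tilde y_i\bigr) = \sum_{j\geq 0} z^j(1+z^2)^{n-k-j} e_j(\tilde y^{[n-k]}).
\end{equation*}
Substituting and grouping terms by $p=k+j$ gives
\begin{equation*}
\sum_r z^r T_{r,n}(y) = \sum_{p\geq 0} z^p (1+z^2)^{n-p} \sum_{k=0}^{p} (-1)^k\, e_{p-k}(\tilde y^{[n-k]})\, h_k(\tilde y^{[n-k+1]}),
\end{equation*}
and the inner sum is exactly $\theta_{p,n}(\tilde y)=\delta_{p,0}$ by Lemma~\ref{lemeh}. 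Hence only $p=0$ survives and $\sum_r z^r T_{r,n}(y) = (1+z^2)^n$, so $T_{r,n}(y) = \binom{n}{r/2}$ for $r$ even and $0$ otherwise.

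\smallskip
\noindent The last step is to match this with the LHS. Applying the same factorization to $\hat E(z;x)$ itself yields
\begin{equation*}
\hat e_m(x) = [z^m]\hat E(z;x) = \sum_{\ell\geq 0} e_\ell(\tilde x)\, [z^{m-\ell}](1+z^2)^{N-\ell} = \sum_\ell e_\ell(\tilde x)\, T_{m-\ell,\,N-\ell}(y),
\end{equation*}
which coincides with the rearranged RHS from the first step. This completes the proof.

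\smallskip
\noindent\textbf{Main obstacle.} The conceptual hurdle is recognizing that the inner sum over $k$ in \eqref{genid}, which at first glance depends non-trivially on $y$, collapses to a $y$-independent binomial after an appropriate generating-function manipulation, and that this collapse is governed by the alternating identity \eqref{theta} of the previous lemma. Once this is seen, all the subsequent algebra is a straightforward bookkeeping exercise using the factorization $1+z\tilde y_i+z^2 = (1+z^2)+z\tilde y_i$.
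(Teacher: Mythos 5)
Your proposal is correct and follows essentially the same route as the paper: both reduce \eqref{genid} to Lemma \ref{lemeh} via the factorization $1+z\tilde y_i+z^2=(1+z^2)+z\tilde y_i$ and the resulting expansions of $\hat E(z;\cdot)$, the only difference being that you package the intermediate collapse as the $y$-independence of an auxiliary quantity $T_{r,n}(y)$ before matching with the expansion of $\hat E(z;x)$, whereas the paper carries out the same manipulations in a single generating-function computation.
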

\begin{proof}
We rewrite \eqref{genid} using the generating function ${\hat E}(z;x)$ of \eqref{Chatdef} as
\begin{equation}\label{etilde}{\hat E}(z;x)\vert_{z^m}=\sum_{j=0}^m {\hat E}(z;{y}^{[N-j]})
\vert_{z^{m-j}} F(z;{\tilde x},{\tilde y}^{[N-j+1]})\vert_{z^{j}},\end{equation}
where $f(z)\vert_{z^m}$ is the coefficient of $z^m$ in the series expansion of $f(z)$ around $0$, and
$${\hat E}(z;{y}^{[\beta]})=\prod_{i=1}^{\beta}(1+{\tilde y}_i z+z^2), \quad 
F(z;{\tilde x},{\tilde y}^{[\beta]})=\frac{\prod_{i=1}^N 1+z\tilde{x}_i}{\prod_{i=1}^{\beta} 1+z\tilde{y}_i} \ .$$
One can further decompose
$$\prod_{i=1}^{N-j} (1+z {\tilde y}_i+z^2)=(1+z^2)^{N-j} {\hat E}(\frac{z}{1+z^2};\tilde{y}) =\sum_{k=0}^{N-j} z^k\,(1+z^2)^{N-j-k} \,e_k(\tilde{y}^{[N-j]}),$$
and
$$\frac{\prod_{i=1}^N (1+z {\tilde x}_i)}{\prod_{i=1}^{N-j+1} (1+z {\tilde y}_i)}=\prod_{i=1}^N (1+z {\tilde x}_i) \sum_{\ell\geq 0} (-1)^\ell z^\ell h_\ell(\tilde{y}^{[N-j+1]}).$$
The right hand side of  \eqref{etilde} is therefore
\begin{eqnarray}
&&\sum_{j=0}^m  \sum_{k,\ell} (1+z^2)^{N-j-k} \big\vert_{z^{m-j-k}}\, \prod_{i=1}^N(1+z {\tilde x}_i)\vert_{z^{j-\ell}}\, (-1)^\ell e_k(\tilde{y}^{[N-j]})
h_\ell({\tilde  y}^{[N-j+1]})\nonumber \\
&=&\sum_{r=0}^m  \sum_{k,\ell \geq 0\atop k+\ell=r}\, \sum_{j=\ell}^{m-k} (1+z^2)^{N-j-k} \big\vert_{z^{m-j-k}}\, 
\prod_{i=1}^N(1+z {\tilde x}_i)\vert_{z^{j-\ell}}\, 
(-1)^\ell e_k(\tilde{y}^{[N-j]}) h_\ell(\tilde{y}^{[N-j+1]}),\nonumber \\
&=&\sum_{r=0}^m  \sum_{j=0}^{m-r} (1+z^2)^{N-j-r} \big\vert_{z^{m-j-r}}\, 
\prod_{i=1}^N(1+z {\tilde x}_i)\vert_{z^{j}}\, \sum_{\ell =0}^{r}\, 
(-1)^\ell e_{r-\ell}(\tilde{y}^{[N-j-\ell]}) h_\ell(\tilde{y}^{[N-j-\ell+1]}),\nonumber \\
&&\label{presquefini}
\end{eqnarray}
where we changed variables $k\mapsto r-\ell$ and $j\mapsto j-\ell$.
Finally, using Lemma \ref{lemeh} for the collection $u={\tilde y}$ and $n=N-j$, the expression above drastically simplifies into
$$\sum_{j=0}^{m} (1+z^2)^{N-j} \big\vert_{z^{m-j}}\, 
\prod_{i=1}^N(1+z {\tilde x}_i)\vert_{z^{j}}=\prod_{i=1}^N (1+z {\tilde x}_i+z^2)\big\vert_{z^m} ={\hat e}_m(x),$$
which implies \eqref{etilde}. 
\end{proof}

\begin{defn}\label{malphadef} We define the Koornwinder-Macdonald difference operators
as 
$${\mathcal D}_m^{(a,b,c,d)}(x):= \sum_{j=0}^m d_{j}^{(N-m+j)}\, {\mathcal V}_{m-j}^{(a,b,c,d)},\qquad  \quad
(m=1,2,...,N),$$
in terms of the van Diejen operators of Sect.\ref{vandisec}, with $d_{j}^{(k)}$ as in \eqref{Evalzero}.
\end{defn}

\begin{thm}\label{ABCDmacdopol}
The Koornwinder-Macdonald polynomials are common eigenfunctions of  ${\mathcal D}_m$ with eigenvalue $d_{\lambda;m}$ defined in \eqref{Eval}:
\begin{equation}\label{evalgen}
 {\mathcal D}_m^{(a,b,c,d)}(x;q,t)\, P_\lambda^{(a,b,c,d)}(x)= d_{\lambda;m}\, P_\lambda^{(a,b,c,d)}(x) .
 \end{equation}
\end{thm}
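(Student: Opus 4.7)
\textbf{Proof plan for Theorem \ref{ABCDmacdopol}.}

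The plan is to reduce the eigenvalue equation for $\mathcal{D}_m^{(a,b,c,d)}$ to a purely combinatorial identity for symmetric functions, which will turn out to be exactly Lemma \ref{finlutcomb} applied to well-chosen sets of variables. First I would act with $\mathcal{D}_m^{(a,b,c,d)} = \sum_{j=0}^m d_j^{(N-m+j)}\,\mathcal{V}_{m-j}^{(a,b,c,d)}$ on $P_\lambda^{(a,b,c,d)}(x)$ and invoke van Diejen's spectral theorem \ref{speckorthm}, \eqref{korevalpha}, to replace each $\mathcal{V}_{m-j}^{(a,b,c,d)}$ by the scalar eigenvalue $f_{\lambda;m-j}$. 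This reduces the claim to the scalar identity
\begin{equation*}
\sum_{j=0}^{m} d_j^{(N-m+j)}\, f_{\lambda;m-j}\;=\;d_{\lambda;m}.
\end{equation*}

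Next I would normalize the summands by pulling out a common prefactor. Using Definition \ref{ABCDeigen}, one has
\begin{equation*}
d_j^{(N-m+j)}\,f_{\lambda;m-j} = \sigma^{m}\,t^{\,j(N-m+j-\frac{j+1}{2})+(m-j)(N-\frac{m-j+1}{2})}\,\hat{e}_j(u^{(N-m+j)})\!\!\sum_{k=0}^{m-j}(-1)^k e_{m-j-k}(v)\,h_k(w_{(m-j)}).
\end{equation*}
A short computation shows that the $t$-exponent equals $m(N-\tfrac{m+1}{2})$ independently of $j$, so the factor $\sigma^m t^{m(N-\frac{m+1}{2})}$ comes out of the sum cleanly. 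Cancelling it against the same prefactor in $d_{\lambda;m}=\sigma^m t^{m(N-\frac{m+1}{2})}\hat{e}_m(s)$ reduces the problem to proving
\begin{equation*}
\hat{e}_m(s)\;=\;\sum_{j=0}^{m}\hat{e}_j\!\left(u^{(N-m+j)}\right)\sum_{k=0}^{m-j}(-1)^k e_{m-j-k}(v)\,h_k(w_{(m-j)}).
\end{equation*}

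The final step is to identify this with Lemma \ref{finlutcomb}. Introduce the auxiliary sequence $y_i:=\sigma t^{i-1}$ for $1\le i\le N$ and note: (i) the set $u^{(N-m+j)}=\{\sigma t^{N-m+j-i}\}_{1\le i\le N-m+j}$ equals $\{y_1,\ldots,y_{N-m+j}\}$ as a multiset, so $\hat{e}_j(u^{(N-m+j)})=\hat{e}_j(y^{[N-m+j]})$; (ii) the set $w_{(m-j)}=\{\sigma t^{N-i}+\sigma^{-1}t^{-(N-i)}\}_{m-j\le i\le N}$ coincides, after reindexing, with $\tilde{y}^{[N-m+j+1]}$; and (iii) $\tilde{x}=v$ when we take $x=s$. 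After the substitution $j\mapsto m-j$ this is exactly the identity of Lemma \ref{finlutcomb} with $x=s$ and $y$ the sequence defined above, and the theorem follows.

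The main technical obstacle is not the core idea (which is to match two identifications against Lemma \ref{finlutcomb}) but rather the careful verification of the two claims in the previous paragraph: one must be meticulous that the $t$-exponent collapses to the uniform value $m(N-\tfrac{m+1}{2})$, and that the truncations $u^{(N-m+j)}$ and $w_{(m-j)}$ align with truncations of a single underlying $y$-sequence in the precise form demanded by Lemma \ref{finlutcomb}. Both are straightforward bookkeeping checks, but they are the place where the construction in Definition \ref{malphadef} is used in an essential way: the very choice $d_j^{(N-m+j)}$ (rather than some other combination of $\hat{e}_j$ of a shifted set) is what makes the combinatorial identity apply.
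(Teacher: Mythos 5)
Your proposal is correct and follows essentially the same route as the paper: reduce via van Diejen's spectral theorem (Theorem \ref{speckorthm}) to the scalar identity $\sum_{j=0}^m d_j^{(N-m+j)}f_{\lambda;m-j}=d_{\lambda;m}$, check that the $\sigma$- and $t$-prefactors collapse to the uniform factor $\sigma^m t^{m(N-\frac{m+1}{2})}$, and then recognize the remainder as Lemma \ref{finlutcomb} specialized to $x=s$ and an auxiliary geometric sequence $y$. Your explicit bookkeeping (in particular the choice $y_i=\sigma t^{i-1}$, which does make the truncations $y^{[N-j]}$ and $\tilde y^{[N-j+1]}$ literally coincide with $u^{(N-j)}$ and $w_{(j)}$) fills in exactly the verifications the paper leaves implicit.
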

\begin{proof}
Equation \eqref{evalgen} follows from Theorem \ref{speckorthm} and the relation
\begin{equation}\label{tobeproved} d_{\lambda;m} =\sum_{j=0}^m d_{j}^{(N-m+j)}\, f_{\lambda;m-j} .
\end{equation}
Equation \eqref{tobeproved} is obtained by specializing Lemma \ref{finlutcomb} to the variables
$x_i=\sigma t^{N-i}q^{\lambda_i}$ and $y_i=\sigma t^{N-i}$, and 
noting that the prefactors in (\ref{Eval}-\ref{Fval}) amount to an overall factor of $\sigma^m t^{m(N-\frac{m+1}{2})}$ 
on both sides of the equation.
\end{proof}

\subsubsection{Rains operators}
\label{rainsec}

The Rains operator $\widehat{\mathcal D}_N^{(a,b,c,d)}(x;q,t)$ defined in  (\ref{rains},\ref{rainsop}) has eigenvalues \eqref{eigenrains}.
This operator is not linearly independent of the K-M operators, as is seen from the following Lemma.
\begin{lemma}\label{rainstokor}
For generic $(a,b,c,d)$ the following relation holds between the Koornwinder-Macdonald operators and the Rains operator:
\begin{equation}
\widehat{\mathcal D}_N^{(a,b,c,d)}(x;q,t)={\mathcal D}_{N}^{(a,b,c,d)}(x;q,t)+\sum_{m=1}^N (-1)^m\left(a^mb^m +\frac{c^md^m}{q^m}\right)\,t^{m(m-1)/2} \, 
{\mathcal D}_{N-m}^{(a,b,c,d)}(x;q,t).
\end{equation}
\end{lemma}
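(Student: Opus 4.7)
\noindent\textbf{Proof proposal for Lemma \ref{rainstokor}.} The plan is to verify the operator identity by reducing it to an eigenvalue identity on Koornwinder polynomials, and then to a combinatorial identity for symmetric functions.

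First, I would observe that both sides preserve the Koornwinder eigenbasis. The Koornwinder--Macdonald operators $\mathcal D_m^{(a,b,c,d)}$ are diagonal on $\{P_\lambda^{(a,b,c,d)}(x)\}$ with eigenvalues \eqref{Eval} by Theorem \ref{ABCDmacdopol}. The Rains operator $\widehat{\mathcal D}_N^{(a,b,c,d)} = \mathcal R_N^{(a,b)}\,\mathcal R_N^{(q^{-\frac12}c,q^{-\frac12}d)}$ is also diagonal in this basis: the two Rains factors in \eqref{rainskor} map Koornwinder polynomials between parameter sets shifted by $q^{\pm\frac12}$, and the chosen composition returns to the original parameters $(a,b,c,d)$, producing the eigenvalue \eqref{eigenrains}. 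For generic $(a,b,c,d,q,t)$ the polynomials $\{P_\lambda^{(a,b,c,d)}\}$ form a basis of $\mathbb F[x,x^{-1}]^W$ and the joint spectrum of the K--M family separates $\lambda$; hence it suffices to check the scalar identity
\begin{equation}\label{eq:eigid}
\hat d_{\lambda;N}^{(a,b,c,d)}=d_{\lambda;N}+\sum_{m=1}^N(-1)^m\!\left(a^mb^m+\tfrac{c^md^m}{q^m}\right) t^{m(m-1)/2}\,d_{\lambda;N-m}
\end{equation}
for every integer partition $\lambda$.

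The next step is a clean rewrite of both sides. Setting $\sigma=\sqrt{abcd/q}$, $u=ab/\sigma$, $v=cd/(q\sigma)$, the relation $\sigma^2=abcd/q$ forces $uv=1$, i.e.\ $v=u^{-1}$. With $s_i=q^{\lambda_i}\sigma t^{N-i}$ one finds $q^{-|\lambda|}=\sigma^N t^{N(N-1)/2}\prod_i s_i^{-1}$ and $(1-us_i)(1-s_i/u)=s_i(\tilde s_i-\tilde u)$, where $\tilde s_i=s_i+s_i^{-1}$, $\tilde u=u+u^{-1}$. Hence
$$
\hat d_{\lambda;N}^{(a,b,c,d)}=\sigma^N t^{N(N-1)/2}\prod_{i=1}^N(\tilde s_i-\tilde u).
$$
On the right of \eqref{eq:eigid} the exponent of $t$ collapses via $\tfrac{m(m-1)}{2}+\tfrac{(N-m)(N+m-1)}{2}=\tfrac{N(N-1)}{2}$, and $(ab)^m+(cd/q)^m=\sigma^m(u^m+u^{-m})$. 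Factoring out the common $\sigma^N t^{N(N-1)/2}$, the eigenvalue identity reduces to
\begin{equation}\label{eq:combid}
\prod_{i=1}^N(\tilde s_i-\tilde u)\;=\;\sum_{m=0}^N(-1)^m\tilde T_m(\tilde u)\,\hat e_{N-m}(s),
\end{equation}
where $\tilde T_0=1$ and $\tilde T_m(\tilde u)=u^m+u^{-m}$ for $m\geq 1$.

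The last step is to verify the combinatorial identity \eqref{eq:combid}, which no longer depends on $(a,b,c,d,q,t)$. Expanding both sides in the classical elementary symmetric functions $e_k(\tilde s)$ of $\tilde s_1,\dots,\tilde s_N$, the left side gives $\sum_{k=0}^N(-1)^k\tilde u^k e_{N-k}(\tilde s)$. For the right side, the generating-function expansion of $\prod_i(1+z\tilde s_i+z^2)$ with three choices per factor ($1$, $z\tilde s_i$, or $z^2$) yields $\hat e_k(s)=\sum_{l\geq 0}\binom{N-k+2l}{l}e_{k-2l}(\tilde s)$. Re-summing with $k=m+2l$, the coefficient of $e_{N-k}(\tilde s)$ on the right becomes $(-1)^k\sum_{l=0}^{\lfloor k/2\rfloor}\binom{k}{l}\tilde T_{k-2l}(\tilde u)$. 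Matching coefficients then reduces \eqref{eq:combid} to the Chebyshev-type identity
$$
\tilde u^k=\sum_{l=0}^{\lfloor k/2\rfloor}\binom{k}{l}\tilde T_{k-2l}(\tilde u),
$$
which is simply the binomial expansion $(u+u^{-1})^k=\sum_l\binom{k}{l}u^{k-2l}$ collected into the symmetric pairs $u^{k-2l}+u^{-(k-2l)}$ (with the middle term $\binom{k}{k/2}$ for $k$ even matching $\tilde T_0=1$).

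The main obstacle is none of the three steps in isolation but rather the bookkeeping of the $t$-exponents and the reindexing $k\leftrightarrow m+2l$; the only conceptual input is the factorized eigenvalue \eqref{eigenrains} of the Rains operator, which makes the substitution $uv=1$ possible. An alternative, more symmetric, route would apply the same argument directly to universal Koornwinder solutions $P^{(a,b,c,d)}(x;s)$ of Section \ref{sec:universal}, replacing ``eigenvalues on polynomials'' by ``scalar coefficients acting on the universal series'', but this reduces to the same combinatorial identity \eqref{eq:combid}.
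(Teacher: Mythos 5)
Your proof is correct and follows essentially the same route as the paper: both reduce the operator identity to the eigenvalue identity \eqref{eigenrelations} (justified by diagonality of both sides on the Koornwinder basis), substitute $u=ab/\sigma$, $u^{-1}=cd/(q\sigma)$, and observe that the $t$-exponents collapse to a common $t^{N(N-1)/2}$. The only divergence is in the final symmetric-function step: the paper uses the palindromy $\hat e_m(s)=\hat e_{2N-m}(s)$ to recognize $\hat e_N(s)+\sum_{m\geq 1}(-1)^m(u^m+u^{-m})\hat e_{N-m}(s)$ as $(-u)^{-N}\hat E(-u;s)$ and factors it in one line via \eqref{Chatdef}, whereas you re-expand each $\hat e_k(s)$ in the classical $e_j(\tilde s)$ and reduce to the binomial identity $(u+u^{-1})^k=\sum_l\binom{k}{l}u^{k-2l}$. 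Both finishes are valid; the generating-function route is shorter, while yours is more mechanical but equally elementary.
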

\begin{proof}
This is a consequence of the relation between the eigenvalues:
\begin{equation}\label{eigenrelations}
\hat{d}_{\lambda;N}^{(a,b,c,d)}=d_{\lambda,N}+\sum_{m=1}^N (-1)^m\left(a^mb^m +\frac{c^md^m}{q^m}\right)\,t^{m(m-1)/2} \,d_{\lambda;N-m} .
\end{equation}
Using the notation $\sigma=\sqrt{abcd/q}$, $u=ab/\sigma$, $u^{-1}=c d/q\sigma$ and
$s_i=\sigma q^{\lambda_i}t^{N-i}$, as well as the expressions \eqref{Eval} for $d_{\lambda,m}$ and \eqref{eigenrains} for $\hat{d}_{\lambda;N}$, Equation \eqref{eigenrelations} is
\begin{eqnarray*} 
&&q^{-|\lambda|} \prod_{i=1}^N (1-u s_i)(1-u^{-1} s_i)=\sigma^N\, t^{N(N-1)/2}\left\{ \hat{e}_N(s)+\sum_{m=1}^N (-1)^m (u^m+u^{-m}) \hat{e}_{N-m}(s) \right\}\\
&&\ \ =\frac{(-1)^N \sigma^N\, t^{N(N-1)/2}}{u^N} \sum_{m=0}^{2N} (-1)^m u^m \hat{e}_m(s)
=\frac{(-1)^N \sigma^N\, t^{N(N-1)/2}}{u^N}\prod_{i=1}^N(1-u s_i)(1-u s_i^{-1}),
\end{eqnarray*}
where we have used \eqref{Chatdef} in the last line.
The equality follows from $\prod_{i=1}^N s_i =q^{|\lambda|} \sigma^N\, t^{N(N-1)/2}$.
\end{proof}

\subsubsection{Koornwinder-Macdonald operators and Macdonald's operators of Section \ref{appmacsec}}
\label{usumacdo}

The first Koornwinder-Macdonald operator in Equation of Definition \ref{malphadef} is
\begin{eqnarray}
{\mathcal D}_1^{(a,b,c,d)}(x;q,t)&=&{\mathcal V}_1^{(a,b,c,d)}(x;q,t)+\frac{1-t^N}{1-t}\left( 1+ \frac{abcd}{q}t^{N-1}\right),
\nonumber \\
&=&
\K_1^{(a,b,c,d)}(x;q,t)+\frac{1-t^N}{1-t}\left( 1+ \frac{abcd}{q}t^{N-1}\right).\label{MaKop}
\end{eqnarray}

To relate these with some of the operators of Section \ref{appmacsec}, we have the following Lemma.

\begin{lemma}\label{constlem}
The first order  Koornwinder-Macdonald operator can be written as
\begin{equation}
\qquad {\mathcal D}_1^{(a,b,c,d)}(x;q,t)= 
\varphi^{(a,b,c,d)}(x)+
\sum_{i=1}^N\sum_{\epsilon=\pm 1}  \Phi_{i,\epsilon}^{(a,b,c,d)}(x) \Gamma_i^{\epsilon},\nonumber 
\end{equation}
where
\begin{eqnarray}
\varphi^{(a,b,c,d)}(x)&=&\sum_{\epsilon=\pm 1} \frac{\big(1-\frac{\epsilon}{ q^{1/2}}a\big)
\big(1-\frac{\epsilon }{q^{1/2}}b\big)
\big(1-\frac{\epsilon }{q^{1/2}}c\big)\big(1- \frac{\epsilon }{q^{1/2}}d\big)}{2(1-t)(1-q^{-1}t)}\nonumber \\
&&\qquad \qquad\qquad \times 
\left\{ \prod_{i=1}^N  
\frac{1-\frac{\epsilon\, t }{q^{1/2}}x_i}{1-\frac{\epsilon }{q^{1/2}}x_i}\,
\frac{1-\frac{\epsilon\, t }{q^{1/2}}x_i^{-1}}{1-\frac{\epsilon }{q^{1/2}}x_i^{-1}}
-t^N\right\} .\label{valphi}
\end{eqnarray}
\end{lemma}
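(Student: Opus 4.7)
Plugging \eqref{koorn} into \eqref{MaKop} one immediately has
$$\mathcal D_1^{(a,b,c,d)}(x;q,t) = \sum_{i,\epsilon}\Phi_{i,\epsilon}^{(a,b,c,d)}(x)\,\Gamma_i^\epsilon \;-\; \sum_{i,\epsilon}\Phi_{i,\epsilon}^{(a,b,c,d)}(x) \;+\; \frac{1-t^N}{1-t}\Bigl(1+\frac{abcd}{q}t^{N-1}\Bigr),$$
so the lemma is equivalent to the purely scalar rational-function identity
\begin{equation}\label{rfi-proposal}
\varphi^{(a,b,c,d)}(x) \;=\; \frac{1-t^N}{1-t}\Bigl(1+\frac{abcd}{q}t^{N-1}\Bigr) \;-\; \sum_{i=1}^N\sum_{\epsilon=\pm1}\Phi_{i,\epsilon}^{(a,b,c,d)}(x).
\end{equation}

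The strategy I would take to prove \eqref{rfi-proposal} is to introduce a single-variable auxiliary rational function
$$R(z) \;=\; \frac{\prod_{\alpha\in\{a,b,c,d\}}(1-\alpha z)}{(1-z^2)(1-qz^2)}\,\prod_{j=1}^N\frac{(tz-x_j)(tzx_j-1)}{(z-x_j)(zx_j-1)},$$
whose numerator and denominator in $z$ both have degree $2N+4$, so $R(z)$ is regular at infinity with $R(\infty)=\tfrac{abcd}{q}t^{2N}$ and $R(0)=1$. The idea is that the nontrivial structure on both sides of \eqref{rfi-proposal} is encoded in the residues of $R(z)$ at its finite poles $\{x_i,\,x_i^{-1},\,\pm 1,\,\pm q^{-1/2}\}$: a direct computation identifies $\mathrm{Res}_{z=x_i}R(z)$ as an explicit scalar multiple of $\Phi_{i,+}(x)$ and $\mathrm{Res}_{z=x_i^{-1}}R(z)$ as one of $\Phi_{i,-}(x)$, while the residues at $z=\pm q^{-1/2}$ produce, after using $1-qz^2=-q(z-q^{-1/2})(z+q^{-1/2})$, exactly the factors $\prod_\alpha(1\mp\alpha/q^{1/2})\prod_j\frac{(1\mp tx_j/q^{1/2})(1\mp tx_j^{-1}/q^{1/2})}{(1\mp x_j/q^{1/2})(1\mp x_j^{-1}/q^{1/2})}$ appearing in $\varphi^{(a,b,c,d)}(x)$.

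I would then apply the partial-fraction decomposition $R(z)=R(\infty)+\sum_{z_0}\mathrm{Res}_{z=z_0}R(z)/(z-z_0)$ at the two evaluation points $z=0$ and $z\to\infty$ (equivalently, using $\sum_{z_0}\mathrm{Res}_{z_0}R=-\lim_{z\to\infty}z(R(z)-R(\infty))$). Evaluating at $z=0$ converts the sum into one involving $\mathrm{Res}_{z_0}R/z_0$, which naturally groups the $\Phi_{i,\pm}$ terms into $\sum_{i,\epsilon}\Phi_{i,\epsilon}(x)$ and the $\pm q^{-1/2}$ residues into the $\epsilon$-sum in $\varphi^{(a,b,c,d)}(x)$. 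The residues at the simple poles $z=\pm1$, after being divided by $z_0=\pm1$, contribute a symmetric combination which, together with the constant $R(\infty)-R(0)=\tfrac{abcd}{q}t^{2N}-1$, I expect to produce exactly the constant $\tfrac{1-t^N}{1-t}(1+\tfrac{abcd}{q}t^{N-1})$ as well as the $-t^N$ correction built into $\varphi^{(a,b,c,d)}$. The elementary identity $\prod_\alpha(1-\alpha/q^{1/2})+\prod_\alpha(1+\alpha/q^{1/2}) = 2(1+e_2(a,b,c,d)/q+abcd/q^2)$, in conjunction with $(1-t)(1-q^{-1}t)$ appearing in the denominator of $\varphi$, is what reconciles the residues at $z=\pm1$ with the residues at $z=\pm q^{-1/2}$.

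The main obstacle I anticipate is bookkeeping: when taking the residue of $R(z)$ at $z=x_i$, the $j=i$ term in the product $\prod_j(tzx_j-1)/(zx_j-1)$ does not cancel cleanly and produces an extra factor $(tx_i^2-1)/(x_i^2-1)$ that must be carefully tracked to confirm that the coefficient is correctly $-x_i(t-1)(tx_i^2-1)/(1-x_i^2)$ times $\Phi_{i,+}(x)$; analogous care is required at $z=x_i^{-1}$. As a self-consistency check I would verify the identity directly for $N=1$, where it reduces to the one-variable statement $\varphi(x_1) = 1+abcd/q - \Phi_{1,+}(x_1)-\Phi_{1,-}(x_1)$, obtained by clearing denominators and comparing degree-four polynomials in $x_1$; the $N$-dependence of the identity for $N\ge 2$ then enters only through the product $\prod_{j=1}^N$ inside $R(z)$, whose effect is transparent under the residue/partial-fraction manipulation.
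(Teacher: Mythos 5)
Your reduction of the lemma to the scalar identity \eqref{rfi-proposal} is correct, and the general strategy (partial fractions of a one-variable auxiliary rational function) is the right one — it is the same strategy the paper uses. However, your specific choice of $R(z)$ breaks the argument in a way that is structural, not mere bookkeeping. The paper's auxiliary function is
$\theta(z)=\frac{\prod_{\alpha}(1-z/\alpha)}{(1-z^2/q)(1-z^2/t)}\prod_{i}\frac{(1-zx_i/t)(1-zx_i^{-1}/t)}{(1-zx_i)(1-zx_i^{-1})}$,
and the factor $(1-z^2/t)$ in its denominator is essential for two reasons. First, at the pole coming from the $j=i$ factor, the leftover numerator piece $(1-zx_i^{-1}/t)$ (resp.\ $(1-zx_i/t)$) evaluates so as to cancel against $(1-z^2/t)$, leaving a residue that is an $i$-\emph{independent} constant times $\Phi_{i,\epsilon}^{(a,b,c,d)}(x)$. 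Your $R(z)$ has no such factor, and as you yourself compute, $\mathrm{Res}_{z=x_i}R=x_i(t-1)\frac{tx_i^2-1}{x_i^2-1}\Phi_{i,+}(x)$ while $\mathrm{Res}_{z=x_i^{-1}}R=\frac{t-1}{x_i}\cdot\frac{t-x_i^2}{1-x_i^2}\Phi_{i,-}(x)$: the extra factors $\frac{tx_i^2-1}{x_i^2-1}$ and $\frac{t-x_i^2}{1-x_i^2}$ are different for $\epsilon=\pm$ and depend on $x_i$, so no evaluation of the partial-fraction decomposition (at $z=0$, at $z\to\infty$, or any linear combination) reassembles these residues into the needed combination $\sum_{i,\epsilon}\Phi_{i,\epsilon}^{(a,b,c,d)}(x)$.

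Second, your poles at $z=\pm1$ contribute residues proportional to $\prod_{\alpha}(1\mp\alpha)\prod_{j}\frac{(t\mp x_j)(tx_j\mp1)}{(1\mp x_j)(x_j\mp1)}$, which are genuinely $x$-dependent quantities appearing nowhere in \eqref{valphi} or in the constant $\frac{1-t^N}{1-t}\bigl(1+\frac{abcd}{q}t^{N-1}\bigr)$; there is no mechanism in your setup to produce that constant or the $-t^N$ correction inside $\varphi^{(a,b,c,d)}$. In the paper these come precisely from the residues $C_\epsilon$ at the poles $z=\pm t^{1/2}$ of the $(1-z^2/t)$ factor, combined with the normalization $\theta(0)=1$ and an elementary identity relating $\prod_\alpha(1\mp\alpha/t^{1/2})$ to $\prod_\alpha(1\mp\alpha/q^{1/2})$. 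So the identity your $R(z)$ yields is a true but different identity from \eqref{rfi-proposal}. The fix is to replace $(1-z^2)$ by the $t$-dependent factor and to put the $x_j$-products in the "$1-zx_j^{\pm1}/t$ over $1-zx_j^{\pm1}$" form; your $N=1$ check would then go through, but as stated the proposal does not prove the lemma.
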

\begin{proof}
We have:
\begin{equation}\label{phiexp}
 \varphi^{(a,b,c,d)}(x)=\frac{1-t^N}{1-t}\left( 1+ \frac{abcd}{q}t^{N-1}\right)-\sum_{i=1}^N\sum_{\epsilon=\pm 1}
\Phi_{i,\epsilon}^{(a,b,c,d)}(x) .
\end{equation}
Using the simple fraction decomposition of the function
\begin{equation}\label{deftheta}
\theta(z):= 
\frac{\big(1-\frac{z}{a}\big)\big(1-\frac{z}{b}\big)\big(1-\frac{z}{c}\big)\big(1-\frac{z}{d}\big)}{\big(1-\frac{z^2}{q}\big)\big(1-\frac{z^2}{t}\big)} \, \prod_{i=1}^N\frac{1-\frac{z}{t} x_i}{1-z x_i}\, \frac{1-\frac{z}{t} x_i^{-1}}{1-z x_i^{-1}} .
\end{equation}
We find
$$\theta(z)=\frac{q}{abcd\,t^{2N-1}}+\sum_{\epsilon=\pm 1}\left\{ \sum_{i=1}^N \frac{A_{i,\epsilon}}{1-z x_i^{\epsilon}} +\frac{B_\epsilon}{1-\epsilon \frac{z}{q^{1/2}}}+\frac{C_\epsilon}{1-\epsilon \frac{z}{t^{1/2}}}\right\},$$
where
\begin{eqnarray*}
A_{i,\epsilon}&=&\frac{1-t}{t^{2N}}\,\frac{\big(1-\frac{x_i^{-\epsilon}}{a}\big)\big(1-\frac{x_i^{-\epsilon}}{b}\big)\big(1-\frac{x_i^{-\epsilon}}{c}\big)\big(1-\frac{x_i^{-\epsilon}}{d}\big)}{\big(1-\frac{x_i^{-2\epsilon}}{q}\big)\big(1-\frac{x_i^{-2\epsilon}}{t}\big)} \,
\prod_{j\neq i} \frac{t x_i^{\epsilon}-x_j}{x_i^{\epsilon}-x_j}\, \frac{t x_i^{\epsilon}x_j-1}{x_i^{\epsilon}x_j-1},\\
&=&
\frac{q(1-t)}{abcd\, t^{2N-1}}\, \Phi_{i,\epsilon}^{(a,b,c,d)}(x) ,
\end{eqnarray*}
and
\begin{eqnarray*}
B_\epsilon&=&\frac{\big(1-\epsilon\frac{a}{q^{1/2}}\big)\big(1-\epsilon\frac{b}{q^{1/2}}\big)\big(1-\epsilon\frac{c}{q^{1/2}}\big)\big(1-\epsilon\frac{d}{q^{1/2}}\big)}{2\,abcd\, t^{2N-1}\,\big(1-\frac{t}{q}\big)}\prod_{i=1}^N 
\frac{1-\frac{\epsilon\, t }{q^{1/2}}x_i}{1-\frac{\epsilon }{q^{1/2}}x_i}\,
\frac{1-\frac{\epsilon\, t }{q^{1/2}}x_i^{-1}}{1-\frac{\epsilon }{q^{1/2}}x_i^{-1}},\\
C_\epsilon&=&q\frac{\big(1-\epsilon\frac{a}{t^{1/2}}\big)\big(1-\epsilon\frac{b}{t^{1/2}}\big)\big(1-\epsilon\frac{c}{t^{1/2}}\big)\big(1-\epsilon\frac{d}{t^{1/2}}\big)}{2\,abcd\, t^{N-1}\,\big(1-\frac{q}{t}\big)}.
\end{eqnarray*}
By definition \eqref{deftheta}, $\theta(0)=1$, therefore
\begin{equation}\label{interm}
\sum_{\epsilon=\pm1}\left\{ \sum_{i=1}^N \Phi_{i,\epsilon}^{(a,b,c,d)}(x)+\frac{abcd\, t^{2N-1}}{q(1-t)}\big(B_\epsilon+C_{\epsilon}\big)\right\}=\frac{\frac{abcd}{q}\, t^{2N-1}-1}{1-t}.
\end{equation}
Using
\begin{eqnarray*}&&\sum_{\epsilon=\pm 1} \left\{\frac{t}{2}\big(1-\epsilon\frac{a}{t^{1/2}}\big)\big(1-\epsilon\frac{b}{t^{1/2}}\big)\big(1-\epsilon\frac{c}{t^{1/2}}\big)\big(1-\epsilon\frac{d}{t^{1/2}}\big)
\right.\\
&&\left. \qquad \qquad 
- \frac{q}{2}\big(1-\epsilon\frac{a}{q^{1/2}}\big)\big(1-\epsilon\frac{b}{q^{1/2}}\big)\big(1-\epsilon\frac{c}{q^{1/2}}\big)\big(1-\epsilon\frac{d}{q^{1/2}}\big)\right\}=\left(1-\frac{q}{t}\right)\left(t-\frac{abcd}{q}\right),
\end{eqnarray*}
we have:
\begin{eqnarray*}
&&\sum_{\epsilon=\pm 1}\frac{abcd\, t^{2N-1}}{q(1-t)}\,C_{\epsilon}=\frac{t^N}{2(1-t)\big(1-\frac{q}{t}\big)}
\sum_{\epsilon=\pm 1} \big(1-\epsilon\frac{a}{t^{1/2}}\big)\big(1-\epsilon\frac{b}{t^{1/2}}\big)\big(1-\epsilon\frac{c}{t^{1/2}}\big)\big(1-\epsilon\frac{d}{t^{1/2}}\big),\\
&&=\frac{t^N- \frac{abcd}{q}t^{N-1}}{1-t}-\frac{t^N}{2(1-t)\big(1-\frac{t}{q}\big)}\sum_{\epsilon=\pm 1} \big(1-\epsilon\frac{a}{q^{1/2}}\big)\big(1-\epsilon\frac{b}{q^{1/2}}\big)\big(1-\epsilon\frac{c}{q^{1/2}}\big)\big(1-\epsilon\frac{d}{q^{1/2}}\big).
\end{eqnarray*}
The Lemma follows by substituting this into \eqref{interm}, and using the result to reexpress $\varphi^{(a,b,c,d)}(x)$
as given by \eqref{phiexp}.
\end{proof}

When  $c=q^{1/2}$ and $d=-q^{1/2}$,  $\varphi^{(a,b,q^{1/2},-q^{1/2})}(x)=0$. Using Table \ref{korspec}, we obtain the following.
\begin{cor}\label{lesmemes}
In the cases $\g=D_N^{(1)},B_N^{(1)}, A_{2N-1}^{(2)}$ the first Macdonald operator is equal to the first specialized Koornwinder-Macdonald operator:
\begin{equation}\label{simpD}
\D_1^{(\g)}(x;q,t)=\sum_{i=1}^N\sum_{\epsilon=\pm 1} \Phi_{i,\epsilon}^{(\g)}(x) \Gamma_i^{\epsilon}={\mathcal E}_1^{(\g)}(x;q,t).
\end{equation}
\end{cor}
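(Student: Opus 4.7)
The plan is to read Corollary \ref{lesmemes} as an immediate consequence of Lemma \ref{constlem}, followed by a case-by-case verification that the resulting difference operator matches Macdonald's construction from Section \ref{appmacsec}.

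First, I would focus on the first equality in the displayed formula. By Lemma \ref{constlem}, the specialized first Koornwinder-Macdonald operator decomposes as $\varphi^{(a,b,c,d)}(x) + \sum_{i,\epsilon}\Phi_{i,\epsilon}^{(a,b,c,d)}(x)\,\Gamma_i^{\epsilon}$, and the closed formula \eqref{valphi} for $\varphi$ contains the factors $\bigl(1-\tfrac{\epsilon c}{q^{1/2}}\bigr)\bigl(1-\tfrac{\epsilon d}{q^{1/2}}\bigr)$ in the summand. A brief check shows that when $c=q^{1/2}$ and $d=-q^{1/2}$ this factor vanishes for both $\epsilon=+1$ (via $1-c/q^{1/2}=0$) and $\epsilon=-1$ (via $1+d/q^{1/2}=0$). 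Consequently $\varphi^{(a,b,q^{1/2},-q^{1/2})}(x)=0$ identically. Inspecting Table \ref{korspec} then shows that $(c,d)=(q^{1/2},-q^{1/2})$ precisely in the three cases at hand, namely $\g=D_N^{(1)},B_N^{(1)},A_{2N-1}^{(2)}$, which proves the first equality.

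For the second equality, I would compare the specialized coefficients $\Phi_{i,\epsilon}^{(\g)}(x)$ obtained from the Koornwinder weight \eqref{koorn} with the Macdonald operators $\mathcal E_{\omega_1}^{(\g)}$ displayed in Sections \ref{DNopsec}, \ref{BNopsec}, \ref{A2Nmopsec}. In each of the three cases the minuscule weight entering Macdonald's construction is $\omega_1=e_1$, so $\mathcal E_{\omega_1}^{(\g)}$ is obtained as a Weyl-group symmetrization of $\Phi_{\omega_1}\Gamma_1$; after collecting terms, this symmetrization produces exactly one summand $\Phi_{i,\epsilon}^{(\g)}\Gamma_i^{\epsilon}$ for each $(i,\epsilon)$, matching the range of the sum $\sum_{i,\epsilon}\Phi_{i,\epsilon}^{(\g)}\Gamma_i^{\epsilon}$.

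The remaining task is then the routine verification, case by case, that with the specializations of $(a,b,c,d)$ from Table \ref{korspec} the rational coefficient $\Phi_{i,\epsilon}^{(a,b,c,d)}(x)$ coincides with the corresponding product in Macdonald's $\Phi_{\omega_1}^{(\g)}$. This amounts to simplifying the four factors $(1-ax_i^{\epsilon})(1-bx_i^{\epsilon})(1-cx_i^{\epsilon})(1-dx_i^{\epsilon})/\bigl((1-x_i^{2\epsilon})(1-qx_i^{2\epsilon})\bigr)$ under each specialization: for $D_N^{(1)}$ with $(a,b,c,d)=(1,-1,q^{1/2},-q^{1/2})$ the quartic numerator becomes $(1-x_i^{2\epsilon})(1-qx_i^{2\epsilon})$ and the whole factor collapses to $1$; for $B_N^{(1)}$ with $(a,b,c,d)=(t,-1,q^{1/2},-q^{1/2})$ one such cancellation leaves the extra $(1-tx_i^{\epsilon})/(1-x_i^{\epsilon})$; for $A_{2N-1}^{(2)}$ with $(a,b,c,d)=(t^{1/2},-t^{1/2},q^{1/2},-q^{1/2})$ one obtains $(1-tx_i^{2\epsilon})/(1-x_i^{2\epsilon})$. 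Combined with the identical two-body part $\prod_{j\ne i}\tfrac{t x_i^{\epsilon}-x_j}{x_i^{\epsilon}-x_j}\tfrac{tx_i^{\epsilon}x_j-1}{x_i^{\epsilon}x_j-1}$, these are precisely the factors $\Phi_{i,\epsilon}^{(\g)}(x)$ read off from \eqref{phig} (at finite $t$), and they match $\Phi_{\omega_1}^{(\g)}$ read off from Sections \ref{DNopsec}, \ref{BNopsec}, \ref{A2Nmopsec}.

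There is no real obstacle: the proof is a one-line specialization of Lemma \ref{constlem} followed by a bookkeeping check. The only care needed is in the second equality, where one must correctly identify Macdonald's $u_{\alpha}$-rescaling of roots (which affects the case $A_{2N-1}^{(2)}$ where $(R,S)=(C_N,B_N)$ and the short roots get rescaled, producing the $x_i^{2\epsilon}$ rather than $x_i^{\epsilon}$ in the one-body factor); once this rescaling is accounted for, the three cases match term by term.
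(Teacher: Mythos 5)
Your proposal is correct and follows the paper's own route: the paper deduces the corollary from Lemma \ref{constlem} by observing that $\varphi^{(a,b,q^{1/2},-q^{1/2})}(x)=0$ and reading off Table \ref{korspec}, exactly as you do. Your additional case-by-case matching of the specialized $\Phi_{i,\epsilon}^{(a,b,c,d)}$ with Macdonald's $\mathcal E_{\omega_1}^{(\g)}$ (including the root-rescaling remark for $A_{2N-1}^{(2)}$) is correct bookkeeping that the paper leaves implicit.
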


By direct inspection, the specialized Rains operators of \eqref{rains} and \eqref{rainsop} can also be expressed in terms Macdonald's operators.
\begin{lemma}\label{lesautres}
In the cases $\g=D_{N}^{(1)}, C_N^{(1)}, D_{N+1}^{(2)}$ we have the identifications
\begin{equation}
\renewcommand{\arraystretch}{1.5}
\begin{array}{ll}
D_{N}^{(1)}:&{\mathcal E}_{\omega_N}^{(D_N^{(1)})}(x;q,t) +{\mathcal E}_{\omega_{N-1}}^{(D_N^{(1)})}(x;q,t)= {\mathcal R}_N^{(1,-1)}(x;q,t),\\ &
 \widehat{\mathcal D}_N^{(1,-1,q^{1/2},-q^{1/2})}=\left({\mathcal R}_N^{(1,-1)}\right)^2;\\
C_N^{(1)}:&{\mathcal E}_{\omega_N}^{(C_N^{(1)})}(x;q,t)={\mathcal R}_N^{(t^{1/2},-t^{1/2})}(x;q,t),\\
&\widehat{\mathcal D}_N^{(t^{1/2},-t^{1/2},t^{1/2}q^{1/2},-t^{1/2}q^{1/2})}=\left({\mathcal R}_N^{(t^{1/2},-t^{1/2})}\right)^2;\\
D_{N+1}^{(2)}:&{\mathcal E}_{\omega_N}^{(D_{N+1}^{(2)})}(x;q,t)={\mathcal R}_N^{(t,-1)}(x;q,t),\\ 
&\widehat{\mathcal D}_N^{(t,-1,t q^{1/2},-q^{1/2})}=\left({\mathcal R}_N^{(t,-1)}\right)^2.
\end{array}
\end{equation}
\end{lemma}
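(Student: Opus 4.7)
The plan is to verify all six identifications by direct comparison of the explicit formulas, since each case reduces to a routine matching of the per-variable prefactor inside the Rains sum with the corresponding $\Phi$ factor in Macdonald's construction. The main observation is that for the specific values of $(u,v)$ appearing on the right-hand sides, the factor $(1-u x_i^{\epsilon_i})(1-v x_i^{\epsilon_i})$ factorizes in a way that either cancels or specializes the denominator $1-x_i^{2\epsilon_i}$ of the Rains prefactor.

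First I would handle the three ``single Rains operator'' identities. Writing
$$(1-u x_i^{\epsilon_i})(1-v x_i^{\epsilon_i}) = 1 - (u+v)x_i^{\epsilon_i} + uv\,x_i^{2\epsilon_i},$$
I would observe that the three relevant specializations collapse this product as follows: for $(u,v)=(1,-1)$ it equals $1-x_i^{2\epsilon_i}$; for $(u,v)=(t^{1/2},-t^{1/2})$ it equals $1-t\,x_i^{2\epsilon_i}$; and for $(u,v)=(t,-1)$ it equals $(1-t x_i^{\epsilon_i})(1+x_i^{\epsilon_i})$. Dividing by $1-x_i^{2\epsilon_i}=(1-x_i^{\epsilon_i})(1+x_i^{\epsilon_i})$, the Rains prefactor $\prod_i$ reduces respectively to $1$, $\prod_i (1-tx_i^{2\epsilon_i})/(1-x_i^{2\epsilon_i})$, and $\prod_i (1-tx_i^{\epsilon_i})/(1-x_i^{\epsilon_i})$. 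Comparing with the explicit formulas (\ref{MD2})--(\ref{MD3}) for ${\mathcal E}_{\omega_{N-1}}^{(D_N^{(1)})}+{\mathcal E}_{\omega_N}^{(D_N^{(1)})}$ (whose sum removes the parity constraint $\epsilon_1\cdots\epsilon_N=\pm 1$ and restores the full unconstrained sum over $\{\epsilon_i\}$), and with \eqref{CNN} and \eqref{D2NN}, each case matches term by term with the remaining pairwise factor $\prod_{i<j}(1-tx_i^{\epsilon_i}x_j^{\epsilon_j})/(1-x_i^{\epsilon_i}x_j^{\epsilon_j})$ and the shifts $\prod_i\Gamma_i^{\epsilon_i/2}$ already identical on both sides.

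For the three ``squared'' identities, I would apply the definition \eqref{rainsop} of the Rains operator $\widehat{\mathcal D}_N^{(a,b,c,d)}={\mathcal R}_N^{(a,b)}(x;q,t)\,{\mathcal R}_N^{(q^{-1/2}c,\,q^{-1/2}d)}(x;q,t)$ at the three parameter specializations. For $(a,b,c,d)=(1,-1,q^{1/2},-q^{1/2})$ one has $(q^{-1/2}c,q^{-1/2}d)=(1,-1)=(a,b)$, so both factors collapse to ${\mathcal R}_N^{(1,-1)}$; similarly for the $C_N^{(1)}$ parameters $(t^{1/2},-t^{1/2},t^{1/2}q^{1/2},-t^{1/2}q^{1/2})$ both factors equal ${\mathcal R}_N^{(t^{1/2},-t^{1/2})}$, and for the $D_{N+1}^{(2)}$ parameters $(t,-1,tq^{1/2},-q^{1/2})$ both equal ${\mathcal R}_N^{(t,-1)}$. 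Hence $\widehat{\mathcal D}_N=({\mathcal R}_N^{(\cdot)})^2$ in each case.

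No genuine obstacle is anticipated: the proof is a direct computational verification and the only care required is tracking the parity constraint $\epsilon_1\cdots\epsilon_N=\pm 1$ in the two $D_N^{(1)}$ Macdonald operators, which is handled by taking the sum ${\mathcal E}_{\omega_N}^{(D_N^{(1)})}+{\mathcal E}_{\omega_{N-1}}^{(D_N^{(1)})}$ to recover the unconstrained Weyl-group sum of type $C_N$ that underlies the Rains construction.
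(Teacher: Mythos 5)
Your verification is correct and is exactly the ``direct inspection'' the paper invokes without writing out: the per-variable Rains prefactor collapses to $1$, $(1-tx_i^{2\epsilon_i})/(1-x_i^{2\epsilon_i})$, and $(1-tx_i^{\epsilon_i})/(1-x_i^{\epsilon_i})$ at the three specializations, matching \eqref{MD2}--\eqref{MD3} (with the parity constraint removed by summing), \eqref{CNN} and \eqref{D2NN}, while the squared identities follow because the two parameter pairs $(a,b)$ and $(q^{-1/2}c,q^{-1/2}d)$ in \eqref{rainsop} coincide at each specialization. No gaps.
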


\subsection{$\g$-Macdonald operators and their eigenvalues}
\label{gmacsec}

For each $\g$, we have chosen a list of $N$ linearly independent commuting difference operators (see Def. \ref{gmacdodef}) using the operators in the preceding sections of this Appendix. These have the property that in the $q$-Whittaker limit, they and their time-translates provide solutions of the various quantum Q-systems.
We first explore redundancies between the various definitions to justify Def. \ref{gmacdodef}. Then we describe the eigenvalues of the $\g$-Macdonald operators, and conclude with a remark on the choice leading to Def. \ref{gmacdodef}.

\subsubsection{Redundancies}

Specializing the parameters $(a,b,c,d)$ as in Table \ref{tableone} in the difference operators of Theorem \ref{ABCDmacdopol} gives 
a list of $N$ commuting difference operators for each $\g$.
As noted in Corollary \ref{lesmemes}, some of these are Macdonald's operators listed in Section \ref{appmacsec}.
In addition, there are non-linear relations\footnote{These relations are proved using the same argument as in the proof of Theorem \ref{ABCDmacdopol}, using identities between eigenvalues. Some are a direct consequence of the identifications with Rains operators from Theorem \ref{lesautres}.}:
\begin{eqnarray*}
{\mathcal E}_{\omega_N}^{(D_N^{(1)})}\,{\mathcal E}_{\omega_{N-1}}^{(D_N^{(1)})}&=&\sum_{\al=0}^{\lfloor \frac{N-1}{2} \rfloor} t^{\al(2\al+1)}\, {\mathcal D}^{(a,b,c,d)}_{N-2\al-1},\\
({\mathcal E}_{\omega_N}^{(D_N^{(1)})})^2+({\mathcal E}_{\omega_{N-1}}^{(D_N^{(1)})})^2&=&{\mathcal D}_{N}^{(a,b,c,d)}
+2\sum_{\al=0}^{\lfloor \frac{N}{2} \rfloor} t^{\al(2\al-1)}\,  {\mathcal D}^{(a,b,c,d)}_{N-2\al},
\end{eqnarray*}
for the $D_N^{(1)}$ specialization. 
Similarly, for the $C_N^{(1)}$ and $D_{N+1}^{(2)}$ specializations,
$$\left({\mathcal E}_{\omega_N}^{(\g)}\right)^2 ={\mathcal D}_N^{(a,b,c,d)}+2\sum_{\al=1}^N t^{\frac{\al(\al+1)}{2}} \, {\mathcal D}^{(a,b,c,d)}_{N-\al}.$$
In those cases, the Macdonald operators carry more information than the specialization of the Koornwinder-Macdonald operators ${\mathcal D}_m^{(a,b,c,d)}$. This justifies the choice in Def. \ref{gmacdodef}.

\subsubsection{The eigenvalues of $\g$-Macdonald operators}
\label{listsec}

\begin{defn}
If $m\leq N_\g$,
let $d_{\lambda,m}^{(\g)}$ to be the specialization of Equation \ref{Eval} to the parameters corresponding to $\g$ in Table \ref{tableone}:
\begin{equation}\label{newevaldef}
d_{\lambda,m }^{(\g)}:=d_{\lambda,m}=t^{m(N+\xi_\g-\frac{m+1}{2})}\, \hat e_m(s), \quad m\in[1,N_\g],
\end{equation}
where $s=t^{\rho^{(\g)}} q^\lambda$, i.e. $s_i=q^{\lambda_i} t^{N-i+\xi_\g}$.
Otherwise, define
\begin{equation}\label{evalex}
\renewcommand{\arraystretch}{1.5}
\begin{array}{ll}
\g=D_N^{(1)}:& d_{\lambda;\beta}^{(D_{N}^{(1)})}=t^{\frac{N(N-1)}{4}}\,{\hat e}_\beta^{(D_{N})}(s), \qquad  \beta=N-1,N ,\\
\g=C_N^{(1)}, D_{N+1}^{(2)}:&  d_{\lambda;N}^{(\g)}=t^{\frac{N(N+1)}{4}}\,{\hat e}_N^{(B_N)}(s),
\end{array}
\end{equation}
where
\begin{eqnarray}\label{restofehat}
{\hat e}_\beta^{(D_{N})}(x)&:=& \sum_{\epsilon_1,...,\epsilon_N=\pm1 \atop \prod \epsilon_i=2(\beta-N)+1} \prod_{i=1}^N x_i^{\epsilon_i/2}\qquad (\beta=N-1,N),\nonumber \\
{\hat e}_N^{(B_N)}(x)&:=& \prod_{i=1}^N \frac{1+x_i}{\sqrt{x_i}}={\hat e}_{N-1}^{(D_{N})}(x)+{\hat e}_N^{(D_{N})}(x).
\end{eqnarray}
\end{defn}
The functions $\hat e_m^{(R^*)}(s)$ have the property that their dominant monomial (in powers of $t$) is
$s^{\omega_m^*}$, where $\omega_m^*$ are the fundamental weights of $R^*$.

\begin{thm}\label{specmacG}
The $\g$-specialized Koornwinder polynomials are common eigenvectors of the $\g$-Macdonald operators, satisfying the eigenvalue equation
\begin{equation}\label{geneval}
 {\mathcal D}^{(\g)}_m(x)\, P_\lambda^{(\g)}= d_{\lambda;m}^{(\g)}\, P_\lambda^{(\g)} , \qquad m\in[1,N],
 \end{equation}
 with
 $d_{\lambda;m}^{(\g)}=\theta_m^{(\g)}\, \hat{e}_m^{(R^*)}(s)$
 ($\hat{e}_m^{(R^*)}= \hat{e}_m$ for $m\leq N_\g$)
 and 
\begin{equation}
 \theta_m^{(\g)}=t^{m(N+\xi_\g-\frac{m+1}{2})},\qquad m\leq N_\g \label{defthone} ,
 \end{equation}
 \begin{equation}\label{defthtwo}
 \begin{array}{c}
 \theta_\beta^{(D_{N}^{(1)})}=t^{\frac{N(N-1)}{4}}, \qquad \beta=N-1,N,\\
 \theta_N^{(\g)}=t^{\frac{N(N+1)}{4}}, \qquad \g=C_N^{(1)}, D_{N+1}^{(2)}.
\end{array}
\end{equation}
\end{thm}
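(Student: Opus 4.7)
\medskip

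\noindent\textbf{Proof proposal for Theorem \ref{specmacG}.} The plan is to handle separately the three qualitatively different situations encapsulated in Definition \ref{gmacdodef}: the operators $\mathcal{D}_m^{(\g)}$ coming from the Koornwinder--Macdonald family in the range $m\leq N_\g$; the spin-type operators for $\g=C_N^{(1)},D_{N+1}^{(2)}$; and the two spin operators for $\g=D_N^{(1)}$.

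\smallskip

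\noindent\textbf{Case 1 ($m\leq N_\g$).} Here $\mathcal{D}_m^{(\g)}$ is just $\mathcal{D}_m^{(a,b,c,d)}$ at the parameters of Table \ref{korspec}, so Theorem \ref{ABCDmacdopol} gives
$$\mathcal{D}_m^{(\g)}\,P_\lambda^{(\g)}=\sigma^m t^{m(N-(m+1)/2)}\hat e_m(s)\,P_\lambda^{(\g)}.$$
Substituting $\sigma=t^{\xi_\g}$ from Table \ref{tableone} collapses the prefactor to $\theta_m^{(\g)}$ of \eqref{defthone}, and since $\hat e_m^{(R^*)}=\hat e_m$ for $m\leq N_\g$ (Table \ref{chartable}) this is \eqref{geneval}.

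\smallskip

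\noindent\textbf{Case 2 ($\g=C_N^{(1)}$ or $D_{N+1}^{(2)}$, $m=N$).} By Definition \ref{gmacdodef} combined with Lemma \ref{lesautres}, $\mathcal{D}_N^{(\g)}=\mathcal{E}_{\omega_N}^{(\g)}$ is a single Rains operator: $\mathcal{R}_N^{(t^{1/2},-t^{1/2})}$ for $C_N^{(1)}$ and $\mathcal{R}_N^{(t,-1)}$ for $D_{N+1}^{(2)}$. Apply \eqref{rainskor} with source Koornwinder parameters $(a',b',c',d')$ chosen so that $(q^{-1/2}a',q^{-1/2}b')$ matches the Rains $(u,v)$; concretely $(q^{1/2}t^{1/2},-q^{1/2}t^{1/2},t^{1/2},-t^{1/2})$ for $C_N^{(1)}$ and $(q^{1/2}t,-q^{1/2},t,-1)$ for $D_{N+1}^{(2)}$. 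In both cases source and target $(a,b,c,d)$ lie in the same $S_4$-orbit, so by the manifest $S_4$-invariance $P_\lambda^{(a,b,c,d)}=P_\lambda^{(\g)}$ on both sides and \eqref{rainskor} becomes a genuine eigenvalue equation with eigenvalue $q^{-|\lambda|/2}\prod_i(1+q^{\lambda_i}t^{N+\xi_\g+1-i})$. Using \eqref{svarg} and $\prod_i\sqrt{s_i}=q^{|\lambda|/2}t^{N(N+1)/4}$ this rewrites as $\theta_N^{(\g)}\,\hat e_N^{(B_N)}(s)$, matching \eqref{defthtwo}.

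\smallskip

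\noindent\textbf{Case 3 ($\g=D_N^{(1)}$, $m=N-1,N$).} Neither spin operator is individually a Rains operator, but Lemma \ref{lesautres} identifies the sum $\mathcal{E}_{\omega_{N-1}}^{(D_N^{(1)})}+\mathcal{E}_{\omega_N}^{(D_N^{(1)})}$ with $\mathcal{R}_N^{(1,-1)}$. Running the Case 2 argument with shift parameters $(a',b')=(q^{1/2},-q^{1/2})$ applied to the $D_N^{(1)}$ specialization yields
$$\bigl(\mathcal{E}_{\omega_{N-1}}^{(D_N^{(1)})}+\mathcal{E}_{\omega_N}^{(D_N^{(1)})}\bigr)P_\lambda^{(D_N^{(1)})}=t^{N(N-1)/4}\bigl(\hat e_{N-1}^{(D_N)}(s)+\hat e_N^{(D_N)}(s)\bigr)P_\lambda^{(D_N^{(1)})}.$$
To split this sum, observe that $\mathcal{E}_{\omega_N}^{(D_N^{(1)})}$ (resp.\ $\mathcal{E}_{\omega_{N-1}}^{(D_N^{(1)})}$) sums over sign vectors with $\prod_i\epsilon_i=+1$ (resp.\ $-1$), exactly parallelling the parity decomposition of $\hat e_N^{(B_N)}(s)=\hat e_N^{(D_N)}(s)+\hat e_{N-1}^{(D_N)}(s)$ defined in \eqref{restofehat}. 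One verifies the individual eigenvalue on the base case $\lambda=0$ via a Weyl-symmetry argument on $\mathcal{E}_{\omega_N}^{(D_N^{(1)})}\cdot 1$ (which reduces to a Macdonald-identity for the root system $D_N$), and then propagates to arbitrary $\lambda$ using the fact that both operators commute with the full family $\{\mathcal{D}_m^{(D_N^{(1)})}:m\leq N-2\}$ of Case 1 and share their common eigenbasis $\{P_\lambda^{(D_N^{(1)})}\}$.

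\smallskip

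\noindent\textbf{Main obstacle.} Cases 1 and 2 are essentially bookkeeping once the Rains intertwining \eqref{rainskor} is combined with the $S_4$-invariance of Koornwinder polynomials. The only substantive step is the parity separation in Case 3: verifying directly that the restricted symmetrization defining $\mathcal{E}_{\omega_N}^{(D_N^{(1)})}$ yields eigenvalue $t^{N(N-1)/4}\hat e_N^{(D_N)}(s)$ rather than the symmetric average $\tfrac12 t^{N(N-1)/4}\hat e_N^{(B_N)}(s)$. The cleanest route is to test at $\lambda=0$, where the identity reduces to a sum of two Weyl-denominator-type products over the two $D_N$-spinor orbits and can be verified by a direct computation or by invoking Macdonald's constant term evaluation for the minuscule co-weight $\omega_N$ of $D_N^\vee=D_N$.
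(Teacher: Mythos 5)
Your Cases 1 and 2 are sound. Case 1 is exactly the specialization of Theorem \ref{ABCDmacdopol} on which the paper relies, and Case 2 gives a clean, self-contained justification of what the paper asserts only ``by direct inspection'' via Lemma \ref{lesautres}: since the source and target parameter quadruples in \eqref{rainskor} are permutations of one another and the Koornwinder operator \eqref{koorn} is manifestly symmetric in $(a,b,c,d)$, the single Rains operator acts diagonally, and the bookkeeping indeed yields $\theta_N^{(\g)}\hat e_N^{(B_N)}(s)$ (small typo: the factors are $1+q^{\lambda_i}t^{N+\xi_\g-i}=1+s_i$, not $1+q^{\lambda_i}t^{N+\xi_\g+1-i}$).

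Case 3, however, contains a genuine gap, not merely a deferred computation. First, ``propagating from $\lambda=0$'' cannot work in principle: the claimed eigenvalue $t^{N(N-1)/4}\hat e_N^{(D_N)}(q^\lambda t^\rho)$ is a nonconstant function of $\lambda$, so one evaluation at $\lambda=0$ determines nothing for $\lambda\neq 0$, and commuting with $\{\D_m^{(D_N^{(1)})}: m\le N-2\}$ provides no recursion in $\lambda$. Second, even the diagonality of each spin operator does not follow from commutativity: every eigenvalue $\hat e_m(s)$ with $m\le N-2$, as well as the sum $\hat e_{N-1}^{(D_N)}(s)+\hat e_N^{(D_N)}(s)$, is invariant under $s_N\mapsto s_N^{-1}$, i.e.\ $\lambda_N\mapsto -\lambda_N$, whereas $\hat e_{N-1}^{(D_N)}$ and $\hat e_N^{(D_N)}$ are swapped by it; the joint eigenspaces of the operators you already control are therefore precisely the two-dimensional spaces that the spin operators must split, and commutativity alone allows $\mathcal{E}_{\omega_N}^{(D_N^{(1)})}$ to mix them. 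What is actually required --- and what the paper implicitly imports from \cite{macdoroot} --- is Macdonald's argument that $\mathcal{E}_\pi$ for a minuscule coweight $\pi$ is triangular on the monomial basis with diagonal entry the orbit sum $m_\pi(q^\lambda t^\rho)$, combined with self-adjointness; that leading-term computation is exactly what distinguishes $\mathcal{E}_{\omega_N}$ from $\mathcal{E}_{\omega_{N-1}}$. (Alternatively, your sum identity together with the product identity $\mathcal{E}_{\omega_N}^{(D_N^{(1)})}\mathcal{E}_{\omega_{N-1}}^{(D_N^{(1)})}=\sum_{\al} t^{\al(2\al+1)}\D_{N-2\al-1}$ from the Redundancies subsection pins down the unordered pair of eigenvalues, but assigning each root to the correct operator still requires the triangularity input.) A smaller point: \eqref{rainskor} is stated for integer partitions, so as written Cases 2 and 3 do not cover the half-integer $\g$-partitions arising for $D_{N+1}^{(2)}$ and $D_N^{(1)}$.
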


\subsubsection{Remark about our choice of Macdonald operators}\label{aboutsec}
We made the choice of $\g$-Macdonald operators for simplicity. A more natural choice is to choose eigenvalues proportional to the fundamental characters of $R^*$. This leads to a more complicated choice of the difference operators, but in the $q$-Whittaker limit, they have the same limit as our $\g$-Macdonald operators.

For Macdonald's operators in Section \ref{appmacsec}, 
the eigenvalue equation is
$$ {\mathcal D}_m^{(\g)}(x)\, P_\lambda^{(\g)}(x)= \theta_m^{(\g)}\,s_{\omega_m}^{(R^*)}(s)\, P_\lambda^{(\g)}(x) ,$$
where the Schur functions $s_{\omega_m}^{(R^*)}$ are the fundamental characters of $R^*$ with highest weight $\omega_m^*$.
We may choose the set of difference operators $\tilde{\mathcal D}_m^{(\g)}(x)$, with
 eigenvalues proportional to the fundamental characters of $R^*$ for all $m$ as above.
These are related to ${\mathcal D}_m^{(\g)}$ via a triangular change of basis, as can be seen from the relation between the fundamental characters and $e_m^{(R)}$:
\begin{eqnarray*}
D_N: \quad s_{\omega_m}^{(D_N)}&=&\hat{e}_m^{(D_N)} ,\qquad m\in[1,N],\\
B_N: \quad 
s_{\omega_m}^{(C_N)}&=&\hat{e}_m-\hat{e}_{m-2} , \qquad m\in[1,N],\\
C_N: \quad 
s_{\omega_m}^{(B_N)}&=&\hat{e}_m+\hat{e}_{m-1}  \qquad m\in[1,N-1],\\
s_{\omega_N}^{(B_N)}&=&\hat{e}_N^{(B_N)}.
\end{eqnarray*}
Therefore
$\tilde{\mathcal D}^{(\g)}$ are
\begin{eqnarray}
\g=D_N^{(1)}: \quad \tilde{\mathcal D}^{(\g)}_m&:=& {\mathcal D}^{(\g)}_m \qquad (m=1,2,...,N),\nonumber \\
\g=B_N^{(1)},A_{2N-1}^{(2)},A_{2N}^{(2)}: \quad \tilde{\mathcal D}^{(\g)}_m&:=&{\mathcal D}^{(\g)}_m
-(1-\delta_{m,1})\frac{\theta_m^{(\g)}}{\theta_{m-2}^{(\g)}}\, {\mathcal D}^{(\g)}_{m-2} \quad (m=1,2,...,N),\nonumber \\
\g=C_N^{(1)},D_{N+1}^{(2)}: \quad \tilde{\mathcal D}^{(\g)}_m&:=&{\mathcal D}^{(\g)}_m
+\frac{\theta_m^{(\g)}}{\theta_{m-1}^{(\g)}}\, {\mathcal D}^{(\g)}_{m-1}\quad (m=1,2,...,N-1),\nonumber\\
\tilde{\mathcal D}^{(\g)}_N&:=&{\mathcal D}^{(\g)}_N ,\label{tildeDdef}
\end{eqnarray}
with the convention that ${\mathcal D}^{(\g)}_0=1$.
This choice guarantees the following:
\begin{thm}
For all $m=1,2,...,N$ and all $\g$,
\begin{equation}\label{spectilde}
\tilde{\mathcal D}^{(\g)}_m\, P_\lambda(x)=\theta_m^{(\g)}\, s_{\omega_m}^{(R^*)}(s)\, P_\lambda(x), \qquad s=q^\lambda \,t^{\rho^{(\g)}}.
\end{equation}
\end{thm}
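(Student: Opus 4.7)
The plan is to reduce the statement to a direct scalar computation. Since the operators $\{\mathcal D^{(\g)}_m\}_{m=1}^N$ commute and admit the $\g$-Macdonald polynomial $P_\lambda(x)$ as a simultaneous eigenvector with eigenvalues $\theta_m^{(\g)}\hat e_m^{(R^*)}(s)$ (Theorem \ref{specmacG}), and $\tilde{\mathcal D}^{(\g)}_m$ is defined in \eqref{tildeDdef} as an explicit linear combination of $\mathcal D^{(\g)}_m$ and $\mathcal D^{(\g)}_{m-k}$ (with $k\in\{1,2\}$ depending on $\g$), the eigenvalue of $\tilde{\mathcal D}^{(\g)}_m$ on $P_\lambda$ is the same linear combination of the scalar eigenvalues. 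The whole theorem therefore follows from a family of purely combinatorial identities expressing the fundamental characters $s_{\omega_m}^{(R^*)}(s)$ in terms of the symmetric functions $\hat e_k^{(R^*)}(s)$, with $t$-prefactors matching the ratios $\theta_m^{(\g)}/\theta_{m-k}^{(\g)}$.

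First I would dispatch the case $\g=D_N^{(1)}$: here $\tilde{\mathcal D}^{(\g)}_m=\mathcal D^{(\g)}_m$ and the character identity $s_{\omega_m}^{(D_N)}=\hat e_m^{(D_N)}$ (listed in \eqref{restofehat}) is immediate, so \eqref{spectilde} is just a restatement of Theorem \ref{specmacG}.

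Next, for $\g\in\{B_N^{(1)},A_{2N-1}^{(2)},A_{2N}^{(2)}\}$, where $R^*=C_N$ or $BC_N$, I would use the standard Weyl-character identity $s_{\omega_m}^{(C_N)}=\hat e_m-\hat e_{m-2}$ (with $\hat e_{-1}=\hat e_{0}=1$ adjusted suitably at the boundary $m=1$, where no subtraction occurs). From \eqref{defthone}, the ratio $\theta_m^{(\g)}/\theta_{m-2}^{(\g)}$ is a monomial in $t$, so the definition of $\tilde{\mathcal D}^{(\g)}_m$ yields
\[
\tilde{\mathcal D}^{(\g)}_m P_\lambda=\bigl(\theta_m^{(\g)}\hat e_m(s)-\tfrac{\theta_m^{(\g)}}{\theta_{m-2}^{(\g)}}\,\theta_{m-2}^{(\g)}\hat e_{m-2}(s)\bigr)P_\lambda=\theta_m^{(\g)}s_{\omega_m}^{(C_N)}(s)P_\lambda,
\]
which is \eqref{spectilde}. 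For $\g\in\{C_N^{(1)},D_{N+1}^{(2)}\}$, where $R^*=B_N$, I would similarly invoke $s_{\omega_m}^{(B_N)}=\hat e_m+\hat e_{m-1}$ for $m<N$ and the ``spin'' identity $s_{\omega_N}^{(B_N)}=\hat e_N^{(B_N)}$ from \eqref{restofehat}. The second identity is exactly the relation already built into the eigenvalue $d_{\lambda;N}^{(\g)}$ of $\mathcal D^{(\g)}_N=\tilde{\mathcal D}^{(\g)}_N$ in \eqref{evalex}, so the $m=N$ case is automatic; for $m<N$ the linear combination \eqref{tildeDdef} reproduces $\theta_m^{(\g)}s_{\omega_m}^{(B_N)}(s)$ by the same cancellation of $\theta$-factors as in the $C_N$ case.

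The only substantive issue — and I expect this to be the main thing to check — is the consistency of the boundary conventions: one must verify that when $m-k$ falls outside $[1,N_\g]$ the formulas \eqref{tildeDdef} and the character identities use compatible extensions (in particular, $\hat e_0=1$, $\hat e_{-1}=0$, and the spin-representation boundary at $m=N$), and that the $t$-prefactors in $\theta_m^{(\g)}/\theta_{m-k}^{(\g)}$ computed from \eqref{defthone}--\eqref{defthtwo} really produce no extra scaling beyond what appears in $\theta_m^{(\g)}s_{\omega_m}^{(R^*)}(s)$. All of these are routine case-by-case verifications, and together they complete the proof.
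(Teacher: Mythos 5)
Your proposal is correct and matches the paper's intended argument exactly: the paper offers no separate proof, stating the theorem as an immediate consequence of the definition \eqref{tildeDdef}, the eigenvalues from Theorem \ref{specmacG} (and \eqref{evalex} for the exceptional labels), and the listed character identities $s_{\omega_m}^{(D_N)}=\hat e_m^{(D_N)}$, $s_{\omega_m}^{(C_N)}=\hat e_m-\hat e_{m-2}$, $s_{\omega_m}^{(B_N)}=\hat e_m+\hat e_{m-1}$, $s_{\omega_N}^{(B_N)}=\hat e_N^{(B_N)}$, which is precisely your case-by-case cancellation of the $\theta$-factors. The only blemish is the stray ``$\hat e_{-1}=\hat e_0=1$'' in your second paragraph, which you correct to $\hat e_0=1$, $\hat e_{-1}=0$ in the final paragraph.
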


\subsection{$q$-Whittaker limit of the $\g$-Macdonald operators}
\label{appqwhit}
The $q$-Whittaker limit corresponds to sending $t\to \infty$. 
The quantity $A^{(a,b,c,d)}(x)$ of \eqref{defAabcd}, which appears in the van Diejen operators \eqref{vandiealpha}, tends to $\sigma = t^{\xi_\g}$ under the specialization of Table \ref{tableone}.
All terms in \eqref{vandiealpha} have the same leading behavior as that with
$s=1$ and $J=J_1=\{1,2,...,m\}$, namely a sum of three contributions from \eqref{defV}:
$$ m \xi_\g + m(m-1) + 2 m(N-m)=m(2N-m-1+\xi_\g). $$
Therefore, ${\mathcal V}_m^{(\g)} \sim (\theta_m^{(\g)})^2\, V_m^{(\g)}$, where the difference operator
$V_m^{(\g)}$ is independent of $t$. Similarly ${\mathcal D}_m^{(\g)}\sim (\theta_m^{(\g)})^2\, D_m^{(\g)}$
where $D_m^{(\g)}$ independent of $t$, for $m=1,2,...,N_\g$. By inspection, 
we find the same leading behavior for $N_\g<m\leq N$, using \eqref{defthtwo}. This leads to the definitions:
\begin{equation}\label{DlimW}D_m^{(\g)}(x):=\lim_{t\to \infty} (\theta_m^{(\g)})^{-2} \,{\mathcal D}_m^{(\g)}(x),\ \ V_m^{(\g)}:=\lim_{t\to\infty}  (\theta_m^{(\g)})^{-2}\, {\mathcal V}_m^{(\g)}\end{equation}
with $\theta_m^{(\g)}$ as in (\ref{defthone}-\ref{defthtwo}).

\begin{remark}\label{limDrem}
The limit $t\to \infty$ 
of ${\mathcal D}^{(\g)}_m$ is the {\it same} as that of $\tilde{\mathcal D}^{(\g)}_m$ of Sect. \ref{aboutsec}:
Using ${\mathcal D}^{(\g)}_m\simeq (\theta_m^{(\g)})^{2}\, {D}^{(\g)}_m$ at leading order in $t$,
the statement follows immediately from \eqref{tildeDdef} by noting that for $m\leq N_\g$:
\begin{eqnarray*}
\lim_{t\to\infty} \frac{\theta_{m-1}^{(\g)}}{\theta_{m}^{(\g)}}&=&0 \quad (m\geq 1;\g=C_N^{(1)},D_{N+1}^{(2)}),\\
\lim_{t\to\infty} \frac{\theta_{m-2}^{(\g)}}{\theta_{m}^{(\g)}}&=&0 \quad (m\geq 2;\g=B_N^{(1)},A_{2N-1}^{(2)},A_{2N}^{(2)}).
\end{eqnarray*}
\end{remark}

For any $\g$, define
$\Pi_\lambda^{(\g)}=\lim_{t\to \infty} t^{-\rho^*\cdot\lambda} P_\lambda^{(\g)}$.
We refer to these as $q$-Whittaker functions, although they coincide with the usual definition only for the 
case of $\g=D_N^{(1)}$ and the twisted algebras in Table \ref{tableone}. The spectral theorem for $D_m^{(\g)}$ is as follows.
\begin{thm}\label{qwithakthm}
The functions $\Pi^{(\g)}_\lambda$ are common eigenfunctions of $D_m^{(\g)}$, with
$$ D_{m}^{(\g)}(x)\, \Pi^{(\g)}_\lambda(x) =\Lambda^{\omega_m^*} \,  \Pi^{(\g)}_\lambda(x).$$
\end{thm}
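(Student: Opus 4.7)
The strategy is to obtain the claimed eigenvalue equation as the $t\to\infty$ limit of the Koornwinder–Macdonald eigenvalue equation \eqref{geneval}, exploiting the fact that both $(\theta_m^{(\g)})^{-2}\mathcal D_m^{(\g)}(x)$ and $P_\lambda^{(\g)}(x)$ have well-defined limits in this regime by \eqref{DlimW} and \eqref{whitpol}. The key nontrivial ingredient is that the eigenvalue $d_{\lambda;m}^{(\g)}=\theta_m^{(\g)}\,\hat e_m^{(R^*)}(s)$, properly normalized by $(\theta_m^{(\g)})^{-1}$, converges to the monomial $\Lambda^{\omega_m^*}$ as $t\to\infty$.

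First I would rewrite \eqref{geneval} in the form
$$\bigl[(\theta_m^{(\g)})^{-2}\mathcal D_m^{(\g)}(x)\bigr]\, P_\lambda^{(\g)}(x) = \bigl[(\theta_m^{(\g)})^{-1}\hat e_m^{(R^*)}(s)\bigr]\, P_\lambda^{(\g)}(x),$$
valid at finite $t$. The left-hand operator converges to $D_m^{(\g)}(x)$ by definition \eqref{DlimW}, and $P_\lambda^{(\g)}(x)\to \Pi_\lambda^{(\g)}(x)$ by \eqref{whitpol}. Since the operator $\mathcal D_m^{(\g)}$ is a finite-rank $q$-difference operator with rational coefficients in the $x_i$'s whose leading $t$-behavior is $(\theta_m^{(\g)})^2$ (as per the scaling argument preceding \eqref{DlimW}), the operator-coefficient limits exist term-by-term, so the limit of the action equals the action of the limit.

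The main calculation is then to verify that $(\theta_m^{(\g)})^{-1}\hat e_m^{(R^*)}(s)\to \Lambda^{\omega_m^*}$ as $t\to\infty$, where $s_i=q^{\lambda_i}t^{N-i+\xi_\g}$. For the generic range $m\leq N_\g$, one has $\hat e_m^{(R^*)}=\hat e_m$ and the dominant contribution in $t$ to $\hat e_m(s)$ comes from the product $s_1s_2\cdots s_m$ (corresponding to the largest variables, smallest indices $i$):
$$s_1\cdots s_m = q^{\lambda_1+\cdots+\lambda_m}\,t^{\,m(N+\xi_\g)-m(m+1)/2} = \Lambda^{\omega_m}\,\theta_m^{(\g)},$$
using $\theta_m^{(\g)}=t^{m(N+\xi_\g-(m+1)/2)}$ from \eqref{defthone}; all other monomials in $\hat e_m(s)$ have strictly lower powers of $t$. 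Since $\omega_m^*=\omega_m$ for these labels, the claim follows. For the exceptional range $m>N_\g$ (i.e.\ the spinor-type labels $m\in\{N-1,N\}$ for $\g=D_N^{(1)}$, or $m=N$ for $\g=C_N^{(1)},D_{N+1}^{(2)}$), one uses the explicit formulas \eqref{restofehat}: the dominant $t$-behavior is selected by choosing all $\epsilon_i=+1$, except in the case $\beta=N-1$ of $D_N^{(1)}$ where the parity constraint $\prod\epsilon_i=-1$ forces $\epsilon_N=-1$. A short case check, comparing the resulting $t$-power to $\theta_m^{(\g)}$ from \eqref{defthtwo} and the $q$-exponent to $\omega_m^*$ read off from Table \ref{rootsandweights}, confirms in each case that $(\theta_m^{(\g)})^{-1}\hat e_m^{(R^*)}(s)\to \Lambda^{\omega_m^*}$.

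Putting these pieces together and passing to the limit $t\to\infty$ in the displayed equation above yields $D_m^{(\g)}(x)\,\Pi_\lambda^{(\g)}(x)=\Lambda^{\omega_m^*}\,\Pi_\lambda^{(\g)}(x)$ for every $a=m\in[1,N]$ and every $\g$-partition $\lambda$. The only mildly delicate point is the exceptional-case bookkeeping: one must be careful that the conventions for half-integer $\lambda_N$ entering the spinor characters match the definition of $\Lambda^{\omega_m^*}$ as a monomial in $q^{\lambda_i/2}$. This is routine but is the step that requires the most attention; everything else is a direct application of the definitions and of Theorem \ref{specmacG}.
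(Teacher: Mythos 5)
Your proposal is correct and follows essentially the same route as the paper's own proof of Theorem \ref{qwithakthm}: divide the finite-$t$ eigenvalue equation \eqref{geneval} by $(\theta_m^{(\g)})^2$, note that the normalized eigenvalue $(\theta_m^{(\g)})^{-1}\hat e_m^{(R^*)}(s)$ has leading $t$-monomial $s_1\cdots s_m$ for $m\leq N_\g$, and check the spinor-type labels by inspection of \eqref{restofehat}. One small imprecision: for $\g=B_N^{(1)}$ the label $m=N$ is generic ($N_\g=N$) yet $\omega_N^*=e_1+\cdots+e_N$ is the $C_N$ weight, not $\omega_N=\tfrac12\sum e_k$, so the correct justification is that $\omega_m^*=e_1+\cdots+e_m$ for all $m\leq N_\g$ rather than $\omega_m^*=\omega_m$; the conclusion is unaffected.
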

\begin{proof}
The eigenvalues are  the limits
$$\delta_{\lambda;m}^{(\g)}=\lim_{t\to \infty} (\theta_m^{(\g)})^{-1} {\hat e}_m^{(R^*)}(s), \qquad s_i=q^{\lambda_i} t^{N+\xi_\g-i}.$$
The limit  extracts the leading monomial of the $s$ variables in the symmetric function, which is the term involving $s_1,s_2,...,s_m$ with positive powers.
For $m=1,2,...,N_\g$ this gives, using \eqref{newevaldef} \eqref{defthone} and \eqref{Eval}:
$\delta_{\lambda;m}^{(\g)}=\lim_{t\to \infty} t^{-m(N+\xi_g-\frac{m+1}{2})} {\hat e}_m(s)=q^{\lambda_1+\lambda_2+\cdots+\lambda_m}$.
By inspection, using \eqref{evalex}, \eqref{defthtwo} and \eqref{restofehat}, 
\begin{eqnarray*}
&&D_N^{(1)}: \quad \delta_{\lambda;N}^{(D_{N}^{(1)})}=q^{\frac{1}{2}(\lambda_1+\lambda_2+\cdots+\lambda_{N-1}+\lambda_N)},\ 
\delta_{\lambda;N-1}^{(D_{N}^{(1)})}=q^{\frac{1}{2}(\lambda_1+\lambda_2+\cdots+\lambda_{N-1}-\lambda_N)},\\
&&C_N^{(1)}: \quad \delta_{\lambda;N}^{(C_N^{(1)})}=q^{\frac{1}{2}(\lambda_1+\lambda_2+\cdots+\lambda_{N-1}+\lambda_N)},\\
&&D_{N+1}^{(2)}: \delta_{\lambda;N}^{(D_{N+1}^{(2)})} =q^{\frac{1}{2}(\lambda_1+\lambda_2+\cdots+\lambda_{N-1}+\lambda_N)}.
\end{eqnarray*}
The result can be uniformly written as $\Lambda^{\omega_m^*}=q^{\omega_m^*\cdot\lambda}$,  $\omega_m^*$ the fundamental weights of $R^*$.
\end{proof}

\begin{remark} 
Using the convention $V_0^{(\g)}=1$, Def. \ref{malphadef} implies
\begin{equation}\label{vanD} D_{m}^{(\g)}= \sum_{j=0}^m V_j^{(\g)},  \qquad m\leq N_\g.\end{equation}
\end{remark}

\section{Pieri rules}\label{appB}
In this section, we list some of the Pieri operators implementing the first Pieri rule for Koornwinder and $\g$-Macdonald polynomials. Their $q$-Whittaker limit gives $q$-difference Toda Hamiltonians for certain root systems.

\subsection{First Pieri rule for Koornwinder polynomials}
\label{appkorpieri}

The first Pieri operator for generic $(a,b,c,d)$ is the $q$-difference operator $\cH_1^{(a,b,c,d)}(s)$ of Theorem \eqref{kortopieri}. 

\begin{thm}\label{korpier}
The first Pieri operator for Koornwinder polynomials is
\begin{eqnarray*}
\cH_1^{(a,b,c,d)}(s;q,t)&=&a^{-1}t^{1-N}\varphi^{(a^*,b^*,c^*,d^*)}(s)+
\sum_{i=1}^N\left[ \left(\prod_{j=1}^{i-1}\frac{s_i-t^{-1}s_j }{s_i-s_j}\frac{q s_i-t s_j}{q s_i-s_j}\right)\, T_i\right.\nonumber \\
&&+
\frac{\prod_{u\in\{a^*,b^*,c^*,d^*\}}(1-u^{-1}s_i)(q- u s_i)}{(1-s_i^2)(q-s_i^2)^2(q^2-s_i^2)}
\nonumber \\
 &&\times
 \left.
 \left(
 \prod_{j=i+1}^{N} \frac{t^{-1}s_i-s_j}{s_i-s_j}\frac{t s_i-q s_j}{s_i-q s_j} 
 \prod_{j\neq i} \frac{q-t s_i s_j }{q-s_i s_j}\frac{1-t^{-1} s_i s_j}{1-s_i s_j}\right)T_i^{-1}
 \right],
\end{eqnarray*}
where 
\begin{eqnarray*}\varphi^{(a^*,b^*,c^*,d^*)}(x)&=&\sum_{\epsilon=\pm 1} \frac{\big(1-\frac{\epsilon}{ q^{1/2}}a^*\big)
\big(1-\frac{\epsilon }{q^{1/2}}b^*\big)
\big(1-\frac{\epsilon }{q^{1/2}}c^*\big)\big(1- \frac{\epsilon }{q^{1/2}}d^*\big)}{2(1-t)(1-q^{-1}t)}\nonumber \\
&&\qquad \qquad\qquad \times 
\left\{ \prod_{i=1}^N  
\frac{1-\frac{\epsilon\, t }{q^{1/2}}x_i}{1-\frac{\epsilon }{q^{1/2}}x_i}\,
\frac{1-\frac{\epsilon\, t }{q^{1/2}}x_i^{-1}}{1-\frac{\epsilon }{q^{1/2}}x_i^{-1}}
-t^N\right\},
\end{eqnarray*}
and $(a^*,b^*,c^*,d^*)$ are the dual Koornwinder parameters \eqref{dua}.
\end{thm}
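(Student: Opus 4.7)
The plan is to compute both sides directly from the definition \eqref{PieriOpK} of $\cH_1^{(a,b,c,d)}(s;q,t)$ specialized at $m=1$, using the explicit form of the first Koornwinder--Macdonald operator $\mathcal{D}_1^{(a^*,b^*,c^*,d^*)}(s;q,t)$ at the dual parameters, and then simplifying the conjugation by the Gaussian $t^{\rho^*\cdot\lambda}$ and the $\Delta$-factor. Since $\theta_1^*=a\,t^{N-1}$, the prefactor on the right-hand side is exactly $a^{-1}t^{1-N}$, so the first step is to invoke Lemma~\ref{constlem} to write
\[
\mathcal{D}_1^{(a^*,b^*,c^*,d^*)}(s;q,t)=\varphi^{(a^*,b^*,c^*,d^*)}(s)+\sum_{i=1}^N\sum_{\epsilon=\pm 1}\Phi_{i,\epsilon}^{(a^*,b^*,c^*,d^*)}(s)\,\Gamma_i^{\epsilon}.
\]
The scalar piece $\varphi^{(a^*,b^*,c^*,d^*)}(s)$ is untouched by the conjugations and, after multiplication by $a^{-1}t^{1-N}$, produces the first term of the claimed formula. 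The remaining work is to track what happens to each $\Phi_{i,\epsilon}^{(a^*,b^*,c^*,d^*)}(s)\Gamma_i^{\epsilon}$ under conjugation by $\Delta^{(a^*,b^*,c^*,d^*)}(s)\,t^{-\rho^*\cdot\lambda}$, identifying $\Gamma_i$ with the shift $T_i\colon s_i\mapsto qs_i$ of \eqref{qtorlambda}.

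The next step is the telescoping computation. Using the shift rule $\Gamma_i^{\epsilon} h(s)=h(\Gamma_i^{\epsilon}s)\,\Gamma_i^{\epsilon}$ one gets
\[
t^{\rho^*\cdot\lambda}\Delta(s)^{-1}\Gamma_i^{\epsilon}\Delta(s)t^{-\rho^*\cdot\lambda}
=t^{-\epsilon\rho_i^*}\,\frac{\Delta(\Gamma_i^{\epsilon}s)}{\Delta(s)}\,T_i^{\epsilon}.
\]
Writing $\Delta^{(a^*,b^*,c^*,d^*)}(s)$ via the infinite products \eqref{deltabcd}, the ratio $\Delta(\Gamma_i^{+}s)/\Delta(s)$ telescopes to a finite rational expression in $s$: the numerator factor $(q/s_i^{2};q)_\infty$ contributes $(1-q^{-1}s_i^{-2})(1-s_i^{-2})$, each $(q/(u s_i);q)_\infty^{-1}$ with $u\in\{a^*,b^*,c^*,d^*\}$ contributes $(1-1/(us_i))^{-1}$, and the crossed factors with $k=i$ or $l=i$ collapse to finite products $\prod_{j\neq i}\tfrac{1-s_j^{\eta}/s_i}{1-s_j^{\eta}/(ts_i)}\cdot\tfrac{1-qs_j^{\eta}/(ts_i)}{1-qs_j^{\eta}/s_i}$ (and analogously, with an inversion of $s_i$, for $\epsilon=-1$). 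The same calculation gives $\Delta(\Gamma_i^{-}s)/\Delta(s)$ via $s_i\mapsto s_i^{-1}$ symmetry.

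The third step is the routine but crucial simplification. Multiplying $\Phi_{i,+}^{(a^*,b^*,c^*,d^*)}(s)$ (whose denominators $(1-s_i^{2})(1-qs_i^{2})$ and numerators $\prod_u(1-us_i)$ match precisely the telescoped factors from $\Delta$) by $\Delta(\Gamma_i^{+}s)/\Delta(s)$ produces massive cancellations: the $u$-factors cancel identically, the $(1-s_i^{\pm 2})$ factors convert to the rational prefactor appearing in the $T_i$ term, and the $j\neq i$ product collapses into two ranges $j<i$ and $j>i$ by pairing $\prod_{j<i}$ with $\prod_{j>i}$ terms coming respectively from the cross contributions. Carefully rewriting the result in terms of $s_i$ and $s_j$ yields the coefficient $\prod_{j=1}^{i-1}\tfrac{s_i-t^{-1}s_j}{s_i-s_j}\tfrac{qs_i-ts_j}{qs_i-s_j}\cdot T_i$ (up to the power of $t^{-\rho_i^*}$ which combines with $a^{-1}t^{1-N}$ to give exactly $1$ on this $T_i$-term, because $\rho_i^*=N-i+(\text{part absorbed into }\sigma)$ and the $\Phi_{i,+}$ symmetric factors provide the missing $t^{N-1}\!/a$). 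The parallel computation for $\Phi_{i,-}^{(a^*,b^*,c^*,d^*)}(s)\,\Delta(\Gamma_i^{-}s)/\Delta(s)$ produces the $T_i^{-1}$ term with the full rational prefactor displayed in the statement.

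The main obstacle is precisely this bookkeeping: getting the four groups (the $s_i^{\pm 2}$ factors, the four dual parameters $\{a^*,b^*,c^*,d^*\}$, the cross $\prod_{j\neq i}$ factors, and the overall $t$-powers $t^{1-N}/a$ and $t^{-\epsilon\rho_i^*}$) to balance into the asymmetric form of the theorem (in which the $T_i$ coefficient is a simple finite product whereas the $T_i^{-1}$ coefficient still carries the full ``fundamental domain'' of rational factors). Once this bookkeeping is completed for both $\epsilon=\pm 1$, the three contributions (scalar, $T_i$, $T_i^{-1}$) assemble into the claimed formula, proving Theorem~\ref{korpier}. A parallel, simpler computation confirms the dual identity $\cH_1^{(a^*,b^*,c^*,d^*)}(x;q,t)=\mathcal{D}_1^{(a,b,c,d)}$-image under the involution $*$, providing a useful consistency check.
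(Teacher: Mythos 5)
Your proposal is correct and follows essentially the same route as the paper's own proof in Appendix B: start from \eqref{PieriOpK} at $m=1$, invoke Lemma \ref{constlem} to split off the scalar piece $\varphi$, and compute the conjugation $\Delta^{-1}\Phi_{i,\epsilon}\Gamma_i^{\epsilon}\Delta$ explicitly via the telescoping of the infinite products, with the $t$-power bookkeeping closing up against $\theta_1^{*}=a\,t^{N-1}$ and $t^{-\epsilon\rho_i^{*}}$. The only cosmetic difference is that the paper states the conjugation lemma at parameters $(a,b,c,d)$ and dualizes at the very end, whereas you work with the starred parameters throughout.
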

\begin{proof}
We use the formula \eqref{PieriOpK} with $(a,b,c,d)\to (a^*,b^*,c^*,d^*)$. For simplicity, we work with the values $(a,b,c,d)$ and interchange them with their duals at the end.
We need the following.
\begin{lemma}
\begin{eqnarray*}
&&\Delta^{(a,b,c,d)}(x)^{-1} \Gamma_i \Delta^{(a,b,c,d)}(x)\\
&&\qquad =\frac{(1-\frac{1}{x_i^2})(1-\frac{1}{q x_i^2})}{(1-\frac{1}{ax_i})(1-\frac{1}{bx_i})(1-\frac{1}{cx_i})(1-\frac{1}{dx_i})}\prod_{j=1}^{i-1} \frac{1-\frac{q x_i}{t x_j}}{1-\frac{q x_i}{x_j}} \prod_{j=i+1}^N\frac{1-\frac{x_j}{x_i}}{1-\frac{x_j}{t x_i}} \prod_{j\neq i} \frac{1-\frac{1}{x_ix_j}}{1-\frac{1}{tx_ix_j}}\, \Gamma_i,\\
&&\Delta^{(a,b,c,d)}(x)^{-1} \Gamma_i^{-1} \Delta^{(a,b,c,d)}(x)\\
&&\qquad =\frac{(1-\frac{q}{ax_i})(1-\frac{q}{bx_i})(1-\frac{q}{cx_i})(1-\frac{q}{dx_i})}{(1-\frac{q}{x_i^2})(1-\frac{q^2}{x_i^2})}\prod_{j=1}^{i-1}\frac{1-\frac{x_i}{ x_j}}{1-\frac{x_i}{t x_j}} \prod_{j=i+1}^N\frac{1-\frac{qx_j}{ tx_i}}{1-\frac{qx_j}{x_i}} \prod_{j\neq i} \frac{1-\frac{q}{t x_ix_j}}{1-\frac{q}{x_ix_j}}\, \Gamma_i^{-1}.
\end{eqnarray*}
\end{lemma}
Using Lemma \ref{constlem}, the conjugation of the ${\mathcal D}_1^{(a,b,c,d)}$ operator boils down to that of the terms $\Phi_{i,\epsilon}^{(a,b,c,d)} \Gamma_i^\epsilon$. We have:
\begin{eqnarray*} 
&&\Delta^{(a,b,c,d)}(x)^{-1} \Phi_{i,+}^{(a,b,c,d)} \Gamma_i\Delta^{(a,b,c,d)}(x)=\frac{a b c d}{q} t^{2N-2} \left( \prod_{j=1}^{i-1} \frac{1-\frac{x_j}{tx_i}}{1-\frac{x_j}{x_i}}\frac{1-\frac{qx_i}{t x_j}}{1-\frac{qx_i}{x_j}}\right) \Gamma_i,\\
&&\Delta^{(a,b,c,d)}(x)^{-1} \Phi_{i,-}^{(a,b,c,d)} \Gamma_i^{-1}\Delta^{(a,b,c,d)}(x)\\
&&\qquad =t^{2N-2} \frac{(1-\frac{a}{x_i})(1-\frac{q}{ax_i})(1-\frac{b}{x_i})(1-\frac{q}{bx_i})(1-\frac{c}{x_i})(1-\frac{q}{cx_i})(1-\frac{d}{x_i})(1-\frac{q}{dx_i})}{(1-\frac{1}{x_i^2})(1-\frac{q}{x_i^2})^2(1-\frac{q^2}{x_i^2})}\\
&&\qquad \qquad \quad \times\, \left(\prod_{j=i+1}^{N} \frac{1-\frac{x_i}{t x_j}}{1-\frac{x_i}{ x_j}}\frac{1-\frac{qx_j}{t x_i}}{1-\frac{qx_j}{x_i}}\prod_{j\neq i} \frac{1-\frac{q}{t x_ix_j}}{1-\frac{q}{x_ix_j}}\frac{1-\frac{x_ix_j}{t}}{1-x_ix_j}\right)\Gamma_i^{-1}.
\end{eqnarray*}
We finally use the formula \eqref{PieriOpK} for $m=1$ and substitute the above results with the change 
$(a,b,c,d)\to (a^*,b^*,c^*,d^*)$, and the Theorem follows.
\end{proof}

\subsection{First Pieri rules for $\g$}\label{firpierapp}

We may now specialize the result of Theorem \ref{korpier} to $\g$, according to Table \ref{korspec}. We first treat the cases where $\g=D_N^{(1)},C_N^{(1)},A_{2N-1}^{(2)}$ for which the constant term $\varphi^{(\g^*)}(x)$ vanishes\footnote{ Here we show the explicit dependence on $\lL_i$ and $t$ (rather than $s_i=\lL_i\, t^{\xi_\g+N-i}$), as $\xi_\g$ varies with $\g$. This makes the limit $t\to\infty$ easier to follow.}:
\begin{eqnarray*}
\cH_1^{(D_N^{(1)})}&=&\sum_{i=1}^N\left\{\prod_{j<i} \frac{t^{i-j-1} \lL_j -\lL_i}{t^{i-j}\lL_j-\lL_i}\frac{t^{i-j+1} \lL_j -q \lL_i}{t^{i-j}\lL_j-q \lL_i} \, T_i \right. \\
&&\left. +\prod_{j>i} \frac{t^{j-i+1} \lL_i-q\lL_j}{t^{j-i}\lL_i-q\lL_j}\frac{t^{j-i-1} \lL_i-\lL_j}{t^{j-i}\lL_i-\lL_j} 
\prod_{j\neq i}\frac{1-t^{2N-i-j-1}\lL_i\lL_j}{1-t^{2N-i-j}\lL_i\lL_j} \frac{t^{2N-i-j+1}\lL_i\lL_j-q}{t^{2N-i-j}\lL_i \lL_j-q}\, T_i^{-1}\right\},\\
\cH_1^{(C_N^{(1)})}&=&\sum_{i=1}^N \left\{ \prod_{j<i} \frac{t^{j-i+1} \lL_i-\lL_j}{t^{j-i} \lL_i-\lL_j}\frac{\lL_j-qt^{j-i-1}\lL_i}{\lL_j-qt^{j-i}\lL_i} \,T_i \right. \\
&&+ \frac{1 -t^{N-i}\lL_i}{1-t^{N+1-i}\lL_i}\frac{t^{N+2-i}\lL_i-q}{t^{N+1-i}\lL_i-q} 
\prod_{j>i}\frac{t^{j-i+1} \lL_i-q\lL_j}{t^{j-i} \lL_i-q\lL_j}\frac{\lL_j-t^{j-i-1}\lL_i}{\lL_j-t^{j-i}\lL_i}\\
&&\qquad \times \left.
\prod_{j\neq i} \frac{1 -t^{2N+1-i-j}\lL_i \lL_j}{1-t^{2N+2-i-j}\lL_i\lL_j} \frac{t^{2N+3-i-j}\lL_i\lL_j-q}{t^{2N+2-i-j}\lL_i\lL_j-q}\, T_i^{-1}  \right\}.
\end{eqnarray*}
\begin{eqnarray*}
\cH_1^{(A_{2N-1}^{(2)})}&=&\sum_{i=1}^N\left\{ 
\prod_{j<i} \frac{t^{j-i+1}\Lambda_i -\Lambda_j}{t^{j-i}\Lambda_i -\Lambda_j} \frac{\lL_j-q t^{j-i-1}\lL_i}{\lL_j-q t^{j-i}\lL_i}\, T_i\right. \\
&&\left. +\frac{t^{2N-2i}\Lambda_i^2-1}{t^{2N+1-2i}\Lambda_i^2-1}
\frac{t^{2N+2-2i}\Lambda_i^2-q^2}{t^{2N+1-2i}\Lambda_i^2-q^2}
\prod_{j>i} 
\frac{t^{j-i+1}\Lambda_i-q \Lambda_j}{t^{j-i}\Lambda_i-q \Lambda_j}\frac{t^{j-i-1}\Lambda_i-\Lambda_j}{t^{j-i}\Lambda_i-\Lambda_j} \right. \\
&&\left. \times 
\prod_{j\neq i} 
\frac{t^{2N-i-j}\Lambda_i\Lambda_j-1}{t^{2N+1-i-j}\Lambda_i\Lambda_j-1}
\frac{t^{2N+2-i-j}\Lambda_i\Lambda_j-q}{t^{2N+1-i-j}\Lambda_i\Lambda_j-q}\, T_i^{-1}\right\}.
\end{eqnarray*}

We now treat the cases $\g=B_N^{(1)},D_{N+1}^{(2)},A_{2N}^{(2)}$ for which $\varphi^{(\g^*)}$ has a non-trivial contribution. We find:
\begin{eqnarray*}
&&\cH_1^{(B_N^{(1)})}= G^{(B_N^{(1)})}(\Lambda)+\sum_{i=1}^N\left\{ 
\prod_{j< i} \frac{t^{j-i+1} \lL_i-\lL_j}{t^{j-i} \lL_i-\lL_j}\frac{\lL_j-qt^{j-i-1}\lL_i}{ \lL_j-qt^{j-i}\lL_i}\,T_i\right.\\
&&+\frac{1 -t^{2N-2i}\lL_i^2}{1-t^{2N+1-2i}\lL_i^2}\,\frac{q -t^{2N-2i}\lL_i^2}{q -t^{2N+1-2i}\lL_i^2}\,\frac{t^{2N+2-2i}\lL_i^2-q^2}{t^{2N+1-2i}\lL_i^2-q^2}\,\frac{t^{2N+2-2i}\lL_i^2-q}{t^{2N+1-2i}\lL_i^2-q}\\
&&\times \left.\prod_{j>i} \frac{t^{j-i+1} \lL_i-q\lL_j}{t^{j-i} \lL_i-q\lL_j}\frac{\lL_j-t^{j-i-1}\lL_i}{\lL_j-t^{j-i}\lL_i} \prod_{j\neq i} \frac{1 -t^{2N-i-j}\lL_i \lL_j}{1-t^{2N+1-i-j}\lL_i \lL_j} \frac{t^{2N+2-i-j}\lL_i \lL_j-q}{t^{2N+1-i-j}\lL_i \lL_j-q} T_i^{-1} \right\},
\end{eqnarray*}
where
$$
G^{(B_N^{(1)})}(\lL)=-1+\frac{1}{2}\sum_{\epsilon=\pm 1}
\prod_{i=1}^N \frac{t^{N+\frac{3}{2}-i}\Lambda_i -\epsilon q^{1/2}}{t^{N+\frac{1}{2}-i}\Lambda_i -\epsilon q^{1/2}}\, \frac{t^{N-\frac{1}{2}-i}\Lambda_i -\epsilon q^{-1/2}}{t^{N+\frac{1}{2}-i}\Lambda_i -\epsilon q^{-1/2}}.
$$
\begin{eqnarray*}
&&\cH_1^{(D_{N+1}^{(2)})}=G^{(D_{N+1}^{(2)})}(\lL)+\sum_{i=1}^N\left\{ \prod_{j<i} \frac{t^{j-i+1}\lL_i-\lL_j}{t^{j-i}\lL_i-\lL_j}
\frac{qt^{j-i-1}\lL_i-\lL_j}{qt^{j-i}\lL_i-\lL_j}\, T_i\right.\\
&&+\frac{1-t^{N-i}\Lambda_i}{1-t^{N+1-i}\Lambda_i} \frac{q^{1/2}-t^{N-i}\Lambda_i}{q^{1/2}-t^{N+1-i}\Lambda_i}
 \frac{q-t^{N+2-i}\Lambda_i}{q-t^{N+1-i}\Lambda_i} \frac{q^{1/2}-t^{N+2-i}\Lambda_i}{q^{1/2}-t^{N+1-i}\Lambda_i}\\
 &&\left. \times
\prod_{j\neq i} \frac{1-t^{2N+1-i-j}\lL_i\lL_j}{1-t^{2N+2-i-j}\lL_i\lL_j} 
\frac{q-t^{2N+3-i-j}\lL_i\lL_j}{q-t^{2N+2-i-j}\lL_i\lL_j}
\prod_{j>i}\frac{ t^{j-i-1}\lL_i-\lL_j}{ t^{j-i}\lL_i-\lL_j} \frac{t^{j-i+1}\lL_i-q\lL_j}{t^{j-i}\lL_i-q\lL_j}
\, T_i^{-1}\right\},
\end{eqnarray*}
where 
$$
G^{(D_{N+1}^{(2)})}(\lL)=\frac{(1-t q^{-1/2})(1+q^{-1/2})}{1-t q^{-1}}\left\{ 
\prod_{i=1}^N \frac{q^{1/2}- \lL_it^{N+2-i}}{q^{1/2}- \lL_it^{N+1-i}}\frac{1-q^{1/2}\lL_it^{N-i}}{1-q^{1/2}\lL_it^{N+1-i}}-1\right\},
$$
and finally
\begin{eqnarray*}
&&\cH_1^{(A_{2N}^{(2)})}=G^{(A_{2N}^{(2)})}(\lL)+\sum_{i=1}^N\left\{ \prod_{j<i} \frac{t^{j-i+1}\lL_i-\lL_j}{t^{j-i}\lL_i-\lL_j}
\frac{qt^{j-i-1}\lL_i-\lL_j}{qt^{j-i}\lL_i-\lL_j}\, T_i\right.\\
&&+\frac{1-t^{N-i}\Lambda_i}{1-t^{N+1-i}\Lambda_i} \frac{q-t^{2N-2i+1}\Lambda_i^2}{q-t^{2N+2-2i}\Lambda_i^2}
 \frac{q^2-t^{2N+3-2i}\Lambda_i^2}{q^2-t^{2N+2-2i}\Lambda_i^2} \frac{q-t^{N+2-i}\Lambda_i}{q-t^{N+1-i}\Lambda_i}\\
 &&\left. \times
 \prod_{j>i}\frac{ t^{j-i-1}\lL_i-\lL_j}{ t^{j-i}\lL_i-\lL_j} \frac{t^{j-i+1}\lL_i-q\lL_j}{t^{j-i}\lL_i-q\lL_j}
\prod_{j\neq i} \frac{1-t^{2N+1-i-j}\lL_i\lL_j}{1-t^{2N+2-i-j}\lL_i\lL_j} 
\frac{q^2-t^{2N+3-i-j}\lL_i\lL_j}{q^2-t^{2N+2-i-j}\lL_i\lL_j}
\, T_i^{-1}\right\},
\end{eqnarray*}
where
\begin{eqnarray*}
G^{(A_{2N}^{(2)})}(\lL)&=& \sum_{\epsilon=\pm 1} \frac{(1-\epsilon t q^{-1/2})(1+\epsilon q^{-1/2})}{2(1-t q^{-1})} \left\{ \prod_{i=1}^N \frac{q^{1/2}-\epsilon \lL_it^{N+2-i}}{q^{1/2}-\epsilon \lL_it^{N+1-i}}\frac{\epsilon-q^{1/2}\lL_it^{N-i}}{\epsilon-q^{1/2}\lL_it^{N+1-i}}-1\right\}.
\end{eqnarray*}

\begin{remark}\label{remlimops}
When we consider the $q$-Whittaker limit $t\to \infty$, the constant pieces $G^{(\g)}(\lL)$ in the latter three cases $\g=B_N^{(1)},D_{N+1}^{(2)},A_{2N}^{(2)}$ provide an extra constant term
in the limiting Hamiltonians $H_1^{(\g)}$. These read respectively:
$$\lim_{t\to\infty} G^{(B_N^{(1)})}(\Lambda)=\frac{q^{-1}}{\lL_{N-1}\lL_N}-\frac{1+q^{-1}}{\lL_N^2} ,\ \lim_{t\to\infty} G^{(D_{N+1}^{(2)})}(\Lambda)=-\frac{1+q^{-1/2}}{\lL_N} ,\ \lim_{t\to\infty} G^{(A_{2N}^{(2)})}(\Lambda)=-\frac{1}{\lL_N} .
$$
\end{remark}

\section{Proof of Theorem \ref{lemone}
}\label{appC}
\label{appCtwo}

Here, we provide the proof of  Theorem \ref{lemone}, that the time translation operators $g^{(\g)}$ of Equation \eqref{variousg}
commute with the limiting first Pieri operator/Toda Hamiltonian  $H_1^{(\g)}$ of Equation \eqref{Hamil}:
\begin{equation} \label{commute} g^{(\g)} H_1^{(\g)}= H_1^{(\g)}g^{(\g)} .\end{equation}

We divide each Hamiltonian into bulk and boundary pieces, study how $g$ commutes with each,
and then sum up the contributions to prove the commutation \eqref{commute}.

\begin{lemma}
For all $\g$, 
\begin{eqnarray}
g_T\, \left(1-\lL^{-\al_a}\right) &=&   \left(1-q \lL^{-\al_a}\right) T^{-\al_a} g_T \quad (a=0,1,...,N-1),\label{lemc1}\\
g_\lL\,  \left(1-\lL^{-\al_a}\right) T_a&=& \left(1-q^{-1}\lL^{-\al_{a+1}}\right) T_{a+1}\, g_\lL \quad (a=1,2,...,N-2),\label{lemc2}\\
g_\lL\, \left(1-\lL^{-\al_a}\right) T_a^{-1}&=& \left(1-q^{-1}\lL^{-\al_{a-1}}\right)T_a^{-1}\, g_\lL\quad (a=1,2,...,N-1) ,\label{lemc3}
\end{eqnarray}
with the convention $\al_0=0$.
\end{lemma}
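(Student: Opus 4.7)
The plan is to reduce each of the three identities to one of two elementary conjugation facts---one for $g_T$ and one for $g_\lL$---and then to carry out direct algebra with the Heisenberg commutation $T_a\lL_a=q\lL_aT_a$.

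First, for (C.1), I would start from the Heisenberg-type identity
\[
g_T\,\lL_i\,g_T^{-1}=q^{1/2}\,\lL_i\, T_i,
\]
derived in the proof of Theorem~\ref{gfunctionA} via the Campbell--Hausdorff formula applied to the single-site Gaussian $e^{(\log T_i)^2/(2\log q)}$. Inverting and multiplying over the two indices appearing in $\al_a=e_a-e_{a+1}$ yields
\[
g_T\,\lL^{-\al_a}\,g_T^{-1}=q\,\lL^{-\al_a}\,T^{-\al_a},
\]
with $T^{-\al_a}:=T_a^{-1}T_{a+1}$. Since $g_T$ commutes with every $T_i$, this rearranges into (C.1); the boundary case $a=0$ is vacuous under the convention that the corresponding factors vanish.

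Next, for (C.2) and (C.3), I would exploit the factorisation $g_\lL=\prod_{b=1}^{N-1}(\lL^{-\al_b};q)_\infty^{-1}$ together with the elementary telescoping identity $(qu;q)_\infty=(u;q)_\infty/(1-u)$, which produces the shift rule
\[
T_a\,(u_b;q)_\infty^{-1}=\bigl(T_au_bT_a^{-1};q\bigr)_\infty^{-1}\,T_a,\qquad u_b:=\lL^{-\al_b}.
\]
Only the factors indexed by $b=a-1$ and $b=a$ are affected, since $T_au_bT_a^{-1}=u_b$ for all other $b$; for these two we have $T_au_aT_a^{-1}=q^{-1}u_a$ and $T_au_{a-1}T_a^{-1}=qu_{a-1}$. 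Collecting the resulting shift prefactors gives
\[
g_\lL\,T_a\,g_\lL^{-1}=(1-q^{-1}\lL^{-\al_a})\,(1-\lL^{-\al_{a-1}})^{-1}\,T_a,
\]
from which (C.2) follows by multiplying through on the right by $(1-\lL^{-\al_{a-1}})$ (which commutes with $g_\lL$) and relabeling. Identity (C.3) is obtained by the same argument with $T_a^{-1}$ in place of $T_a$: the signs of the shifts $u_a\mapsto qu_a$ and $u_{a-1}\mapsto q^{-1}u_{a-1}$ are reversed, producing $(1-q^{-1}\lL^{-\al_{a-1}})$ instead of $(1-q^{-1}\lL^{-\al_a})$.

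The main obstacle is purely bookkeeping: one has to track precisely the Heisenberg commutation $T_a\lL_a=q\lL_aT_a$ through each rearrangement---every such move absorbs or releases a power of $q$---and one has to verify at the endpoints of the allowed range of $a$ that the vanishing conventions on the boundary roots $\al_0,\al_N$ make the corresponding factors drop out consistently. No deeper input (cluster algebra, root system combinatorics, or Koornwinder duality) is needed beyond the two basic conjugation formulas, so once those are established the three identities follow by immediate algebra.
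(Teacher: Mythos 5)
The paper states this lemma without proof, so there is no argument of record to compare against; your reduction to the two single-site conjugation facts $g_T\,\lL_i\,g_T^{-1}=q^{1/2}\lL_iT_i$ and the $q$-Pochhammer telescoping $(qu;q)_\infty=(u;q)_\infty/(1-u)$ is exactly the computation the authors rely on implicitly, and all of your $q$-bookkeeping (the shifts $T_a\lL^{-\al_a}T_a^{-1}=q^{-1}\lL^{-\al_a}$, $T_a\lL^{-\al_{a-1}}T_a^{-1}=q\lL^{-\al_{a-1}}$, and the resulting $g_\lL T_a g_\lL^{-1}=(1-q^{-1}\lL^{-\al_a})(1-\lL^{-\al_{a-1}})^{-1}T_a$) checks out.

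The one place you should not have waved your hands is the final ``rearrangement'' onto the printed formulas, because the identities you correctly derive do \emph{not} literally coincide with \eqref{lemc1} and \eqref{lemc2} as stated. Your computation gives
$g_T\,(1-\lL^{-\al_a})=(1-q\,\lL^{-\al_a}T^{-\al_a})\,g_T$, whereas \eqref{lemc1} places $T^{-\al_a}$ outside the parenthesis; since $T^{-\al_a}\lL^{-\al_a}=q^{2}\lL^{-\al_a}T^{-\al_a}$, the two right-hand sides genuinely differ (and the printed version is false, e.g. it fails already on the constant term $1$ versus $T^{-\al_a}$). Likewise your relabeling of $g_\lL(1-\lL^{-\al_{a-1}})T_a=(1-q^{-1}\lL^{-\al_a})T_a\,g_\lL$ produces $T_{a+1}$, not $T_a$, on the left-hand side of \eqref{lemc2}; conjugation by the $\lL$-dependent factor $g_\lL$ cannot change the index of a translation operator, so the printed $T_a$ must be a typo for $T_{a+1}$. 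You should state explicitly that you are proving the corrected identities: they are the ones actually invoked later in Appendix C (e.g. in deriving \eqref{stepzero} and \eqref{comtinv}), so your versions are the substantively correct ones, but asserting that they ``rearrange into'' the printed statements conceals a real non-commutativity. Identity \eqref{lemc3} and the boundary case $a=0$ are handled correctly as you say.
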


\subsection{Bulk and boundary terms}
Each of the first Pieri  operators \eqref{Hamil} is the sum of a bulk and  boundary piece, where the bulk piece has the same form for all $\g$:
$$H_1^{[m]}:=\sum_{a=1}^m \left(1-\frac{\lL_a}{\lL_{a-1}}\right)\, T_a+\left(1-\frac{\lL_{a+1}}{\lL_{a}}\right)\, \frac{1}{T_a},$$
with the convention that $\frac{1}{\lL_0}=0$, for suitable value of $m= m_\g$: $m_\g=N-2$ for $\g=D_N^{(1)},B_N^{(1)},A_{2N}^{(2)}$, and $m_\g=N-1$ for all other cases. 

The operators $g^{(\g)}$ of \eqref{variousg} also have some common structures. For 
$\g=B_N^{(1)},A_{2N-1}^{(2)},D_{N+1}^{(2)}$, 
$g=g_T^{\frac{1}{2}} \al g_T^{\frac{1}{2}} g_\lL \beta $ where $\al,\beta$ are functions of $\lL_N$ only. For
$\g=D_N^{(1)}$  $g=g_T g_\lL \beta$ with $\beta$  a function of $\lL_{N-1}\lL_N$ only. 
The cases $\g=C_N^{(1)},A_{2N}^{(2)}$ are different, and have a $g$ operator of the form $g=g_Tg_\lL \al g_Tg_\lL\beta$, where $\al,\beta$ are functions of $\lL_N$ only.

\begin{lemma}\label{lembulk}
For all $\g$ and $m\leq m_\g$, and $t_1=2$ for $\g=C_N^{(1)},A_{2N}^{(2)}$, $t_1=1$ otherwise,
$$
g^{(\g)}\, H_1^{[m]} \, (g^{(\g)})^{-1} =(g_T\,g_\lL)^{t_1} \, H_1^{[m]} \, (g_T\,g_\lL)^{-t_1} .
$$
\end{lemma}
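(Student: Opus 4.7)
The plan is to show case by case that, once one extracts the factor $(g_T g_\Lambda)^{t_1}$ from $g^{(\g)}$, the remaining ``boundary'' pieces of $g^{(\g)}$ commute with the bulk operator $H_1^{[m]}$ at every stage of the conjugation chain, so that they cancel against their inverses and leave only the adjoint action of $(g_T g_\Lambda)^{t_1}$. The central structural observation, valid for all $\g$, is that $H_1^{[m]}$ is built from $T_1,\dots,T_m$ and $\Lambda_1,\dots,\Lambda_{m+1}$ with $m\leq m_\g\leq N-1$; in particular, no $T_a$ appearing in $H_1^{[m]}$ shifts $\Lambda_N$ (and $T_N$ never appears). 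By inspection of \eqref{variousg}, each boundary factor occurring in $g^{(\g)}$ is a function of $\Lambda_N$ alone (or of $\Lambda_{N-1}\Lambda_N$ in the $D_N^{(1)}$ case, where $m_\g=N-2$, so that $T_{N-2}$ does not shift $\Lambda_{N-1}$ either). Such functions therefore commute with $H_1^{[m]}$ as operators on $\mathbb T_\Lambda$.

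For the ``simple'' cases $\g=D_N^{(1)},A_{2N-1}^{(2)},D_{N+1}^{(2)}$ one has $t_1=1$ and $g^{(\g)}= g_T\,g_\Lambda\, R^{(\g)}$ with $R^{(\g)}$ a scalar function of $\Lambda_N$ or $\Lambda_{N-1}\Lambda_N$. The observation above gives $R^{(\g)} H_1^{[m]} (R^{(\g)})^{-1}=H_1^{[m]}$ at once, and the lemma follows. For $\g=C_N^{(1)}$ one has $t_1=2$ and $g^{(\g)}=(g_T g_\Lambda)^2 h_{\alpha_N^*}$, again with $h_{\alpha_N^*}$ a function of $\Lambda_N$, so the same argument applies directly since $m_\g=N-1$ and $T_{N-1}$ does not shift $\Lambda_N$.

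The genuinely interleaved cases are $\g=B_N^{(1)}$ and $\g=A_{2N}^{(2)}$, where $g^{(\g)}$ has the form $g_T^{1/2} h' g_T^{1/2} h'' g_\Lambda$ or $g_T g_\Lambda h' g_T g_\Lambda h''$. Here I would proceed by successively peeling off the outermost factor. At each step the conjugated inner piece is shown to depend only on generators with indices $\leq m+1\leq N-1$: this uses that conjugation by $g_T^{1/2}$ only rescales $\Lambda_a\mapsto q^{1/4}\Lambda_a T_a^{1/2}$ and conjugation by $g_\Lambda$ only exchanges neighbouring $T_a$'s via \eqref{lemc2}--\eqref{lemc3}. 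Neither operation can introduce dependence on $\Lambda_N$ (or on $T_N$) when applied to an operator already supported on indices $\leq N-1$. Consequently each $h',h''$ commutes with the partially conjugated bulk at the relevant stage, cancels with its inverse on the right, and one recovers $(g_T g_\Lambda)^{t_1} H_1^{[m]} (g_T g_\Lambda)^{-t_1}$.

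The main obstacle is bookkeeping in the compound $B_N^{(1)}$ and $A_{2N}^{(2)}$ cases: one must carefully track the order in which fractional powers of $g_T$ act on $g_\Lambda$-conjugated versions of $H_1^{[m]}$ and verify, term by term, that no intermediate expression acquires a $\Lambda_N$- or $T_N$-factor that would spoil commutation with the boundary $q$-Pochhammers. This verification is direct but tedious, and proceeds by applying the exchange relations \eqref{lemc1}--\eqref{lemc3} in each half-power layer, using the fact that $[T_a,\Lambda_N]=0$ for all $a\leq N-1$. Once this case-by-case check is complete, the identity $g^{(\g)} H_1^{[m]} (g^{(\g)})^{-1}=(g_T g_\Lambda)^{t_1} H_1^{[m]}(g_T g_\Lambda)^{-t_1}$ follows uniformly for all $\g$ and all $m\leq m_\g$.
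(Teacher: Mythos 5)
Your proof is correct and follows essentially the same route as the paper: extract $(g_T\,g_\lL)^{t_1}$, observe that the remaining factors of $g^{(\g)}$ are functions of $\Lambda_N$ (or of $\Lambda_{N-1}\Lambda_N$) commuting with the bulk, and note that in the interleaved cases $B_N^{(1)},A_{2N}^{(2)}$ the inner conjugation only raises indices to $m+1\leq N-1$, which is precisely why $m_\g=N-2$ there. One justification is loosely worded — conjugation by $g_\lL$ \emph{can} introduce $\Lambda_N$ when applied to a term containing $T_{N-1}$, so ``supported on indices $\leq N-1$'' is not literally preserved — but your preceding, correct bound that the conjugated bulk involves only indices $\leq m+1\leq N-1$ is what actually carries the argument, so the conclusion stands.
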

\begin{proof}
The only subtleties arise in the cases $\g=B_N^{(1)},A_{2N}^{(2)}$. When $\g=B_N^{(1)}$, with $g=g_T^{\frac{1}{2}} \al g_T^{\frac{1}{2}} g_\lL \beta$, the action of $g_T^{\frac{1}{2}} g_\lL \beta$ on $H_1^{[m]}$ creates a term proportional to $T_{m+1}$,
as a consequence of 
\begin{equation}\label{stepzero}
g_T\, g_\lL\, \left(1-\frac{\lL_a}{\lL_{a-1}}\right)\, T_a =\left(T_a-\frac{\lL_{a+1}}{\lL_a}\,T_{a+1}\right)\, g_Tg_\lL
\end{equation}
for $a=m$.
This term commutes with $\al$ only if $m+1\leq N-1$, hence we set $m_\g=N-2$. 
Similarly, when $\g=A_{2N}^{(2)}$, with $g=g_Tg_\lL \al g_Tg_\lL\beta$, the action of 
$g_Tg_\lL\beta$ on $H_1^{[m]}$ creates a term proportional to $T_{m+1}$ which commutes with $\al$ only if $m+1\leq N-1$, hence we set $m_\g=N-2$ as well. 
\end{proof}
\begin{lemma}\label{bulklem}
We have the commutation relations for $m=1,2,...,N-1$:
$$g_T\,g_\lL \, H_1^{[m]} =\left\{H_1^{[m]} +\frac{\lL_{m+1}}{\lL_m}\,\left(\frac{1}{T_m}- T_{m+1}\right)\right\}\, g_T \,g_\lL .$$
\end{lemma}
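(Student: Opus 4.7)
The plan is to compute the adjoint action of $g_Tg_\Lambda$ on each summand of $H_1^{[m]}$ and then observe that the resulting expressions telescope into $H_1^{[m]}$ plus a boundary term at position $m$. I expect no serious obstacle: the whole lemma is a slight refinement of the argument already used in the proof of Theorem~\ref{commutationtypeA}, truncated at the index $m$ instead of $N$.

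First I will handle the positive summands. The computation in the proof of Theorem~\ref{commutationtypeA} already establishes
$$g_Tg_\Lambda(1-\Lambda^{-\alpha_{a-1}})T_a(g_Tg_\Lambda)^{-1} = T_a - \Lambda^{-\alpha_a}T_{a+1},\qquad a\in[1,N-1],$$
with the convention $\Lambda^{-\alpha_0}=0$ absorbing the $a=1$ endpoint. For the negative summands I plan the analogous two-step calculation: applying (lemc3) moves $g_\Lambda$ past $(1-\Lambda^{-\alpha_a})T_a^{-1}$ and yields $(1-q^{-1}\Lambda^{-\alpha_{a-1}})T_a^{-1}g_\Lambda$, after which (lemc1) lets $g_T$ commute through, converting the factor $q^{-1}\Lambda^{-\alpha_{a-1}}$ into $\Lambda^{-\alpha_{a-1}}T^{-\alpha_{a-1}}$ and producing
$$g_Tg_\Lambda(1-\Lambda^{-\alpha_a})T_a^{-1}(g_Tg_\Lambda)^{-1} = T_a^{-1}-\Lambda^{-\alpha_{a-1}}T_{a-1}^{-1},$$
again with the same convention disposing of the $a=1$ case.

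Summing these two identities for $a=1,\ldots,m$ gives two telescoping sums. The positive one collapses to $\sum_{a=1}^m(1-\Lambda^{-\alpha_{a-1}})T_a-\Lambda^{-\alpha_m}T_{m+1}$, while the negative one collapses to $\sum_{a=1}^m(1-\Lambda^{-\alpha_a})T_a^{-1}+\Lambda^{-\alpha_m}T_m^{-1}$, the opposite endpoint at $a=1$ again vanishing thanks to $\Lambda^{-\alpha_0}=0$. Combining,
$$g_Tg_\Lambda H_1^{[m]}(g_Tg_\Lambda)^{-1} = H_1^{[m]} + \Lambda^{-\alpha_m}\!\left(T_m^{-1}-T_{m+1}\right) = H_1^{[m]}+\tfrac{\Lambda_{m+1}}{\Lambda_m}\!\left(T_m^{-1}-T_{m+1}\right),$$
and multiplying both sides on the right by $g_Tg_\Lambda$ yields the claimed identity.

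The hard part, if any, is purely bookkeeping: one must verify that the endpoint $a=1$ in both sums collapses correctly under the convention $\Lambda^{-\alpha_0}=0$ (so that no spurious $T_0^{-1}$ term survives), and that the residual boundary contributions on the positive and negative sides combine into the single factor $\Lambda^{-\alpha_m}(T_m^{-1}-T_{m+1})$ advertised in the lemma. This ``leakage'' at the right boundary, absent from the full sum treated in Theorem~\ref{commutationtypeA}, is precisely what will prevent $g_Tg_\Lambda$ from commuting with the truncated Hamiltonian and is compensated for case-by-case in the remainder of the appendix by the algebra-specific boundary terms in $g^{(\g)}$.
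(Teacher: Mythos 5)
Your proof is correct and follows essentially the same route as the paper: the paper's proof consists precisely of summing the two one-term conjugation identities (its Eqs.~\eqref{stepzero} and \eqref{comtinv}, derived from (\ref{lemc1})--(\ref{lemc3})) over $a=1,\dots,m$ and letting the sums telescope, with the convention $\Lambda^{-\alpha_0}=0$ killing the left endpoint and leaving exactly the boundary term $\Lambda^{-\alpha_m}(T_m^{-1}-T_{m+1})$. Your bookkeeping of the telescoping and of the surviving right-boundary contributions is accurate.
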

\begin{proof}
We simply sum over the relevant values of $a$ the commutation relations \eqref{stepzero}, as well as the following:
\begin{equation}\label{comtinv} g_T\, g_\lL\, \left(1-\frac{\lL_{a+1}}{\lL_{a}}\right)\, \frac{1}{T_a} =\left( \frac{1}{T_a}-\frac{\lL_{a}}{\lL_{a-1}}\, \frac{1}{T_{a-1}}\right)\, g_Tg_\lL ,
\end{equation}
obtained by combining (\ref{lemc1}-\ref{lemc3}).
\end{proof}

The cases $\g=C_N^{(1)}$ and $A_{2N}^{(1)}$ are special, as we will have to use Lemma \ref{bulklem} {\it twice}. This is deferred to Section~\ref{CAsecs} below.

For all the other cases, we apply Lemmas \ref{lembulk} and \ref{bulklem} to commute $g$ through the relevant bulk contributions to $H_1^{(\g)}$ corresponding to $a=1,2,..,m_\g$. The remaining terms of $H_1^{(\g)}$ are the ``boundary" terms, corresponding to $m_\g<a\leq N$ and possibly to constant terms independent of $T_a^{\pm1}$. We address them individually in the following Sections \ref{D1sec} through \ref{D2sec}. In all cases, the Lemma \ref{lemone} follows from summing all contributions
from the bulk $a=1,2,...,m_\g$ and the boundary $a=m_\g+1,...,N$ plus constant terms.

\subsection{Case $\g=D_{N}^{(1)}$}\label{D1sec} Recall that $g=g_Tg_\lL\prod_{n=0}^\infty \left(1-\frac{q^n}{\lL_{N-1}\lL_N}\right)^{-1}$, and that $m_\g=N-2$.
For $a=N-1$, we have:
\begin{eqnarray*}
&&g\left(1-\frac{\lL_{N-1}}{\lL_{N-2}}\right) T_{N-1}= \left( T_{N-1} -\frac{\Lambda_{N}}{\Lambda_{N-1}}\, T_{N}-\frac{1}{\Lambda_{N-1}\Lambda_{N}} T_{N}^{-1}+\frac{1}{\Lambda_{N-1}^2} T_{N-1}^{-1}\right)\, g, \\
&&g\, \left(1-\frac{\Lambda_{N}}{\Lambda_{N-1}}\right)\left(1-\frac{1}{\Lambda_{N-1}\Lambda_{N}}\right) T_{N-1}^{-1}=
\left(T_{N-1}^{-1}-\frac{\Lambda_{N-1}}{\Lambda_{N-2}} \,T_{N-2}^{-1}\right)\, g,
\end{eqnarray*}
and for $a=N$:
\begin{eqnarray*}
&&g\, \left(1-\frac{\Lambda_{N}}{\Lambda_{N-1}}\right) T_{N}=\left( T_{N} -\frac{1}{\Lambda_{N-1}\Lambda_{N}} T_{N-1}^{-1}\right)\, g,\\
&&g\,\left(1-\frac{1}{\Lambda_{N-1}\Lambda_{N}}\right)\, T_N^{-1}=\left(T_{N}^{-1}-\frac{\Lambda_{N}}{\Lambda_{N-1}}\, T_{N-1}^{-1}\right)\, g.
\end{eqnarray*}
Eq. \eqref{commute} follows by summing all bulk and boundary contributions.

\subsection{Case $\g=B_{N}^{(1)}$}\label{B1sec} Recall that $g=g_1\,g_2$, $g_1=g_T^{\frac{1}{2}} \al$, $g_2= g_1 g_\lL$ with $\al= \prod_{n=0}^\infty \left(1-\frac{q^n}{\lL_N^2}\right)^{-1}$, and that $m_\g=N-2$.
The terms $a=N-1,N$ and the constant term read
\begin{eqnarray*}
&&g\, \left(1-\frac{\Lambda_{N}}{\Lambda_{N-1}}\right) \,T_{N-1}^{-1}=T_{N-1}^{-1}\, g_T^{\frac{1}{2}} \,\al g_T^{\frac{1}{2}} \, 
 \left(1-\frac{\Lambda_{N-1}}{\Lambda_{N-2}}\right) \al \,g_{\Lambda}=\left(T_{N-1}^{-1}-\frac{\Lambda_{N-1}}{\Lambda_{N-2}}T_{N-2}^{-1}\right)\, g,\\
&&g\, \left(1-\frac{\Lambda_{N-1}}{\Lambda_{N-2}}\right) \,T_{N-1}
=T_{N-1}\, g_1 g_T^{\frac{1}{2}}\, \left(1-\frac{\Lambda_{N}}{\Lambda_{N-1}}\right) \,\al g_{\Lambda}
=g_1\, \left(T_{N-1}-q^{-1}\frac{\Lambda_{N}}{\Lambda_{N-1}}T_N^{\frac{1}{2}} T_{N-1}^{\frac{1}{2}}\right) \, g_2,\\
&&g\, \left(1-\frac{\Lambda_{N}}{\Lambda_{N-1}}\right) \,T_N=g_{1}\, 
\left(1-\frac{1}{\Lambda_{N}^2}T_N^{-1}\right)\left(1-q^{-2}\frac{1}{\Lambda_{N}^2}T_N^{-1}\right)\, T_N\,g_{2}, \\
&&g\,\left(1-\frac{1}{\Lambda_{N}^2}\right)\left(1-q^{2}\frac{1}{\Lambda_{N}^2}\right)\,T_N^{-1}
=g_1\, \left(T_N^{-1}-q^{-1}\frac{\Lambda_{N}}{\Lambda_{N-1}}T_{N-1}^{-\frac{1}{2}}T_{N}^{-\frac{1}{2}}\right)\, g, \\
&&g\, \left(\frac{q^{-2}}{\Lambda_{N-1}\Lambda_N}-\frac{1+q^{-2}}{\Lambda_{N}^2}\right)=
 g_{1} \left(\frac{q^{-1}}{\Lambda_{N-1}\Lambda_N}T_{N-1}^{-\frac{1}{2}}T_{N}^{-\frac{1}{2}}-q^2\frac{1+q^{-2}}{\Lambda_{N}^2}T_N^{-1}\right)\,g_{2} .
\end{eqnarray*}
Summing these five terms gives
\begin{eqnarray*}
&&g\left\{\left(1-\frac{\Lambda_{N}}{\Lambda_{N-1}}\right) \,T_{N-1}^{-1}+\left(1-\frac{\Lambda_{N-1}}{\Lambda_{N-2}}\right) \,T_{N-1}+ \left(1-\frac{\Lambda_{N}}{\Lambda_{N-1}}\right) \,T_N \right.\\
&&\qquad\qquad +\left.
\left(1-\frac{1}{\Lambda_{N}^2}\right)\left(1-q^{2}\frac{1}{\Lambda_{N}^2}\right)\,T_N^{-1}+
\frac{q^{-2}}{\Lambda_{N-1}\Lambda_N}-\frac{1+q^{-2}}{\Lambda_{N}^2} \right\}\\
&&=\left\{ T_{N-1}+\left(1-\frac{\Lambda_N}{\Lambda_{N-1}}\right)T_N-\frac{\lL_{N-1}}{\lL_{N-2}}T_{N-2}^{-1}+\left(1-\frac{\Lambda_N}{\Lambda_{N-1}}\right)T_{N-1}^{-1}\right. \\
&&\qquad\qquad  +\left. \left(1-\frac{1}{\Lambda_{N}^2}\right)\left(1-q^{2}\frac{1}{\Lambda_{N}^2}\right)\,T_N^{-1}+\frac{q^{-2}}{\Lambda_{N-1}\Lambda_N}-\frac{1+q^{-2}}{\Lambda_N^2}
 \right\}\, g.
\end{eqnarray*}
Eq. \eqref{commute} follows by summing all bulk and boundary contributions.

\subsection{Case $\g=A_{2N-1}^{(2)}$}\label{A2osec} Recall that $g=g_Tg_\lL\al$, with $\al=\prod_{n=0}^\infty \left(1-\frac{q^{2n}}{\lL_N^2}\right)^{-1}$, and $m_\g=N-1$. For the two boundary terms for $a=N$ we have:
\begin{eqnarray*}
g\,\left(1-\frac{\Lambda_N}{\Lambda_{N-1}}\right)T_N&=&g_T\, \left(1-q^{-2}\frac{1}{\Lambda_N^2}\right) T_N\, g_\lL\,\al=\left(T_N-\frac{1}{\Lambda_N^2}T_{N}^{-1}\right)\,g,\\
g\,\left(1-\frac{1}{\Lambda_{N}^2}\right)T_N^{-1}\, g^{-1}&=&g_T\,\left(1-q^{-1}\frac{\Lambda_{N}}{\Lambda_{N-1}}\right)T_N^{-1} \,g_\lL\,\al=
\left(T_N^{-1}-\frac{\Lambda_{N}}{\Lambda_{N-1}}T_{N-1}^{-1}\right)\, g.
\end{eqnarray*}
Eq. \eqref{commute} follows by summing all bulk and boundary contributions.

\subsection{Case $\g=D_{N+1}^{(2)}$}\label{D2sec} Recall that $g=g_Tg_\lL\al$, with $\al=\prod_{n=0}^\infty \left(1-\frac{q^{\frac{n}{2}}}{\lL_N}\right)^{-1}$, and $m_\g=N-1$. For the two boundary terms for $a=N$ and the constant term we have:
\begin{eqnarray*}
&&g\,\left(1-\frac{\Lambda_N}{\Lambda_{N-1}}\right)T_N
=\left(T_N-\frac{1+q^{-\frac{1}{2}}}{\Lambda_N}+\frac{q}{\Lambda_N^{2}}T_N^{-1}\right)\, g,\\
&&g\,\left(1-\frac{1}{\Lambda_{N}}\right)\left(1-\frac{q^\frac{1}{2}}{\Lambda_{N}}\right)T_N^{-1}=
g_T\,\left(1-q^{-1}\frac{\Lambda_{N}}{\Lambda_{N-1}}\right)T_N^{-1} \, g_\lL\,\al=
\left(T_N^{-1}-\frac{\Lambda_{N}}{\Lambda_{N-1}}T_{N-1}^{-1}\right)\, g,\\
&&g\,\left(-\frac{1+q^{-\frac{1}{2}}}{\Lambda_{N}}\right)= -\frac{1+q^\frac{1}{2}}{\Lambda_{N}}T_N^{-1}\, g.
\end{eqnarray*}
Eq. \eqref{commute} follows by summing all bulk and boundary contributions.

\subsection{Cases $\g=C_{N}^{(1)},A_{2N}^{(2)}$} \label{CAsecs}
Recall the general structure of the operator $g=g_1 g_2$ with $g_1=g_T g_\lL \al $ and $g_2=g_T g_\lL \beta $, where $\al,\beta$ are functions of $\lL_N$ only. We note that $g_2$ is the {\it same} for both cases, with $\beta=\prod_{n=0}^\infty \left(1-\frac{q^n}{\lL_N}\right)^{-1}$. We note also that $H_1^{[N-1]}$ is the same for both cases, and Lemma \ref{bulklem} gives the commutation of $g_2$ through $H_1^{[N-1]}$:
\begin{equation}
\label{bulkCA}
g_2\,  H_1^{[N-1]}= \left\{ H_1^{[N-1]} +\frac{\lL_{N}}{\lL_{N-1}}\,\left(\frac{1}{T_{N-1}}- T_{N}\right)\right\}\, g_2.
\end{equation}
We now need to add the boundary and constant terms.
For $\g=C_N^{(1)}$ we have for $a=N$ (and no constant term):
\begin{eqnarray*}
g_{2}\, \left(1-\frac{\Lambda_{N}}{\Lambda_{N-1}}\right) T_N=
\left(T_N-\frac{q^{-\frac{1}{2}}}{\Lambda_{N}}\right)\, g_{2},\ \
g_{2}\, \left(1-\frac{1}{\Lambda_{N}}\right) T_N^{-1}= 
\left(T_N^{-1}-\frac{\Lambda_{N}}{\Lambda_{N-1}}T_{N-1}^{-1}\right)\, g_{2}.
\end{eqnarray*}
For $\g=A_{2N}^{(2)}$ we have for $a=N$ and the constant term:
\begin{eqnarray*}
g_2\, \left( 1- \frac{\lL_N}{\lL_{N-1}}\right)T_N &=&g_T\,\left(1- \frac{q^{-1}}{\lL_N}\right)T_N\,g_\lL\,\beta
=\left(T_N-\frac{q^{-\frac{1}{2}}}{\lL_N}\right)\, g_2,\\
g_2\, \left( 1- \frac{1}{\lL_N}\right)T_N^{-1}&=&\left(T_N^{-1}-\frac{\lL_N}{\lL_{N-1}}T_{N-1}^{-1}\right)\, g_2,\\
g_2\, \left( 1- \frac{1}{\lL_N}\right) &=&\left(1-  \frac{q^\frac{1}{2}}{\lL_N}T_N^{-1}\right)\, g_2.
\end{eqnarray*}
Adding these to the bulk contributions \eqref{bulkCA}, we deduce the following commutations:
\begin{eqnarray}
\g=C_N^{(1)}: && g_2 H_1=\left\{ H_1+ \frac{1}{\lL_N}(T_N^{-1}-q^{-\frac{1}{2}})\right\} g_2,\label{g2C}\\
\g=A_{2N}^{(2)}:&& g_2 H_1=\left\{ H_1+ \frac{1-q^\frac{1}{2}}{\lL_N}(T_N^{-1}-q^{-\frac{1}{2}})\right\} g_2.\label{g2A}
\end{eqnarray}
We must now commute $g_1$ through this. For both cases, we split again $H_1$ into a bulk piece $H_1^{[N-1]}$
and boundary pieces and constant terms. For the bulk contribution, we use again Lemma \ref{bulklem} to write in both cases
\begin{equation}
\label{bulk2CA}
g_1\,  H_1^{[N-1]}= \left\{ H_1^{[N-1]} +\frac{\lL_{N}}{\lL_{N-1}}\,\left(\frac{1}{T_{N-1}}- T_{N}\right)\right\}\, g_1.
\end{equation}
Then for $\g=C_N^{(1)}$ and $a=N$:
\begin{eqnarray*}
g_{1}\, \left(1-\frac{\Lambda_{N}}{\Lambda_{N-1}}\right) T_N&=&
T_N\, g_{1},\\
g_{1}\, \left(1-\frac{1}{\Lambda_{N}}\right) T_N^{-1}&=& \left(1-\frac{q^\frac{1}{2}}{\Lambda_{N}}T_N^{-1}\right) \left(T_N^{-1}-\frac{\Lambda_{N}}{\Lambda_{N-1}}T_{N-1}^{-1}\right)\, g_{1},
\end{eqnarray*}
and for $\g=A_{2N}^{(2)}$, $a=N$ and the constant term:
\begin{eqnarray*}
g_1\, \left( 1- \frac{\lL_N}{\lL_{N-1}}\right)T_N&=&g_T\,\left(1- \frac{q^{-\frac{1}{2}}}{\lL_N}\right)T_N\, g_\lL\,\al
=\left(T_N-\frac{1}{\lL_N}\right)\, g_1,\\
g_1\, \left( 1- \frac{q^\frac{1}{2}}{\lL_N}\right)T_N^{-1}&=&\left(T_N^{-1}-\frac{\lL_N}{\lL_{N-1}}T_{N-1}^{-1}\right)\, g_1,\\
g_1\, \left( 1- \frac{q^{-\frac{1}{2}}}{\lL_N}\right) &=&\left(1- \frac{1}{\lL_N}T_N^{-1}\right)\, g_1.
\end{eqnarray*}
Summing these with the bulk contributions \eqref{g2C}and \eqref{g2A} respectively, we arrive at
\begin{eqnarray}
\g=C_N^{(1)}: && g_1 \,\left\{ H_1+ \frac{1}{\lL_N}(T_N^{-1}-q^{-\frac{1}{2}})\right\}=H_1\, g_1,\label{gfin2C}\\
\g=A_{2N}^{(2)}:&& g_1\,\left\{ H_1+ \frac{1-q^{\frac{1}{2}}}{\lL_N}(T_N^{-1}-q^{-\frac{1}{2}})\right\}=H_1\, g_1.\label{gfin2A}
\end{eqnarray}
Finally, Lemma \ref{lemone} follows for $\g=C_N^{(1)},A_{2N}^{(2)}$ by combining the commutation relations 
(\ref{g2C}-\ref{g2A}) and (\ref{gfin2C}-\ref{gfin2A}). In all cases, Eq. \eqref{commute} follows by summing all bulk and boundary contributions.

\bibliographystyle{alpha}

\begin{thebibliography}{HKO{\etalchar{+}}02}

\bibitem[BFN19]{BFN}
Alexander Braverman, Michael Finkelberg, and Hiraku Nakajima.
\newblock Coulomb branches of {$3d$} {$\mathcal N=4$} quiver gauge theories and
  slices in the affine {G}rassmannian.
\newblock {\em Adv. Theor. Math. Phys.}, 23(1):75--166, 2019.
\newblock With two appendices by Braverman, Finkelberg, Joel Kamnitzer, Ryosuke
  Kodera, Nakajima, Ben Webster and Alex Weekes.

\bibitem[BZ05]{BZ}
Arkady Berenstein and Andrei Zelevinsky.
\newblock Quantum cluster algebras.
\newblock {\em Adv. Math.}, 195(2):405--455, 2005.

\bibitem[Cha02]{Chalykh}
Oleg~A. Chalykh.
\newblock Macdonald polynomials and algebraic integrability.
\newblock {\em Adv. Math.}, 166(2):193--259, 2002.

\bibitem[Che95a]{CheredDAHA}
Ivan Cherednik.
\newblock Double affine {H}ecke algebras and {M}acdonald's conjectures.
\newblock {\em Ann. of Math. (2)}, 141(1):191--216, 1995.

\bibitem[Che95b]{Cherednik95}
Ivan Cherednik.
\newblock Macdonald's evaluation conjectures and difference {F}ourier
  transform.
\newblock {\em Invent. Math.}, 122(1):119--145, 1995.

\bibitem[Che97]{cheredMehta}
Ivan Cherednik.
\newblock Difference {M}acdonald-{M}ehta conjecture.
\newblock {\em Internat. Math. Res. Notices}, (10):449--467, 1997.

\bibitem[Che05]{Cheredbook}
Ivan Cherednik.
\newblock {\em Double affine {H}ecke algebras}, volume 319 of {\em London
  Mathematical Society Lecture Note Series}.
\newblock Cambridge University Press, Cambridge, 2005.

\bibitem[Che09]{cherednikqwhittaker}
Ivan Cherednik.
\newblock Whittaker limits of difference spherical functions.
\newblock {\em Int. Math. Res. Not. IMRN}, (20):3793--3842, 2009.

\bibitem[CM06]{ChariMoura}
Vyjayanthi Chari and Adriano Moura.
\newblock The restricted {K}irillov-{R}eshetikhin modules for the current and
  twisted current algebras.
\newblock {\em Comm. Math. Phys.}, 266(2):431--454, 2006.

\bibitem[CNV]{CNV}
S.~Cecotti, A.~Neitzke, and C.~Vafa.
\newblock R-twisting and 4d/2d correspondences.
\newblock Preprint arXiv:1006.3435.

\bibitem[DFK09]{clusDFK}
Philippe Di~Francesco and Rinat Kedem.
\newblock {$Q$}-systems as cluster algebras. {II}. {C}artan matrix of finite
  type and the polynomial property.
\newblock {\em Lett. Math. Phys.}, 89(3):183--216, 2009.

\bibitem[DFK10]{DFK10}
Philippe Di~Francesco and Rinat Kedem.
\newblock {$Q$}-systems, heaps, paths and cluster positivity.
\newblock {\em Comm. Math. Phys.}, 293(3):727--802, 2010.

\bibitem[DFK11]{DFKnoncom}
Philippe Di~Francesco and Rinat Kedem.
\newblock {N}on-commutative integrability, paths and quasi-determinants.
\newblock {\em Adv. Math.}, 228(1):97--152, 2011.

\bibitem[DFK14]{krKR}
Philippe Di~Francesco and Rinat Kedem.
\newblock Quantum cluster algebras and fusion products.
\newblock {\em Int. Math. Res. Not. IMRN}, (10):2593--2642, 2014.

\bibitem[DFK17a]{DFKcur}
Philippe Di~Francesco and Rinat Kedem.
\newblock Quantum {Q} systems: from cluster algebras to quantum current
  algebras.
\newblock {\em Lett. Math. Phys.}, 107(2):301--341, 2017.

\bibitem[DFK17b]{DFK16}
Philippe Di~Francesco and Rinat Kedem.
\newblock Quantum {Q} systems: from cluster algebras to quantum current
  algebras.
\newblock {\em Lett. Math. Phys.}, 107(2):301--341, 2017.

\bibitem[DFK18]{DFK15}
P.~Di~Francesco and R.~Kedem.
\newblock Difference equations for graded characters from quantum cluster
  algebra.
\newblock {\em Transform. Groups}, 23(2):391--424, 2018.

\bibitem[DFK19]{DFKqt}
Philippe Di~Francesco and Rinat Kedem.
\newblock ($q,t$)-{D}eformed {Q}-{S}ystems, {DAHA} and {Q}uantum {T}oroidal
  {A}lgebras via {G}eneralized {M}acdonald {O}perators.
\newblock {\em Comm. Math. Phys.}, 369(3):867--928, 2019.

\bibitem[DFK21]{DFKconj}
Philippe Di~Francesco and Rinat Kedem.
\newblock {M}acdonald {O}perators and {Q}uantum {Q}-{S}ystems for classical
  types.
\newblock 2021.
\newblock Preprint arXiv:1908.00806 [math-ph].

\bibitem[DFKT17]{DKT}
Philippe Di~Francesco, Rinat Kedem, and Bolor Turmunkh.
\newblock A path model for {W}hittaker vectors.
\newblock {\em J. Phys. A}, 50(25):255201, 35, 2017.

\bibitem[Eti99]{Etingof}
Pavel Etingof.
\newblock Whittaker functions on quantum groups and {$q$}-deformed {T}oda
  operators.
\newblock In {\em Differential topology, infinite-dimensional {L}ie algebras,
  and applications}, volume 194 of {\em Amer. Math. Soc. Transl. Ser. 2}, pages
  9--25. Amer. Math. Soc., Providence, RI, 1999.
  
\bibitem[FJMM09]{FJMMfermio}
Boris Feigin, Michio Jimbo, Tetsuji Miwa and Evgeny Mukhin.
\newblock Fermionic formulas for eigenfunctions of the difference {T}oda
{H}amiltonian.
\newblock {\em Lett. Math. Phys.}, 88(1-3), 39, 2009.


\bibitem[FL99]{FL}
Boris Feigin and Sergey Loktev.
\newblock On generalized {K}ostka polynomials and the quantum {V}erlinde rule.
\newblock In {\em Differential topology, infinite-dimensional {L}ie algebras,
  and applications}, volume 194 of {\em Amer. Math. Soc. Transl. Ser. 2}, pages
  61--79. Amer. Math. Soc., Providence, RI, 1999.

\bibitem[FT19]{FTshiftedquantum}
Michael Finkelberg and Alexander Tsymbaliuk.
\newblock Multiplicative slices, relativistic {T}oda and shifted quantum affine
  algebras.
\newblock In {\em Representations and nilpotent orbits of {L}ie algebraic
  systems}, volume 330 of {\em Progr. Math.}, pages 133--304.
  Birkh\"{a}user/Springer, Cham, 2019.

\bibitem[GK13]{GoncharovKenyon}
Alexander~B. Goncharov and Richard Kenyon.
\newblock Dimers and cluster integrable systems.
\newblock {\em Ann. Sci. \'{E}c. Norm. Sup\'{e}r. (4)}, 46(5):747--813, 2013.

\bibitem[GSV11]{GSVct}
Michael Gekhtman, Michael Shapiro, and Alek Vainshtein.
\newblock Generalized {B}\"{a}cklund-{D}arboux transformations for
  {C}oxeter-{T}oda flows from a cluster algebra perspective.
\newblock {\em Acta Math.}, 206(2):245--310, 2011.

\bibitem[GT21]{gotsy}
Roman Gonin and Alexander Tsymbaliuk.
\newblock On {S}evostyanov's construction of quantum difference {T}oda
  lattices.
\newblock {\em Int. Math. Res. Not. IMRN}, (12):8885--8945, 2021.

\bibitem[HKKR00]{HKKR}
Tim Hoffmann, Johannes Kellendonk, Nadja Kutz, and Nicolai Reshetikhin.
\newblock Factorization dynamics and {C}oxeter-{T}oda lattices.
\newblock {\em Comm. Math. Phys.}, 212(2):297--321, 2000.

\bibitem[HKO{\etalchar{+}}99]{HKOTY}
G.~Hatayama, A.~Kuniba, M.~Okado, T.~Takagi, and Y.~Yamada.
\newblock Remarks on fermionic formula.
\newblock In {\em Recent developments in quantum affine algebras and related
  topics ({R}aleigh, {NC}, 1998)}, volume 248 of {\em Contemp. Math.}, pages
  243--291. Amer. Math. Soc., Providence, RI, 1999.

\bibitem[HKO{\etalchar{+}}02]{HKOTT}
Goro Hatayama, Atsuo Kuniba, Masato Okado, Taichiro Takagi, and Zengo Tsuboi.
\newblock Paths, crystals and fermionic formulae.
\newblock In {\em Math{P}hys odyssey, 2001}, volume~23 of {\em Prog. Math.
  Phys.}, pages 205--272. Birkh\"{a}user Boston, Boston, MA, 2002.

\bibitem[Kac90]{Kac}
Victor~G. Kac.
\newblock {\em Infinite-dimensional {L}ie algebras}.
\newblock Cambridge University Press, Cambridge, third edition, 1990.

\bibitem[Ked08]{clusK}
Rinat Kedem.
\newblock {$Q$}-systems as cluster algebras.
\newblock {\em J. Phys. A}, 41(19):194011, 14, 2008.

\bibitem[KN99]{kinoum}
A.~N. Kirillov and M.~Noumi.
\newblock {$q$}-difference raising operators for {M}acdonald polynomials and
  the integrality of transition coefficients.
\newblock In {\em Algebraic methods and {$q$}-special functions ({M}ontr\'eal,
  {QC}, 1996)}, volume~22 of {\em CRM Proc. Lecture Notes}, pages 227--243.
  Amer. Math. Soc., Providence, RI, 1999.

\bibitem[Koo92]{Koornwinder}
Tom~H. Koornwinder.
\newblock Askey-{W}ilson polynomials for root systems of type {$BC$}.
\newblock In {\em Hypergeometric functions on domains of positivity, {J}ack
  polynomials, and applications ({T}ampa, {FL}, 1991)}, volume 138 of {\em
  Contemp. Math.}, pages 189--204. Amer. Math. Soc., Providence, RI, 1992.

\bibitem[KR87]{KR}
A.~N. Kirillov and N.~Yu. Reshetikhin.
\newblock Representations of {Y}angians and multiplicities of the inclusion of
  the irreducible components of the tensor product of representations of simple
  {L}ie algebras.
\newblock {\em Zap. Nauchn. Sem. Leningrad. Otdel. Mat. Inst. Steklov. (LOMI)},
  160(Anal. Teor. Chisel i Teor. Funktsii. 8):211--221, 301, 1987.

\bibitem[LNS20]{Langmann}
Edwin Langmann, Masatoshi Noumi and Junichi Shiraishi.
\newblock Basic properties of non- stationary {R}uijsenaars functions.
\newblock SIGMA 16, 105, 2020.

\bibitem[Lin]{SimonTwisted}
Mingyan~Simon Lin.
\newblock Quantum twisted q-systems and graded fermionic sums.
\newblock Preprint 2021.

\bibitem[Lin21]{Simon}
Mingyan~Simon Lin.
\newblock Quantum {$Q$}-systems and fermionic sums---the non-simply laced case.
\newblock {\em Int. Math. Res. Not. IMRN}, (2):805--854, 2021.

\bibitem[Mac01]{macdoroot}
I.~G. Macdonald.
\newblock Orthogonal polynomials associated with root systems.
\newblock {\em S\'{e}m. Lothar. Combin.}, 45:Art. B45a, 40, 2000/01.

\bibitem[Mac95]{macdo}
I.~G. Macdonald.
\newblock {\em Symmetric functions and {H}all polynomials}.
\newblock Oxford Mathematical Monographs. The Clarendon Press, Oxford
  University Press, New York, second edition, 1995.
\newblock With contributions by A. Zelevinsky, Oxford Science Publications.

\bibitem[Nou95]{Noumireps}
Masatoshi Noumi.
\newblock Macdonald-{K}oornwinder polynomials and affine {H}ecke rings.
\newblock Number 919, pages 44--55. 1995.
\newblock Various aspects of hypergeometric functions (Japanese) (Kyoto, 1994).

\bibitem[NS12]{ShiraishiNoumi}
Masatoshi Noumi and Jun'ichi Shiraishi.
\newblock A direct approach to the bispectral problem for the
  ruijsenaars-macdonald $q$-difference operators, 2012.
\newblock arXiv:1206.5364 [math.QA].

\bibitem[Rai05]{Rains}
Eric~M. Rains.
\newblock {${\rm BC}_n$}-symmetric polynomials.
\newblock {\em Transform. Groups}, 10(1):63--132, 2005.

\bibitem[Res03]{Resh03}
N.~Reshetikhin.
\newblock Integrability of characteristic {H}amiltonian systems on simple {L}ie
  groups with standard {P}oisson {L}ie structure.
\newblock {\em Comm. Math. Phys.}, 242(1-2):1--29, 2003.

\bibitem[Sah99]{Sahi}
Siddhartha Sahi.
\newblock Nonsymmetric {K}oornwinder polynomials and duality.
\newblock {\em Ann. of Math. (2)}, 150(1):267--282, 1999.

\bibitem[Sh19]{ShiraishiLaumon}
Junichi Shiraishi.
\newblock Affine screening operators, affine Laumon spaces and conjectures concerning non-stationary Ruijsenaars functions.
\newblock Journal of Integrable Systems, 3, 1--30, 2019. 

\bibitem[SS18]{SS}
Gus Schrader and Alexander Shapiro.
\newblock On $b$-whittaker functions.
\newblock 2018.
\newblock Preprint arXiv:1806.00747 [math-ph].

\bibitem[SS19]{SS19}
Gus Schrader and Alexander Shapiro.
\newblock K-theoretic coulomb branches of quiver auge theories.
\newblock 2019.
\newblock Preprint arXiv:1910.03186.

\bibitem[S14]{Stokman}
Jasper Stokman. 
\newblock Connection coefficients for basic Harish-Chandra series. 
\newblock Adv. Math., 250:351--386, 2014.

\bibitem[vD95]{vandiej}
J.~F. van Diejen.
\newblock Commuting difference operators with polynomial eigenfunctions.
\newblock {\em Compositio Math.}, 95(2):183--233, 1995.

\bibitem[vD96]{vDselfdual}
J.~F. van Diejen.
\newblock Self-dual {K}oornwinder-{M}acdonald polynomials.
\newblock {\em Invent. Math.}, 126(2):319--339, 1996.

\bibitem[vDE15]{vDE}
J.~F. van Diejen and E.~Emsiz.
\newblock Integrable boundary interactions for {R}uijsenaars' difference {T}oda
  chain.
\newblock {\em Comm. Math. Phys.}, 337(1):171--189, 2015.

\bibitem[Vic18]{Panu}
Panupong Vichitkunakorn.
\newblock Conserved quantities of {Q}-systems from dimer integrable systems.
\newblock {\em Electron. J. Combin.}, 25(1):Paper No. 1.36, 43, 2018.

\bibitem[Wil15]{Williams}
Harold Williams.
\newblock {$Q$}-systems, factorization dynamics, and the twist automorphism.
\newblock {\em Int. Math. Res. Not. IMRN}, (22):12042--12069, 2015.

\bibitem[YY21]{YamaYana}
Kohei Yamaguchi and Shintarou Yanagida.
\newblock Specializing koornwinder polynomials to macdonald polynomials of type
  $b,c,d$ and $bc$.
\newblock 2021.
\newblock Preprint arXiv:2105.00936 [math.QA].

\end{thebibliography}

\newcommand{\etalchar}[1]{$^{#1}$}
\def\cprime{$'$}

\end{document}